\DeclarePairedDelimiter\floor{\lfloor}{\rfloor}
\newcommand{\verti}[1]{{\left\vert\kern-0.25ex\left\vert\kern-0.25ex\left\vert #1 
    \right\vert\kern-0.25ex\right\vert\kern-0.25ex\right\vert}}
\newtheorem{theo}{Theorem}[section]
\newtheorem{lemme}[theo]{Lemma}
\newtheorem{propo}[theo]{Proposition}
\newtheorem{cor}[theo]{Corollary}
\newtheorem{hyp}[theo]{Assumption}
\newtheorem{nb}[theo]{Remark}
\def \bq {\begin{equation}}
\def \eq {\end{equation}}
\def \leq {\leqslant}
\def \geq {\geqslant}
\def \v {v}
\def \N {\mathbb{N}}
\def \ind {\mathbf{1}}
\def \Z {\mathbb{Z}}
\def \K {\mathcal{K}}
\def \S {\mathbb{S}}
\numberwithin{equation}{section}
\def \A {\mathcal{A}}
\def \Ms {\mathsf{M}}
\def \Gs {\mathsf{G}} 
\def \d {\mathrm{d}}
\def \D {\mathscr{D}}
\def \C {\mathbb{C}}
\def \Rs {\mathcal{R}}
\def \R {\mathbb{R}}
\def \X {\mathbb{X}}
\def \l+ {L^1_+}
\def \l- {L^1_-}
\renewcommand{\epsilon}{\varepsilon}
\def \ds {\displaystyle}
\def \l {\lambda}
\def \T {\mathbb{T}}
\def \B {\mathscr{B}}
\def \e {\varepsilon}
\def \H {\mathsf{H}}
\def \O {\mathcal{D}}
 \def \Ms {\mathsf{M}}
\def \O {\T^{d}}
\def \v {{v}}
\def \w {{w}}
\begin{document}
\title[Collisional kinetic equation]{Convergence rate to equilibrium for conservative scattering models on the torus: a new tauberian approach}

 \author{B. Lods}

 \address{Universit\`{a} degli
 Studi di Torino \& Collegio Carlo Alberto, Department of Economics and Statistics, Corso Unione Sovietica, 218/bis, 10134 Torino, Italy.}\email{bertrand.lods@unito.it}

 \author{M. Mokhtar-Kharroubi}

 \address{Universit\'e de Bourgogne-Franche-Comt\'e, Equipe de Math\'ematiques, CNRS UMR 6623, 16, route de Gray, 25030 Besan\c con Cedex, France
}
\email{mustapha.mokhtar-kharroubi@univ-fcomte.fr}

\maketitle
\begin{abstract}
The object of this paper is to provide a new and systematic tauberian approach to quantitative long time behaviour of $C_{0}$-semigroups $\left(\mathcal{V}(t)\right)_{t \geq0}$  in $L^{1}(\T^{d}\times \R^{d})$ governing conservative linear kinetic equations on the torus with general scattering kernel $\bm{k}(v,v')$ and degenerate (i.e. not bounded away from zero) collision frequency $\sigma(v)=\int_{\R^{d}}\bm{k}(v',v)\bm{m}(\d v')$, (with $\bm{m}(\d v)$ being absolutely continuous with respect to the Lebesgue measure).  We show in particular that
if $N_{0}$ is the maximal integer $s \geq0$ such that
$$\frac{1}{\sigma(\cdot)}\int_{\R^{d}}\bm{k}(\cdot,v)\sigma^{-s}(v)\bm{m}(\d v) \in L^{\infty}(\R^{d})$$
then, for initial datum $f$ such that $\ds\int_{\T^{d}\times \R^{d}}|f(x,v)|\sigma^{-N_{0}}(v)\d x\bm{m}(\d v) <\infty$ it holds
$$\left\|\mathcal{V}(t)f-\varrho_{f}\Psi\right\|_{L^{1}(\T^{d}\times \R^{d})}=\dfrac{{\epsilon}_{f}(t)}{(1+t)^{N_{0}-1}}, \qquad \varrho_{f}:= \int_{\R^{d}}f(x,v)\d x\bm{m}(\d v)$$
where $\Psi$ is the unique invariant density of  $\left(\mathcal{V}(t)\right)_{t \geq0}$ and $\lim_{t\to\infty}{\epsilon}_{f}(t)=0$. {We in particular provide a new criteria of the existence of invariant density.} The proof relies on the explicit computation of the time decay of each term of the Dyson-Phillips expansion of $\left(\mathcal{V}(t)\right)_{t \geq0}$ and on suitable smoothness and integrability properties of the trace on the imaginary axis of Laplace transform of remainders of large order of this Dyson-Phillips expansion. Our construction resorts also on  collective compactness arguments and provides various technical results of independent interest. Finally, as a by-product of our analysis, we derive essentially sharp ``subgeometric'' convergence rate for Markov semigroups associated to general transition kernels.
\noindent \textsc{MSC:} primary 82C40; secondary 35F15, 47D06

\noindent \textit{Keywords:} Kinetic equation; Markov semigroups; Convergence to equilibrium; Dyson-Phillips expansion; Inverse Laplace transform.

\end{abstract}

\tableofcontents
\section{Introduction}

The objet of this paper is to provide $L^{1}$--rates of convergence to equilibrium for
conservative linear kinetic equations of the form
\begin{equation}\label{1a}
\partial_{t}f(x,v,t) + v \cdot \nabla_{x}f(x,v,t)+\sigma(x,v)f(x,v,t)=\int_{V}k(x,v,v')f(t,x,v')\bm{m}(\d v'),\end{equation} 
for $(x,v) \in \T^{d} \times V,$ and $t \geq 0$ where
$$\sigma(x,v)=\int_{V}k(x,v',v)\bm{m}(\d v'), \qquad (x,v) \in \T^{d} \times V.$$
Here $V \subset \R^{d}$ is the support of a nonnegative Borel measure $\bm{m}$  while $\T^{d}$ is the $d$-dimensional torus
$$\T^{d}=\R^{d}\big/\mathbb{Z}^{d}.$$
For simplicity, we will assume that the Lebesgue measure on the torus is normalized i.e. $|\T^{d}|=1$. 
 
 \subsection{Assumptions and main result}
This class of equation was dealt with in \cite{mkjfa,mmktore} for a general class of velocity measures $\bm{m}(\d v)$.  A key result in \cite{mkjfa} is that the semigroup governing \eqref{1a} has a spectral gap if and only if
$$\lim_{t\to\infty}\inf_{(x,v) \in \T^{d} \times V}\int_{0}^{t}\sigma(x+tv,v)\d t >0.$$
In this case, there exists automatically an invariant density and the latter is exponentially stable (i.e. the semigroup converges exponentially, in operator norm, to the spectral projection associated to the invariant density). The existence and the stability of an invariant density in the degenerate case
\begin{equation}\label{eq:dege}
\lim_{t\to\infty}\inf_{(x,v) \in \T^{d} \times V}\int_{0}^{t}\sigma(x+tv,v)\d t=0
\end{equation}
are dealt with systematically in \cite{mmktore}. The stability of the invariant density (i.e. the strong convergence of the semigroup to its ergodic projection) is \emph{not quantified} and is obtained either by means of general results on partially integral semigroups \cite{rudnicki} or by means of a $0-1$ law for semigroups \cite{ergodic}. We provide also in Remark \ref{nb:ingham}  below a third approach via Ingham tauberian theorem.\medskip

Our object here, in a continuation of \cite{mmktore}, is to provide rates of convergence to equilibrium in the spirit of our recent construction on collisionless kinetic semigroups with boundary operators \cite{MKL-JFA}. To this end, we restrict ourselves to space homogeneous scattering kernel
$$k(x,v,v')=\bm{k}(v,v')$$
(and consequently $\sigma(x,v)=\sigma(v)$) where the degeneracy condition \eqref{eq:dege} amounts to
\begin{equation}\label{eq:dege1}
\inf_{v\in V}\sigma(v)=0.\end{equation}
The non homogeneous case is left open even if we suspect that a similar, albeit much more technical, construction is possible in that case. We also assume that
\begin{equation}\label{eq:bounded}
\sigma \in L^{\infty}(V) \qquad \text{ and } \qquad \sigma(v)=\int_{V}\bm{k}(v',v)\bm{m}(\d v'), \qquad v \in V.
\end{equation}
We will see in Assumption \ref{hypK} that we willrestrict ourselves to the case in which $\bm{m}$ is \emph{absolutely continuous with respect to the Lebesgue measure}
$$\bm{m}(\d v) =\bm{m}(v)\d v$$
for some nonnegative weight function $\bm{m}\::\:V \to \R^{+}$ satisfying some technical regularity assumption (see \eqref{eq:logM} for details). The specific  nature of the Lebesgue measure is used only once (in the proof of Lemma \ref{lem:lem37}) and could be avoided at the cost of more technical calculations. We have not tried to elaborate on this point here because the whole construction given in the paper is already quite involved. We however insist on the fact that the choice of the Lebesgue measure seems to be only technical.
   We denote by 
$$\X_{0}:=L^{1}(\T^{d} \times V\,,\,\d x\otimes \bm{m}(\d v))$$
endowed with its usual norm $\|\cdot\|_{\X_{0}}.$ More generally, for any $s \in \R$, we set
$$\X_{s}:=L^{1}(\T^{d} \times V\,,\,\max(1,\sigma(v)^{-s})\d x \otimes \bm{m}(\d v))$$
with norm $\|\cdot\|_{\X_{s}}.$
Notice that the absorption semigroup $\left(U_{0}(t)\right)_{t\geq 0}$ given by
\begin{equation}\label{eq:U0t}
U_{0}(t)f(x,v)=\exp\left(-\sigma(v)\,t\right)f(x-tv,v), \qquad t\geq0, f \in \X_{0}\end{equation}
has zero type in $\X_{0}$:
$$\omega(U_{0})=0$$
under the degeneracy condition \eqref{eq:dege1}.
The generator of $\left(U_{0}(t)\right)_{t\geq0}$ is given by
$$\A f(x,v)=-v \cdot \nabla_{x} f(x,v) - \sigma(v)f(x,v), \qquad f \in \D(\A)=\left\{f \in \X_{0}\;;\;v \cdot \nabla_{x} f \in \X_{0}\right\}.$$
We introduce the operator, acting in the $v$-variable only,
\begin{equation}\label{eq:Hhkernel}
\mathcal{K}f(v)=\int_{V}\bm{k}(v,v')f(v')\bm{m}(\d v'), \qquad f \in L^{1}(V)=L^{1}(V,\d\bm{m}).\end{equation}
Due to \eqref{eq:bounded}, one sees first that
$$\mathcal{K} \in \mathscr{B}(\X_{0}), \qquad \qquad \|\K\|_{\mathscr{B}(\X_{0})} \leq \|\sigma\|_{\infty}$$
and also that
$$ \K \in \mathscr{B}(\X_{-1},\X_{0}), \qquad \qquad \|\K\|_{\mathscr{B}(\X_{-1},\X_{0})}=1.$$
 The fact that $\K$ is a bounded operator in $\X_{0}$ implies that $\A + \K$ is the generator of a $C_{0}$-semigroup $\left(\mathcal{V}(t)\right)_{t\geq0}$ in $\X_{0}$ given by
$$\mathcal{V}(t)=\sum_{n=0}^{\infty}U_{n}(t), \qquad t \geq 0$$
where
$$U_{n+1}(t)=\int_{0}^{t}U_{n}(t-s)\K U_{0}(s)\d s = \int_{0}^{t}U_{0}(t-s)\K U_{n}(t-s)\d s, \qquad n \in \N,\qquad t \geq0.$$
Introduce the following notation, for any $s \in \R$
\begin{equation}\label{eq:varthetas}
\vartheta_{s}(w):=\frac{1}{\sigma(w)}\int_{V}\sigma^{-s}(v)\bm{k}(v,w)\bm{m}(\d v), \qquad \qquad w \in V.\end{equation}

{The results of the present paper are based upon several sets of Assumptions. The first \emph{fundamental assumptions} are the following which are at the basis of the underlying method:}
\begin{hyp}\label{hypH}
Assume that $\K\::\:L^{1}(V) \to L^{1}(V)$ is a \emph{weakly compact}  operator of the form \eqref{eq:Hhkernel} which satisfies the following
\begin{enumerate}[(1)]
\item For any $v \in V$
\begin{equation}\label{eq:conservative}
\sigma(v)=\int_{V}\bm{k}(w,v)\bm{m}(\d w),\end{equation}
with $\sigma \in L^{\infty}(V)$ and
\begin{equation}\label{spahomo}
\inf_{v}\sigma(v)=0.
\end{equation}
\item There exists some (maximal) integer $N_{0} \geq 1$ such that
\begin{equation}\label{eq:varthetaN0}
\vartheta_{N_{0}} \in L^{\infty}(V).\end{equation}
\item Introducing, for any $\delta >0$ the set
$$\Sigma_{\delta}=\{v \in V\;;\;\sigma(v) \leq \delta\},$$
we assume that
\begin{equation}\label{eq:mue}
\lim_{\delta \to 0^{+}}\sup_{w \in V}\frac{1}{\sigma(w)}\int_{\Sigma_{\delta}}\bm{k}(v,w)\bm{m}(\d v)=0\,\end{equation}
\end{enumerate}
\end{hyp}

{These assumptions provide actually a new practical criteria ensuring the existence (and uniqueness) of an invariant density:}
\begin{theo}\label{theo:main-invar} Assume that $\K$ is an irreducible operator {satisfying Assumptions \ref{hypH} and  the measure $\bm{m}$ is such that that there exists $\alpha >0$ such that, for any bounded set $S \subset V$, there is $c(S) >0$ such that
\begin{equation}\label{eq:hypm}
\sup_{\nu \in \S^{d-1}}\bm{m}\otimes \bm{m}\big(\left\{(v,w) \in S\times S\;;\;|(v-w)\cdot \nu| < \e\right\}\big) \leq c(S)\e^{\alpha}, \qquad \forall \e >0.\end{equation}}
Then, there exists a unique ${\Psi} \in \D(\A)$ \emph{spatially homogeneous} with 
$$\Psi(v) >0, \qquad \int_{\O\times V}\Psi(x,v)\d x\otimes \bm{m}(\d v)=1$$
such that
$$\left(\A+\K\right)\Psi=0=\K\Psi.$$
Moreover, $\Psi \in \X_{N_{0}-1}$.\end{theo}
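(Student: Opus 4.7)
Since $\bm{k}(v,v')$ and $\sigma(v)$ depend only on $v$, the generator $\A+\K$ commutes with the group of spatial translations $\tau_y\colon f(x,v)\mapsto f(x+y,v)$, $y\in\T^d$. I look for $\Psi$ in the form of a spatially homogeneous density $\Psi=\Psi(v)$: then $v\cdot\nabla_x\Psi\equiv 0$ and the invariance condition $(\A+\K)\Psi=0$ reduces to the velocity-only fixed-point problem
\[
\sigma(v)\,\Psi(v)\;=\;(\K\Psi)(v),\qquad v\in V,
\]
in $L^1(V,\bm{m})$. The main question is thus the construction of a nonnegative normalized solution of this equation. Full uniqueness of the invariant density of $(\mathcal{V}(t))_{t\geq 0}$ will then follow from irreducibility: the $x$-average of any invariant density is, by a direct Fubini computation using periodicity, invariant under $T_v(t):=\exp(t(\K-\sigma I))$, and by uniqueness at the velocity level together with the extremality of minimal invariant sets for positive semigroups, any invariant density of $(\mathcal{V}(t))_{t\geq 0}$ must itself be translation-invariant.

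To construct the velocity fixed point, I would run a resolvent-based Krylov--Bogoliubov argument for $G:=\K-\sigma I$, which is bounded on $L^1(V,\bm{m})$ (since $\sigma\in L^\infty$ and $\K\in\mathscr{B}(\X_0)$) and generates a stochastic $C_0$-semigroup $T_v(t)$, because its adjoint annihilates $\mathbf{1}\in L^\infty(V)$ by \eqref{eq:conservative}. Pick a reference density $f_0\geq 0$ with $\int f_0\,\d\bm{m}=1$ and $\mathrm{supp}(f_0)\subset\{\sigma\geq\sigma_0\}$ for some $\sigma_0>0$, so that $f_0\in\X_s$ for every $s\geq 0$. Setting
\[
u_\lambda\;:=\;\lambda(\lambda-G)^{-1}f_0\;=\;\lambda(\lambda+\sigma-\K)^{-1}f_0,\qquad \lambda\in(0,1],
\]
one has $u_\lambda\geq 0$, $\int u_\lambda\,\d\bm{m}=1$, and $(\lambda+\sigma)u_\lambda=\K u_\lambda+\lambda f_0$. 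Multiplying this identity by $\sigma^{-N_0}$, integrating against $\bm{m}$, and using
\[
\int_V \sigma^{-N_0}\K u_\lambda\,\d\bm{m}\;=\;\int_V \sigma(w)\,\vartheta_{N_0}(w)\,u_\lambda(w)\,\bm{m}(\d w)
\]
together with the key hypothesis $\vartheta_{N_0}\in L^\infty$ yields the uniform a priori bound $\int_V\sigma^{-(N_0-1)}u_\lambda\,\d\bm{m}\leq C$ for $\lambda\in(0,1]$. This bound forces tightness on the degenerate set $\Sigma_\delta$ through $\int_{\Sigma_\delta}u_\lambda\,\d\bm{m}\leq C\,\delta^{N_0-1}$; uniform integrability of $\{u_\lambda\}_{\lambda\in(0,1]}$ is then obtained from the weak compactness of $\K\colon L^1(V)\to L^1(V)$ via the Dunford--Pettis theorem applied to the representation $u_\lambda=(\lambda+\sigma)^{-1}(\K u_\lambda+\lambda f_0)$, combined with \eqref{eq:mue} to control the small-$\sigma$ contribution. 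Extracting a weak limit $u_{\lambda_n}\rightharpoonup\Psi$ in $L^1(V,\bm{m})$ along some $\lambda_n\downarrow 0$ and passing to the weak limit in $\lambda u_\lambda-\lambda f_0=(\K-\sigma I)u_\lambda$ (the RHS being a bounded and hence weak-weak continuous operator, while the LHS tends to $0$ strongly in $L^1$) produces $\sigma\Psi=\K\Psi$, together with $\Psi\geq 0$ and $\int\Psi\,\d\bm{m}=1$.

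Strict positivity $\Psi>0$ $\bm{m}$-a.e.\ follows from the irreducibility of $\K$ by a standard Perron--Frobenius argument for positive operators. The integrability $\Psi\in\X_{N_0-1}$ is then immediate from the fixed-point relation:
\[
\int_V \sigma^{-(N_0-1)}\Psi\,\d\bm{m}\;=\;\int_V \sigma^{-N_0}\K\Psi\,\d\bm{m}\;=\;\int_V \sigma\,\vartheta_{N_0}\,\Psi\,\d\bm{m}\;\leq\;\|\sigma\|_\infty\|\vartheta_{N_0}\|_\infty<\infty.
\]
The step I expect to be the most delicate is the tightness of $\{u_\lambda\}$ on $\Sigma_\delta$ in the borderline case $N_0=1$, where the weighted bound above degenerates into the trivial normalization $\int u_\lambda\,\d\bm{m}=1$ and yields no control on $\Sigma_\delta$: this is presumably where the transversality assumption \eqref{eq:hypm} enters, most likely via a velocity-averaging argument on the full spatial resolvent $(\lambda-\A-\K)^{-1}$ that substitutes for the velocity-only construction above and supplies the missing compactness.
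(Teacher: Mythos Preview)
Your route is genuinely different from the paper's. You reduce immediately to the velocity-only problem $\sigma\Psi=\K\Psi$ and run a Krylov--Bogoliubov argument on the resolvent $u_\lambda=\lambda(\lambda-(\K-\sigma))^{-1}f_0$, extracting compactness from the weighted bound $\int\sigma^{-(N_0-1)}u_\lambda\leq C$ combined with weak compactness of $\K$. The paper instead works on the full space $\X_0$: it uses that $\Ms_0=\K\Rs(0,\A)$ is a \emph{power-compact} stochastic irreducible operator (this is where \eqref{eq:hypm} enters, via velocity averaging in Lemma~\ref{lem:KNCol} and Theorem~\ref{theo:collective}), applies Perron--Frobenius to get a positive eigenfunction $\varphi_0$ with $\Ms_0\varphi_0=\varphi_0$, and then sets $\Psi(x,v)=\int_0^\infty e^{-s\sigma(v)}\varphi_0(x-sv,v)\,ds$, checking directly that $\A\Psi=-\varphi_0$ and $\K\Psi=\Ms_0\varphi_0=\varphi_0$. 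Spatial homogeneity is deduced \emph{a posteriori} from uniqueness.

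What each buys: your argument is more elementary and, for $N_0\geq 2$, does not need the transversality hypothesis \eqref{eq:hypm} at all --- the weighted estimate alone forces tightness on $\Sigma_\delta$. The paper's spectral route, on the other hand, works uniformly in $N_0\geq 1$: existence of $\varphi_0$ comes purely from power-compactness of $\Ms_0$ and needs no weighted control, while the regularity $\varphi_0\in\X_{N_0}$ (hence $\Psi\in\X_{N_0-1}$) is a one-line bootstrap from $\varphi_0=\Ms_0\varphi_0\in\K(\X_{-1})\subset\X_{N_0}$ via \eqref{eq:KXN}.

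The gap you flag at $N_0=1$ is real: your weighted bound degenerates to the trivial normalisation and there is no evident way to recover tightness on $\Sigma_\delta$ from \eqref{eq:mue} alone within the resolvent scheme (the factor $(\lambda+\sigma)^{-1}$ on $\Sigma_\delta$ blows up like $\lambda^{-1}$). Your intuition that \eqref{eq:hypm} supplies the missing compactness is correct, but the paper uses it not to patch a velocity-space argument --- rather, it is baked into the power-compactness of $\Ms_0$ from the outset, so that no weighted estimate is ever needed for \emph{existence}. If you want a self-contained velocity-space fix, note that \eqref{eq:mue} alone already gives $\|\Ms_0-\K^{(\delta)}\sigma^{-1}\|_{\B(L^1(V))}\leq\mu_\delta\to 0$, and $(\K^{(\delta)}\sigma^{-1})^2$ factors as $\K^{(\delta)}\circ(\sigma^{-1}\mathbf{1}_{\Lambda_\delta})\circ(\K^{(\delta)}\sigma^{-1})$ with the middle factor bounded; hence $\Ms_0$ is power-compact on $L^1(V)$ and the paper's Perron--Frobenius step goes through there without any appeal to \eqref{eq:hypm} or to $N_0$.
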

\begin{nb} We recall that $\K$ is irreducible if there exists  no non trivial $\Omega \subset \T^{d}  \times V$  such that $\K$ leaves invariant $L^{1}(\Omega)$ which is identified to the closed subspace of $\X_{0}$ of functions vanishing a.e. outside $\Omega$. Practical criterion ensuring the irreducibility of $\K$ is given in \cite[Proposition 7]{mmktore}. In particular, $\K$ is irreducible if $\bm{k}(v,w) >0$ for $\bm{m}\otimes\bm{m}$-a.e. $(v,w) \in V\times V$. Notice that the existence and uniqueness of a steady solution has been obtained, under \emph{different assumptions} in  \cite{mmktore}. The approach followed here is  technically different from \cite{mmktore} and, as said, resort to different assumptions (see Proposition \ref{prop:varphi0} for details).\end{nb} 
\begin{nb} Notice that, if $\bm{k}(\cdot,\cdot)$ satisfies a \emph{detailed balance} condition, i.e. there exists a positive spatially homogeneous density $\mathcal{M}=\mathcal{M}(v)$, $\mathcal{M} \in L^{1}(V)$ such that
$$\bm{k}(v,w)\mathcal{M}(w)=\mathcal{M}(v)\bm{k}(v,w), \qquad \forall v,w \in V$$
then, up to a normalisation factor, $\Psi=\mathcal{M}$ is an invariant density and assumption \eqref{eq:mue} is not needed for our analysis. {Of course, assumption \eqref{eq:hypm} is satisfied if $\bm{m}$ is absolutely continuous with respect to the Lebesgue measure over $\R^{d}$ which is the framework we will further adopt in the paper.}
\end{nb}

A second set of Assumptions, most of technical nature, is the following
\begin{hyp}\label{hypK}
The measure $\bm{m}(\d v)$ is absolutely continuous with respect to the Lebesgue measure 
$$\bm{m}(\d v)=\bm{m}(v)\d v$$
for some  weight function $\bm{m}$ such that 
\begin{equation}\label{eq:logM}
\sup_{v\in V}\left|v \cdot \nabla_{v}\log \bm{m}(v)\right| < \infty.\end{equation}
Moreover, the kernel $\bm{k}(v,v')$ is such that there exist two positive constants $C_{1},C_{2} >0$ such that
\begin{equation}\label{eq:K2}
\int_{V}\left|w\cdot \nabla_{w}\bm{k}(v,w)\right|\max\left(1,\sigma^{-1}(v)\right)\bm{m}(v)\d v \leq C_{1}\sigma(w) \qquad \forall w \in V
\end{equation}
and
\begin{equation}\label{eq:K3}
\int_{V}\left|v \cdot \nabla_{v}\bm{k}(v,w)\right| \bm{m}(v)\d v \leq C_{2}\sigma(\w) \qquad \forall \w\in V.
\end{equation}
\end{hyp}

\begin{nb} We will comment in Subsection \ref{sec:ass} below on this set of assumptions as well as to the subsequent Assumption \ref{hypH}. We only mention here that property \eqref{eq:logM} is satisfied for instance for weight functions of the form
$$\bm{m}(v)=\left(1+|v|^{2}\right)^{\frac{s}{2}}, \qquad s \geq0.$$
\end{nb}
Our main result is the following

\begin{theo}\label{theo:maindec}
Under Assumptions \ref{hypH} and \ref{hypK}, if $\left(\mathcal{V}(t)\right)_{t\geq0}$ is an irreducible semigroup then for any $f \in \X_{N_{0}}$ there exist a constant $C_{f} > 0$ and 
$$\mathsf{\Theta}_{f} \in \mathscr{C}_{0}(\R,\X_{0}) \cap L^{1}(\R,\X_{0})$$
such that
\begin{equation}\label{eq:rat}
\left\|\mathcal{V}(t)f-\varrho_{f}\Psi\right\|_{\X_{0}} \leq \frac{C_{f}}{(1+t)^{N_{0}-1}} \bm{\epsilon}(t) \qquad \forall t\geq 0,
\end{equation}
where $\varrho_{f}:=\ds\int_{\T^{d}\times V}f(x,v)\d x\bm{m}(\d v)$ and
\begin{equation}\label{eq:defepst}
\bm{\epsilon}(t)=\frac{1}{1+t}+\left\|\int_{-\infty}^{\infty}\exp\left(i\eta\,t\right)\mathsf{\Theta}_{f}(\eta)\d \eta\right\|_{\X_{0}}\,,\, \qquad \lim_{t\to \infty}\bm{\epsilon}(t)=0.\end{equation}
Moreover, for any $\mathsf{p} >4$, 
there is some positive constant $K=K(\mathsf{p}) >0$  such that 
\begin{equation}\label{eq:estMod}
\left\|\int_{-\infty}^{\infty}\exp\left(i\eta\,t\right)\mathsf{\Theta}_{f}(\eta)\d \eta\right\|_{\X_{0}}\leq K\,\left(\omega_{f}\left(\frac{\pi}{t}\right)\right)^{\frac{\mathsf{p}-4}{\mathsf{p}}} \qquad \forall t \geq 1\end{equation}
where $\omega_{f}\::\:\R^{+} \to \R^{+}$ denotes the \emph{minimal modulus  of uniform continuity} of the  mapping $\mathsf{\Theta}_{f}$.\end{theo}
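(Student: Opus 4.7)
The plan is to combine the Dyson--Phillips decomposition of $\left(\mathcal{V}(t)\right)_{t\geq 0}$ with a careful inverse Laplace transform analysis of the remainder. For $N$ large enough (to be chosen so that $[(\lambda-\A)^{-1}\K]^N$ has sufficient smoothness and decay along vertical lines), I would write
$$\mathcal{V}(t) f - \varrho_f\Psi = \sum_{n=0}^{N-1}U_n(t)f \;+\; \bigl(R_N(t)f - \varrho_f\Psi\bigr),\qquad R_N(t)=\sum_{n\geq N}U_n(t).$$
The finite sum will produce the $(1+t)^{-N_0}$ contribution (i.e.\ the $1/(1+t)$ inside $\bm{\epsilon}(t)$), while the remainder, analysed through an inverse Laplace transform on the imaginary axis, will produce the Fourier integral of $\mathsf{\Theta}_f$.

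For the finite sum, the starting point is the elementary pointwise inequality $e^{-\sigma(v)t}\sigma(v)^{N_0}\leq (N_0/(\mathrm{e}\,t))^{N_0}$, which immediately gives $\|U_0(t)f\|_{\X_0}\leq C\,t^{-N_0}\|f\|_{\X_{N_0}}$. The boundedness of $\vartheta_{N_0}$ granted by Assumption~\ref{hypH}(2) means exactly that $\K$ transfers the weight $\sigma^{-N_0}$ onto a factor of $\sigma$, so that by iterating $U_{n+1}(t)=\int_{0}^{t}U_n(t-s)\K U_0(s)\d s$ one can keep exchanging weights against the exponential $e^{-\sigma s}$. An inductive computation then yields $\|U_n(t)f\|_{\X_0}=\mathcal{O}(t^{-N_0})\|f\|_{\X_{N_0}}$ for each $0\leq n\leq N-1$, summing to the required contribution.

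For the remainder, the classical Laplace representation gives
$$R_N(t)f=\frac{1}{2\pi i}\int_{\gamma-i\infty}^{\gamma+i\infty}e^{\lambda t}\,\bigl[(\lambda-\A)^{-1}\K\bigr]^N(\lambda-(\A+\K))^{-1}f\,\d\lambda,\qquad\gamma>0,$$
where the integrand is Bochner integrable on $\gamma + i\R$ once $N$ is chosen large enough. Theorem~\ref{theo:main-invar} shows that $(\lambda-(\A+\K))^{-1}$ has a simple pole at $\lambda=0$ with residue $f\mapsto\varrho_f\Psi$; pushing the contour to the imaginary axis and extracting that pole gives
$$R_N(t)f-\varrho_f\Psi=\frac{1}{2\pi}\int_{-\infty}^{\infty}e^{i\eta t}\widetilde{\Phi}_N(\eta;f)\,\d\eta,$$
with $\widetilde{\Phi}_N(\cdot;f)$ the trace of $\bigl[(\lambda-\A)^{-1}\K\bigr]^N(\lambda-(\A+\K))^{-1}f$ on $i\R$ minus the $1/\lambda$ contribution. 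Multiplying by $t^{N_0-1}$ and integrating by parts $N_0-1$ times in $\eta$ converts the polynomial prefactor into derivatives of $\widetilde{\Phi}_N$; Assumption~\ref{hypK}, through \eqref{eq:logM}--\eqref{eq:K3}, is designed precisely to make such an integration by parts in the velocity variable against the kernel of $\K$ legitimate, producing the derivatives with values in $\X_0$ and integrable in $\eta$. This leaves $t^{N_0-1}\bigl(R_N(t)f-\varrho_f\Psi\bigr)=\int_\R e^{i\eta t}\mathsf{\Theta}_f(\eta)\,\d\eta$ with $\mathsf{\Theta}_f\in\mathscr{C}_0(\R,\X_0)\cap L^1(\R,\X_0)$, and Riemann--Lebesgue then delivers $\bm{\epsilon}(t)\to 0$.

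For the quantitative refinement \eqref{eq:estMod} I would apply the standard Plancherel trick based on translation invariance,
$$\int_{\R}e^{i\eta t}\mathsf{\Theta}_f(\eta)\,\d\eta=-\int_{\R}e^{i\eta t}\mathsf{\Theta}_f(\eta-\pi/t)\,\d\eta,$$
so that $\bigl\|\int e^{i\eta t}\mathsf{\Theta}_f(\eta)\d\eta\bigr\|_{\X_0}\leq \tfrac12 \int_{\R}\|\mathsf{\Theta}_f(\eta)-\mathsf{\Theta}_f(\eta-\pi/t)\|_{\X_0}\,\d\eta$. A dyadic truncation in $\eta$ combined with H\"older interpolation between the pointwise modulus-of-continuity bound $\omega_f(\pi/t)$ and an $L^\mathsf{p}(\R,\X_0)$ bound for $\mathsf{\Theta}_f$ (valid for every $\mathsf{p}>4$ as a by-product of the resolvent analysis above) yields, after optimizing the truncation radius, the exponent $(\mathsf{p}-4)/\mathsf{p}$. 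The main obstacle is the regularity analysis of $\widetilde{\Phi}_N$ on the imaginary axis: both at $\eta=0$, where the degeneracy \eqref{eq:dege1} destroys uniform resolvent bounds for $(\lambda-\A)^{-1}$, and at $|\eta|\to\infty$, where many compositions $\bigl[(i\eta-\A)^{-1}\K\bigr]^N$ have to be used to gain enough smoothing; the kernel-regularity package of Assumption~\ref{hypK} combined with the collective-compactness machinery from the earlier sections of the paper is exactly what is tailored to this point.
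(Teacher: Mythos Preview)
Your outline matches the paper's strategy closely: split via Dyson--Phillips, bound the finite sum algebraically, represent the remainder by inverse Laplace on the imaginary axis, integrate by parts $N_0-1$ times in $\eta$, invoke Riemann--Lebesgue, and then use the translation trick $e^{i\eta t}=-e^{i(\eta+\pi/t)t}$ for the quantitative part. Two points deserve correction, though neither is fatal.

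First, you misattribute the mechanism behind the $N_0-1$ derivatives. The integration by parts is purely in $\eta$, and the derivatives are produced by the resolvent identity $\tfrac{\d^{k}}{\d\eta^{k}}\Rs(i\eta,\A+\K)f=(-i)^{k}k!\,[\Rs(i\eta,\A+\K)]^{k+1}f$; each iterate costs one power of $\sigma^{-1}$, which is why $f\in\X_{N_0}$ gives exactly $N_0-1$ derivatives in $\X_0$. Assumption~\ref{hypK} is \emph{not} used for this step. Its role is elsewhere: the kernel-regularity conditions \eqref{eq:K2}--\eqref{eq:K3} feed an integration by parts in the \emph{time} variable inside the representation of $\K\Rs(\lambda,\A)\K$, yielding the decay $\|[\K\Rs(\lambda,\A)]^{2}\|\lesssim|\lambda|^{-1/2}$ and hence $\|\Ms_{i\eta}^{\mathsf p}\|\lesssim|\eta|^{-\mathsf p/4}$. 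That decay is what makes $\mathsf{\Theta}_f$ integrable in $\eta$ and controls the tails in the second part.

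Second, for \eqref{eq:estMod} the paper does not use H\"older against an $L^{\mathsf p}$ bound on $\mathsf{\Theta}_f$. It splits the integral at a cutoff $R$: on $|\eta|\leq R$ the modulus of continuity gives $R\,\omega_f(\pi/t)$, while on $|\eta|>R/2$ the tail estimate $\int_{|\eta|>R}\|\mathsf{\Theta}_f(\eta)\|\,\d\eta\lesssim R^{-(\mathsf p-4)/4}$ (coming from the pointwise decay above) controls the rest. Optimising $R$ produces the exponent $(\mathsf p-4)/\mathsf p$. Your ``H\"older interpolation with an $L^{\mathsf p}$ bound'' would need to be reinterpreted along these lines.

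Finally, rather than extracting a residue at $\lambda=0$, the paper reduces to $\varrho_f=0$ and shows directly that $\Rs(\lambda,\A+\K)f$ admits a boundary trace at $\lambda=0$; this requires the collective-compactness analysis of $\Ms_\lambda$ near $0$ (in particular that $\mu'(0)\neq 0$), which is the delicate point you allude to at the end.
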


\begin{nb} Notice that $(\mathcal{V}(t))_{t\geq0}$ is irreducible if  there is no invariant subspace $L^{1}(\Omega)$ of $\X_{0}$ left invariant by $\mathcal{V}(t)$ for any $t\geq0.$  One can prove that if $\K$ is irreducible then so is $\left(\mathcal{V}(t)\right)_{t\geq0}$ (see \cite[Proposition 7]{mmktore}).\end{nb}

\begin{nb} Recall that, in Assumption \ref{hypH}, we assumed $N_{0} \geq 1$ to be an integer. Without such an assumption, i.e if $N_{0}=\floor{N_{0}}+\alpha$, $\alpha \in (0,1)$, our main decay rate will then read
\begin{equation*}
\left\|\mathcal{V}(t)f-\varrho_{f}\Psi\right\|_{\X_{0}} \leq \frac{C_{f}}{(1+t)^{\floor{N_{0}}-1}} \bm{\epsilon}(t) \qquad \quad f \in \X_{N_{0}}.
\end{equation*}
In that case,  we believe that, for concrete examples of collision kernel $\bm{k}(v,v')$, it should be possible to explicit $\bm{\epsilon}(t)$ through an identification of the modulus of continuity of $\mathsf{\Theta}_{f}$ in terms of the non-integer part $\alpha=N_{0}-\floor{N_{0}]}.$
\end{nb}

Here above and in all the sequel, for any Banach space $(X,\|\cdot\|_{X})$ and any $k \in \N$, we set
\begin{multline*}
\mathscr{C}_{0}^k(\R,X)=\left\{h\::\:\R \to X\;;\;\text{of class $\mathscr{C}^{k}$  over $\R$ }\right.\\
\left.\text{and such that } \lim_{|\eta|\to\infty}\left\|\frac{\d^j}{\d\eta^j}h(\eta)\right\|_{X}=0 \qquad \forall j \leq k\right\}\end{multline*}
and we endow $\mathscr{C}_{0}^k(\R,X)$ with the norm 
$$\|h\|_{\mathscr{C}_{0}^k(\R,X)}:=\max_{0\leq j \leq k}\sup_{\eta}\left\|\frac{\d^j}{\d\eta^j}h(\eta)\right\|_{X}$$ which makes it a Banach space. We of course adopt the notation $\mathscr{C}_{0}(\R,X)=\mathscr{C}_{0}^{0}(\R,X)$.\medskip

{The above main result of the paper provides an explicit decay of the solution to \eqref{1a}. We strongly believe however that the interest of the present paper goes far beyond the mere convergence rate but it paves the way to a general abstract tauberian approach to the convergence rate of perturbed stochastic semigroup \cite{progr}. Moreover, because of the use of several fine collective compactness results and decay of Dyson-Phillips iterates, the paper contains several intermediate results of fundamental interest.}

\medskip

We can already mention that the function $\mathsf{\Theta}_{f}(\eta)$ appearing in \eqref{eq:estMod} is related to suitable derivatives of the trace along the imaginary axis $\l=i\eta$ $(\eta \in\R)$ of  the Laplace transform of suitable (large order) reminder of the Dyson-Phillips series defining the semigroup $\left(\mathcal{V}(t)\right)_{t\geq0}$. See Theorem \ref{theo:MainLaplace} and its proof in Section \ref{sec:Main} for a more precise statement.

It is important to mention that, as a direct by-product of our construction, our analysis covers also the important case in which the initial datum $f_{0}$ is independent of $x$. In that case, the general solution $f(x,v,t)=f(v,t)$ is also independent of $x$ and satisfies the \emph{spatially homogenous} version of \eqref{1a} which can be rewritten as
\begin{equation}\label{eq:1aSH}
\partial_{t}f(v,t)=\int_{V}\left[\bm{k}(v,v')f(t,v')-\bm{k}(v',v)f(t,v)\right]\bm{m}(\d v')\,.\end{equation} 
Models governed by eq. \eqref{eq:1aSH} are ubiquitous in the study of Markov processes and  results like our main Theorem \ref{theo:maindec} provide in this case an estimate of the decay rate for the transition probability semigroup of a continuous time Markov jump process. For such jump processes, roughly speaking, positive lower bound on the transition kernel $\bm{k}(v,v')$ induces exponential convergence of the stochastic processes (this case is refer to as the ``geometric case'' in the study of Markov processes) whereas our degeneracy condition \eqref{spahomo} prevents such an exponential convergence. Our result provides therefore a (seemingly sharp) ``subgeometric'' convergence rate for these kinds of processes. Our construction is quite involved and relies on various technical results of independent interest. As far as we know, most of our results are new and appear here for the first time.

\subsection{About Assumptions \ref{hypH} and \ref{hypK}} \label{sec:ass} Let us comment a bit about Assumptions \ref{hypH}--\ref{hypK}, referring the reader to the subsequent Section \ref{sec:Main} for more details on that matter. We first observe that the conservative assumption \eqref{eq:conservative} is natural to deal with Markov semigroups whereas, as said already, the degeneracy condition \eqref{spahomo} is the one which prevents the existence of a spectral gap. 

As illustrated by the above Theorem \ref{theo:maindec}, the decay rate   is \emph{prescribed} by the maximal gain of integrability that the boundary operator is able to provide through the function $\vartheta_{N_{0}}$ where we notice that
\begin{equation}\label{eq:KXN}
\mathcal{K} \in \mathscr{B}(\X_{-1},\X_{N_{0}}), \qquad \|\K\|_{\mathscr{B}(\X_{-1},\X_{N_{0}})} \leq \|\vartheta_{N_{0}}\|_{\infty}.\end{equation}
This illustrates the fundamental role of \eqref{eq:varthetaN0} in Assumption \ref{hypH}. The fact that we assume here $N_{0}$ to be an integer is an artefact of the approach we follow since, as established in Theorem \ref{theo:MainLaplace}, $N_{0}-1$ is also the maximal regularity of the trace function $\Upsilon_{n}(\eta)f$ we can derive for $f \in \X_{N_{0}}$. 

We already pointed out that a consequence of Assumptions \ref{hypH} concerns the existence of an invariant density $\Psi$ in Theorem \ref{theo:main-invar} since it allows to apply the results from \cite{mmktore}. Moreover, \eqref{eq:mue}   is the cornerstone hypothesis to establish that there exists $\mathsf{q} >0$ such that 
\begin{equation}\label{eq:coll}
\left\{\left[\K\Rs(\l,\A)\right]^{\mathsf{q}}\;;\;0 \leq \mathrm{Re}\l \leq 1\right\} \subset \B(\X_{0}) \text{ is collectively compact} \end{equation} in Theorem \ref{theo:main-assum}.
{All these consequences of Assumptions \ref{hypH} are the fundamental brick on which we build our theory. The role of Assumptions \ref{hypK}, on the contrary, is more of technical nature. Indeed, Assumptions \ref{hypK}  are technical requirements} which allow to deduce the decay rate of iterates of  $\K\Rs(\l,\A)$ on the imaginary axis with respect to $|\mathrm{Im}\l\,|$. We refer to Theorem \ref{theo:main-assum} and especially \eqref{eq:decayP}  for a precise statement. In particular, under such an assumption, we point out (see \eqref{eq:power}) that
\begin{equation*} 
\int_{|\eta|>1}\left\|\left[\Rs(i\eta,\A)\K\right]^{\mathsf{p}}\right\|_{\mathscr{B}(\X_{0})} \d \eta < \infty\end{equation*}
for any $\mathsf{p} >4$ which is crucial for the estimate \eqref{eq:estMod}.

We wish to insist here on the fact that the assumption \eqref{eq:mue} is the one ensuring the above collective compactness \eqref{eq:coll} whereas \eqref{eq:K2}--\eqref{eq:logM} (together with the fact that $\bm{m}(\d v)$ is absolutely continuous w. r. t. the Lebesgue measure) are those assumptions which provide \emph{quantitative estimates} on the behaviour of iterates of $\K\Rs(\l,\A)$. In particular, all the results of the paper which resort  only on some \emph{qualitative} properties of $\K\Rs(\l,\A)$ remain valid \emph{without the assumptions \eqref{eq:K2}--\eqref{eq:logM}}. See for instance Theorem \ref{theo:P1conv} and Remark \ref{nb:ingham} for an example of such qualitative results.


\subsection{Related literature}  A very precise exposition of modern tools developed for the convergence to equilibrium and stability of  Markov processes is the monograph \cite{tweedie}. The bibliography about exponential convergence (geometric case) for such processes is too vast and, since our main purpose in the present work is rather the study of kinetic equation like \eqref{1a}, we refer the reader to \cite{jose-mischler} for a nice introduction to the field. We only mention here that such geometric convergence results usually resort to hypocoercivity results and Harris-type results (see for instance  \cite{canizo20} for an application of Harris-type techniques to the study of fragmentation equation) or to a careful spectral analysis of the associated semigroup (see \cite{banasiak,mmkfrag} for very recent application to fragmentation models). For subgeometric convergence to equilibrium, a somehow concurrent approach (well-adapted to nonlinear models) is the so-called entropy method which consists in quantifying the entropy dissipation properties of the collisional operato to deduce establish the algebraic rate of convergence of the entropy (this, in turn, provides a decay in the usual $L^{1}$-norm thanks to Csiszar-Kullback inequality). For linear model, this method has been applied in \cite{amit} for the linear Boltzmann equation or its relaxation model caricature \cite{caceres}.

For subgeometric convergence to equilibrium of Markov processes, Harris-type tools have been develop in the probabilistic community (see for instance \cite{douc, bernou}) and we refer again the reader to \cite{jose-mischler} for a thorough description of such results. Subgeometric convergence rates have also been studied for classical models as the Fokker-Planck
equation \cite{kavian} and, for spatially homogeneous linear Boltzmann equation in a previous contribution by the authors \cite{spatial}.

As far as spatially inhomogeneous kinetic equations are concerned, the question of estimating the speed of approach to equilibrium for a non-homogeneous, linear transport equation with degenerated total scattering cross-section has been considered mainly in the context of the linearized Boltzmann or Landau equation (see for instance \cite{caflish, kleber} to mention just a few relevant results). Spectral gap estimates via the so-called hypocoercivity method have been derived in a $L^{2}$-setting in \cite{mouhot,duan} while algebraic rate of convergence towards the equilibrium, still in the $L^{2}$ setting, has been established in \cite{desv} norm is established for a case in which the cross-section $\sigma$ is depending on $x$ and vanishes in some portion of the space. We also mention the recent contribution \cite{bouin} in which a decay similar to the one in Theorem \ref{theo:maindec} is obtained in a $L^{2}$ framework. 

For a purely $L^{1}$--approach, the literature on the field is scarcer. We mention the contribution \cite{mkjfa} and \cite{evans} which prove the existence of a spectral gap if $\sigma$ is bounded for below by means of spectral analysis and Harris-type results respectively.  Harris-type of results are actually providing subgeometric rate of convergence for linear Boltzmann equation with weak confining potentials in \cite{evans}. 

For ``subgeometric'' convergence to equilibrium for the degenerate linear kinetic equations \eqref{1a} on the torus, as far as we know, the only previous work providing results similar to those obtained in the present paper is  \cite{komo} in which $(V;\bm{m}(\d v))$ is a probability space (we change slightly the quick presentation of this work in order to compare it to ours).  The main assumption in \cite{komo} is then
\begin{equation}\label{eq:komo-a}\begin{cases}
\bm{k}(\cdot,\cdot) \in L^{\infty}(V \times V,\bm{m}\otimes\bm{m})\,,\\ 
\qquad \ds\int_{V}\bm{k}(v,v')\bm{m}(\d v')=\ds\int_{V}\bm{k}(v',v)\bm{m}(\d v'), \quad \bm{k}(v,v') \leq C\,\sigma(v)\sigma(v')\\
\ds \int_{V}\sigma^{-a}(v)\bm{m}(\d v) < \infty\end{cases}\end{equation}
for some (maximal) $a >0$. The analysis of \cite{komo}  is carried out in the space
$$\bm{W}_{a}:=\left\{f \in \X_{0}\;;\;\|f\|_{\bm{W}_{a}}:=\sum_{p\in \Z^{d}}\left\|\widehat{f}(p)\right\|_{L^{1}(\mu_{a})} < \infty\right\}$$
where, for any Fourier mode $p \in \Z^{d}$, $\widehat{f}(p)$ is the Fourier transform of $f$ in the $x$-variable
$$\widehat{f}(p,\cdot)=\int_{\T^{d}}f(x,\cdot)\exp\left(i\,x\cdot p\right)\d x, \qquad p \in \Z^{d}$$
and
$$\|\widehat{f}(p)\|_{L^{1}(\mu_{a})}=\int_{V}\left|\widehat{f}(p,v)\right|\sigma^{-a}(v)\bm{m}(\d v)\,,\,\quad p \in \Z^{d}.$$
Under assumption \eqref{eq:komo-a}, the main result in \cite{komo} is a convergence rate towards equilibrium of the type
\begin{equation}\label{eq:ratekomo}
\|\mathcal{V}(t)f-\varrho_{f}\ind_{\T^{d}\times V}\|_{\X_{0}} \leq \frac{c}{(1+t)^{a}}\|f\|_{\bm{W}_{a}}, \qquad t\geq0.\end{equation}
Notice that $\|f\|_{\bm{W}_{a}}$ is a combination of the Wiener algebra norm in space variable $x$ and the weighted norm with weight $\sigma^{-a}$ in velocity. Moreover, under assumption \eqref{eq:komo-a}, the unique steady equilibrium state is $\Psi=\ind_{\T^{d}\times V}$. The strategy of \cite{komo} is completely different from ours and is based upon some Fourier diagonalization of the transport operator $v\cdot \nabla_{x}$,  the inverse Laplace transform for each Fourier mode and the use of
the theory of Fredholm determinants to derive \eqref{eq:ratekomo}. However, the result obtained is comparable to ours. Note that the boundedness of $\bm{k}(\cdot,\cdot)$ and the finiteness of the measure $\bm{m}(\d v)$ implies that $K\::\:L^{1}(V  ) \to L^{1}(V )$ is \emph{weakly compact} whereas, under assumption \eqref{eq:komo-a}, one can check without difficulty that 
$$\vartheta_{a+1}(v)=\frac{1}{\sigma(v)}\int_{V}\sigma^{-a-1}(v')\bm{k}(v,v')\bm{m}(\d v') \leq C\int_{V}\sigma^{-a}(v')\bm{m}(\d v') < \infty$$
i.e. \eqref{eq:varthetaN0} holds true with $N_{0}=\floor{a}+1$. Our main result  gives then a decay rate like $(1+t)^{-N_{0}+1}=(1+t)^{-\floor{a}}$. Note that the second assumption in \eqref{eq:komo-a} implies \eqref{eq:mue} which means that \eqref{eq:komo-a} implies all our Assumptions \ref{hypH}. However, the construction in \cite{komo} is independent of our set of assumptions \ref{hypK}. Even though the class of measure $\bm{m}(\d v)$ is much general in \cite{komo} than in our presentation, our main result Theorem \ref{theo:maindec} provides a sharper rate of convergence rate if $a \in \N$: using $\mathrm{O-o}$ Landau's notation, our result improves the $\mathrm{O}\left((1+t)^{-a}\right)=\mathrm{O}\left(1+t)^{-N_{0}+1}\right)$ rate in \eqref{eq:ratekomo} into a 
$$\mathrm{o}\left((1+t)^{-N_{0}+1}\right)$$
rate. Moreover, our result applies to a broader class of functions since $\bm{W}_{a}$ is a proper subspace of $\X_{N_{0}-1}=\X_{a}$. We also point out that the diagonilisation Fourier procedure makes the approach in \cite{komo} difficult to adapt to the case in which $\bm{k}$ (and thus $\sigma$) depends on $x$ whereas our approach appears robust enough to allow to tackle this case.

We finally mention, on a different but related topic, the recent works \cite{bernou1,fournier} and our contribution \cite{MKL-JFA}
 consider linear transport equation for collisionless gas in which  the scattering occurs only on the boundary of a bounded region and the return to equilibrium is induced by the boundary conditions. More precisely, in \cite{bernou1,fournier}, some \emph{ad hoc} Harris-type results are tailored to treat such collisionless model whereas, in our contribution \cite{MKL-JFA}, a tauberian approach similar to the one we consider in the present work is devised. The results in \cite{MKL-JFA}
 served as an inspiration for the techniques developed in the present paper.

\subsection{Strategy}  The general strategy to prove Theorem \ref{theo:maindec} is explained in full details in Section \ref{sec:Main} in which the main steps are described. In a nutshell, we just mention here that our approach is Tauberian in essence since we will deduce the decay of the semigroup $\left(\mathcal{V}(t)\right)_{t\geq0}$ for some fine properties of its Laplace transform along the imaginary axis $\l=i\eta$. This approach combines in a robust and efficient way the so-called \emph{semigroup and resolvent approaches} for the study of the long-time behaviour of transport equation (see \cite{MMK} for a comprehensive description of the semigroup approach and \cite{mmk-mech} for a first account of the resolvent one). As said, inspired by our results in \cite{MKL-JFA}, we device here a method which combines the two approaches. In particular, using that the semigroup $\mathcal{V}(t)$ is given by a Dyson-Phillips series 
\begin{equation}\label{eq:DysPhil}
\mathcal{V}(t)=\sum_{n=0}^{\infty}U_{n}(t)\end{equation}
where
\begin{equation}\label{eq:UnK}
U_{n+1}(t)=\int_{0}^{t}U_{n}(t-s)\K\,U_{0}(s)\d s, \qquad t \geq0, \qquad n \in \N\,,\end{equation}
and 
we first establish, for $f \in \X_{N_{0}}$ a \emph{universal} decay of each of the iterated $U_{n}(t)f$ as $t \to \infty.$ Second, we carefully study the behaviour of some reminder of the above Dyson-Phillips expansion 
$$\bm{S}_{n+1}(t)=\sum_{k=n}^{\infty}U_{k}(t)$$
and shows that its Laplace transform
$$\mathscr{S}_{n+1}(\l)f=\int_{0}^{\infty}\exp\left(-\l t\right)\bm{S}_{n+1}(t)f\d t, \qquad \mathrm{Re}\l >0$$
 can be extended, for suitable class of functions $f$, up to the imaginary axis $\l=i\eta$ with moreover a nice decay of the Laplace transform as $|\eta| \to\infty.$ The existence of such a trace of $\mathscr{S}_{n+1}(\l)f$ is based upon some important collective compactness arguments, as introduced in \cite{anselone}, and the consequences of those compactness argument to the spectral theory of some of the operators defining $\mathscr{S}_{n+1}(\l)$ (see Section \ref{sec:Assum} for details). Using then the inverse Laplace transform, this allows to deduce our main decay estimate in Theorem \ref{theo:maindec}. As said already, this very rough description is expanded in the next Section \ref{sec:Main} which provides for a detailed description of the main technical difficulty as well as  the organization of the paper (see Section \ref{sec:orga}). 
 
 We point out here that, even though our approach is inspired by our previous contribution  \cite{MKL-JFA}, it differs from it in several technical and conceptual aspects. In particular, a crucial role in our analysis will be played by the family of operators
$$\Ms_{\l}:=\K\Rs(\l,\A), \qquad \mathrm{Re}\l >0$$
where $\Rs(\l,\A)$ is the resolvent of $\A$ which can be easily extended in a family of stochastic operators along the imaginary axis $\mathrm{Re}\l=0$ (see Section \ref{sec:reguTr} for details). One can easily get convinced here by direct computations that 
$$\lim_{\e\to0}\left\|\Ms_{\e+\eta}f-\Ms_{i\eta}f\right\|_{\X_{0}}=0 \qquad \forall f \in \X_{0}$$
but
$$\sup_{\e>0}\left\|\Ms_{\e+i\eta}-\Ms_{i\eta}\right\|_{\B(\X_{0})}>0.$$
Therefore, the mapping $\e \geq 0 \mapsto \Ms_{\e+i\eta}$ is strongly continuous but is not continuous in the operator norm topology whereas, in our construction in \cite{MKL-JFA}, the analogue of $\Ms_{\e+i\eta}$  was played by a family of boundary operators was continuous in the operator norm up to $\e=0.$ To be able to deduce spectral properties of $\Ms_{\l}$ from the sole strong continuity, we have to resort to several \emph{collective compactness results} proving in particular that
there exists $\mathsf{q} >0$ such that $$\left\{ \Ms_{\l}^{\mathsf{q}}\;;\;0 \leq \mathrm{Re}\l \leq 1\right\} \subset \B(\X_{0})$$
is collectively compact. The fact that we are dealing only with some \emph{iterate} of $\Ms_{\l}$ and not with $\Ms_{\l}$ itself prevents us to use \emph{directly} known functional analysis results linking the collective compactness and the strong convergence and forces us to tailor some specific extension of the results of \cite{anselone} here. This use of collective compactness is one of main difference between the the approach followed in \cite{MKL-JFA} and the one cooked up for the present contribution.

\section{General strategy and main results}\label{sec:Main}

Our general strategy to investigate the time-decay of $\mathcal{V}(t)f$ is based upon a general and robust approach which fully exploits semigroup and resolvent interplays.

\subsection{Preliminary facts} We recall that $(\A+\K,\D(\A))$ generates a $C_{0}$-semigroup $\left(\mathcal{V}(t)\right)_{t\geq0}$ 
given by \eqref{eq:DysPhil} and \eqref{eq:UnK} 
and the key basic observation is the decay of $U_{0}(t)$ on the hierarchy of spaces $\X_{k}$, namely
\begin{lemme}\label{lem:decayU0} 
Given $k \geq0$, one has
\begin{equation}\label{eq:uo}
\left\|U_{0}(t)f\right\|_{\X_{0}} \leq \left(\frac{k}{e\,t}\right)^{k}\|f\|_{\X_{k}}, \qquad \forall t >0, \qquad f \in \X_{k}.\end{equation} Moreover, for any $f \in \X_{k+1}$, it holds
\begin{equation}\label{eq:uoK}
\int_{0}^{\infty}\left\|U_{0}(t)f\right\|_{\X_{k}}\d t \leq \|f\|_{\X_{k+1}} \qquad \text{ and } \qquad \int_{0}^{\infty}t^{k}\left\|U_{0}(t)f\right\|_{\X_{0}}\d t \leq \Gamma(k+1)\|f\|_{\X_{k+1}}\end{equation}
where $\Gamma(\cdot)$ is the usual Gamma function.
\end{lemme}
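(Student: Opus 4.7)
The plan is to exploit the explicit formula \eqref{eq:U0t} for $U_0(t)$ together with the translation invariance of the Lebesgue measure on $\T^d$. Since $|U_0(t)f(x,v)|=\exp(-\sigma(v)t)\,|f(x-tv,v)|$ and $\int_{\T^d}|f(x-tv,v)|\,dx=\int_{\T^d}|f(x,v)|\,dx$, one immediately obtains
\[
\|U_0(t)f\|_{\X_0}=\int_{\T^d\times V}\exp\!\bigl(-\sigma(v)\,t\bigr)\,|f(x,v)|\,dx\,\bm{m}(dv).
\]
All three estimates then reduce to pointwise inequalities in the $v$-variable applied under this integral.

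For \eqref{eq:uo}, I would invoke the elementary calculus identity $\sup_{s\geq 0} s^k e^{-st}=(k/et)^k$, attained at $s=k/t$, which yields $e^{-st}\leq (k/et)^k\,s^{-k}$ for every $s>0$. Splitting the cases $\sigma(v)\geq 1$ and $0<\sigma(v)<1$, I check that
\[
\exp\!\bigl(-\sigma(v)t\bigr)\leq \Bigl(\frac{k}{e\,t}\Bigr)^k\,\max\!\bigl(1,\sigma(v)^{-k}\bigr),
\]
using $\sigma(v)^{-k}\leq 1$ in the first case and the calculus bound itself in the second. Pulling the constant out of the integral produces exactly the factor $\|f\|_{\X_k}$.

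For \eqref{eq:uoK}, I would apply Fubini--Tonelli to exchange the $t$-integration with the $x$-$v$ integration and use the explicit Laplace integrals
\[
\int_0^\infty\exp\!\bigl(-\sigma(v)t\bigr)\,dt=\frac{1}{\sigma(v)},\qquad \int_0^\infty t^k\,\exp\!\bigl(-\sigma(v)t\bigr)\,dt=\frac{\Gamma(k+1)}{\sigma(v)^{k+1}}.
\]
It then remains to compare the resulting weights with the weight $\max(1,\sigma(v)^{-(k+1)})$ defining $\X_{k+1}$, which amounts to the two trivial pointwise inequalities $\max(1,s^{-k})\,s^{-1}\leq\max(1,s^{-(k+1)})$ (verified separately for $s\geq 1$ and $s<1$) and $s^{-(k+1)}\leq\max(1,s^{-(k+1)})$. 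Both estimates in \eqref{eq:uoK} follow.

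I do not anticipate any genuine obstacle: the argument is a short computation resting on the elementary bound $s^k e^{-st}\leq (k/et)^k$, the Gamma-function evaluation of $\int_0^\infty t^k e^{-\sigma t}\,dt$, and translation invariance on $\T^d$. The only minor bookkeeping issue is the behaviour on $\{\sigma=0\}$: there $\max(1,\sigma^{-k})=+\infty$ for $k>0$, so any $f\in\X_k$ must vanish a.e.\ on that set whenever $\bm{m}(\{\sigma=0\})>0$, and this region contributes nothing to the integrals.
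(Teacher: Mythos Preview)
Your proof is correct and follows essentially the same route as the paper: both exploit the explicit formula for $U_0(t)$, translation invariance on $\T^d$, the elementary bound $s^k e^{-st}\le (k/et)^k$, and the Laplace integral $\int_0^\infty t^k e^{-\sigma t}\,dt=\Gamma(k+1)\sigma^{-(k+1)}$. Your case-splitting on $\sigma\gtrless 1$ is in fact a bit more careful than the paper's version in tracking the weight $\max(1,\sigma^{-k})$, but the arguments are otherwise identical.
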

\begin{proof}[Proof of Lemma \ref{lem:decayU0}] 
Let $f \in \X_{k}$ and $t >0$ be fixed. For simplicity, we introduce $g(x,v)=\sigma(v)^{-k}|f(x,v)|$, $(x,v) \in \T^{d}\times V$. One has then
\begin{equation}\begin{split}\label{eq:normuo}
\|U_{0}(t)f\|_{\X_{0}}&=\int_{\T^{d}\times V}\sigma(v)^{k}e^{-t\sigma(v)}|g(x-tv,v)|\d x \bm{m}(\d v)\\
&=\int_{\T^{d}\times V}\sigma(v)^{k}e^{-t\sigma(v)}|g(y,v)|\d y\,\bm{m}(\d v)\end{split}\end{equation}
where we performed the change of variable $y=x-tv$, $\d y=\d x.$ Using the elementary inequality
\begin{equation}\label{eq:uke}
u^{k}e^{-u} \leq \left(\frac{k}{e}\right)^{k} \qquad \forall u \geq 0\end{equation}
and applying it with $u=t\sigma(v)$, one has
$$\|U_{0}(t)f\|_{\X_{0}} \leq \left(\frac{k}{t\,e}\right)^{k}\int_{\T^{d}\times V}|g(y,v)|\d y\,\bm{m}(\d v)=\left(\frac{k}{t\,e}\right)^{k}\int_{\T^{d}\times V}\sigma(v)^{-k}|f(y,v)|\d y\,\bm{m}(\d v)$$
which gives \eqref{eq:uo}. Now, one deduces also from \eqref{eq:normuo}   and Fubini's theorem that
\begin{multline*}
\int_{0}^{\infty}\|U_{0}(t)f\|_{\X_{k}}\d t\leq \int_{0}^{\infty}\|U_{0}(t)g\|_{\X_{0}}=\int_{0}^{\infty}\d t\int_{\T^{d}\times V}e^{-t\sigma(v)} |g(y,v)|\d y\,\bm{m}(\d v)\\
=\int_{\T^{d}\times V}|g(y,v)|\d y\,\bm{m}(\d v)\int_{0}^{\infty}e^{-t\sigma(v)}\d t=\int_{\T^{d}\times V}\sigma(v)^{-1}(v)|g(y,v)|\d y\,\bm{m}(\d v) \leq \|f\|_{\X_{k+1}}
\end{multline*}
whereas
\begin{equation*}\begin{split}
\int_{0}^{\infty}t^{k}\|U_{0}(t)f\|_{\X_{0}}\d t&=\int_{\T^{d}\times V}|f(y,v)|\d y\,\bm{m}(\d v)\int_{0}^{\infty}t^{k}e^{-t\sigma(v)}\d t\\
&=\int_{\T^{d}\times V}\sigma^{-k}(v)|f(y,v)|\d y\,\bm{m}(\d v)\int_{0}^{\infty}\left(t\sigma(v)\right)^{k}e^{-t\sigma(v)}\d t\\
&=\int_{\T^{d}\times V}\sigma^{-k-1}(v)|f(y,v)|\d y\,\bm{m}(\d v)\int_{0}^{\infty}\tau^{k}e^{-\tau}\d \tau
\end{split}\end{equation*}
where we performed the change of variable $\tau=t\,\sigma(v)$ in the last step. This gives the last estimate.
\end{proof}

 \subsection{Decay of the Dyson-Phillips iterated}\label{sec:DP-2} We extend the decay of the semigroup $\left(U_{0}(t))\right)_{t\geq0}$ obtained in Lemma \ref{lem:decayU0} to the iterates 
$\left(U_{k}(t)\right)_{t\geq0}$ for any $k \geq1$. To do so, we  first observe that for any $t \geq 0$, $U_{0}(t)$ commute with any multiplication operator depending on the velocity i.e.
$$U_{0}(t)(\varpi\,f)=\varpi\,U_{0}(t)f$$
for any $\varpi=\varpi(v) \in L^{\infty}(V)$. Moreover, $U_{0}(t)$ has an exponential decay on any region in which $\sigma$ is bounded away from zero. Namely, for any $\delta >0$, introduce
$$\Lambda_{\delta}:=\{v \in V\;;\;\sigma(v) \geq \delta\}, \qquad \Sigma_{\delta}=V \setminus \Lambda_{\delta}$$
one has
$$\|U_{0}(t)\ind_{\Lambda_{\delta}}f\|_{\X_{0}} \leq e^{-t\delta}\|\ind_{\Lambda_{\delta}}f\|_{\X_{0}}\leq e^{-t\delta}\|f\|_{\X_{0}} \qquad \forall f \in \X_{0},\qquad t\geq 0.$$ 
Introducing for any $\delta >0$, the operator $\K^{(\delta)} \in \mathscr{B}(\X_{0})$ given by
\begin{equation}\label{defi:Hepsi}
\K^{(\delta)}f(x,v)=\ind_{\Lambda_{\delta}}\K f(x,v) \qquad \forall f \in \X_{0}, \quad (x,v) \in \T^{d}\times V\end{equation}
as well as
\begin{equation}\label{def:Kdel}
\overline{\K}^{(\delta)}=\K-\K^{(\delta)}\,,\end{equation}
one can check the following 
\begin{lemme}\label{lem:sizeDelta}
For any $n \in \{1,\ldots,N_{0}\}$,
\begin{equation}\label{eq:overKdelta}
\|\K-{\K}^{(\delta)}\|_{\mathscr{B}(\X_{-1},\X_{0})} \leq \delta^{n}\|\vartheta_{n}\|_{\infty}, \qquad \qquad \forall 0 \leq n \leq N_{0}.\end{equation}
\end{lemme}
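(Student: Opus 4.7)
The argument is a direct computation using the definition of $\overline{\K}^{(\delta)}$ and the key inequality $\sigma(v)\leq \delta$ on $\Sigma_{\delta}$. Given $f\in \X_{-1}$, I would start by observing that $\overline{\K}^{(\delta)}f(x,v) = \ind_{\Sigma_{\delta}}(v)\,\K f(x,v)$, so by Fubini
\begin{equation*}
\|\overline{\K}^{(\delta)}f\|_{\X_{0}} \leq \int_{\T^{d}\times V} |f(x,w)|\,\d x\,\bm{m}(\d w)\int_{\Sigma_{\delta}} \bm{k}(v,w)\,\bm{m}(\d v).
\end{equation*}

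The crux is then to bound the inner integral against $\sigma(w)$ with the right power of $\delta$. On $\Sigma_{\delta}$, one has $\sigma(v)\leq \delta$, hence $1 \leq \delta^{n}\sigma^{-n}(v)$ for every $n\geq 0$. Inserting this factor and extending the domain of integration from $\Sigma_{\delta}$ back to $V$ gives, by the very definition \eqref{eq:varthetas} of $\vartheta_{n}$,
\begin{equation*}
\int_{\Sigma_{\delta}} \bm{k}(v,w)\,\bm{m}(\d v) \leq \delta^{n}\int_{V}\sigma^{-n}(v)\bm{k}(v,w)\,\bm{m}(\d v) = \delta^{n}\sigma(w)\,\vartheta_{n}(w) \leq \delta^{n}\|\vartheta_{n}\|_{\infty}\,\sigma(w).
\end{equation*}
This step requires $n\leq N_{0}$ precisely so that $\vartheta_{n}\in L^{\infty}(V)$, which follows from $\vartheta_{N_{0}}\in L^{\infty}(V)$ (Assumption \ref{hypH} \eqref{eq:varthetaN0}) together with the trivial monotonicity $\vartheta_{n}\leq \|\sigma\|_{\infty}^{N_{0}-n}\vartheta_{N_{0}}$ coming from $\sigma^{-n}\leq \|\sigma\|_{\infty}^{N_{0}-n}\sigma^{-N_{0}}$.

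Plugging this bound back, and using $\sigma(w)\leq \max(1,\sigma(w))$ (the weight defining the norm $\|\cdot\|_{\X_{-1}}$), I obtain
\begin{equation*}
\|\overline{\K}^{(\delta)}f\|_{\X_{0}} \leq \delta^{n}\|\vartheta_{n}\|_{\infty}\int_{\T^{d}\times V}|f(x,w)|\,\sigma(w)\,\d x\,\bm{m}(\d w) \leq \delta^{n}\|\vartheta_{n}\|_{\infty}\,\|f\|_{\X_{-1}},
\end{equation*}
which is the desired estimate. There is no real obstacle here; everything is direct manipulation of the integral defining $\K$, and the only conceptual point is the elementary observation that $\ind_{\Sigma_{\delta}}(v) \leq \delta^{n}\sigma^{-n}(v)$, which converts the smallness of $\sigma$ on $\Sigma_{\delta}$ into the gain factor $\delta^{n}$ while transferring the $\sigma^{-n}$ weight onto $\bm{k}$, where it is absorbed by the hypothesis $\vartheta_{n}\in L^{\infty}(V)$.
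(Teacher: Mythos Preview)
Your proof is correct and follows essentially the same approach as the paper: both use Fubini, insert $1\leq \delta^{n}\sigma^{-n}(v)$ on $\Sigma_{\delta}$, and recognise the resulting inner integral as $\sigma(w)\vartheta_{n}(w)$. You even supply a bit more justification than the paper (why $\vartheta_{n}\in L^{\infty}$ for $n\le N_{0}$, and the weight comparison $\sigma\le\max(1,\sigma)$).
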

Having such a property in mind, we can deduce the decay of $U_{k}(t)$ as $t \to \infty$ for any $k \in \N$ resulting in
\begin{propo}\label{prop:Snt} Assume that
$$f \in \X_{N_{0}}$$
then, for any $n \geq 1$, there exists $\bm{C}_{n} >0$ such that
\begin{equation}\label{eq:UntNH}
\left\|\sum_{k=0}^{n}U_{k}(t)f\right\|_{\X_{0}} \leq \bm{C}_{n}\,\left(\frac{\log t}{t}\right)^{N_{0}}\left\|f\right\|_{\X_{N_{0}}} \qquad \forall t >0.\end{equation}
\end{propo}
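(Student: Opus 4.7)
I will prove the bound individually for each iterate $\|U_n(t)f\|_{\X_0}$ by induction on $n \geq 0$; summing the bounds for $k = 0,\ldots,n$ then gives the stated estimate with $\bm{C}_n$ equal to the sum of the individual constants. The base case $n=0$ is immediate from \eqref{eq:uo} with $k = N_0$, giving the even stronger bound $\|U_0(t)f\|_{\X_0} \leq (N_0/e)^{N_0}t^{-N_0}\|f\|_{\X_{N_0}}$.

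For the inductive step, I will combine the Dyson-Phillips identity $U_n(t)f = \int_0^t U_0(t-s)\K U_{n-1}(s)f\,\d s$ with the decomposition $\K = \K^{(\delta)} + \overline{\K}^{(\delta)}$ from Lemma \ref{lem:sizeDelta}, for a parameter $\delta = \delta(t) > 0$ to be chosen. The $\overline{\K}^{(\delta)}$-contribution is handled by the operator bound $\|\overline{\K}^{(\delta)}\|_{\mathscr{B}(\X_{-1},\X_0)} \leq \delta^{N_0}\|\vartheta_{N_0}\|_{\infty}$ paired with the time-integrability $\int_0^\infty \|U_0(s)f\|_{\X_{-1}}\d s \leq \|f\|_{\X_0}$ from \eqref{eq:uoK}, producing a contribution of order $\delta^{N_0}\|f\|_{\X_{N_0}}$. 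The $\K^{(\delta)}$-contribution uses that $\K^{(\delta)}h$ is velocity-supported in $\Lambda_\delta$, yielding the exponential decay $\|U_0(\tau)\K^{(\delta)}h\|_{\X_0} \leq e^{-\delta\tau}\|\sigma\|_{\infty}\|h\|_{\X_0}$; coupled with the smoothing $\K \in \mathscr{B}(\X_{-1},\X_{N_0})$ from \eqref{eq:KXN} and the decay $\|U_0(\tau)g\|_{\X_0} \leq (N_0/(e\tau))^{N_0}\|g\|_{\X_{N_0}}$, a further splitting of the $s$-integral at $s=t/2$ produces contributions of orders $e^{-\delta t/2}$ and $t^{-N_0}/\delta$. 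The choice $\delta = 2N_0\log t/t$ balances all three terms at the rate $(\log t/t)^{N_0}$.

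The main obstacle is propagating the estimate through the induction: applying the $\overline{\K}^{(\delta)}$-step at level $n$ requires $s$-integrability of $\|U_{n-1}(s)f\|_{\X_{-1}}$, which does not follow from the mere contraction bound $\|U_{n-1}(s)f\|_{\X_0} \leq \|f\|_{\X_0}$. I therefore maintain the inductive hypothesis in two norms simultaneously ($\X_0$ and $\X_{-1}$), using the pointwise comparison $\max(1,\sigma(v)) \leq \max(1,\|\sigma\|_{\infty})$ together with the decay of $U_0$ in $\X_{-1}$ to preserve time-integrability at each level. The small-$\sigma$ regime, which causes the bounds $\|U_0(s)f\|_{\X_0} \leq (N_0/(es))^{N_0}\|f\|_{\X_{N_0}}$ to degenerate through the blow-up of $\max(1,\sigma^{-N_0})$ near $\sigma=0$, is handled by the same mechanism: on $\{\sigma(v) < \delta\}$ I use the elementary identity $1 \leq \delta^{N_0}\sigma(v)^{-N_0}$ to borrow the weight $\sigma^{-N_0}$ from the $\X_{N_0}$-norm at the same cost $\delta^{N_0}$ that dictates the balance above. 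This yields the claimed rate with a constant $\bm{C}_n$ growing with $n$ but no degradation of the $(\log t/t)^{N_0}$ decay.
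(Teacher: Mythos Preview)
Your splitting $\K=\K^{(\delta)}+\overline{\K}^{(\delta)}$ and the $\delta$-optimisation are exactly the right mechanism, and your scheme works for $n=1$ (this is the paper's Lemma~A.3). The gap is in the \emph{inductive} step for $n\geq 2$. In the $\K^{(\delta)}$-contribution with $s\in[t/2,t]$ you are forced to use the inductive hypothesis $\|U_{n-1}(s)f\|_{\X_0}\leq C_{n-1}(\log s/s)^{N_0}\|f\|_{\X_{N_0}}$ (or the equivalent bound on $\|\K U_{n-1}(s)f\|_{\X_0}$); convolving this against $e^{-\delta(t-s)}$ produces a factor $1/\delta\sim t/\log t$, so the output is of order $(\log t/t)^{N_0-1}$, not $(\log t/t)^{N_0}$. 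Your proposed two-norm fix does not help: since $\sigma\in L^\infty$, the spaces $\X_0$ and $\X_{-1}$ carry equivalent norms, so maintaining the hypothesis in $\X_{-1}$ adds no decay. One can try to carry a $\delta$-dependent hypothesis $\|U_n(t)f\|_{\X_0}\lesssim \delta^{-n}e^{-\delta t/2}+\delta^{N_0}$ through the induction, but the $\delta^{N_0}$-term, after one convolution with $e^{-\delta(t-s)}$, becomes $\delta^{N_0-1}$, and the scheme degrades by one power of $\delta$ at each step.

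The paper avoids this by \emph{not} inducting: for fixed $n$ it splits all $n$ copies of $\K$ simultaneously, writing $U_n(t)=U_n^{(\delta)}(t)+\overline{U}_n^{(\delta)}(t)$ where $U_n^{(\delta)}$ uses only $\K^{(\delta)}$ and $\overline{U}_n^{(\delta)}$ is a sum of $2^n-1$ terms each containing at least one $\overline{\K}^{(\delta)}$. Each error term is bounded \emph{uniformly in $t$} by $\|\overline{\K}^{(\delta)}\|_{\B(\X_{-1},\X_0)}\leq\delta^{N_0}\|\vartheta_{N_0}\|_\infty$ via Proposition~A.1 (so no time-convolution, hence no $1/\delta$ loss). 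The all-$\K^{(\delta)}$ term is handled by the explicit kernel $\Theta_n^{(\delta)}(t,w)$ of Lemma~A.5, which on $\{\sigma(w)\geq\delta/2\}$ satisfies $\Theta_n^{(\delta)}(t,w)\leq (2\sigma(w)/\delta)^n e^{-\delta t/2}$ and is $\leq 1$ elsewhere; the velocity region $\{\sigma(w)<\delta/2\}$ then contributes another $\delta^{N_0}$ by borrowing the weight $\sigma^{-N_0}$. Optimising $\delta=2(n+N_0)\log t/t$ gives the result.
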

The proof is based on a splitting of each term $U_{k}(t)$ as 
$$U_{k}(t)=U_{k}^{(\delta)}(t)+\overline{U}_{k}^{(\delta)}(t)$$
where $U_{k}^{(\delta)}(t)$ is constructed as a Dyson-Phillips iterated involving \emph{only} the operator $\K^{(\delta)}$ whereas the reminder terms make appear \emph{at least once} the difference $\K-\K^{(\delta)}$ and, as such, can be made small with respect to $\delta$ by virtue of Lemma \ref{lem:sizeDelta}. For the part involving only $\K^{(\delta)}$, we are dealing with a Dyson-Phillips iterate associated with a collision frequency which is \emph{bounded away from zero}  we can deduce a full exponential decay of $U_{k}^{(\delta)}(t)$ and, optimizing the parameter $\delta$, we deduce the algebraic decay of $U_{k}(t)$.  Details are given in Appendix \ref{appen:DYSON}. 

\subsection{Representation formulae for remainder terms}

On the basis of Proposition \ref{prop:Snt}, one sees that, to capture a decay of $\mathcal{V}(t)f$, it is enough to focus on the decay of the reminders
\begin{equation}\label{eq:Sn+1}
\bm{S}_{n+1}(t):=\mathcal{V}(t)-\sum_{k=0}^{n}U_{k}(t), \qquad n \geq 0,\;\,t >0.\end{equation}
This is where the resolvent approach enters in the game since it will be convenient to observe that such a reminder admits a useful representation formula as an inverse Laplace transform. 
We recall here that, for $\mathrm{Re}\l >0$, the resolvent of $\A+\K$ exists and is given by
\begin{equation}\label{eq:resAK}
\Rs(\l,\A+\K)=\sum_{n=0}^{\infty}\Rs(\l,\A)\left[\K\Rs(\l,\A)\right]^{n}
\end{equation}
where the series converge in operator norm. 

Before such a representation, let us explain here the main important consequences of Assumptions \ref{hypH}--\ref{hypK} we refer to Section \ref{sec:Assum} for a complete proof.
\begin{theo}\label{theo:main-assum}
If $\K$ satisfies Assumptions \ref{hypH}, then there exists $\mathsf{q} \in \N$ such that
$$\left\{\left[\K\Rs(\l,\A)\right]^{\mathsf{q}}\;;\;0 \leq \mathrm{Re}\l \leq 1\right\} \subset \B(\X_{0})$$
is collectively compact. {Moreover, if $\K$ satisfies also Assumptions \ref{hypK},} there exists $C_{0} >0$ such that
\begin{equation}\label{eq:decayP}
\left\|\left[\Rs(\l,\A)\K\right]^{2}\right\|_{\B(\X_{0})}+ \left\|\left[\K\Rs(\l,\A)\right]^{2}\right\|_{\B(\X_{0})} \leq \frac{C_{0}}{\sqrt{|\l|}}, \qquad \forall \l \in \overline{\C}_{+} \setminus \{0\}.\end{equation}
In particular,  for any $\mathsf{p}>4$, there is $C_{\mathsf{p}} >0$ such that
\begin{equation}
\label{eq:power}
\sup_{\varepsilon \geq 0}\int_{|\eta|>1}\left\|\left[\Rs(\varepsilon+i\eta,\A)\K\right]^{\mathsf{p}}\right\|_{\mathscr{B}(\X_{0})} \d \eta \leq C_{\mathsf{p}}< \infty.\end{equation}
\end{theo}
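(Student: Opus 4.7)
The proof splits naturally into the three displayed assertions, and I would organize the plan accordingly.

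For the collective compactness, I would start from the decomposition
$$\K\Rs(\l,\A) = \K^{(\delta)}\Rs(\l,\A) + \bigl(\K-\K^{(\delta)}\bigr)\Rs(\l,\A).$$
By the direct computation underlying Lemma \ref{lem:decayU0} one checks that $\Rs(\l,\A)$ is bounded $\X_{-1}\to\X_{0}$ uniformly for $0\leq \mathrm{Re}\,\l\leq 1$, so Lemma \ref{lem:sizeDelta} with $n=1$ makes the error term arbitrarily small in operator norm as $\delta\to 0^{+}$, uniformly in the strip. On the principal piece $\K^{(\delta)}\Rs(\l,\A)$, the output of $\K^{(\delta)}$ is supported in $\{\sigma\geq\delta\}$, so the absorption term in $\A$ gives uniform control of $\Rs(\l,\A)$ there. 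The weak compactness of $\K$ (combined with the velocity-averaging smoothing in $x$ produced by iterating $\Rs(\l,\A)\K$) then turns some iterate $[\K^{(\delta)}\Rs(\l,\A)]^{\mathsf{q}}$ into a compact operator for $\mathsf{q}$ large enough; the pre-compactness on the strip can be upgraded to collective compactness by the Anselone-type argument alluded to at the end of the Strategy section. The cornerstone hypothesis \eqref{eq:mue} is precisely what makes the error estimate from Lemma \ref{lem:sizeDelta} uniform and what lets the approximation by compact operators survive the passage to the limit $\delta\to 0$.

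For the quantitative decay \eqref{eq:decayP}, I would work from the explicit semigroup representation
$$\bigl(\Rs(\l,\A)\K f\bigr)(x,v)=\int_{0}^{\infty}e^{-s(\l+\sigma(v))}\int_{V}\bm{k}(v,w)\,f(x-sv,w)\,\bm{m}(\d w)\,\d s,$$
so that iterating produces a double $(s,\tau)$ integral involving $\bm{k}(v,w)\bm{k}(w,v')$ and two absorption factors. A straightforward modulus bound yields only uniform boundedness in $\l$; to gain a half-power one uses the elementary AM--GM splitting $|\l+\sigma(v)|^{-1}\leq |\l|^{-1/2}\sigma(v)^{-1/2}$ together with the observation that $\vartheta_{1/2}\in L^{\infty}(V)$ (which follows by interpolation from $\vartheta_{N_{0}}\in L^{\infty}$ and the boundedness of $\sigma$). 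This immediately handles the zero spatial Fourier mode. For the non-zero modes, the extra factor $e^{-is(p\cdot v)}$ must be exploited: an integration by parts in $v$ along the direction of $p$ transfers one velocity derivative onto the product $\bm{k}(v,w)\bm{m}(v)$, which is controlled precisely by Assumptions \eqref{eq:K2}, \eqref{eq:K3} and \eqref{eq:logM}, while the transversality assumption \eqref{eq:hypm} on $\bm{m}$ is what keeps the resulting integrals uniform in the spatial frequency.

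The integrability \eqref{eq:power} is then a purely algebraic consequence of \eqref{eq:decayP}. Writing $\mathsf{p}=2m+r$ with integer $m\geq 2$ and $r\in\{0,1\}$,
$$\bigl\|[\Rs(\l,\A)\K]^{\mathsf{p}}\bigr\|_{\B(\X_{0})}\leq \|\Rs(\l,\A)\K\|^{r}\,\bigl\|[\Rs(\l,\A)\K]^{2}\bigr\|^{m}\leq M^{r}\,\frac{C_{0}^{\,m}}{|\l|^{m/2}},$$
where the uniform bound $M=\sup_{\mathrm{Re}\,\l\geq 0}\|\Rs(\l,\A)\K\|_{\B(\X_{0})}<\infty$ follows from Fubini together with the fact that $\vartheta_{1}\in L^{\infty}(V)$. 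For $\mathsf{p}>4$ one has $m/2>1$, hence $|\l|^{-m/2}$ is integrable on $\{|\eta|>1\}$, and the bound is uniform in $\varepsilon\geq 0$ since the chain of inequalities never uses $\mathrm{Re}\,\l>0$.

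The main obstacle is unquestionably the decay estimate \eqref{eq:decayP}. The naive modulus bound on the double transport integral only produces boundedness; extracting a full factor $|\l|^{-1/2}$ uniformly across both the high-$|\eta|$ regime on the imaginary axis and across all spatial frequencies $p\in\Z^{d}$ requires simultaneously orchestrating three distinct mechanisms: the AM--GM redistribution between $|\l|$ and $\sigma(v)$, the interpolation of the integrability functions $\vartheta_{s}$, and the integration-by-parts in velocity that couples \eqref{eq:K2}--\eqref{eq:K3}, \eqref{eq:logM} with the transversality \eqref{eq:hypm}. Making these estimates uniform up to $\mathrm{Re}\,\l=0$, where the resolvent is merely strongly continuous and not norm-continuous, is the source of the genuine technical difficulty.
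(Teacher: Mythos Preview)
Your plan for the collective compactness and for the integrability \eqref{eq:power} is correct and aligned with the paper: approximate $\K$ by $\K^{(\delta)}$ uniformly on the strip (the paper phrases this via \eqref{eq:mue}, but your use of Lemma~\ref{lem:sizeDelta} with $n=1$ is equivalent), invoke the known compactness of high Dyson--Phillips iterates for the non-degenerate problem on $\Lambda_{\delta}$ (this is Lemma~\ref{lem:KNCol}, drawn from \cite{mkjfa,mmktore}), and pass to the limit. The final step \eqref{eq:power} is exactly the submultiplicativity you wrote.

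For the decay \eqref{eq:decayP}, however, your outline departs from the paper and, as written, has a real gap. Your scheme is Fourier-based: AM--GM on the zero mode, integration by parts in $v$ against $e^{-is\,p\cdot v}$ on the others. Two issues. First, mode-by-mode $L^{1}(V)$ bounds do \emph{not} reassemble into the $\mathscr{B}(\X_{0})$ norm, since $\X_{0}=L^{1}(\T^{d}\times V)$ is not the Wiener algebra; you would need \emph{summable} decay in $p$, not just uniformity. Second, your velocity IBP produces a factor $(s\,p)^{-1}$, which decays in $|p|$ but not in $|\eta|$ for fixed $p\neq 0$; and the quantity $|\l+\sigma(v)|^{-1}$ on which AM--GM acts appears only after Fourier---the physical-space modulus bound gives $(\mathrm{Re}\l+\sigma(v))^{-1}$, in which $\eta$ has disappeared. (Note also that \eqref{eq:hypm} plays no role in \eqref{eq:decayP}; it is subsumed by the stronger Assumption~\ref{hypK}.)

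The paper's argument stays entirely in physical space and is structurally different. One writes $\K\Rs(\l,\A)\K$ explicitly, splits the time integral as $\int_{0}^{\delta}+\int_{\delta}^{\infty}$, and bounds the short-time piece trivially by $O(\delta)$ (Lemma~\ref{lem:Udelta'}). On $[\delta,\infty)$ one performs the change of variables $y=x-tw$ with Jacobian $t^{d}$ (well-defined since $t\geq\delta>0$; this is the single place where $\bm{m}(\d v)=\bm{m}(v)\d v$ is used), periodizes over $\Z^{d}$, and then integrates by parts \emph{in $t$} against $e^{-\l t}$. The boundary term and the $t$-derivative of $t^{-d}\exp\!\bigl(-t\sigma(u/t)\bigr)\bm{k}(v,u/t)\bm{k}(u/t,w')\bm{m}(u/t)$ produce precisely the combinations $w\cdot\nabla_{w}\sigma$, $w\cdot\nabla_{w}\bm{k}$ and $w\cdot\nabla_{w}\log\bm{m}$ controlled by \eqref{eq:K2}--\eqref{eq:logM}; the outcome is a bound $C|\l|^{-1}(\delta^{-1}+1)$ (Lemma~\ref{lem:Udelta}). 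Optimizing $\delta\sim|\l|^{-1/2}$ yields \eqref{eq:decayP}. There is no Fourier decomposition and no integration by parts in velocity; the assumptions in \ref{hypK} enter because the $t$-derivative, after the change of variables, becomes a directional $w$-derivative.
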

\begin{nb} We point here that the collective compactness properties is a consequence of Assumption \eqref{eq:mue} and is crucial in particular for the existence of an invariant density $\Psi$ of $\left(\mathcal{V}(t)\right)_{t\geq0}$ in Theorem \ref{theo:main-invar} (we refer to \cite{mmktore} for details on that matter).\end{nb} 

It is well-established that the reminder $\bm{S}_{n+1}(t)$ of the Dyson-Phillips series here above admits the following Laplace transform:
\begin{equation}\label{eq:Snl}\begin{split}
\mathscr{S}_{n+1}(\l)f&=\int_{0}^{\infty}\exp\left(\l t\right)\bm{S}_{n+1}(t)f\d t\\
&=\sum_{k=n}^{\infty}\Rs(\l,\A)\left[\K\Rs(\l,\A)\right]^{k+1}  f, \qquad \forall \mathrm{Re}\l >0, f \in \X_{0}.
\end{split}\end{equation}
This allows to express in a natural way $\bm{S}_{n+1}(t)f$ as the inverse Laplace transform of $\mathscr{S}_{n+1}(\e+i\eta)$ $(\eta \in \R,\e>0)$ (see Proposition \ref{propo:Sn+1} in Appendix \ref{appen:DYSON}). 

The crucial point in our analysis is then to extend the representation formula \eqref{eq:Snl} \emph{up to the boundary $\e=0$}. Of course, since $0 \in \mathfrak{S}(\A)$ we cannot expect to define $\Rs(\l,\A+\K)$ for $\mathrm{Re}\l=0$ using \eqref{eq:resAK}. However,  under assumption \eqref{eq:conservative}, it is possible to extend the definition of $\K\Rs(\l,\A)$ to $\l=0$ by observing that
$$\lim_{\l \to 0}\K\Rs(\l,\A)\varphi=\Ms_0\varphi, \qquad \varphi \in \X_0$$
exists with, for almost every $(x,v) \in \T^{d}\times V$,
$$
\Ms_{0}\varphi(x,v)=\int_{V}\bm{k}(v,w)\bm{m}(\d w)\int_{0}^{\infty}\exp\left(-t\sigma(w)\right)\varphi(x-tw,w)\d t\,.$$
Notice indeed that, for $\varphi \in \X_{0}$, $\varphi \geq0$,
\begin{equation}\begin{split}
\label{eq:M0}
\|\Ms_{0}\varphi\|_{0} &= \int_{\T^{d}\times V}\d x\,\bm{m}(\d v)\int_{V}\bm{k}(v,w)\bm{m}(\d w)\int_{0}^{\infty}\exp\left(-t\sigma(w)\right) \varphi(x-tw,w) \d t\\
&=\int_{\T^{d}\times V}\varphi(y,w) \d y\,\bm{m}(\d w)\int_{0}^{\infty}\exp\left(-t\sigma(w)\right)\d t\int_{V}\bm{k}(v,w)\bm{m}(\d v)\\
&=\int_{\T^{d}\times V} \varphi(y,w)\d y \frac{1}{\sigma(w)}\bm{m}(\d w)\int_{V}\bm{k}(v,w)\bm{m}(\d v)=\int_{\T^{d}\times V} \varphi(y,w)\d y\,\bm{m}(\d w)\end{split}\end{equation}
where we used the change of variable $y=x-tw$ in the second identity and \eqref{eq:conservative} for the last one. Thus, $\Ms_{0}$ is stochastic, i.e. mass-preserving on the positive cone of $\X_{0}$:
$$\|\Ms_{0}\varphi\|_{\X_{0}}=\|\varphi\|_{\X_{0}} \qquad \forall \varphi \in \X_{0},\;\varphi \geq0.$$
In particular, 
$$\|\Ms_{0}\|_{\mathscr{B}(\X_{0})} = 1.$$
More generally, it is possible to define, for any $\eta \in \R$
$$\Ms_{i\eta}f:=\lim_{\e\to0^{+}}\K\Rs(\e+i\eta,\A)f, \qquad f \in \X_{0}.$$
The properties of the operator $\Ms_{i\eta}$ are the cornerstone of our analysis which, somehow, culminates with the  following result :
\begin{theo}\label{theo:MainLaplace} {Let Assumptions \ref{hypH} and \ref{hypK} be in force}, Let $f \in \X_{N_{0}}$ be such that
\begin{equation}\label{eq:0mean-main}
\varrho_{f}=\int_{\Omega\times V}f(x,v)\d x \otimes \bm{m}(\d v)=0.\end{equation}
Then, the following holds:
\begin{enumerate}
\item For any $n\geq0$ the limit
$$\lim_{\e\to0^{+}}\mathscr{S}_{n}(\e+i\eta)f,$$
exists in $\mathscr{C}_{0}^{N_{0}-1}(\R,\X_{0})$. Its limit is denoted ${\Upsilon}_{n}(\eta)f$. 
\item For any $n \geq 5 \cdot 2^{N_{0}-1}$, the trace function
$$\eta \in \R \longmapsto \Upsilon_{n}(\eta)f \in \X_{0}$$
and its derivatives of order $k \in \{0,\ldots,N_{0}-1\}$ are integrable, i.e.
$$\int_{\R}\left\|\dfrac{\d^{k}}{\d\eta^{k}}\Upsilon_{n}(\eta)f\right\|_{\X_{0}} \d \eta < \infty\qquad \forall k \in \{0,\ldots,N_{0}-1\}.$$
\end{enumerate}
Consequently, for $n \geq 5\cdot 2^{N_{0}-1}-1$ and $f \in \X_{N_{0}}$ satisfying \eqref{eq:0mean-main}, 
 one has
\begin{equation}\label{eq:SntInt-main} 
\bm{S}_{n+1}(t)f=\lim_{\ell\to\infty}\frac{1}{2\pi} \int_{-\ell}^{\ell}\exp\left(i\eta t\right)\Upsilon_{n+1}(\eta)f\d\eta=\frac{1}{2\pi}\int_{-\infty}^{\infty}\exp\left(i\eta t\right)\Upsilon_{n+1}(\eta)f\d\eta, \qquad \forall t >0 \end{equation}
where the convergence holds in $\X_{0}$ and
\begin{equation}\label{eq:SnDeri-main}
\bm{S}_{n+1}(t)f=\left(-\frac{i}{t}\right)^{N_{0}-1}\frac{1}{2\pi}\int_{-\infty}^{\infty}\exp\left(i\eta t\right)\dfrac{\d^{N_{0}-1}}{\d\eta^{N_{0}-1}}\Upsilon_{n+1}(\eta)f\,\d\eta\end{equation}
holds true for any $t \geq0$ where the convergence of the integral holds in $\X_{0}$.
\end{theo}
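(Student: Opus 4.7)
The starting point is to rewrite, for $\mathrm{Re}\,\l>0$, the Laplace transform \eqref{eq:Snl} in the closed form
\begin{equation*}
\mathscr{S}_{n+1}(\l)f \;=\; \Rs(\l,\A)\,\Ms_{\l}^{\,n+1}\,(I-\Ms_{\l})^{-1}f,
\end{equation*}
thereby concentrating the entire obstruction to extending $\mathscr{S}_{n+1}$ up to the imaginary axis onto the operator $(I-\Ms_{i\eta})^{-1}$. Since $\Ms_{0}$ is stochastic with $\Ms_{0}\Psi=\Psi$, this inverse can at best exist on the codimension-one subspace $\ker\varrho$, which is exactly what the vanishing mean condition \eqref{eq:0mean-main} supplies.

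For the first part of the statement, I would construct $\Upsilon_{n+1}(\eta)f$ as the strong limit of $\mathscr{S}_{n+1}(\e+i\eta)f$ as $\e\downarrow 0$. For $\eta\neq 0$, the collective compactness of $\{\Ms_\l^{\mathsf{q}}:\,0\le\mathrm{Re}\,\l\le 1\}$ provided by Theorem \ref{theo:main-assum}, combined with the irreducibility of $(\mathcal{V}(t))_{t\ge 0}$, rules out $1$ from the peripheral spectrum of $\Ms_{i\eta}^{\mathsf{q}}$, so $(I-\Ms_{i\eta})^{-1}$ is bounded on the whole space by the Fredholm alternative. At $\eta=0$, the same compactness, now combined with the irreducibility of $\K$, makes $1$ a simple isolated eigenvalue of $\Ms_{0}$ with eigenspace $\mathrm{span}(\Psi)$, and the restriction of $I-\Ms_{0}$ to $\ker\varrho$ is invertible. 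The passage to the limit relies on the strong continuity $\e\mapsto\Ms_{\e+i\eta}$ (checked directly from the explicit formula defining $\Ms_\l$) together with a tailored Anselone-type argument that upgrades strong convergence to norm convergence of the inverses, using only the compactness of the iterate $\Ms_\l^{\mathsf{q}}$.

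The regularity $\Upsilon_{n+1}(\cdot)f\in\mathscr{C}_{0}^{N_{0}-1}(\R,\X_{0})$ follows by differentiating the above identity up to $N_{0}-1$ times. Each $\partial_{\eta}$ replaces an $\Rs(i\eta,\A)$ factor by $-i\,\Rs(i\eta,\A)^{2}$, consuming one unit of weight on the $\X_{s}$-scale; the hypothesis $\vartheta_{N_{0}}\in L^{\infty}$, i.e.\ $\K\in\B(\X_{-1},\X_{N_{0}})$, together with $f\in\X_{N_{0}}$, supplies exactly the buffer needed to absorb these $N_{0}-1$ derivations, and the decay to $0$ as $|\eta|\to\infty$ comes from the $1/\sqrt{|\l|}$ estimate \eqref{eq:decayP}. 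For the integrability part, I would take $n\ge 5\cdot 2^{N_{0}-1}$, large enough so that after distributing $k\le N_{0}-1$ derivatives via the Leibniz rule, every resulting summand contains a block $[\Ms_{i\eta}]^{\mathsf{p}}$ with $\mathsf{p}>4$ (the factor $2^{N_{0}-1}$ absorbs the doubling of resolvent factors at each differentiation, and the factor $5$ secures the margin $\mathsf{p}>4$); the bound \eqref{eq:power} then yields an $L^{1}$-majorant on $\{|\eta|>1\}$. The representation \eqref{eq:SntInt-main} is obtained by applying Proposition \ref{propo:Sn+1} at $\e>0$ and letting $\e\downarrow 0$ under dominated convergence, and \eqref{eq:SnDeri-main} follows by integrating by parts $N_{0}-1$ times, the boundary terms vanishing thanks to the $\mathscr{C}_{0}^{N_{0}-1}$ property.

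The critical technical step is the construction of $(I-\Ms_{i\eta})^{-1}$ uniformly in $\eta$ on $\ker\varrho$ together with its strong-limit characterization from the right half-plane: because $\e\mapsto\Ms_{\e+i\eta}$ is only strongly continuous and not continuous in operator norm at $\e=0$, classical spectral perturbation is inapplicable, and one must develop an extension of Anselone's collective compactness machinery tailored to the situation in which it is only the high iterate $\Ms_\l^{\mathsf{q}}$, and not $\Ms_\l$ itself, that is compact.
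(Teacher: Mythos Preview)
Your overall strategy matches the paper's, but you have underestimated the difficulty at $\eta=0$ and overestimated what the Anselone machinery alone can deliver there. The issue is not merely that $I-\Ms_{0}$ fails to be invertible on $\X_{0}$: for $\l\neq 0$ the operator $\Ms_{\l}$ does \emph{not} preserve $\ker\varrho$ (a direct computation gives $\varrho(\Ms_{\l}f)=\int f\,\frac{\sigma}{\l+\sigma}\,\d x\,\bm{m}(\d v)\neq\varrho(f)$), so you cannot simply restrict to that subspace and run collective-compactness arguments there. What the paper does instead is a genuine spectral-perturbation step: it shows (Theorem~\ref{prop:eigenMLH}) that for $|\l|$ small the spectrum of $\Ms_{\l}$ near $1$ consists of a single simple eigenvalue $\mu(\l)$ with $\mu(0)=1$, builds the associated rank-one projection $\mathsf{P}(\l)$, splits
\[
\Rs(1,\Ms_{\l})f=\Rs\bigl(1,\Ms_{\l}(I-\mathsf{P}(\l))\bigr)f+\frac{1}{1-\mu(\l)}\mathsf{P}(\l)f,
\]
and then proves (Lemma~\ref{lem:deriv}) that $\mu'(0)<0$. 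For $f$ with $\mathsf{P}(0)f=0$ (equivalently $\varrho_{f}=0$) the second term becomes $\frac{\l}{1-\mu(\l)}\cdot\frac{\mathsf{P}(\l)f-\mathsf{P}(0)f}{\l}\to -\frac{1}{\mu'(0)}\mathsf{P}'(0)f$ (Proposition~\ref{propo:P0conv}). This L'H\^opital-type cancellation is the missing idea in your sketch; no amount of Anselone bookkeeping substitutes for it, because Anselone's resolvent-convergence theorem requires $1\notin\mathfrak{S}(\Ms_{0})$, which fails.

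For the regularity in $\mathscr{C}_{0}^{N_{0}-1}$, the paper takes a route that avoids differentiating $(I-\Ms_{i\eta})^{-1}$ directly. It writes $\mathscr{S}_{n}(\l)f=\Rs(\l,\A+\K)f-\Rs(\l,\A)f-s_{n-1}(\l)f$ with $s_{n-1}$ a \emph{finite} sum, and for the first term uses the resolvent identity $\tfrac{\d^{k}}{\d\eta^{k}}\Rs(\e+i\eta,\A+\K)f=(-i)^{k}k!\,[\Rs(\e+i\eta,\A+\K)]^{k+1}f$: regularity then reduces to proving that the iterates $[\Rs(\e+i\eta,\A+\K)]^{k+1}f$ converge as $\e\to 0$ (Lemma~\ref{lem:UPS}), which is done by an induction on $k$ that consumes one $\X_{s}$-weight at each step. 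Your plan of differentiating the product $\Rs(i\eta,\A)\Ms_{i\eta}^{\,n+1}(I-\Ms_{i\eta})^{-1}f$ would force you to control $\partial_{\eta}^{j}(I-\Ms_{i\eta})^{-1}$ uniformly near $\eta=0$, which drags the eigenvalue analysis above into every derivative and is considerably heavier than the paper's detour through the full resolvent. The integrability argument and the passage from \eqref{eq:SntInt-main} to \eqref{eq:SnDeri-main} are as you describe.
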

We admit this result for a little while, the whole rest of the paper being devoted to a complete proof of this fundamental Theorem. Let us illustrate right away how to deduce our main result from Theorem \ref{theo:MainLaplace}:

\begin{proof}[Proof of Theorem \ref{theo:maindec}] Let us fix $f \in \X_{N_{0}}$. To prove the result, we can assume without loss of generality that $\varrho_{f}=0$.  Of course, the term $\mathsf{\Theta}_{f}(\cdot)$ is given by
$$
\mathsf{\Theta}_{f}(\eta)=\dfrac{\d^{N_{0}-1}}{\d \eta^{N_{0}-1}}\Upsilon_{n+1}(\eta)f \in  \X_{0}, \qquad \eta \in \R$$
for some suitable choice of $n\in \N$. Recall first that, for any $n \in \N$ and any $t\geq 0$
$$\mathcal{V}(t)f=\sum_{k=0}^{\infty}{U}_{k}(t)f=\sum_{k=0}^{n}{U}_{k}(t)+\bm{S}_{n+1}(t)f$$
where, according to Proposition \ref{prop:Snt},
$$\left\|\sum_{k=0}^{n}{U}_{k}(t)f\right\|_{\X_{0}} \leq C_{n}\left(\frac{\log}{1+t}\right)^{-N_0}, \qquad \forall t\geq 0$$
for some positive constant $C_{n}$ depending on $n$ and $f$ (but not on $t$). Choosing now $n\geq 2^{N_{0}-1}\mathsf{p}$ and using \eqref{eq:SnDeri-main}, one obtains
$$\left\|\mathcal{V}(t)f\right\|_{\X_{0}} \leq  C_{n}(1+t)^{-N_{0}-1}+t^{-N_{0}-1}\mathcal{F}_{n}(t)$$
where
$$\mathcal{F}_{n}(t)=\left\|\frac{1}{2\pi}\int_{-\infty}^{\infty}\exp\left(i\eta t\right)\dfrac{\d^{N_{0}-1}}{\d\eta^{N_{0}-1}}\Upsilon_{n+1}(\eta)f\d\eta\right\|_{\X_{0}}$$
is such that $\lim_{t\to \infty} \mathcal{F}_{n}(t)=0$ according to Riemann-Lebesgue Theorem (recall the mapping $\eta \mapsto \dfrac{\d^{N_{0}-1}}{\d\eta^{N_{0}-1}}\Upsilon_{n+1}(\eta)f \in \X_{0}$ is integrable over $\R$ according to \eqref{eq:SnDeri-main}). This proves the first part of the result. 

Let us now prove the second part of it. According to \eqref{eq:decayP}, one deduces very easily that, for any $\mathsf{p} >4$, there is $C_{\mathsf{p}} >0$ such that
$$\|\Ms_{i\eta}^{\mathsf{p}}\|_{\B(\X_{0})} \leq C_{\mathsf{p}}\,|\eta|^{-\frac{\mathsf{p}}{4}}, \qquad \forall |\eta| >1$$
from which, for $R >1$, 
\begin{equation}\label{eq:decay-power}
\int_{|\eta| >R}\left\| \mathsf{M}_{i\eta} ^{\mathsf{p}}\right\|_{\mathscr{B}(\X_{0})} \d \eta \leq \frac{8C_{\mathsf{p}}}{\mathsf{p}-4}R^{-\frac{\mathsf{p}-4}{4}}=:\tilde{C}(\mathsf{p})R^{-\beta}, \qquad \forall R >1\end{equation}
with $\beta=\frac{\mathsf{p}-4}{4}.$ Since the mapping
$$\mathsf{\Theta}_{f}\::\:\eta \in \R \longmapsto  \dfrac{\d^{N_{0}-1}}{\d\eta^{N_{0}-1}}\Upsilon_{n+1}(\eta)f \in \X_{0}$$
belongs to $\mathscr{C}_{0}(\R,\X_{0})$, it is  uniformly continuous. This allows to define a (minimal) modulus of continuity 
$$\omega_{f}(s):=\sup\left\{\left\|\mathsf{\Theta}_{f}(\eta_{1})-\mathsf{\Theta}_{f}(\eta_{2})\right\|_{\X_{0}}\,;\,\eta_{1},\eta_{2}\in \R,\,|\eta_{1}-\eta_{2}|\leq s\right\},\quad  s\geq 0.$$
The estimate then comes from some standard reasoning about Fourier transform. Namely, introducing the Fourier transform  of the (Bochner integrable) function $\mathsf{\Theta}_{f}$ as 
$$\widehat{\mathsf{\Theta}}_{f}(t)=\int_{\R}\exp(i\eta t)\mathsf{\Theta}_{f}(\eta)\d\eta \in  \X_{0}, \qquad t \geq 0$$
one has then, since $e^{i\pi}=-1=\exp(i\pi t/t),$ $t >0$,
$$\widehat{\mathsf{\Theta}}_{f}(t)=-\int_{\R}\exp\left(i\eta t + i \frac{\pi}{t} t\right)\mathsf{\Theta}_{f}(\eta)\d \eta=-\int_{\R}\exp\left(i y t\right)\mathsf{\Theta}_{f}\left(y-\frac{\pi}{t}\right)\d y$$
which gives, taking the mean of both the expressions of $\widehat{\mathsf{\Theta}}_{f}(t)$,
$$\widehat{\mathsf{\Theta}}_{f}(t)=\frac{1}{2}\int_{\R}\exp\left(i\eta t\right)\left(\mathsf{\Theta}_{f}(\eta)-\mathsf{\Theta}_{f}\left(\eta-\frac{\pi}{t}\right)\right)\d\eta.$$
Consequently, if one assumes that $R > 2\pi$, 
\begin{equation*}
\left\|\widehat{\mathsf{\Theta}}_{f}(t)\right\|_{\X_{0}} \leq \frac{1}{2}\int_{|\eta|\leq R}\left\|\mathsf{\Theta}_{f}(\eta)-\mathsf{\Theta}_{f}\left(\eta-\frac{\pi}{t}\right)\right\|_{\X_{0}}\d\eta 
+\int_{|\eta| >\frac{R}{2}}\left\|\mathsf{\Theta}_{f}(\eta)\right\|_{ \X_{0}}\d\eta\end{equation*}
where we used that $\{\eta \in \R\;;\;|\eta+\frac{\pi}{t}| > R\} \subset \{\eta \in \R\;;\;|\eta| > R-\pi\} \subset \{\eta \in \R\;;\;|\eta| >\frac{R}{2}\}$ since $t\geq1,$ $R-\pi >\frac{R}{2}$. Therefore, using the modulus of continuity $\omega_{f}$ and   \eqref{eq:decay-power}, we deduce that
\begin{equation}\begin{split}\label{eq:Hol}
\left\|\widehat{\mathsf{\Theta}}_{f}(t)\right\|_{\X_{0}}&\leq  R\omega_{f}\left(\frac{\pi}{t}\right)  + \int_{|\eta| >\frac{R}{2}}\left\|\mathsf{\Theta}_{f}(\eta)\right\|_{\X_{0}}\d\eta\\
&\leq  R\omega_{f}\left(\frac{\pi}{t}\right) + 2^{\beta}\tilde{C}(\mathsf{p})R^{-\beta}\|f\|_{\X_{N_{0}}}, \qquad \forall R > 2\pi, \qquad t \geq 1.\end{split}
\end{equation}
Optimising then the parameter $R$, i.e. choosing
$$R=\left(\frac{2^{\beta }\beta\,\tilde{C}(\mathsf{p})\|f\|_{\X_{N_{0}}} }{\omega_{f}\left(\frac{\pi}{t}\right)}\right)^{\frac{1}{\beta+1}}$$
(up to work with $t \geq t_{0} \geq 1$ to ensure that $R >2\pi$), we obtain the desired estimate since $\frac{\beta}{\beta+1}=\frac{\mathsf{p-4}}{\mathsf{p}}$.\end{proof} 


\subsection{Organization of the paper}\label{sec:orga} 
The rest of the paper is devoted to the proof of Theorem \ref{theo:MainLaplace}. More precisely, in Section \ref{sec:Assum} we deduce from Assumptions \ref{hypH} the main collective compactness properties and decay estimates which will play a fundamental role for the proof of Theorem \ref{theo:MainLaplace}. In Section \ref{sec:reguTr}, we established the preliminary results about the regularity of some of the terms appearing in the resolvent $\Rs(\l,\A+\K)$ and establish the existence of some of their limit along the imaginary axis $\l=i\eta$, $\eta \in \R$. In Section \ref{sec:spec}, we especially focus on the spectral properties of $\Ms_{\l}$ and, in a particular way, on its spectral behaviour around $\l=0$. We use, in a crucial way in this part, Anselone's collective compactness theory and the results established in Section \ref{sec:Assum}. Section \ref{sec:trace} establish the existence of the extension of $\Rs(\l,\A+\K)$ to the imaginary axis. In Section \ref{sec:near0}, we finally provide the full proof of Theorem \ref{theo:MainLaplace}. The paper ends with three Appendices containing several of the main technical aspects of the proofs. Namely, Appendix \ref{appen:DYSON} is devoted to the proof of Proposition \ref{prop:Snt}, Appendix \ref{appen:Ml} establishes some of the technical properties of $\Ms_{\l}$ used in Section \ref{sec:spec} and Appendix \ref{sec:functional}
 recalls the main aspects of Anselone's collective compactness theory we use in the paper.

\section{Consequences of Assumptions \ref{hypH} and \ref{hypK}}\label{sec:Assum}

We comment here on the main consequences of our  set of Assumptions on the operator $\K$ given in Assumptions \ref{hypH}.  Namely, the following two Theorems \ref{theo:collective} and \ref{theo:decay-KRA} provides a complete proof of Theorem \ref{theo:main-assum}. 

\subsection{A criterion for collective compactness of some power}
We begin with by showing how  Assumption \ref{hypH}  is the key argument to deduce the collective compactness of some power of $\K\Rs(\l,\A)$ in Theorem \ref{theo:main-assum}.
Our scope is to prove the following
\begin{theo}\label{theo:collective}
Let us assume that $\K\::\:L^{1}(V) \to L^{1}(V)$ is a \emph{weakly compact}   operator and its kernel
satisfies \eqref{eq:conservative} together with \eqref{eq:mue}. {We also assume the measure $\bm{m}$ to satisfy \eqref{eq:hypm}.}
Then, there exists $\mathsf{q} \in \N$ such that the family of operators
$$\left\{\Ms_{\l}^{\mathsf{q}}\;;\;0 \leq \mathrm{Re}\l \leq 1\right\} \subset \B(\X_{0})$$
is collectively compact.
\end{theo}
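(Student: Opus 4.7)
The plan is to combine a splitting on the collision frequency with the Dunford--Pettis-type weak compactness of $\K$ and the averaging effect of iterated transport. I decompose $\K=\K^{(\delta)}+\overline{\K}^{(\delta)}$ as in \eqref{defi:Hepsi}--\eqref{def:Kdel}, giving $\Ms_{\l}=\Ms_{\l}^{(\delta)}+\overline{\Ms}_{\l}^{(\delta)}$ with $\Ms_{\l}^{(\delta)}:=\K^{(\delta)}\Rs(\l,\A)$ and $\overline{\Ms}_{\l}^{(\delta)}:=\overline{\K}^{(\delta)}\Rs(\l,\A)$. A direct Fubini calculation, exploiting the pointwise bound $\sigma(v)\int_{\T^{d}}|\Rs(\l,\A)f(x,v)|\d x\leq\int_{\T^{d}}|f(y,v)|\d y$ together with \eqref{eq:conservative}, shows
\[
\sup_{\mathrm{Re}\l\geq 0}\bigl\|\overline{\Ms}_{\l}^{(\delta)}\bigr\|_{\B(\X_{0})}\leq \mu(\delta):=\sup_{w\in V}\frac{1}{\sigma(w)}\int_{\Sigma_{\delta}}\bm{k}(v,w)\bm{m}(\d v)\xrightarrow[\delta\to 0^{+}]{}0
\]
by \eqref{eq:mue}, while $\sup_{\mathrm{Re}\l\geq 0}\|\Ms_{\l}^{(\delta)}\|_{\B(\X_{0})}\leq 1$. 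Fixing an integer $\mathsf{q}\geq 1$, expanding $(\Ms_{\l}^{(\delta)}+\overline{\Ms}_{\l}^{(\delta)})^{\mathsf{q}}$ and isolating the pure term $[\Ms_{\l}^{(\delta)}]^{\mathsf{q}}$, each of the remaining $2^{\mathsf{q}}-1$ terms carries a factor of $\overline{\Ms}_{\l}^{(\delta)}$, so
\[
\sup_{0\leq\mathrm{Re}\l\leq 1}\bigl\|\Ms_{\l}^{\mathsf{q}}-[\Ms_{\l}^{(\delta)}]^{\mathsf{q}}\bigr\|_{\B(\X_{0})}\leq (2^{\mathsf{q}}-1)\mu(\delta)\xrightarrow[\delta\to 0]{}0.
\]
A standard $\varepsilon$-net argument then shows that collective compactness is inherited from uniform operator-norm approximation by collectively compact families, so the problem reduces to exhibiting some $\mathsf{q}$ for which $\{[\Ms_{\l}^{(\delta)}]^{\mathsf{q}}:0\leq\mathrm{Re}\l\leq 1\}$ is collectively compact for every small enough $\delta>0$.

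For this core step, I would take $\mathsf{q}\geq d+1$ and verify the Kolmogorov--Riesz--Fréchet criterion on the set $E_{\delta}:=\{[\Ms_{\l}^{(\delta)}]^{\mathsf{q}}f:0\leq\mathrm{Re}\l\leq 1,\,\|f\|_{\X_{0}}\leq 1\}\subset L^{1}(\T^{d}\times V)$. Writing out the iterated integral representation, the indicators $\ind_{\Lambda_{\delta}}$ force each of the $\mathsf{q}-1$ intermediate time integrations to carry a uniform exponential decay $e^{-\delta t_{j}}$ independent of $\l$. Equi-integrability and tightness in the outermost velocity variable $v_{0}$ then follow from the weak compactness of $\K$ applied at the outermost step (combined with the contractivity bounds just proved). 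The key equicontinuity in the $x$-variable is provided by a velocity-averaging argument: selecting any $d$ of the $\mathsf{q}$ time variables and performing the change of variables $y=\sum_{i=1}^{d}t_{j_{i}}v_{j_{i}}$ produces an inverse Jacobian $|\det(v_{j_{1}}\,|\,\cdots\,|\,v_{j_{d}})|^{-1}$, and the non-degeneracy hypothesis \eqref{eq:hypm}, applied iteratively, controls the $\bm{m}^{\otimes d}$-measure of the ``bad'' configurations where this determinant is small; on its ``good'' complement, the resulting integral is effectively a convolution of $f$ in $x$ against an integrable kernel, hence translation-continuous in $L^{1}(\T^{d})$.

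The main obstacle will be precisely this velocity-averaging step: the velocity integration domain must be split carefully into a ``good'' region on which the selected $d$-tuple of velocities is well-conditioned and a ``bad'' region whose measure is small by \eqref{eq:hypm}, and the two contributions estimated separately, with all bounds kept uniform in $\l$. This uniformity is guaranteed by the $\l$-free exponential controls $e^{-\delta t_{j}}$ on the inner time integrations and by the weak compactness of $\K$ being an intrinsic property of the operator, independent of $\l$.
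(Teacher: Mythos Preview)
Your approximation step—splitting $\K=\K^{(\delta)}+\overline{\K}^{(\delta)}$ and reducing to collective compactness of $\{[\Ms_{\l}^{(\delta)}]^{\mathsf{q}}\}$ via a uniform operator-norm approximation—is exactly what the paper does (Lemma~\ref{lem:approx}); the paper gets the sharper bound $\mathsf{q}\,\mu(\delta)$ by a telescoping induction instead of your $(2^{\mathsf{q}}-1)\mu(\delta)$, but either suffices.

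Where you diverge is the core step. You propose to verify Kolmogorov--Riesz--Fr\'echet directly on $E_{\delta}$: equi-integrability and tightness in $v$ via the weak compactness of $\K$ (which does transfer to $\K^{(\delta)}$ and lifts from $L^{1}(V)$ to $L^{1}(\T^{d}\times V)$ after a normalisation argument), and $x$-equicontinuity via a velocity-averaging change of variables in $d$ of the time integrals, controlling the bad configurations (small $|\det(v_{j_{1}}|\cdots|v_{j_{d}})|$) through \eqref{eq:hypm}. This is sound in principle, and the $\l$-uniformity is indeed guaranteed by the $e^{-\delta t_{j}}$ decays on the intermediate time variables. But the velocity-averaging step you correctly flag as ``the main obstacle'' is a genuine piece of work: handling the cone geometry, the periodicity, and the uniform $L^{1}$-translation modulus of the resulting kernel is essentially the content of \cite[Theorem~13]{mkjfa}.

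The paper bypasses this entirely. It \emph{imports} the compactness of the Dyson--Phillips iterates $U_{j}(t)$ from \cite{mkjfa}, passes to $U_{j}^{(\delta)}(t)$ by domination, uses $s(\A_{\delta})<0$ to write $[\Rs(\l,\A_{\delta})\K^{(\delta)}]^{N}\Rs(\l,\A_{\delta})$ as an operator-norm convergent Bochner integral of compact operators, and then obtains collective compactness from operator-norm continuity in $\l$ combined with Riemann--Lebesgue decay as $|\mathrm{Im}\l|\to\infty$ (Lemma~\ref{lem:KNCol}). This cleanly separates the compactness mechanism (delegated to the cited result) from the uniformity in $\l$ (handled by the Laplace--Bochner structure). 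Your route is more self-contained but amounts to reproving the imported averaging lemma; the paper's route is shorter given the existing literature.
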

\begin{nb} 
We insist here on the fact that the collective compactness provided by Theorem \ref{theo:collective} is the crucial argument in the proof of an invariant density $\Psi$ in Theorem \ref{theo:main-invar} as obtained in \cite{mmktore}.\end{nb}
The proof of Theorem \ref{theo:collective} resorts on similar results in \cite{mmktore} and on a suitable approximation argument. We use here notations of Section \ref{sec:DP-2}. Namely, for any $\delta >0$, we recall from \eqref{defi:Hepsi}
$$\K^{(\delta)}\::\:\varphi \in \X_{0} \mapsto \K^{(\delta)}\varphi(x,v)=\ind_{\Lambda_{\delta}}(v)\int_{V}\bm{k}(v,w)\varphi(x,w)\bm{m}(\d w) \in \X_{0}.$$
It is clear that $\K^{(\delta)}\in \B(\X_{0})$ with $\K^{(\delta)}\varphi(x,v) \leq \K\varphi(x,v)$ for any $\varphi \in \X_{0}$, $\varphi \geq0.$
One has the following:
\begin{lemme}\label{lem:approx}
 For any $n \in \N$,
$$\sup_{\mathrm{Re}\l \geq0}\left\|\bigg[\K\Rs(\l,\A)\bigg]^{n}-\left[\K^{(\delta)}\Rs(\l,\A)\right]^{n}\right\|_{\B(\X_{0})} \leq n\mu_{\delta}$$
where
$$\mu_{\delta}=\sup_{w\in V}\frac{1}{\sigma(w)}\int_{\Sigma_{\delta}}\bm{k}(v,w)\bm{m}(\d v), \qquad \quad \Sigma_{\delta}=V \setminus \Lambda_{\delta}=\{v \in V\;;\;\sigma(v) \leq \delta\}.$$
\end{lemme}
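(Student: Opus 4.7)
The plan is to reduce the estimate to the case $n=1$ via a standard telescoping identity, which requires two ingredients: uniform contractivity of both $\K\Rs(\l,\A)$ and $\K^{(\delta)}\Rs(\l,\A)$ on $\X_{0}$ for $\mathrm{Re}\,\l\geq 0$, and the single-step estimate $\|(\K-\K^{(\delta)})\Rs(\l,\A)\|_{\B(\X_{0})}\leq \mu_{\delta}$.

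First I would record that $\|\K\Rs(\l,\A)\|_{\B(\X_{0})}\leq 1$ for every $\l$ with $\mathrm{Re}\,\l\geq 0$. For $\varphi\in \X_{0}$ with $\varphi\geq 0$ the very computation carried out at \eqref{eq:M0} applies verbatim: writing $\Rs(\l,\A)\varphi$ as an integral in time, using $|e^{-\l t}|\leq 1$, swapping integrations, and converting $\int_{V}\bm{k}(v,w)\bm{m}(\d v)$ into $\sigma(w)$ via \eqref{eq:conservative}, the $\sigma(w)$ factor cancels the $1/\sigma(w)$ produced by the time integral $\int_{0}^{\infty}e^{-t\sigma(w)}\d t$. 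The case of arbitrary $\varphi\in\X_{0}$ follows by splitting into positive and negative parts. Since the kernel of $\K^{(\delta)}$ is nonnegative and pointwise dominated by that of $\K$, the same bound transfers automatically to $\K^{(\delta)}\Rs(\l,\A)$.

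For the base case $n=1$, one observes that $\K-\K^{(\delta)}$ is the integral operator with kernel $\ind_{\Sigma_{\delta}}(v)\bm{k}(v,w)$; running the same manipulation as above yields, for $\varphi\geq 0$,
$$\|(\K-\K^{(\delta)})\Rs(\l,\A)\varphi\|_{\X_{0}}\leq \int_{V}\frac{1}{\sigma(w)}\left(\int_{\Sigma_{\delta}}\bm{k}(v,w)\bm{m}(\d v)\right)\|\varphi(\cdot,w)\|_{L^{1}(\T^{d})}\bm{m}(\d w),$$
and the parenthesised term is majorised by $\mu_{\delta}\sigma(w)$ by the very definition of $\mu_{\delta}$, which gives the $n=1$ estimate. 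Whenever $\sigma(w)=0$, the conservation relation \eqref{eq:conservative} forces $\bm{k}(\cdot,w)=0$ for $\bm{m}$-a.e. $v$, so the formally indeterminate terms vanish.

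Finally, for general $n$, set $A_{\l}=\K\Rs(\l,\A)$ and $B_{\l}=\K^{(\delta)}\Rs(\l,\A)$, so that $A_{\l}-B_{\l}=(\K-\K^{(\delta)})\Rs(\l,\A)$, and use the telescoping identity
$$A_{\l}^{n}-B_{\l}^{n}=\sum_{k=0}^{n-1}A_{\l}^{n-1-k}(A_{\l}-B_{\l})B_{\l}^{k}.$$
Combining the three bounds $\|A_{\l}\|_{\B(\X_{0})}\leq 1$, $\|B_{\l}\|_{\B(\X_{0})}\leq 1$ and $\|A_{\l}-B_{\l}\|_{\B(\X_{0})}\leq \mu_{\delta}$ delivers $\|A_{\l}^{n}-B_{\l}^{n}\|_{\B(\X_{0})}\leq n\mu_{\delta}$ uniformly in $\l$ with $\mathrm{Re}\,\l\geq 0$, as desired. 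The only real subtlety is the careful handling of the degenerate set $\{\sigma=0\}$ inside the Fubini manipulations; once that is in place, the argument is essentially routine. No hypothesis beyond \eqref{eq:conservative} and the defining inequality of $\mu_{\delta}$ is used here; in particular the weak compactness of $\K$, the integrability condition \eqref{eq:varthetaN0}, and \eqref{eq:mue} itself are not invoked at this stage.
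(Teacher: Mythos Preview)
Your proof is correct and follows essentially the same route as the paper: the paper argues by induction on $n$ using the decomposition $A_{\l}^{n+1}-B_{\l}^{n+1}=(A_{\l}-B_{\l})A_{\l}^{n}+B_{\l}(A_{\l}^{n}-B_{\l}^{n})$, which is just the recursive form of your telescoping sum, and relies on exactly the same three ingredients ($\|A_{\l}\|\leq 1$, $\|B_{\l}\|\leq 1$, and $\|A_{\l}-B_{\l}\|\leq\mu_{\delta}$). The only cosmetic difference is that the paper phrases the base case via the factorisation $\|(\K-\K^{(\delta)})\Rs(\l,\A)\|_{\B(\X_{0})}\leq\|\K-\K^{(\delta)}\|_{\B(\X_{-1},\X_{0})}=\mu_{\delta}$ rather than your direct kernel computation.
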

\begin{proof} The proof is made by induction over $n \in \N$. Observing that $\K-\K^{(\delta)}$ is an integral operator of the form
$$(\K-\K^{(\delta)})\varphi(x,v)=\ind_{\Sigma_{\delta}}(v)\int_{V}\bm{k}(v,w)\varphi(x,w)\d\bm{m}(\d w)$$
one already saw that 
$$\|\K\Rs(\l,\A)-\K^{(\delta)}\Rs(\l,\A)\|_{\B(\X_{0})}=\|(\K-\K^{(\delta)})\Rs(\l,\A)\|_{\B(\X_{0})} \leq \|\K-\K^{(\delta)}\|_{\B(\X_{-1},\X_{0})}.$$
Since
$$\|\K-\K^{(\delta)}\|_{\B(\X_{-1},\X_{0})}=\sup_{w \in V}\frac{1}{\sigma(w)}\int_{V}\ind_{\Sigma_{\delta}}(v)\bm{k}(v,w)\bm{m}(\d v)=\mu_{\delta}$$
this proves the result for $n=1$. Let us assume the result to be true for some $n\geq1.$ One has
\begin{multline*}
\bigg[\K\Rs(\l,\A)\bigg]^{n+1}-\left[\K^{(\delta)}\Rs(\l,\A)\right]^{n+1}=\left(\K-\K^{(\delta)}\right)\Rs(\l,\A)\left[\K\Rs(\l,\A)\right]^{n} \\
+  \K^{(\delta)}\Rs(\l,\A)\left(\left[\K\Rs(\l,\A)\right]^{n}-\left[\K^{(\delta)}\Rs(\l,\A)\right]^{n}\right).\end{multline*}
Observing that 
$$\left\| \left[\K^{(\delta)}\Rs(\l,\A)\right]^{n}\right\|_{\B(\X_{0})} \leq \left\| \left[\K\Rs(\l,\A)\right]^{n}\right\|_{\B(\X_{0})} \leq \|\Ms_{0}^{n}\|_{\B(\X_{0})} \leq 1,$$
for any $\mathrm{Re}\l \geq 0,$ $n \in\N$, one deduces that
\begin{multline*}
\left\|\left[\K\Rs(\l,\A)\right]^{n+1}-\left[\K^{(\delta)}\Rs(\l,\A)\right]^{n+1}\right\|_{\B(\X_{0})} \leq \left\|\left(\K-\K^{(\delta)}\right)\Rs(\l,\A)\right\|_{\B(\X_{0})} \\
+ \left\|\bigg[\K\Rs(\l,\A)\bigg]^{n}-\left[\K^{(\delta)}\Rs(\l,\A)\right]^{n}\right\|_{\B(\X_{0})}.\end{multline*}
Using the induction hypothesis, one sees that
$$\left\|\bigg[\K\Rs(\l,\A)\bigg]^{n+1}-\left[\K^{(\delta)}\Rs(\l,\A)\right]^{n+1}\right\|_{\B(\X_{0})} \leq \mu_{\delta}+ n\mu_{\delta}=(n+1)\mu_{\delta}$$
which proves the result.
\end{proof}
We now recall some result which is somehow proven in \cite[Theorem 18]{mmktore} (but not stated as below):
\begin{lemme}\label{lem:KNCol} {Let us assume that $\K\::\:L^{1}(V) \to L^{1}(V)$ is a \emph{weakly compact}   operator and its kernel
satisfies \eqref{eq:conservative} together with \eqref{eq:mue} while the measure $\bm{m}$ satisfies \eqref{eq:hypm}.} There exists $N$ large enough (depending only on $\bm{m}$) such that, for any $\delta >0$,
$$\left\{\left[\Rs(\l,\A)\K^{(\delta)}\right]^{N}\;;\;\mathrm{Re}\l \in [0,1]\right\} \subset \B(\X_{0})$$
is collectively compact
\end{lemme}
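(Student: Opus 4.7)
The plan is to follow the argument of \cite[Theorem 18]{mmktore} verbatim, replacing $\K$ by $\K^{(\delta)}=\ind_{\Lambda_\delta}\K$. Two properties of $\K$ are used in that proof: its weak compactness as an operator on $L^1(V)$ (Assumption \ref{hypH}), and the non-degeneracy \eqref{eq:hypm} of the measure $\bm{m}$. The former is inherited by $\K^{(\delta)}$ because multiplication by the bounded function $\ind_{\Lambda_\delta}$ preserves weak compactness in $L^1$; the latter is a property of $\bm{m}$ alone. Crucially, the argument in \cite{mmktore} never invokes a pointwise lower bound on $\sigma$, so the substitution is transparent and the threshold $N$ is inherited with the same $\delta$-independent value.

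First I would decompose $f\in\X_{0}$ into Fourier series in the spatial variable: $f(x,v)=\sum_{p\in\Z^{d}}\widehat{f}(p,v)\,e^{2i\pi p\cdot x}$. The resolvent $\Rs(\l,\A)$ then acts diagonally on each Fourier mode as the velocity multiplier $(\l+2i\pi v\cdot p+\sigma(v))^{-1}$. Consequently $[\Rs(\l,\A)\K^{(\delta)}]^{N}$ is block-diagonal in $p$ and, on each block, reduces to an explicit integral operator on $L^{1}(V)$ built from $N$ applications of the kernel $\ind_{\Lambda_\delta}(v)\bm{k}(v,w)$ interleaved with resolvent multipliers. Collective compactness in $\X_{0}$, uniform over $\l$ with $\mathrm{Re}\,\l\in[0,1]$, then amounts to two uniform-in-$\l$ properties: equi-integrability in $v$ on each Fourier mode, and tightness in the Fourier variable $p\in\Z^{d}$.

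Equi-integrability follows from the weak compactness of $\K^{(\delta)}$ via Dunford--Pettis, propagated through the resolvent multipliers (which are pointwise dominated by $\sigma^{-1}$, in combination with $\bm{k}$ producing the $\vartheta_{s}$ functions already controlled by Assumption \ref{hypH}). Tightness in $p$ is where the non-degeneracy \eqref{eq:hypm} enters decisively: it yields a velocity-averaging estimate of the type
$$\sup_{\mathrm{Re}\,\l\in[0,1]}\int_{S}\frac{\bm{m}(\d v)}{|\l+2i\pi v\cdot p+\sigma(v)|}\;\leq\;C_{S}\,\big(1+|p|\big)^{-\alpha/2}$$
on every bounded set $S\subset V$. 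Iterating this bound over the $N$ occurrences of $\Rs(\l,\A)$ in $[\Rs(\l,\A)\K^{(\delta)}]^{N}$ yields summability in $p\in\Z^{d}$ once $N$ is chosen large enough in terms of $\alpha$ and the dimension $d$; since the estimate is independent of $\delta$, so is the resulting tightness.

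The main obstacle is the joint uniformity in $\delta>0$ and in $\l$ on the closed strip. The only $\delta$-dependent ingredient is the projector $\ind_{\Lambda_\delta}$, which merely restricts the support; every quantitative estimate above refers to $\bm{k}$ and $\bm{m}$ and is therefore $\delta$-uniform. The choice of $N$ is dictated by the need to absorb simultaneously the $p$-decay produced by velocity averaging and the polynomial blow-up of the resolvent multipliers near $\sigma=0$, which yields $N$ depending on the exponent $\alpha$ of \eqref{eq:hypm} --- hence only on $\bm{m}$ --- as claimed.
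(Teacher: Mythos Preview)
Your route and the paper's are genuinely different, and your claim that ``the argument never invokes a pointwise lower bound on $\sigma$'' does not match what the paper actually does. The paper's proof is time-domain, not Fourier: it first invokes \cite[Theorem 13]{mkjfa} (this is where \eqref{eq:hypm} enters) to obtain an integer $N_{0}$ such that the Dyson--Phillips iterate $U_{j}(t)$ is compact for every $j\geq N_{0}$. By domination $U_{j}^{(\delta)}(t)\leq U_{j}(t)$ the truncated iterates are weakly compact, hence compact for $j\geq 2N_{0}+1$, and $t\mapsto U_{j}^{(\delta)}(t)$ is norm-continuous for one further index. The crucial step is then that on $\Lambda_{\delta}$ the collision frequency is bounded below, so $s(\A_{\delta})<0$; this --- and only this --- makes the Bochner integral $\int_{0}^{\infty}e^{-\lambda t}U_{j}^{(\delta)}(t)\,\d t=[\Rs(\l,\A_{\delta})\K^{(\delta)}]^{j}\Rs(\l,\A_{\delta})$ converge in operator norm on the closed half-plane $\mathrm{Re}\,\l\geq 0$. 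Riemann--Lebesgue then gives $\sup_{\mathrm{Re}\l\in[0,1]}\|[\Rs(\l,\A_{\delta})\K^{(\delta)}]^{N_{2}+1}\|\to 0$ as $|\mathrm{Im}\,\l|\to\infty$, and collective compactness follows. The threshold $N=N_{2}+1$ depends only on $N_{0}$, hence only on $\bm{m}$ as claimed; the lower bound $\sigma\geq\delta$ is essential to the argument but does not affect the value of $N$.

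Your Fourier-mode approach is reasonable in outline, but the velocity-averaging estimate you display is not quite right: hypothesis \eqref{eq:hypm} concerns the \emph{product} measure $\bm{m}\otimes\bm{m}$ and the set $\{|(v-w)\cdot\nu|<\varepsilon\}$, so it yields decay for a \emph{double} velocity integral (schematically for $\K\Rs(\l,\A)\K$-type blocks), not for the single integral $\int_{S}|\l+2i\pi v\cdot p+\sigma(v)|^{-1}\bm{m}(\d v)$ that you write. In particular your displayed bound fails already at $\l=0$, $p=0$ unless $\sigma^{-1}\in L^{1}(S,\bm{m})$, which is nowhere assumed. The repair is to work with pairs of resolvent factors sandwiching a $\K^{(\delta)}$, and then the support restriction to $\Lambda_{\delta}$ (i.e.\ the very lower bound you dismiss) is what makes each factor individually bounded while the pair averaging furnishes the $|p|$-decay. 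So the substitution $\K\to\K^{(\delta)}$ is not ``transparent'' here either: the indicator $\ind_{\Lambda_{\delta}}$ is precisely what allows the resolvent to be evaluated on the boundary $\mathrm{Re}\,\l=0$, in your approach as much as in the paper's.
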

\begin{proof} {Recall the definition of the Dyson-Phillips \eqref{eq:DysPhil}--\eqref{eq:UnK}. Since the measure $\bm{m}(\d v)=\bm{m}(v)\d v$ satisfies \eqref{eq:hypm}, one can apply \cite[Theorem 13]{mkjfa} to assert that there exists $N_{0} \in \N$  such that $U_{j}(t)$ is compact for any $j \geq N_{0}$ and any $t \geq 0$.
Let us now consider $\delta >0$ and recall that $\Lambda_{\delta}=\{v \in V\;;\sigma(v) \geq \delta\}$ and  $\K^{(\delta)}=\ind_{\Lambda_{\delta}}\K$. One observes that 
$\Rs(\l,\A)\ind_{\Lambda_{\delta}}=\Rs(\l,\A_{\delta})$
where $\A_{\delta}=\A\ind_{\Lambda_{\delta}}$ is identified with the advection operator $\A$ on $\Lambda_{\delta}$. We also define the sequence of Dyson-Phillips iterated $\left(U_{j}^{(\delta)}(t)\right)_{j}$ associated to $\A_{\delta}=\A\ind_{\Lambda_{\delta}}$ and $\K^{(\delta)}$, i.e.
$$U_{j}^{(\delta)}(t)=\int_{0}^{t}U_{0}^{(\delta)}(t-s)\K^{(\delta)}U_{j-1}^{(\delta)}(s)\d s, \qquad j \geq1$$
and $\left(U_{0}^{(\delta)}(t)\right)_{t\geq0}$ is the $C_{0}$-semigroup in $\X_{0}$ generated by $\A_{\delta}=\A\ind_{\Lambda_{\delta}}$. One observes that
$$U_{j}^{(\delta)}(t) \leq U_{j}(t), \qquad \forall t\geq0, \quad j \in \N$$
Therefore, by a domination argument, $U_{j}^{(\delta)}(t)$ is weakly-compact for any $j \geq N_{0}$ and, consequently, $U_{j}^{(\delta)}(t)$ is compact for any $j \geq N_{1}=2N_{0}+1$ (see \cite[Theorem 2.6 and Corollary 2.1, p. 16]{MMK}).  It follows then that the mapping 
$$t \geq0 \mapsto U_{j}^{(\delta)}(t) \in \B(\X_{0})$$
is continuous (in operator norm) for $j \geq N_{2}=N_{1}+1$ (see \cite[Corollary 2.2, p. 10]{MMK}).}

Now, observe that, on $\Lambda_{\delta}$ the collision frequency is bounded from below, one has 
$$s(\A_{\delta}) <0.$$  Then, since 
$$\int_{0}^{\infty}\exp\left(-\l t\right)U_{j}^{(\delta)}(t)\d t=\left[\Rs(\l,\A_{\delta})\K^{(\delta)}\right]^{j}\Rs(\l,\A_{\delta})$$
where the integral is converging in operator norm for any $\mathrm{Re} \l \geq0$, we deduce that
$$\left[\Rs(\l,\A_{\delta})\K^{(\delta)}\right]^{N_{2}+1} \quad \text{ is compact for any } \mathrm{Re}\l >s(\A_{\delta}).$$
Since, for any $\mathrm{Re}\l > s(\A_{\delta})$, the mapping
$$t \geq 0 \mapsto \exp\left(-i\,t\,\mathrm{Im}\l\right)\left[\exp\left(-t\,\mathrm{Re}\l \right)U_{j}^{(\delta)}(t)\right] \d t \in \mathscr{B}(\X_{0})$$
is \emph{Bochner integrable}, one has
$$\left[\Rs(\l,\A_{\delta})\K^{(\delta)}\right]^{j}\Rs(\l,\A_{\delta})=\int_{0}^{\infty}\exp\left(-\l t\right)U_{j}^{(\delta)}(t)\d t$$
is compact for $j \geq N_{2}$ and $\mathrm{Re}\l > s(\A_{\delta})$ and, by Riemann-Lebesgue Theorem, 
$$\lim_{|\mathrm{Im}\l | \to \infty}\left\|\int_{\R}\exp\left(-i\,t\,\mathrm{Im}\l \right)\left[\exp\left(-t\mathrm{Re}\l \right)U_{j}^{(\delta)}(t)\right] \d t\right\|_{\B(\X_{0})}=0$$
according to Riemann-Lebesgue Theorem. It follows that
$$\lim_{|\mathrm{Im}\l | \to \infty}\sup_{\mathrm{Re}\l \in [0,1]}\left\|\int_{\R}\exp\left(-i\,t\,\mathrm{Im}\l \right)\left[\exp\left(-t\mathrm{Re}\l \right)U_{j}^{(\delta)}(t)\right] \d t\right\|_{\B(\X_{0})}=0.$$
Indeed, $$\left\{\exp\left(-\bullet\mathrm{Re}\l \right)U_{j}^{(\delta)}(\bullet)\;;\;\mathrm{Re}\l \in [0,1]\right\}$$
is a compact subset of $L^{1}(\R^{+}\,,\,\B(\X_{0}))$ since the mapping 
$$\mathrm{Re}\l \in [0,1] \mapsto \exp\left(-\bullet \mathrm{Re}\l \right)U_{j}^{(\delta)}(\bullet) \in L^{1}(\R^{+},\B(\X_{0}))$$ is continuous. Thus, 
$$\lim_{|\mathrm{Im}\l | \to \infty}\sup_{\mathrm{Re}\l \in [0,1]}\left\|\left[\Rs(\l,\A_{\delta})\K^{(\delta)}\right]^{N_{2}+1}\right\|_{\B(\X_{0})}=0, \qquad \forall \delta >0$$
and finally 
$$\left\{\left[\Rs(\l,\A_{\delta})\K^{(\delta)}\right]^{N_{2}+1}, \quad 0 \leq \mathrm{Re}\l \leq 1\right\}$$
is collectively compact.  One gets the conclusion since $\Rs(\l,\A)\K^{(\delta)}=\Rs(\l,\A_{\delta})\K^{(\delta)}$.
\end{proof}
\begin{proof}[Proof of Theorem \ref{theo:collective}] The previous Lemma \ref{lem:approx} shows that, for any $n \in \N$, $\left[\K\Rs(\l,\A)\right]^{n}$ can be approximated in the operator norm by $\left[\K^{(\delta)}\Rs(\l,\A)\right]^{n}$ as $\delta \to 0$ \emph{uniformly with respect to} $\l$ in the set
$$\{\l \in \C\;;\;\mathrm{Re}\l \in [0,1]\}.$$ Therefore, to prove Theorem \ref{theo:collective}, it is enough to prove that there exists $m \in \N$ such that, for \textit{any fixed} $\delta \in [0,1]$, the set of operators
$$\left\{\left[\K^{(\delta)}\Rs(\l,\A)\right]^{m}\;;\;0 \leq \mathrm{Re}\l \leq1\right\} \subset \B(\X_{0})$$
is collectively compact. One notices now that, for any $m \in\N$, $m \geq 2$, 
\begin{equation}\label{eq:KKRR}
\left[\K^{(\delta)}\Rs(\l,\A)\right]^{m}=\K^{(\delta)}\left[\Rs(\l,\A)\K^{(\delta)}\right]^{m-2}\Rs(\l,\A)\K^{(\delta)}\Rs(\l,\A), \qquad \l \in \overline{\C}_{+}.\end{equation}
We deduce from the previous Lemma that we can find $m$ large enough (\emph{independent of $\delta$}) so that 
$$\left\{\left[\Rs(\l,\A )\K^{(\delta)}\right]^{m-2}\;;\;\mathrm{Re}\l \in [0,1]\right\} \subset \B(\X_{0})$$
is collectively compact.  
Since
$$\left|\Rs(\l,\A)\K^{(\delta)}\Rs(\l,\A)\varphi\right| \leq \Rs(0,\A_{\delta})\Ms_{0}|\varphi|$$
for any $\varphi \in \X_{0}$ with $\Rs(0,\A_{\delta})$ and $\Ms_{0}$ both bounded. One deduces that
$$\left\{\Rs(\l,\A)\K^{(\delta)}\Rs(\l,\A)\varphi\;;\;\|\varphi\|_{\X_{0}} \leq 1\;\;;\;\;0 \leq \mathrm{Re}\l \leq 1\right\}$$
is a bounded subset of $\X_{0}.$ From the collective compactness of $\left\{\left[\Rs(\l,\A_{\delta})\K^{(\delta)}\right]^{m-2}\right\}_{\mathrm{Re}\l \in [0,1]}$, one deduces that
$$\left\{\left[\Rs(\l,\A)\K^{(\delta)}\right]^{m-2}\Rs(\l,\A)\K^{(\delta)}\Rs(\l,\A)\varphi\;\;;\;\|\varphi\|_{\X_{0}}\leq 1\;;\;0 \leq \mathrm{Re}\l \leq 1\right\}$$
is included in some compact of $\X_{0}$.  This ends the proof since $\K^{(\delta)} \in \B(\X_{0})$.  
 \end{proof}

\subsection{Decay of $\K\Rs(\l,\A)\K$} The scope here is to prove the following result
\begin{theo}\label{theo:decay-KRA}
{If $\K$ satisfies Assumption \ref{hypK}}, there exists $C >0$ such that
\begin{equation}\label{eq:decayKRA}
\left\|\left[\Rs(\l,\A)\K\right]^{2}\right\|_{\B(\X_{0})}+\left\|\left[\K\Rs(\l,\A)\right]^{2}\right\|_{\B(\X_{0})} \leq \frac{C}{\sqrt{|\l|}}, \qquad \qquad \forall \l \in \overline{\C}_{+}, \quad |\l|\geq 1.\end{equation}
\end{theo}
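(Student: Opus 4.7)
The plan is to prove the estimate for $[\K\Rs(\l,\A)]^{2}$; the bound on $[\Rs(\l,\A)\K]^{2}$ will follow by a symmetric computation that interchanges the roles of \eqref{eq:K2} and \eqref{eq:K3}. Two elementary ingredients are available: the sub-stochastic bound $\|[\K\Rs(\l,\A)]^{2}\|_{\B(\X_{0})}\leq 1$ (free from $\Ms_{\l}$ being sub-stochastic on $\overline{\C}_{+}$), and the velocity regularity of $\bm{k}$ and $\bm{m}$ encoded in Assumption~\ref{hypK}. The strategy is to combine these through an integration by parts in the velocity variables on the region where $\sigma$ is non-degenerate, then to interpolate with the trivial bound via a splitting on $V$.

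First I unfold the operator as an iterated integral
\begin{equation*}
[\K\Rs(\l,\A)]^{2}f(x,v)=\!\!\int_{V\times V}\!\!\bm{k}(v,v_{1})\bm{k}(v_{1},w)\!\!\int_{0}^{\infty}\!\!\!\int_{0}^{\infty}\!\!e^{-(\l+\sigma(v_{1}))t-(\l+\sigma(w))s}f(x-tv_{1}-sw,w)\,\d t\,\d s\,\bm{m}(\d v_{1})\bm{m}(\d w),
\end{equation*}
which exposes the oscillatory structure in $\l$ on the imaginary axis. An integration by parts in $w$ against $\bm{k}(v_{1},w)\bm{m}(w)$ uses the identity $(w\cdot\nabla_{w})[e^{-(\l+\sigma(w))s}]=-s(w\cdot\nabla_{w}\sigma(w))e^{-(\l+\sigma(w))s}$, which, combined with the complementary $s$-integration by parts $\partial_{s}e^{-(\l+\sigma(w))s}=-(\l+\sigma(w))e^{-(\l+\sigma(w))s}$, rewrites the $s$-$w$ integral with an extra $(\l+\sigma(w))^{-1}$ factor and transfers a single $w$-derivative onto $\bm{k}(v_{1},w)\bm{m}(w)$. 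Assumptions \eqref{eq:K2} and \eqref{eq:logM} then control the resulting integrand in the $L^{1}_{v_{1}}$-norm by a multiple of $\sigma(w)$, which cancels the newly created $\sigma(w)$ denominator.

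To extract the announced $|\l|^{-1/2}$ rate rather than a full $|\l|^{-1}$, I split $V=\Sigma_{\delta}\cup\Lambda_{\delta}$ and decompose $\K=\K^{(\delta)}+\overline{\K}^{(\delta)}$ as in \eqref{defi:Hepsi}--\eqref{def:Kdel}. By Lemma~\ref{lem:sizeDelta} with $n=1$, $\|\overline{\K}^{(\delta)}\Rs(\l,\A)\|_{\B(\X_{0})}\leq\delta\|\vartheta_{1}\|_{\infty}$, so expanding $[\K\Rs(\l,\A)]^{2}$ bi-linearly, all cross terms containing at least one $\overline{\K}^{(\delta)}\Rs(\l,\A)$ are of order $\delta$. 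On the non-degenerate piece $[\K^{(\delta)}\Rs(\l,\A)]^{2}$, on which $\sigma\geq\delta$ and hence $|\l+\sigma(w)|\geq\max(\delta,|\mathrm{Im}\,\l|)$, the integration by parts described above yields a bound of order $\delta^{-1}|\l|^{-1}$. The combined inequality $\|[\K\Rs(\l,\A)]^{2}\|_{\B(\X_{0})}\leq C(\delta+\delta^{-1}|\l|^{-1})$ optimised at $\delta=|\l|^{-1/2}$ produces \eqref{eq:decayKRA}. For $[\Rs(\l,\A)\K]^{2}$ the same strategy applies with the integration by parts performed in the outer velocity slot $v$, whence the need for the companion assumption \eqref{eq:K3}.

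The main technical obstacle lies in the velocity integration by parts itself: one must justify the absence of boundary contributions (at $|w|\to\infty$, and at $\partial V$ if $V$ is bounded), verify that all velocity derivatives land on the regular factors $\bm{k}$ and $\bm{m}$ rather than on $f$ (whose $v$-regularity is not assumed, so the data has to be manoeuvred around during the IBP), and track the $\delta$-dependence of the non-degenerate bound so that it is no worse than $\delta^{-1}$ --- anything larger would degrade the final exponent below $1/2$. This last point is particularly delicate because each $s$-integration costs a factor $(\l+\sigma(w))^{-1}\lesssim\delta^{-1}$, so the integration by parts must be performed at exactly the right order to avoid an accumulating loss in $\delta$.
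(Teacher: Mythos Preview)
Your proposal differs from the paper's argument in two structural ways, and one of them hides a genuine gap.

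The paper does not split $V$ at the threshold $\sigma=\delta$; it splits the \emph{time} integral in $\Rs(\l,\A)$ at $t=\delta$, writing $\K\Rs(\l,\A)\K=\mathcal{U}_{\delta}(\l)+\mathcal{U}'_{\delta}(\l)$ according to $t\in[\delta,\infty)$ versus $t\in(0,\delta)$, and it works with the sandwich $\K\Rs(\l,\A)\K$ in $\B(\X_{-1},\X_{s})$ rather than with $[\K\Rs(\l,\A)]^{2}$ directly. On the large-time piece it performs the change of variable $y=x-tw$, $\d y=t^{d}\,\d w$, in the \emph{velocity} integral (this is the unique place where the Lebesgue density $\bm{m}(\d v)=\bm{m}(v)\,\d v$ is used). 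After this substitution the data appears as $f(y,w')$, independent of both $t$ and the former variable $w$. Only then does the paper integrate by parts, and it does so in $t$ against $e^{-\l t}$: the gain is a clean factor $\l^{-1}$, and by the chain rule the $t$-derivative of the kernel evaluated at $w=(x-y-k)/t$ produces exactly the factors $w\cdot\nabla_{w}$ acting on $\bm{k}$, $\sigma$ and $\bm{m}$, which is where Assumption~\ref{hypK} enters. The time cutoff $t\geq\delta$ is what keeps the Jacobian $t^{-d}$ finite, controls the term $d/t$ and the boundary contribution at $t=\delta$, and yields the bound $C(\delta^{-1}+1)|\l|^{-1}$ for $\mathcal{U}_{\delta}$; optimising $\delta$ gives $|\l|^{-1/2}$.

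In your scheme, integrating by parts directly in $w$ (or in $s$) against the formula for $[\K\Rs(\l,\A)]^{2}f$ necessarily hits the data: $f(x-tv_{1}-sw,w)$ depends on $w$ both through the spatial shift \emph{and} through its second argument, so $w\cdot\nabla_{w}$ lands on $f$ itself (for which no velocity regularity is available), while an $s$-integration by parts produces $w\cdot\nabla_{x}f$, equally uncontrolled in $\X_{0}$. You flag this as a technical obstacle and say the data ``has to be manoeuvred around'', but the manoeuvre is precisely the missing idea: the change of variable $w\mapsto y=x-tw$ --- performed on the sandwich $\K\Rs(\l,\A)\K$, so that the innermost $\K$ has already decoupled the velocity argument of $f$ from $w$ --- is what converts the oscillatory integral into a form where the integration by parts (in $t$, not in $w$) never touches $f$. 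Without an analogue of this step your IBP does not close, and the combination ``$w$-IBP on $e^{-(\l+\sigma(w))s}$ plus $s$-IBP'' that you sketch does not produce a $\l^{-1}$ factor without differentiating the data.
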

We will divide the proof in several lemmas which contain  the crucial estimates of our analysis. We begin with the following easy consequences of Assumption \ref{hypK}
\begin{lemme}\label{lem:K2kk} {Let $\K$ satisfy Assumption \ref{hypK} and introduce} 
$$\bm{K}_{2}(v,w',w)=\bm{k}(v,w)\bm{k}(w,w')\bm{m}(w), \qquad \forall v,w,w'\in V^{3}.$$
Then,  there exists $C >0$ such that
\begin{multline*}\int_{\R^{d}}\sigma^{-1}(w) \d w\int_{\R^{d}}\left|w\cdot \nabla_{w}\bm{K}_{2}\left(v,w',w\right)\right|\max\left(1,\sigma^{-1}(v)\right)\bm{m}(\d v)\\
 \leq  C\,\sigma(w')\,,\qquad \forall w'\in V,\end{multline*}
Moreover,
$$\left|w \cdot \nabla_{w} \sigma(w)\right| \leq C_{1}\sigma(w), \qquad \forall w \in V.$$\end{lemme}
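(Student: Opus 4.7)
\medskip

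\noindent\textbf{Proof proposal.} The plan is to treat the two assertions separately, starting with the easier second one. Since $\sigma(w)=\int_{V}\bm{k}(v,w)\bm{m}(v)\d v$, differentiating under the integral sign (the regularity being ensured by \eqref{eq:K2}) gives
\[
w\cdot\nabla_{w}\sigma(w)=\int_{V}\bigl(w\cdot\nabla_{w}\bm{k}(v,w)\bigr)\bm{m}(v)\d v.
\]
Using $\max(1,\sigma^{-1}(v))\geq 1$ to insert the weight and then applying \eqref{eq:K2} yields
\[
|w\cdot\nabla_{w}\sigma(w)|\leq\int_{V}|w\cdot\nabla_{w}\bm{k}(v,w)|\max(1,\sigma^{-1}(v))\bm{m}(v)\d v\leq C_{1}\sigma(w),
\]
which is the second estimate (with the very same constant $C_{1}$ as in \eqref{eq:K2}).

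\medskip

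For the first estimate, the idea is to apply the product rule to
\[
w\cdot\nabla_{w}\bm{K}_{2}(v,w',w)=w\cdot\nabla_{w}\bigl[\bm{k}(v,w)\bm{k}(w,w')\bm{m}(w)\bigr]
\]
and split the resulting triple integral into $I_{1}+I_{2}+I_{3}$, where $I_{j}$ collects the term in which the derivative hits the $j$-th factor. In $I_{1}$, the derivative acts on the \emph{second} slot of $\bm{k}(v,w)$, so one freezes $w$ and applies \eqref{eq:K2} directly to the $v$-integral, collecting a factor $C_{1}\sigma(w)$ which cancels the weight $\sigma^{-1}(w)$; the remaining $w$-integral of $\bm{k}(w,w')\bm{m}(w)$ is exactly $\sigma(w')$ by \eqref{eq:conservative}. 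In $I_{2}$, the derivative now acts on the \emph{first} slot of $\bm{k}(w,w')$; one first performs the $v$-integration, which is bounded by
\[
\int_{V}\bm{k}(v,w)\max(1,\sigma^{-1}(v))\bm{m}(v)\d v\leq\sigma(w)+\sigma(w)\vartheta_{1}(w)\leq(1+\|\vartheta_{1}\|_{\infty})\sigma(w),
\]
and then one invokes \eqref{eq:K3} (with $v$ renamed $w$ and $w$ renamed $w'$) to deal with the remaining integral $\int|w\cdot\nabla_{w}\bm{k}(w,w')|\bm{m}(w)\d w\leq C_{2}\sigma(w')$. In $I_{3}$, one uses \eqref{eq:logM} to write $|w\cdot\nabla_{w}\bm{m}(w)|\leq M\bm{m}(w)$ and proceeds exactly as for $I_{1}$.

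\medskip

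The only slightly delicate point is the bound on the $v$-integral appearing in $I_{2}$ and $I_{3}$: one needs $\vartheta_{1}\in L^{\infty}(V)$, which is not stated literally but is a direct consequence of Assumption \ref{hypH}(2). Indeed, since $N_{0}\geq 1$ and (after the harmless normalization $\|\sigma\|_{\infty}\leq 1$) the map $s\mapsto\sigma^{-s}(v)$ is non-decreasing, one has $\vartheta_{1}\leq\vartheta_{N_{0}}\in L^{\infty}(V)$. The rest of the argument is a rearrangement of Fubini applications and the two structural bounds \eqref{eq:K2} and \eqref{eq:K3}; the hardest conceptual step is simply to keep straight which argument of $\bm{k}$ is being differentiated, so that the correct hypothesis (\eqref{eq:K2} for the second slot, \eqref{eq:K3} for the first) is invoked in each term. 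Putting the three estimates together furnishes a universal constant $C>0$ such that the full integral is bounded by $C\sigma(w')$ uniformly in $w'\in V$, which is the announced inequality.
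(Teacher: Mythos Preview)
Your proposal is correct and follows essentially the same route as the paper: product rule on $\bm{K}_{2}$ producing three terms, handled respectively via \eqref{eq:K2}, \eqref{eq:K3}, and \eqref{eq:logM}, together with the bound $\int\bm{k}(v,w)\max(1,\sigma^{-1}(v))\bm{m}(\d v)\leq(1+\|\vartheta_{1}\|_{\infty})\sigma(w)$; the second estimate is also identical. One small slip: in $I_{3}$ the $v$-integral has no derivative on $\bm{k}(v,w)$, so it is bounded as in $I_{2}$ (not $I_{1}$), after which the remaining $w$-integral of $\bm{k}(w,w')\bm{m}(w)$ gives $\sigma(w')$---this does not affect the validity of the argument.
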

\begin{proof} Noticing that
\begin{multline*}
w\cdot \nabla_{w}\bm{K}_{2}\left(v,w',w\right)=\left[\bm{k}(v,w)\left(w\cdot \nabla_{w}\bm{k}(w,w')\right)+\bm{k}(w,w')\left(w\cdot \nabla_{w}\bm{k}(v,w)\right)\right]\bm{m}(w)\\
+\bm{k}(v,w)\bm{k}(w,w')w\cdot \nabla_{w}\bm{m}(w)\,,\end{multline*}
it holds, for $s=0,1$,
\begin{multline*}
\int_{\R^{d}}\sigma^{-1}(w)\d w\int_{\R^{d}}\left|w\cdot \nabla_{w}\bm{K}_{2}\left(v,w',w\right)\right|\sigma^{-s}(v)\bm{m}(\d v) \\
\leq \int_{\R^{d}}\sigma^{-1}(w)\bm{k}(w,w')\bm{m}(\d w)\int_{\R^{d}}\left|w\cdot \nabla_{w}\bm{k}(v,w)\right|\sigma^{-s}(v)\bm{m}(\d v) \\+ 
\int_{\R^{d}}\sigma^{-1}(w) \left(\sigma(w)\left|w \cdot \nabla_{w}\bm{k}(w,w')\right|\right)\vartheta_{s}(w)\bm{m}(\d w)\\
+ \int_{\R^{d}}\vartheta_{s}(w)\bm{k}(w,w')\left|w \cdot \nabla_{w}\log \bm{m}(w)\right|\bm{m}(\d w)
\end{multline*}
which results in 
$$\int_{\R^{d}}\sigma^{-1}(w)\d w\int_{\R^{d}}\left|w\cdot \nabla_{w}\bm{K}_{2}\left(v,w',w\right)\right|\sigma^{-s}(v)\bm{m}(\d v) \leq C \sigma(w')$$
with $C=C_{1}+\|\vartheta_{s}\|_{\infty}\left(C_{2}+\sup_{w}|w\cdot \nabla_{w}\log \bm{m}(w)|\right)<\infty$  where we used \eqref{eq:varthetas}, \eqref{eq:K2}--\eqref{eq:logM} and the conservative assumption \eqref{eq:conservative}. For the estimate of $w \cdot \nabla \sigma(w)$, one simply observes that, due to the conservative assumption
$$w \cdot \nabla\sigma(w)=\int_{\R^{d}}w \cdot \nabla_{w}\bm{k}(v,w)\bm{m}(\d v)$$
so that
$$\left|w \cdot \nabla_{w}\sigma(w)\right| \leq \int_{\R^{d}}|w \cdot \nabla_{w}\bm{k}(v,w)|\bm{m}(\d v) \leq C_{1}\sigma(w)$$
according to \eqref{eq:K2}.
\end{proof}
 Let $\l=\e+i\eta$,  $\e\geq0,$ $\eta \in \R$. The proof of Theorem \ref{theo:decay-KRA} consists in actually estimating 
 $$\|\K\Rs(\l,\A)\K\|_{\B(\X_{-1},\X_{0})}.$$
One checks  easily that
\begin{multline*}
\K\Rs(\l,\A)\K\,f(x,v)= \int_{V}\bm{k}(v,w)\bm{m}(\d w)\\
\int_{0}^{\infty}\exp\left(-\left(\l+\sigma(w)\right)t\right)\d t\int_{V}\bm{k}(w,w')f(x-tw,w')\bm{m}(\d w').\end{multline*}
Therefore
\begin{multline*}
\K\Rs(\l,\A)\K\,f(x,v)=\int_{V}\bm{m}(\d w')\int_{0}^{\infty}\exp\left(-\left(\l +\sigma(w)\right)t\right)\d t\\
\int_{V}\bm{k}(v,w)\bm{k}(w,w')f(x-tw,w')\bm{m}(\d w)\,.\end{multline*}
We split the integral over time into $\int_{\delta}^{\infty} + \int_{0}^{\delta}$ and write accordingly
$$\K\Rs(\l,\A)\K=\mathcal{U}_{\delta}(\l)+\mathcal{U}_{\delta}'(\l)$$
where, in $\mathcal{U}_{\delta}(\l)$, we restrict the time integral to the set $[\delta,\infty)$ and $\mathcal{U}_{\delta}'(\l)$ is defined with the time integral over $(0,\delta)$. The estimate of $\|\mathcal{U}_{\delta}'(\l)\|_{\B(\X_{-1},\X_{0})}$ is the easiest one.
\begin{lemme}\label{lem:Udelta'} For any $\delta >0$ it holds
$$\|\mathcal{U}_{\delta}'(\l)f\|_{\X_{s}}\leq \delta\,\|\vartheta_{s}\|_{\infty}\|\sigma\|_{\infty}\|f\|_{\X_{-1}}$$
for any $f \in \X_{-1}$ and any $s \leq N_{0}$ (where we recall that $\vartheta_{0}\equiv 1$). \end{lemme}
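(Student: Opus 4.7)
The plan is to proceed by a direct Fubini computation starting from the explicit integral representation of $\mathcal{U}'_{\delta}(\l)$ that appears just above the statement. Since $\mathrm{Re}\,\l \geq 0$ and $\sigma(w) \geq 0$, the exponential factor is harmless: $|e^{-(\l+\sigma(w))t}| \leq 1$ for every $t \geq 0$, so that after taking pointwise absolute values one obtains
$$|\mathcal{U}'_{\delta}(\l) f(x,v)| \leq \int_V \bm{m}(\d w')\int_0^\delta \d t\int_V \bm{k}(v,w)\bm{k}(w,w')\,|f(x-tw,w')|\,\bm{m}(\d w).$$
This removes all dependence on $\l$, so the resulting estimate will automatically be uniform in $\l \in \overline{\C}_+$.

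Next I would multiply by $\max(1,\sigma^{-s}(v))$, integrate against $\d x\,\bm{m}(\d v)$ on $\T^d\times V$, and apply Fubini to bring the $v$-integral innermost. The change of variable $y = x-tw$ (a measure-preserving bijection of $\T^d$) removes the $t,w$ dependence in $f$ and converts $\int_{\T^d}|f(x-tw,w')|\d x$ into the $w$- and $t$-independent quantity $\|f(\cdot,w')\|_{L^1(\T^d)}$. The innermost $v$-integral is then controlled by the definition \eqref{eq:varthetas} of $\vartheta_s$: the part $\int_V \sigma^{-s}(v)\bm{k}(v,w)\bm{m}(\d v) = \vartheta_s(w)\sigma(w)$ is handled by $\|\vartheta_s\|_\infty$, while the ``$1$'' part of the $\max$ is handled by the conservative identity \eqref{eq:conservative} $\int_V \bm{k}(v,w)\bm{m}(\d v) = \sigma(w)$; for $s \leq N_0$ both contributions are absorbed into a bound of the form $C\,\|\vartheta_s\|_\infty\,\sigma(w)$, and for the benchmark case $s=0$ one has $\vartheta_0 \equiv 1$ so the bound reduces to $\sigma(w)$ exactly.

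Once the $v$-integral has produced the factor $\sigma(w)$, I would bound $\sigma(w) \leq \|\sigma\|_\infty$ and then apply the conservative identity a second time in the form $\int_V \bm{k}(w,w')\bm{m}(\d w) = \sigma(w')$, which takes care of the $w$-integration. The remaining $t$-integration over $[0,\delta]$ yields the prefactor $\delta$, and one is left with
$$\|\mathcal{U}'_\delta(\l)f\|_{\X_s} \leq \delta\,\|\vartheta_s\|_\infty\,\|\sigma\|_\infty \int_V \sigma(w')\,\|f(\cdot,w')\|_{L^1(\T^d)}\,\bm{m}(\d w').$$
Since $\sigma(w') \leq \max(1,\sigma(w'))$, the last integral is bounded by $\|f\|_{\X_{-1}}$ by definition of the $\X_{-1}$-norm, which concludes the argument.

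There is no real obstacle: the proof is a pure bookkeeping exercise relying on three ingredients (absolute-value control of the exponential, translation invariance of Lebesgue measure on $\T^d$, and the conservative identity \eqref{eq:conservative} applied to the two factors $\bm{k}(v,w)$ and $\bm{k}(w,w')$). The only mildly delicate point is making sure the $\max(1,\sigma^{-s})$ weight is absorbed cleanly into $\|\vartheta_s\|_\infty$ rather than producing a spurious additive constant; this is why the assumption $s \leq N_0$ (which guarantees $\vartheta_s \in L^\infty$ via Assumption \ref{hypH}(2)) is essential.
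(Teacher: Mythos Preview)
Your proposal is correct and follows essentially the same route as the paper's proof: take absolute values to remove the $\l$-dependence, apply Fubini and the change of variable $y=x-tw$ on the torus, evaluate the $v$-integral via the definition of $\vartheta_{s}$, bound $\sigma(w)\leq\|\sigma\|_{\infty}$, and use the conservative identity on the $w$-integral to produce $\sigma(w')$. If anything, you are slightly more careful than the paper in explicitly handling the $\max(1,\sigma^{-s})$ weight in the $\X_{s}$-norm, whereas the paper's proof writes only the $\sigma^{-s}$ part.
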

\begin{proof} By definition
$$\mathcal{U}_{\delta}'(\l)f(x,v)=\int_{V}\bm{m}(\d w')\int_{0}^{\delta}\exp\left(-\left(\l +\sigma(w)\right)t\right)\d t\\
\int_{V}\bm{k}(v,w)\bm{k}(w,w')f(x-tw,w')\bm{m}(\d w)$$
for any $f \in \X_{-1},$ $(x,v) \in \O\times V,$ $\l \in \overline{\C}_{+}$. Thus,
\begin{multline*}\|\mathcal{U}_{\delta}'(\l)f\|_{\X_{s}} \leq \int_{\T^{d}}\d x \int_{V} \sigma^{-s}(v)\bm{m}(\d v)\\
\int_0^{\delta}\d t \int_{V}\bm{m}(\d w)\int_{V}\bm{k}(v,w)\bm{k}(w,w')|f(x-tw,w')|\bm{m}(\d w').\end{multline*}
One easily sees, with the change of variables $x \mapsto y=x-tw$ that
\begin{equation*}\begin{split}\|\mathcal{U}_{\delta}'(\l)f\|_{\X_{s}}& \leq \int_{\T^{d}\times V}|f(y,w')|\d y \bm{m}(\d w') \\
&\phantom{++++++}\int_{0}^{\delta}\d t
\int_{V\times V}\sigma^{-s}(v)\bm{k}(v,w)\bm{k}(w,w')\bm{m}(\d w)\bm{m}(\d v)\\
&\leq \delta \|\sigma\|_{\infty}\|\vartheta_{s}\|_{\infty}\int_{\T^{d}\times V}\sigma(w')|f(y,w')|\d y\,\bm{m}(\d w')
\end{split}\end{equation*}
where we used that $\mathrm{Re}\l \geq 0,$ $\sigma \geq0$ and
\begin{equation*}
\int_{V\times V}\bm{k}(v,w)\bm{k}(w,w')\sigma^{-s}(v)\bm{m}(\d v)\bm{m}(\d w')=\int_{V}\sigma(w)\vartheta_{s}(w)\bm{k}(w,w')\bm{m}(\d w)\,. 
\end{equation*}
This gives the desired estimate.
\end{proof}
It remains to estimate $\|\mathcal{U}_{\delta}(\l)\|_{\B(\X_{-1},\X_{0})}.$ One begins with the following representation of $\mathcal{U}_{\delta}(\l)$
\begin{lemme}\label{lem:lem37}
For any $f \in \X_{-1}$, $\l \in \overline{\C}_{+}$ and any $(x,v) \in \O\times V$ one has
\begin{equation}\label{eq:Udel} 
\mathcal{U}_{\delta}(\l)f(x,v) =\int_{[0,1]^{d}\times V}\widetilde{H}_{\l}(x-y,v,w')f(y,w')\d y \bm{m}(\d w')
\end{equation}
where
\begin{multline*}
\widetilde{H}_{\l}(z,v,w')=\sum_{k\in \Z^{d}}\int_{\delta}^{\infty}t^{-d}\exp\left(-\l t\right)\,\exp\left(-t\sigma\left(\frac{z-k}{t}\right)\right)\times\\
 \times \bm{k}\left(v,\frac{z-k}{t}\right)\bm{k}\left(\frac{z-k}{t},w'\right)\bm{m}\left(\frac{z-k}{t}\right)\d t, \qquad z \in \R^{d}, v,w \in V.\end{multline*}
\end{lemme}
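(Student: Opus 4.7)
The plan is a direct computation that unfolds the $\Z^{d}$-periodicity of $f$ in the spatial variable via a change of variables in the velocity integral. Starting from the definition of $\mathcal{U}_{\delta}(\l)$ and using the absolute continuity $\bm{m}(\d w)=\bm{m}(w)\d w$ from Assumption \ref{hypK}, I would rewrite
\begin{equation*}
\mathcal{U}_{\delta}(\l)f(x,v)=\int_{V}\bm{m}(\d w')\int_{\delta}^{\infty}e^{-\l t}\d t\int_{\R^{d}}e^{-t\sigma(w)}\bm{k}(v,w)\bm{k}(w,w')\bm{m}(w)f(x-tw,w')\d w,
\end{equation*}
extending $\bm{k}$ and $\bm{m}$ by zero off $V$.

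For fixed $t\geq\delta$, $x\in[0,1)^{d}$ (a representative of $\T^{d}$), and $v,w'\in V$, I would then perform the change of variables $w\leftrightarrow(y,k)\in[0,1)^{d}\times\Z^{d}$ defined by $w=(x-y-k)/t$. For each fixed $k\in\Z^{d}$, the affine map $y\mapsto(x-y-k)/t$ has constant Jacobian $t^{-d}$ and its image is a half-open cube of side $1/t$; as $k$ ranges over $\Z^{d}$ these cubes tile $\R^{d}$ exactly once, so the map $(y,k)\mapsto w$ is a bijection onto $\R^{d}$. Crucially, $x-tw=y+k$, so by $\Z^{d}$-periodicity $f(x-tw,w')=f(y,w')$, and the inner $w$-integral becomes
\begin{equation*}
\sum_{k\in\Z^{d}}t^{-d}\int_{[0,1)^{d}}e^{-t\sigma((x-y-k)/t)}\bm{k}\bigl(v,\tfrac{x-y-k}{t}\bigr)\bm{k}\bigl(\tfrac{x-y-k}{t},w'\bigr)\bm{m}\bigl(\tfrac{x-y-k}{t}\bigr)f(y,w')\d y.
\end{equation*}

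Next, I would invoke Fubini--Tonelli to interchange the sum over $k$ with the $t$-integral over $[\delta,\infty)$ and with the $(y,w')$-integrals. Replacing $f$ by $|f|$, absolute integrability is a direct consequence of the operator bound $\|\mathcal{U}_{\delta}(\l)\|_{\B(\X_{-1},\X_{0})}\leq\|\K\Rs(\l,\A)\K\|_{\B(\X_{-1},\X_{0})}\leq 1$, which itself follows from conservativity \eqref{eq:conservative} combined with $\int_{0}^{\infty}e^{-\sigma(w)t}\d t=\sigma(w)^{-1}$ through the successive identities $\int_{V}\bm{k}(v,w)\bm{m}(\d v)=\sigma(w)$ and $\int_{V}\bm{k}(w,w')\bm{m}(\d w)=\sigma(w')$. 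After the swap, setting $z=x-y$ identifies the bracketed sum-integral over $t$ as exactly $\widetilde{H}_{\l}(x-y,v,w')$, proving the claim.

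The only real obstacle is the careful bookkeeping of the lattice change of variables and of the Jacobian factor $t^{-d}$; incidentally, this is the unique step where absolute continuity of $\bm{m}$ with respect to Lebesgue measure plays a truly essential role, as flagged by the authors in the introduction immediately after Assumption \ref{hypK}.
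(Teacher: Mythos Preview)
Your proposal is correct and follows essentially the same route as the paper: both arguments rest on the change of variables $w\mapsto y$ with $w=(x-y-k)/t$ and Jacobian $t^{-d}$, together with the $\Z^{d}$-periodicity of $f(\cdot,w')$ to reduce the $y$-integration to $[0,1)^{d}$ at the cost of a lattice sum. The only cosmetic difference is that the paper performs this in two steps (first $y=x-tw\in\R^{d}$, then partition $\R^{d}=\bigcup_{k}(k+[0,1]^{d})$), whereas you package the change of variables and the lattice decomposition into a single bijection $(y,k)\leftrightarrow w$; your Fubini--Tonelli justification and the remark on the role of absolute continuity of $\bm{m}$ are also in line with the paper.
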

\begin{proof} By definition,
\begin{multline*}
\mathcal{U}_{\delta}(\l)f(x,v)=\int_{V}\bm{m}(\d w')\int_{\delta}^{\infty}\exp\left(-\left(\l +\sigma(w)\right)t\right)\d t\\
\int_{V}\bm{k}(v,w)\bm{k}(w,w')f(x-tw,w')\bm{m}(\d w).\end{multline*}
We recall that functions over $\T^{d}$ are identified with functions defined over $\R^{d}$ which are $[0,1]^{d}$-periodic. Given $t \geq \delta$, one performs the change of variable 
\begin{equation}\label{eq:change}
y:=x-tw, \qquad \d y=t^{d}\d w,\end{equation}
and   
\begin{multline*}
\mathcal{U}_{\delta}(\l)f(x,v)=\int_{\R^{d}\times V}f(y,w')\d y\,\bm{m}(\d w')\int_{\delta}^{\infty}t^{-d}\exp\left(-\l t\right)\\
\ind_{\frac{x-y}{t} \in V}(t)
\exp\left(-t\sigma\left(\frac{x-y}{t}\right)\right)\,\bm{k}\left(v,\frac{x-y}{t}\right)\bm{k}\left(\frac{x-y}{t},w'\right)\bm{m}\left(\frac{x-y}{t}\right)\d t\,.
\end{multline*}
Given $z \in \R^{d}, w' \in V$, we introduce the function 
$$H_{\l}(z,v,w')=\int_{\delta}^{\infty}t^{-d}\exp\left(-\l t\right)\ind_{\frac{z}{t} \in V}(t)
\exp\left(-t\sigma\left(\frac{z}{t}\right)\right)\,\bm{k}\left(v,\frac{z}{t}\right)\bm{k}\left(\frac{z}{t},w'\right)\bm{m}\left(\frac{z}{t}\right)\d t$$
so that
$$\mathcal{U}_{\delta}(\l)f(x,v)=\int_{\R^{d}\times V}H_{\l}(x-y,v,w')f(y,w')\d y\,\bm{m}(\d w').$$
Notice that $\R^{d}$ admits the following partition representation
$$\R^{d}=\bigcup_{k \in \Z^{d}}\left(k+[0,1]^{d}\right)$$
so that, using the $\Z^{d}$-periodicity of $f(\cdot,w')$, we have
\begin{equation*}\label{eq:Udel1}\begin{split}
\mathcal{U}_{\delta}(\l)f(x,v)&=\sum_{k \in \Z^{d}}\int_{k+[0,1]^{d}}\d y\int_{V}H_{\l}(x-y,v,w')f(y,w')\bm{m}(\d w')\\
&=\sum_{k \in \Z^{d}}\int_{[0,1]^{d}\times V}H_{\l}(x-y-k,v,w')f(y,w')\d y \bm{m}(\d w')\end{split}
\end{equation*}
which results in \eqref{eq:Udel}.
\end{proof}
With this representation, the key estimate is provided by the following
\begin{lemme}\label{lem:Udelta}There exists $C_{3} >0$ (depending only on $d,C_{1},C_{2},\|\sigma\|_{\infty}$, $\|v\cdot\nabla_{v}\log \bm{m}\|_{\infty}$ and $\|\vartheta_{s}\|_{\infty}$) such that
$$\int_{\R^{d}}\max\left(1,\sigma^{-1}(v)\right)\bm{m}(\d v)\int_{[0,1]^{d}}\left|\widetilde{H}_{\l}(x-y,v,w')\right|\d x\leq \frac{{C}_{3}}{|\l|}\left(\frac{1}{\delta}+1\right)\sigma(w')$$
for any $y, w' \in \O \times V$, $\delta >0,$ $\l \in \C_{+}\setminus\{0\}.$ 
\end{lemme}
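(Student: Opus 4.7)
The plan is to use integration by parts in $t$ to produce the prefactor $\frac{1}{\l}$. Writing $\widetilde{H}_{\l}(x-y,v,w')=\sum_{k\in\Z^{d}}\int_{\delta}^{\infty}e^{-\l t}\phi_{k}(t)\d t$ with
$$\phi_{k}(t):=t^{-d}e^{-t\sigma(w_{k}(t))}\bm{k}(v,w_{k}(t))\bm{k}(w_{k}(t),w')\bm{m}(w_{k}(t)),\qquad w_{k}(t):=\frac{x-y-k}{t},$$
the boundary term at $t=+\infty$ vanishes (since $\phi_{k}(t)=\mathrm{O}(t^{-d})$ and $|e^{-\l t}|\leq 1$ on $\overline{\C}_{+}$), so IBP gives
$$\int_{\delta}^{\infty}e^{-\l t}\phi_{k}(t)\d t=\frac{e^{-\l\delta}}{\l}\phi_{k}(\delta)+\frac{1}{\l}\int_{\delta}^{\infty}e^{-\l t}\phi_{k}'(t)\d t.$$

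For the boundary contribution, one takes absolute values, changes variables $u=(x-y-k)/\delta$ in each cell $x\in[0,1]^{d}$, and sums over $k\in\Z^{d}$: the $\delta^{-d}$ is swallowed by the Jacobian $\delta^{d}$ and the cells $\{-y-k+[0,1]^{d}\}_{k\in\Z^{d}}$ tile $\R^{d}$, reducing the $x$- and $k$-sum to $\int_{\R^{d}}\d u$ of $e^{-\delta\sigma(u)}\bm{k}(v,u)\bm{k}(u,w')\bm{m}(u)$. The $v$-integral with the weight $\max(1,\sigma^{-1}(v))\bm{m}(\d v)$ is bounded by $\max(\|\vartheta_{0}\|_{\infty},\|\vartheta_{1}\|_{\infty})\sigma(u)$ by definition \eqref{eq:varthetas}, and then \eqref{eq:conservative} together with $\sigma\in L^{\infty}$ gives the desired $\tfrac{C}{|\l|}\sigma(w')$.

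For the remainder integral one expands $\phi_{k}'(t)$ by the chain rule into five terms coming from $t^{-d}$, from $e^{-t\sigma(w_{k})}$, and from each of $\bm{k}(v,w_{k})$, $\bm{k}(w_{k},w')$, $\bm{m}(w_{k})$; recall $\tfrac{\d}{\d t}g(w_{k})=-\tfrac{1}{t}\,w_{k}\cdot(\nabla g)(w_{k})$ and $\tfrac{\d}{\d t}[t\sigma(w_{k})]=\sigma(w_{k})-w_{k}\cdot\nabla\sigma(w_{k})$. After interchanging $\sum_{k}\int_{[0,1]^{d}}\d x$ into $\int_{\R^{d}}\d z$ as above and performing the change of variable $w=z/t$ (whose Jacobian $t^{d}$ cancels the $t^{-d}$), four of the five terms carry an extra $\tfrac{1}{t}$ whereas the term coming from $\tfrac{\d}{\d t}e^{-t\sigma(w_{k})}$ does not. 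The no-$\tfrac{1}{t}$ term is handled via $|w\cdot\nabla\sigma(w)|\leq C_{1}\sigma(w)$ (Lemma \ref{lem:K2kk}), the integral $\int_{\R^{d}}\bm{k}(v,w)\max(1,\sigma^{-1}(v))\bm{m}(\d v)\leq \|\vartheta_{s}\|_{\infty}\sigma(w)$, the bound $\int_{\delta}^{\infty}e^{-t\sigma(w)}\d t\leq \sigma(w)^{-1}$, and conservativity $\int_{V}\bm{k}(w,w')\bm{m}(\d w)=\sigma(w')$, yielding a $\tfrac{C}{|\l|}\sigma(w')$ contribution. For each of the four remaining terms we use $\int_{\delta}^{\infty}t^{-1}e^{-t\sigma(w)}\d t\leq \tfrac{1}{\delta\sigma(w)}$, which is the source of the factor $\tfrac{1}{\delta}$; the $v$-integral in the $w\cdot\nabla_{w}\bm{k}(v,w)$ term is controlled by \eqref{eq:K2}, the $w$-integral in the $w\cdot\nabla_{w}\bm{k}(w,w')$ term by a relabelling of \eqref{eq:K3}, and the $w\cdot\nabla_{w}\bm{m}$ term by \eqref{eq:logM}. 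Each of those four terms contributes $\tfrac{C}{|\l|\delta}\sigma(w')$.

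Summing the boundary and five interior contributions gives the claimed $\tfrac{C_{3}}{|\l|}\bigl(\tfrac{1}{\delta}+1\bigr)\sigma(w')$, with $C_{3}$ depending only on $d,\,C_{1},\,C_{2},\,\|\sigma\|_{\infty},\,\|v\cdot\nabla_{v}\log\bm{m}\|_{\infty}$ and $\|\vartheta_{s}\|_{\infty}$ for $s=0,1$. The main obstacle is not any single estimate but rather the bookkeeping of derivatives in $\phi_{k}'$ and the rigorous justification of the $\sum_{k}\int_{[0,1]^{d}}\to \int_{\R^{d}}$ reduction, both of which are routine once absolute values are brought inside thanks to nonnegativity of the moduli; the assumptions of \ref{hypK} match exactly the five terms appearing in $\phi_{k}'$.
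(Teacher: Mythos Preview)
Your proof is correct and follows essentially the same approach as the paper: integration by parts in $t$ to extract the $1/\lambda$, the periodicity reduction $\sum_{k}\int_{[0,1]^{d}}\to\int_{\R^{d}}$, and the backward change of variable $w=z/t$. The only cosmetic difference is that the paper packages the three derivative contributions from $\bm{k}(v,w)$, $\bm{k}(w,w')$ and $\bm{m}(w)$ into the single quantity $w\cdot\nabla_{w}\bm{K}_{2}(v,w',w)$ and invokes Lemma~\ref{lem:K2kk} once, whereas you treat the five factors of $\phi_{k}$ separately and call on \eqref{eq:K2}, \eqref{eq:K3}, \eqref{eq:logM} directly; since Lemma~\ref{lem:K2kk} is itself proved by exactly that decomposition, the two arguments are equivalent.
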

\begin{proof} We begin with the following observation, valid for any $u \in \R^{d},$ and which is obtained thanks to a simple use of integration by parts, 
\begin{multline*}
\int_{\delta}^{\infty} t^{-d}\exp\left(-t\sigma\left(\frac{u}{t}\right)\right)\exp\left(-\l t\right)
\bm{k}\left(v,\frac{u}{t}\right)\bm{k}\left(\frac{u}{t},w'\right)\bm{m}\left(\frac{u}{t}\right)\d t \\
=\frac{1}{\lambda}\int_{\delta}^{\infty}\frac{\d}{\d t}\left[
t^{-d}\exp\left(-t\sigma\left(\frac{u}{t}\right)\right)\bm{k}\left(v,\frac{u}{t}\right)\bm{k}\left(\frac{u}{t},w'\right)\bm{m}\left(\frac{u}{t}\right)\right] \exp\left(-\lambda t\right)\d t \\
-\frac{1}{\lambda} t^{-d}\exp\left(-t\sigma\left(\frac{u}{t}\right)\right)\bm{k}\left(v,\frac{u}{t}\right)\bm{k}\left(\frac{u}{t},w'\right)\bm{m}\left(\frac{u}{t}\right)\bigg\vert_{t=\delta}^{\infty} \\
=\frac{1}{\lambda}\int_{\delta}^{\infty}\frac{\d}{\d t}\left[t^{-d}\exp\left(-t\sigma\left(\frac{u}{t}\right)\right)\bm{k}\left(v,\frac{u}{t}\right)\bm{k}\left(\frac{u}{t},w'\right)\bm{m}\left(\frac{u}{t}\right)\right] \exp\left(-\lambda t\right)\d t\\
+\frac{1}{\lambda}\delta^{-d}\exp\left(-\delta\sigma\left(\frac{u}{\delta}\right)\right)\bm{k}\left(v,\frac{u}{\delta}\right)\bm{k}\left(\frac{u}{\delta},w'\right)\bm{m}\left(\frac{u}{\delta}\right)\exp\left(-\lambda \delta\right)
\end{multline*}
 Recalling that we introduced $\bm{K}_{2}(v,w',w)=\bm{k}(v,w)\bm{k}(w,w')\bm{m}(w),$ for any $v,w,w' \in V^{3},$ one sees that, for $v,u,w' \in V^{3}$ and $t >\delta$, one has
alors
\begin{multline*}
\frac{\d}{\d t}\left[t^{-d}\exp\left(-t\sigma\left(\frac{u}{t}\right)\right)\bm{k}\left(v,\frac{u}{t}\right)\bm{k}\left(\frac{u}{t},w'\right)\bm{m}\left(\frac{u}{t}\right)\right]\\
= t^{-d}\exp\left(-t\sigma\left(\frac{u}{t}\right)\right)\,\left[-\frac{d}{t}-\sigma\left(\frac{u}{t}\right)+\frac{u}{t}\cdot \nabla \sigma\left(\frac{u}{t}\right)\right]\bm{K}_{2}\left(v,w',\frac{u}{t}\right)\\
+t^{-d}\exp\left(-t\sigma\left(\frac{u}{t}\right)\right)\,\left(-\frac{u}{t^{2}}\cdot \nabla_{3}\bm{K}_{2}\left(v,w',\frac{u}{t}\right)\right)
\end{multline*}
where $\nabla_{3}\bm{K}_{2}(v,w',w)$ denotes the gradient with respect to the third variable $w\in \R^{3}.$ In particular
$$\frac{1}{t}\nabla_{3}\bm{K}_{2}\left(v,w',\frac{u}{t}\right)=\nabla_{u}\bm{K}_{2}\left(v,w',\frac{u}{t}\right).$$
From this, one has
\begin{multline*}
\int_{\delta}^{\infty} t^{-d}\exp\left(-t\sigma\left(\frac{u}{t}\right)\right)\exp\left(-\l t\right)
\bm{k}\left(v,\frac{u}{t}\right)\bm{k}\left(\frac{u}{t},w'\right)\bm{m}\left(\frac{u}{t}\right)\d t \\
=\frac{1}{\l}\int_{\delta}^{\infty}t^{-d}\exp\left(-t\sigma\left(\frac{u}{t}\right)\right)\,\left[-\frac{d}{t}-\sigma\left(\frac{u}{t}\right)+\frac{u}{t}\cdot \nabla \sigma\left(\frac{u}{t}\right)\right]\bm{K}_{2}\left(v,w',\frac{u}{t}\right)\exp\left(-\l t\right)\d t\\
-\frac{1}{\l}\int_{\delta}^{\infty}t^{-d}\exp\left(-t\sigma\left(\frac{u}{t}\right)\right)\frac{u}{t}\cdot \nabla_{u}\bm{K}_{2}\left(v,w',\frac{u}{t}\right)\exp\left(-\l t\right)\d t\\
+\frac{1}{\lambda}\delta^{-d}\exp\left(-\delta\sigma\left(\frac{u}{\delta}\right)\right)\bm{K}_{2}\left(v,w',\frac{u}{\delta}\right)\exp\left(-\lambda \delta\right)\,.
\end{multline*}
Consequently, recalling that
$$\widetilde{H}_{\l}(z,v,w')=\sum_{k\in \Z^{d}}\int_{\delta}^{\infty} t^{-d}\exp\left(-t\sigma\left(\frac{z-k}{t}\right)\right)\exp\left(-\l t\right)
\bm{K}_{2}\left(v,w',\frac{z-k}{t}\right)\d t$$
one deduces that
\begin{multline*}
\left|\widetilde{H}_{\l}(z,v,w')\right|\leq \frac{1}{|\l|}\int_{\delta}^{\infty}t^{-d}\exp\left(-t\sigma\left(\frac{z-k}{t}\right)\right)\bm{K}_{2}\left(v,w',\frac{z-k}{t}\right)\\
\left|-\frac{d}{t}-\sigma\left(\frac{z-k}{t}\right)+\frac{z-k}{t}\cdot \nabla \sigma\left(\frac{z-k}{t}\right)\right| \d t\\
+\frac{1}{|\l|}\sum_{k\in \Z^{d}}\int_{\delta}^{\infty}t^{-d}\exp\left(-t\sigma\left(\frac{z-k}{t}\right)\right)\left|\frac{z-k}{t}\cdot \nabla_{z}\bm{K}_{2}\left(v,w',\frac{z-k}{t}\right)\right|\d t\\
+\frac{1}{|\lambda|}\delta^{-d}\sum_{k\in Z^{d}}\exp\left(-\delta\sigma\left(\frac{z-k}{\delta}\right)\right) \bm{K}_{2}\left(v,w',\frac{z-k}{\delta}\right).
\end{multline*}
Therefore, using periodicity once again,
\begin{multline*}
\int_{[0,1]^{d}}\left|\widetilde{H}_{\l}(x-y,v,w')\right|\d x\leq \frac{1}{|\l|}\int_{\delta}^{\infty}t^{-d}\d t\int_{\R^{d}}\exp\left(-t\sigma\left(\frac{x-y}{t}\right)\right)\\
\left(\frac{d}{\delta}+\sigma\left(\frac{x-y}{t}\right)+\left|\frac{x-y}{t}\cdot \nabla \sigma\left(\frac{x-y}{t}\right)\right|\right)\bm{K}_{2}\left(v,w',\frac{x-y}{t}\right) \d x\\
+\frac{1}{|\l|}\int_{\delta}^{\infty}t^{-d}\d t\int_{\R^{d}}\exp\left(-t\sigma\left(\frac{x-y}{t}\right)\right)\left|\frac{x-y}{t}\cdot \nabla_{x}\bm{K}_{2}\left(v,w',\frac{x-y}{t}\right)\right|\d x\\
+\frac{1}{|\lambda|}\delta^{-d}\int_{\R^{d}}\exp\left(-\delta\sigma\left(\frac{x-y}{\delta}\right)\right) \bm{K}_{2}\left(v,w',\frac{x-y}{\delta}\right)\d x
\end{multline*}
Performing the backward change of variables with respect to \eqref{eq:change}, i.e. setting $x \mapsto w=\frac{x-y}{t}$ for the first two integrals  and using that
$$\int_{\delta}^{\infty}\exp\left(-t\sigma(w)\right)\d t \leq \frac{1}{\sigma(w)}, \qquad \forall w \in \R^{d},$$ 
one deduces that
\begin{multline*}
\int_{[0,1]^{d}}\left|\widetilde{H}_{\l}(x-y,v,w')\right|\d x\leq \frac{1}{|\l|}\int_{\R^{d}} 
\left(\frac{d}{\delta}+\sigma\left(w\right)+\left|w\cdot \nabla \sigma\left(w\right)\right|\right)\bm{K}_{2}\left(v,w',w\right)\sigma^{-1}(w) \d w\\
+\frac{1}{|\l|} \int_{\R^{d}} \left|w\cdot \nabla_{w}\bm{K}_{2}\left(v,w',w\right)\right|\sigma^{-1}(w) \d w\\
+\frac{1}{|\lambda|} \int_{\R^{d}}\exp\left(-\delta\sigma\left(w\right)\right) \bm{K}_{2}\left(v,w',w\right)\d w
\end{multline*}
where we performed the change of variable $x \mapsto w=\frac{x-y}{\delta}$ for the last integral. Integrating now with respect to $v\in \R^{d}$, we deduce that, for $s=0,1$,
\begin{multline*}
\int_{\R^{d}}\sigma^{-s}(v)\bm{m}(\d v)\int_{[0,1]^{d}}\left|\widetilde{H}_{\l}(x-y,v,w')\right|\d x\\
\leq \frac{1}{|\l|}\int_{\R^{d}\times\R^{d}} 
\left(\frac{d}{\delta}+\sigma\left(w\right)+\left|w\cdot \nabla \sigma\left(w\right)\right|\right)\bm{K}_{2}\left(v,w',w\right)\sigma^{-1}(w) \d w \sigma^{-s}(v)\bm{m}(\d v)\\
+\frac{1}{|\l|} \int_{\R^{d}\times\R^{d}} \left|w\cdot \nabla_{w}\bm{K}_{2}\left(v,w',w\right)\right|\sigma^{-1}(w) \d w \sigma^{-s}(v)\bm{m}(\d v)\\
+\frac{1}{|\lambda|} \int_{\R^{d}\times \R^{d}}\exp\left(-\delta\sigma\left(w\right)\right) \bm{K}_{2}\left(v,w',w\right) \d w \sigma^{-s}(v)\bm{m}(\d v)\,.
\end{multline*}
Noticing that
$$\int_{\R^{d}}\bm{K}_{2}\left(v,w',w\right)\sigma^{-s}(v)\bm{m}(\d v)=\sigma(w)\vartheta_{s}(w)\bm{k}(w,w')\bm{m}(w)$$
and using Lemma \ref{lem:K2kk}, we  obtain that, or $s=0,1$
\begin{multline*}
\int_{\R^{d}}\sigma^{-s}(v)\bm{m}(\d v)\int_{[0,1]^{d}}\left|\widetilde{H}_{\l}(x-y,v,w')\right|\d x\\
\leq \frac{1}{|\l|}\left(\frac{d}{\delta}+\|\sigma\|_{\infty} +C_{1}\|\sigma\|_{\infty}\right)\|\vartheta_{s}\|_{\infty}\int_{\R^{d}} \bm{k}\left(w,w'\right) \bm{m}(\d w)\\
+\frac{C}{|\l|}\sigma(w')+\frac{\|\sigma\|_{\infty}\|\vartheta_{s}\|_{\infty}}{|\lambda|} \int_{\R^{d} } \bm{k} \left(w,w'\right)\bm{m}(\d w)\,
\end{multline*}
from which we easily derive the conclusion.
\end{proof}
\begin{nb} We insist on the fact that the change of variables \eqref{eq:change} (and its backward counterpart) is the only part of our analysis in which we use the fact that $\bm{m}(\d v)$ is absolutely continuous with respect to the Lebesgue measure. We strongly believe that, at a price of additional technical calculations, our analysis can be extended to a larger class of measures $\bm{m}(\d v)$ satisfying \eqref{eq:hypm}.\end{nb}
We have all at hands to prove the result
\begin{proof}[Proof of Theorem \ref{theo:decay-KRA}] Recalling that, for any $\delta >0$ and any $f \in \X_{-1}$
$$\K\Rs(\l,\A)\K=\mathcal{U}_{\delta}(\l)+\mathcal{U}_{\delta}'(\l)$$
we deduce from \eqref{eq:Udel} that, for $s=0,1$,
\begin{multline*}
\|\K\Rs(\l,\A)\K f\|_{\X_{s}} \leq \|\mathcal{U}_{\delta}'(\l)f\|_{\X_{s}} \\+ \int_{\O\times V}\left|f(y,w')\right|\d y\,\bm{m}(\d w')\int_{V}\sigma^{-s}(v)\bm{m}(\d v)\int_{[0,1]^{d}}\left|\widetilde{H}_{\l}(x-y,v,w')\right|\d x\\
\leq \delta\|\sigma\|_{\infty}\|\vartheta_{s}\|_{\infty}\|f\|_{\X_{-1}} + \frac{C_{3}}{|\l|}\left(\frac{1}{\delta}+1\right)\|f\|_{\X_{-1}}
\end{multline*}
where we used Lemmas \ref{lem:Udelta'} and \ref{lem:Udelta} in the second estimate. Optimizing now the parameter $\delta$, we deduce that
$$\|\K\Rs(\l,\A)\K\|_{\B(\X_{-1},\X_{s})} \leq 2\sqrt{\frac{C_{3}\|\sigma\|_{\infty}}{|\l|}}+\frac{C_{3}}{|\l|}, \qquad \forall \l \in \C_{+}\setminus\{0\}.$$
For $s=0$, we deduce from this that there exists $C_{4} >0$ such that, for $|\l| >1$,
$$\left\|\left[\K\Rs(\l,\A)\right]^{2}\right\|_{\B(\X_{0})} \leq \|\K\Rs(\l,\A)\K\|_{\B(\X_{-1},\X_{0})}\|\Rs(\l,\A)\|_{\B(\X_{0},\X_{-1})} \leq \frac{C_{4}}{\sqrt{|\l|}}$$
since $\sup_{\l \in \overline{\C}_{+}}\|\Rs(\l,\A)\|_{\B(\X_{0},\X_{-1})} \leq 1.$ Now, with $s=1$, we deduce in the same way that there is $C_{5} >0$ such that
$$\left\|\left[\Rs(\l,\A)\K\right]^{2}\right\|_{\B(\X_{0})} \leq \|\Rs(\l,\A)\|_{\B(\X_{1},\X_{0})}\|\K\Rs(\l,\A)\K\|_{\B(\X_{0},\X_{1})}\leq \frac{C_{5}}{\sqrt{|\l|}}$$
since $\|\K\Rs(\l,\A)\K\|_{\B(\X_{0},\X_{1})} \leq \|\sigma\|_{\infty}\|\K\Rs(\l,\A)\K\|_{\B(\X_{-1},\X_{1})}$. 
This proves the result.  \end{proof}

\section{Regularity and extension results}\label{sec:reguTr}

{From now, we will always assume that Assumptions \ref{hypH} and \ref{hypK} are in force.}

\subsection{About the resolvent of $\A$}

We recall that under Assumption \eqref{spahomo} one has
$$0 \in \mathfrak{S}(\A).$$
More precisely, since $\A$ generates a $C_{0}$-group in $\X_{0}$, one can prove (see \cite{mkposi}) that there exists $\lambda_{\star} >0$ such that
\begin{equation}\label{eq:SpecA}
\mathfrak{S}(\A)=\{\l \in \C\;;\;-\lambda_{\star} \leq \mathrm{Re}\l \leq 0\}.\end{equation}
This shows in particular that 
$$i\eta \in \mathfrak{S}(\A) \qquad \qquad \forall \eta \in \R.$$ 
 Regarding the behaviour of $\Rs(\e+i\eta,\A)f$, one first observes that, by virtue of \eqref{eq:SpecA}
\begin{equation}\label{eq:epsi-s-A}
\limsup_{\e\to 0^{+}}\left\|\Rs(\e+i\eta,\A)\right\|_{\mathscr{B}(\X_{0})}=\infty.\end{equation}
However, studying the behaviour of $\Rs(\e+i\eta,\A)f$ on the hierarchy of spaces $\X_{k}$, $k \geq 1$ yields better estimates:
\begin{propo}\phantomsection\label{prop:convRsT0}
For any $f \in \X_{0}$ and $\e >0$, the mapping 
$$\eta \in \R \longmapsto \Rs(\e+i\eta,\A)f \in \X_{0}$$
belongs to $\mathscr{C}_{0}^{k}(\R,\X_{0})$ for any $k \in \N.$ Moreover, given $s \in \R$, for any $f \in \X_{s+1}$, 
$$\lim_{\e\to0^{+}}\Rs(\e+i\eta,\A)f$$ exists in $\mathscr{C}_{0}(\R,\X_{s})$. This limit is denoted $\Rs(i\eta,\A)f,$ i.e.
\begin{equation}\label{eq:RetaXs}
\lim_{\e\to0^{+}}\sup_{\eta\in\R}\left\|\Rs(\e+i\eta,\A)f-\Rs(i\eta,\A)f\right\|_{\X_{s}}=0 \qquad \forall f \in \X_{s+1}.\end{equation}
\end{propo}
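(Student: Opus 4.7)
Both statements rest on the explicit time representation
\[
\Rs(\e+i\eta,\A)f(x,v) = \int_0^\infty \exp\!\bigl(-(\e+i\eta+\sigma(v))t\bigr)\, f(x-tv,v)\,\d t,
\qquad \e>0,\ \eta\in\R,
\]
which holds in $\X_0$ because $\omega(U_0)=0$ and $\mathrm{Re}(\e+i\eta)>0$. The workhorse of every estimate below is the change of variable $y=x-tv$: for fixed $t\geq 0$ and $v\in V$ it preserves Lebesgue measure on $\T^d$, so combined with Fubini it reduces every norm estimate to an elementary time integral of $\exp(-\e t)$ or $\exp(-\sigma(v)t)$.

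\emph{Part 1.} Differentiating $k$ times in $\eta$ inside the integral brings down a factor $(-it)^k$; the modulus of the resulting integrand is dominated by
\[
G(x,v) := \int_0^\infty t^k \exp(-\e t)\, |f(x-tv,v)|\, \d t,
\]
whose $\X_0$-norm equals $\dfrac{k!}{\e^{k+1}}\|f\|_{\X_0}$ by the Fubini/change-of-variable trick above. This legitimates the differentiation under the integral and, by a second application of dominated convergence, gives $\mathscr{C}^k$-regularity in $\eta$. For decay as $|\eta|\to\infty$, observe that for a.e.\ $(x,v)$ the mapping $t\mapsto (-it)^k\exp(-(\e+\sigma(v))t)f(x-tv,v)\ind_{[0,\infty)}(t)$ is in $L^1(\R)$ (its modulus has finite integral $G(x,v)$), so the classical Riemann--Lebesgue lemma gives pointwise vanishing of its Fourier transform; dominated convergence in $\X_0$ with dominant $G$ then yields $\lim_{|\eta|\to\infty}\bigl\|\tfrac{\d^k}{\d\eta^k}\Rs(\e+i\eta,\A)f\bigr\|_{\X_0}=0$.

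\emph{Part 2.} For $f\in\X_{s+1}$ define the candidate boundary trace by
\[
\Rs(i\eta,\A)f(x,v) := \int_0^\infty \exp\!\bigl(-(i\eta+\sigma(v))t\bigr)\, f(x-tv,v)\,\d t.
\]
Fubini/change of variable together with the elementary bound $\max(1,\sigma^{-s}(v))/\sigma(v)\leq C_s\max(1,\sigma^{-s-1}(v))$ --- which holds since $\sigma\in L^\infty(V)$, the only potentially delicate region being $\sigma(v)$ small, where the inequality becomes an equality --- give $\|\Rs(i\eta,\A)f\|_{\X_s}\leq C_s\|f\|_{\X_{s+1}}$, uniformly in $\eta$. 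The decisive estimate is
\[
\Rs(\e+i\eta,\A)f - \Rs(i\eta,\A)f = \int_0^\infty (e^{-\e t}-1)\exp(-(i\eta+\sigma(v))t)\, f(x-tv,v)\,\d t
\]
whose $\X_s$-norm is dominated, by the same Fubini computation, by
\[
\int_{\O\times V}\max(1,\sigma^{-s}(v))\,\frac{\e}{\sigma(v)(\e+\sigma(v))}\,|f(y,v)|\,\d y\,\bm{m}(\d v),
\]
a quantity \emph{independent of $\eta$}. The integrand tends pointwise to $0$ as $\e\to 0^+$ and is $\e$-uniformly dominated by $C_s\max(1,\sigma^{-s-1}(v))|f(y,v)|\in L^1$, so dominated convergence yields $\lim_{\e\to 0^+}\sup_{\eta\in\R}\|\Rs(\e+i\eta,\A)f-\Rs(i\eta,\A)f\|_{\X_s}=0$.

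Finally, applying Part 1 with $\X_0$ replaced by $\X_s$ (the argument is identical), each $\Rs(\e+i\eta,\A)f$ lies in $\mathscr{C}_0(\R,\X_s)$ for fixed $\e>0$; since the $\e$-family converges \emph{uniformly} in $\eta$, the limit $\Rs(i\eta,\A)f$ belongs to $\mathscr{C}_0(\R,\X_s)$, as required. The only point requiring genuine care is the weighted comparison between $\max(1,\sigma^{-s})/\sigma$ and $\max(1,\sigma^{-s-1})$; once that is in place, everything else is dominated convergence in time after the change of variable $y=x-tv$.
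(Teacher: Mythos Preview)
Your argument is correct and follows essentially the same route as the paper: integral representation of the resolvent, Riemann--Lebesgue for the decay in $\eta$, and dominated convergence for the limit $\e\to 0^{+}$, the key point being that the bound on the difference is uniform in $\eta$. The paper phrases Part~1 via the vector-valued Riemann--Lebesgue theorem applied to the Bochner-integrable map $t\mapsto e^{-\e t}U_{0}(t)f$, while you argue pointwise and then pass through dominated convergence; and for Part~2 the paper reduces general $s$ to $s=0$ by the substitution $g=\sigma^{-s}f$ (so that $\|h\|_{\X_s}\simeq\|\sigma^{-s}h\|_{\X_0}$), whereas you work directly with the weighted norm via the inequality $\max(1,\sigma^{-s})/\sigma\leq C_{s}\max(1,\sigma^{-s-1})$ --- these are cosmetic differences.

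One caveat: your weighted inequality is only valid for $s\geq 0$. For $s<0$ and $\sigma(v)$ small, the left side behaves like $\sigma^{-1}$ while the right side is $\sigma^{-s-1}$ with $-s-1>-1$, so no finite $C_{s}$ works. This is harmless here because all applications of the proposition in the paper are for $s\in\{0,\ldots,N_{0}-1\}$, but your sentence ``the only potentially delicate region being $\sigma(v)$ small, where the inequality becomes an equality'' is false as written when $s<0$.
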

\begin{proof} We begin with the first part of the Proposition.
Observing that
$$\Rs(\e+i\eta,\A)f=\int_{0}^{\infty}e^{-i\eta t}e^{-\e t}U_{0}(t)f\d t$$
where the mapping $t \in \R \mapsto e^{-\e t}U_{0}(t)f \in \X_{0}$ is Bochner integrable, one deduces from Riemann-Lebesgue Theorem that the mapping $\eta \in \R \mapsto \Rs(\e+i\eta,\A)f \in \X_0$ belongs to $\mathscr{C}_{0}(\R,\X_0)$, in particular
\begin{equation}\label{eq:Rsieta}
\lim_{|\eta|\to\infty}\left\|\Rs(\e+i\eta,\A)f\right\|_{\X_{0}}=0.\end{equation}
 Given $k \in \N$, because 
$$\dfrac{\d^{k}}{\d\eta^{k}}\Rs(\e+i\eta,\A)f=(-i)^{k}\int_{0}^{\infty}t^{k}e^{-i\eta t} e^{-\e t}U_{0}(t)f\,\d t$$
the exact same argument shows that 
$$\lim_{|\eta|\to\infty}\left\|\frac{\d^{k}}{\d\eta^{k}}\Rs(\e+i\eta,\A)f\right\|_{\X_{0}}=0$$
which proves that $\eta \in \R \mapsto \Rs(\e+i\eta,\A)f$ belongs to $\mathscr{C}_{0}^{k}(\R,\X_{0}).$ Let us focus now on the limit as $\e \to 0^{+}$. 
Let $f \in \X_{1}$ be given, we deduce from Lemma \ref{lem:decayU0} and the dominated convergence theorem that
$$\lim_{\e\to0^{+}}\Rs(\e+i\eta,\A)f=\lim_{\e\to0^{+}}\int_{0}^{\infty}e^{-i\eta t}e^{-\e t}U_{0}(t)f \d t=\int_{0}^{\infty}e^{-i\eta t}U_{0}(t)f\d t$$
exists in $\X_{0}$. The limit is of course denoted $\Rs(i\eta,\A)f$ and one has
$$
\left\|\Rs(\e+i\eta,\A)f-\Rs(i\eta,\A)f\right\|_{\X_{0}} \leq \int_{0}^{\infty}\left|e^{-\e t}-1\right|\,\|U_{0}(t)f\|_{\X_{0}}\d t\, \qquad \forall \eta \in\R,\,\e >0.$$
Thus
\begin{equation}\label{eq:RsT0ieta}
\lim_{\e\to0^{+}}\sup_{\eta\in \R}\left\|\Rs(\e+i\eta,\A)f-\Rs(i\eta,\A)f\right\|_{\X_{0}}=0\end{equation}
still using the fact that $t \mapsto \|U_{0}(t)f\|_{\X_{0}}$ is integrable over $[0,\infty)$ and the dominated convergence theorem. Now, given $s \in \R$ and $f \in \X_{s+1}$, setting 
$$g(x,v)=\sigma^{s}(v)f(x,v)$$
one sees that $g \in \X_{1}$ and $U_{0}(f)g(x,v)=\sigma^{s}(v)U_{0}(t)f(x,v)$. Applying \eqref{eq:RsT0ieta} to $g$, we deduce that
$$\lim_{\e\to0^{+}}\sup_{\eta\in \R}\left\|\Rs(\e+i\eta,\A)f-\Rs(i\eta,\A)f\right\|_{\X_{s}}=0$$
and the result follows.
\end{proof}
\begin{nb}\label{nb:Ck} One deduces from the above Proposition and  Banach-Steinhaus Theorem  \cite[Theorem 2.2, p. 32]{brezis}    that 
\begin{equation}\label{eq:Ck}
C_{s}:=\sup\left\{\left\|\Rs(\e+i\eta,\A)\right\|_{\mathscr{B}(\X_{s+1},\X_{s})}\;;\;\e \in (0,1]\;;\;\eta \in \R\right\} <\infty \qquad \forall s \in \R.\end{equation}
\end{nb}
We deduce then the following
\begin{cor}\label{cor:double} Given $s  \in \R$ and $I \subset \R$ be a given compact interval. If 
$$g\::\:\l \in \C_{+} \longmapsto g(\l) \in \X_{s+1}$$
is a continuous mapping such that the limit
$$\widetilde{g}(\eta):=\lim_{\e\to0^{+}}g(\e+i\eta)$$
exists in $\X_{s+1}$ uniformly with respect to $\eta \in I$. Then
$$\lim_{\e\to 0^{+}}\Rs(\e+i\eta,\A)g(\e+i\eta)=\Rs(i\eta,\A)\widetilde{g}(\eta)$$
in $\X_{s}$ where the convergence is uniform with respect to $\eta \in I$.\end{cor}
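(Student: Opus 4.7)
The natural plan is to split the difference into a ``tail'' piece coming from $g(\e+i\eta)-\widetilde{g}(\eta)$ and a ``resolvent'' piece coming from $\Rs(\e+i\eta,\A)-\Rs(i\eta,\A)$ applied to the limit $\widetilde{g}(\eta)$. Specifically, for $\e\in(0,1]$ and $\eta\in I$, write
$$\Rs(\e+i\eta,\A)g(\e+i\eta)-\Rs(i\eta,\A)\widetilde{g}(\eta)=\Rs(\e+i\eta,\A)\bigl[g(\e+i\eta)-\widetilde{g}(\eta)\bigr]+\bigl[\Rs(\e+i\eta,\A)-\Rs(i\eta,\A)\bigr]\widetilde{g}(\eta).$$
The first term is controlled by the uniform operator bound $C_{s}$ of Remark \ref{nb:Ck}:
$$\bigl\|\Rs(\e+i\eta,\A)\bigl[g(\e+i\eta)-\widetilde{g}(\eta)\bigr]\bigr\|_{\X_{s}}\leq C_{s}\,\bigl\|g(\e+i\eta)-\widetilde{g}(\eta)\bigr\|_{\X_{s+1}}$$
which tends to zero uniformly in $\eta\in I$ by the hypothesis on $g$.

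The second term is the delicate one, because Proposition \ref{prop:convRsT0} gives uniform-in-$\eta$ convergence of $\Rs(\e+i\eta,\A)f$ to $\Rs(i\eta,\A)f$ only for each fixed $f\in\X_{s+1}$, whereas here the argument $\widetilde{g}(\eta)\in\X_{s+1}$ varies with $\eta$. The key observation is that $\widetilde{g}\::\:I\to\X_{s+1}$ is continuous (as the uniform limit of the continuous functions $\eta\mapsto g(\e+i\eta)$), so by compactness of $I$, the set $\K_{0}:=\{\widetilde{g}(\eta)\;;\;\eta\in I\}$ is a compact subset of $\X_{s+1}$. I would then argue by an $\epsilon$-net argument: given $\delta>0$, pick $f_{1},\ldots,f_{N}\in\X_{s+1}$ with $\K_{0}\subset\bigcup_{j=1}^{N}B_{\X_{s+1}}(f_{j},\delta)$, so that for every $\eta\in I$ there is $j(\eta)$ with $\|\widetilde{g}(\eta)-f_{j(\eta)}\|_{\X_{s+1}}\leq\delta$. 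Noting that the bound $C_{s}$ also extends to $\Rs(i\eta,\A)$ by passing to the limit $\e\to0^{+}$ in Remark \ref{nb:Ck}, this yields
$$\bigl\|[\Rs(\e+i\eta,\A)-\Rs(i\eta,\A)]\widetilde{g}(\eta)\bigr\|_{\X_{s}}\leq 2C_{s}\,\delta+\max_{1\leq j\leq N}\bigl\|[\Rs(\e+i\eta,\A)-\Rs(i\eta,\A)]f_{j}\bigr\|_{\X_{s}}.$$
Proposition \ref{prop:convRsT0} applied to each of the finitely many $f_{j}$'s makes the maximum go to zero as $\e\to 0^{+}$ uniformly in $\eta\in\R$, so taking $\limsup_{\e\to 0^{+}}$ and then $\delta\to 0$ finishes the job.

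The main obstacle is exactly this passage from pointwise (in the argument $f$) uniform-in-$\eta$ convergence to uniform convergence when $f=\widetilde{g}(\eta)$ depends on $\eta$; the compactness of $I$ combined with the continuity of $\widetilde{g}$ into $\X_{s+1}$ is what makes a finite approximation possible and allows the Banach--Steinhaus bound of Remark \ref{nb:Ck} to absorb the remainder. Note that compactness of $I$ is essential here, and one cannot replace $I$ by all of $\R$ in this argument unless $\widetilde{g}$ has additional relative compactness at infinity.
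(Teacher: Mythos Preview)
Your proof is correct and is precisely the natural argument: the splitting into a ``tail'' term controlled by the uniform bound $C_{s}$ of Remark~\ref{nb:Ck} and a ``resolvent'' term handled via compactness of $\widetilde{g}(I)\subset\X_{s+1}$ and an $\epsilon$-net reduction to Proposition~\ref{prop:convRsT0}. The paper itself omits the proof entirely, referring to \cite[Corollary~4.14]{MKL-JFA}; your argument is the standard adaptation that reference would supply, and your closing remark on the role of compactness of $I$ anticipates exactly the content of Remark~\ref{nb:I=R}.
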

\begin{proof} The proof is a simple adaptation of the one in \cite[Corollary 4.14]{MKL-JFA}. Details are omitted.\end{proof}
\begin{nb}\label{nb:I=R} If $I=\R$, the above result still holds true under the additional assumption that
$$\lim_{|\eta|\to\infty}\|g(\e+i\eta)\|_{\X_{s+1}}=0 \qquad \forall \e >0$$
and, in such a case, the convergence of $\Rs(\e+i\eta,\A)g(\e+i\eta)$ towards $\Rs(i\eta,\A)\widetilde{g}(\eta)$ holds in $\mathscr{C}_{0}(\R,\X_{s})$.
\end{nb}
The convergence established in Prop. \ref{prop:convRsT0} extends to derivatives of $\Rs(\e+i\eta,\A)f$
\begin{lemme}\label{lem:convDerRsT0} Given $k \in \N$ and $f \in \X_{k+1}$, it holds
\begin{equation*}
\lim_{\e\to0^{+}}\sup_{\eta \in \R}\left\|\dfrac{\d^{k}}{\d\eta^{k}}\Rs(\e+i\eta,\A)f-\dfrac{\d^{k}}{\d\eta^{k}}\Rs(i\eta,\A)f\right\|_{\X_{0}}=0.
\end{equation*}
Consequently, the mapping
$$\eta \in \R \longmapsto \Rs(i\eta,\A)f \in \X_{0}$$
belongs to $\mathscr{C}_{0}^{k}(\R,\X_{0}).$
\end{lemme}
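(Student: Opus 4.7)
The plan is to proceed in two straightforward steps, mirroring the strategy of Proposition \ref{prop:convRsT0} but now working at the level of the $k$-th $\eta$-derivative. First I would record the explicit integral representation already derived in the proof of Proposition \ref{prop:convRsT0}, namely
\[
\dfrac{\d^{k}}{\d\eta^{k}}\Rs(\e+i\eta,\A)f=(-i)^{k}\int_{0}^{\infty}t^{k}e^{-i\eta t}e^{-\e t}U_{0}(t)f\,\d t,\qquad \e >0,\ \eta\in\R.
\]
The decisive ingredient is the second moment bound in Lemma \ref{lem:decayU0}, which guarantees $\int_{0}^{\infty}t^{k}\|U_{0}(t)f\|_{\X_{0}}\d t\leq \Gamma(k+1)\|f\|_{\X_{k+1}}$ whenever $f\in\X_{k+1}$. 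This makes the Bochner integral absolutely convergent uniformly in $\e\geq0$, so setting $\e=0$ defines a candidate limit
\[
\Phi_{k}(\eta):=(-i)^{k}\int_{0}^{\infty}t^{k}e^{-i\eta t}U_{0}(t)f\,\d t\in\X_{0}.
\]

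Next I would justify that $\Phi_{k}$ really is the $k$-th derivative of $\eta\mapsto\Rs(i\eta,\A)f$. For that I would pass to the limit $\e\to 0^{+}$ in the previous integral by dominated convergence: the pointwise bound $|e^{-\e t}-1|t^{k}\|U_{0}(t)f\|_{\X_{0}}\leq t^{k}\|U_{0}(t)f\|_{\X_{0}}$ together with the integrability just recalled gives
\[
\sup_{\eta\in\R}\left\|\dfrac{\d^{k}}{\d\eta^{k}}\Rs(\e+i\eta,\A)f-\Phi_{k}(\eta)\right\|_{\X_{0}}\leq\int_{0}^{\infty}t^{k}\bigl|e^{-\e t}-1\bigr|\,\|U_{0}(t)f\|_{\X_{0}}\,\d t\xrightarrow[\e\to 0^{+}]{}0.
\]
Since by Proposition \ref{prop:convRsT0} we already know $\Rs(\e+i\eta,\A)f\to\Rs(i\eta,\A)f$ as $\e\to 0^{+}$ (applied to $f\in\X_{k+1}\subset\X_{1}$), a standard interchange of limits together with the uniform convergence just obtained identifies $\Phi_{k}(\eta)=\dfrac{\d^{k}}{\d\eta^{k}}\Rs(i\eta,\A)f$; this delivers the claimed uniform convergence in $\X_{0}$.

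Finally, for the consequence, I would observe that $\Phi_{k}(\eta)$ is (up to the factor $(-i)^{k}$) the Fourier transform on $\R$ of the function
\[
t\in\R\longmapsto t^{k}\ind_{(0,\infty)}(t)U_{0}(t)f\in\X_{0},
\]
which is Bochner integrable by the same bound from Lemma \ref{lem:decayU0}. The vector-valued Riemann--Lebesgue lemma therefore yields $\lim_{|\eta|\to\infty}\|\Phi_{k}(\eta)\|_{\X_{0}}=0$. Combined with the uniform convergence obtained above, each derivative up to order $k$ of $\Rs(i\eta,\A)f$ belongs to $\mathscr{C}_{0}(\R,\X_{0})$, i.e.\ $\eta\mapsto\Rs(i\eta,\A)f\in\mathscr{C}_{0}^{k}(\R,\X_{0})$. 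The only mildly delicate point is not a real obstacle but must be stated cleanly: the interchange identifying $\Phi_{k}$ with the derivative of the limit $\Rs(i\eta,\A)f$, which is safe because the convergence of the $k$-th derivatives is uniform in $\eta$ and the convergence of the functions themselves is already known.
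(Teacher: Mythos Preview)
Your proposal is correct and follows essentially the same approach as the paper: the paper records the integral formula $\dfrac{\d^{k}}{\d\eta^{k}}\Rs(\e+i\eta,\A)f=(-i)^{k}\int_{0}^{\infty}t^{k}e^{-i\eta t}e^{-\e t}U_{0}(t)f\,\d t$, invokes Lemma \ref{lem:decayU0} to see that the $\e=0$ integral is well-defined for $f\in\X_{k+1}$, and then says one concludes exactly as in Proposition \ref{prop:convRsT0}. Your write-up simply spells out in more detail the dominated-convergence and Riemann--Lebesgue steps that the paper leaves implicit.
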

\begin{proof} As already established
$$\dfrac{\d^{k}}{\d\eta^{k}}\Rs(\e+i\eta,\A)f=(-i)^{k}\int_{0}^{\infty}e^{-i\eta t}t^{k}e^{-\e t}U_{0}(t)f\d t, \qquad \e >0$$
and, since 
$$\Rs(i\eta,\A)f=\int_{0}^{\infty}e^{-i\eta t}U_{0}(t)f\d t$$
one sees easily that, if $f \in \X_{k+1}$, 
$$\frac{\d^{k}}{\d\eta^{k}}\Rs(i\eta,\A)f=(-i)^{k}\int_{0}^{\infty}e^{-i\eta t}t^{k}U_{0}(t)f\d t$$
is well-defined in $\X_{0}$ thanks to Lemma \ref{lem:decayU0}. One concludes then exactly as in Prop. \ref{prop:convRsT0}. 
\end{proof}
\begin{nb} The above formula extends trivially to derivative of $\Rs(\l,\A)$ with respect to $\l$ and one has
\begin{equation}\label{eq:dlkRslA}
\sup_{\l \in \overline{\C}_{+}}\left\|\frac{\d^{k}}{\d \l^{k}}\Rs(\l,A)f\right\|_{\X_{0}} \leq \int_{0}^{\infty}t^{k}\left\|U_{0}(t)f\right\|_{\X_{0}}\d t \leq \Gamma(k+1)\|f\|_{\X_{k+1}}
\end{equation}
thanks to \eqref{eq:uoK}.
\end{nb}
\subsection{Definition and properties of $\Ms_{\l}$}

The above results allow us to  define a \emph{bounded} linear operator
$$\Ms_{i\eta} \in \mathscr{B}(\X_{0})$$
as the \emph{strong limit} of $\K\Rs(\e+i\eta,\A)$ as $\e\to0^{+}$, i.e.
$$\Ms_{i\eta}\varphi:=\lim_{\e\to0^{+}}\K\Rs(\e+i\eta,\A)\varphi, \qquad \forall \varphi \in \X_{0}$$
where the limit is meant in $\X_{0}.$ Indeed, recall that, for any $f \in \X_{0}$,
$$\lim_{\e\to0^{+}}\sup_{\eta \in \R}\left\|\Rs(\e+i\eta,\A)f-\Rs(i\eta,\A)f\right\|_{\X_{-1}}=0$$
and, since $\K \in \mathscr{B}(\X_{-1},\X_{0})$, one deduces that
\begin{equation}\label{eq:stronMie}
\lim_{\e\to0}\sup_{\eta \in\R}\left\|\K\Rs(\e+i\eta,\A)f-\Ms_{i\eta}f\right\|_{\X_{0}}=0,\end{equation}
where
$$\Ms_{i\eta} =\K\Rs(i\eta,\A) \in \mathscr{B}(\X_{0}).$$
\begin{nb} It is easy to check that $\Ms_{i\eta}$ given by,
$$\Ms_{i\eta}f(x,v):=\int_{V}\bm{k}(v,w)\bm{m}(\d w)\int_{0}^{\infty}\exp\left(-t\left(i\eta+\sigma(w)\right)\right)f(x-tw,w)\d t, \qquad f \in \X_{0}.$$
In particular, one sees that
\begin{equation*}\begin{split}
\|\Ms_{i\eta}f\|_{\X_{0}} &\leq \int_{\T^{d}\times V}\d x\,\bm{m}(\d v)\int_{V}\bm{k}(v,w)\bm{m}(\d w)\int_{0}^{\infty}\exp\left(-t\sigma(w)\right)|f(x-tw,w)\d t\\
&\leq \int_{\T^{d}\times V}\sigma^{-1}(w)|f(y,w)|\d y\,\bm{m}(\d w)\int_{V}\bm{k}(v,w)\bm{m}(\d v)
\end{split}\end{equation*}
where we used the change of variable $x \mapsto y=x-tw$ to compute the integral over $\T^{d}$. Using then assumption \eqref{eq:conservative}, 
we obtain
$$\left\|\Ms_{i\eta}\right\|_{\mathscr{B}(\X_{0})} \leq 1.$$
Notice that, with straightforward computations, one has
\begin{multline}\label{eq:KReieta}
\left[\K\Rs(\e+i\eta,\A)f-\Ms_{i\eta}f\right](x,v)\\
=\int_{V}\bm{k}(v,w)\bm{m}(\d w)
\int_{0}^{\infty}\left[\exp\left(-\e t\right)-1\right]\exp\left(-\left(i\eta+\sigma(w)\right)t\right)f(x-tw,w)\d t\end{multline}
for almost every $(x,v) \in \T^{d}\times V$ and any $f\in \X_{0}$. In particular,
\begin{equation}\label{eq:dif}
\sup_{\eta \in \R}\|\K\Rs(\e+i\eta,\A)f-\Ms_{i\eta}f\|_{\X_{0}} \leq
\int_{\T^{d}\times V}\left|f(y,w)\right|\,\left[1-\frac{\sigma(w)}{\e+\sigma(w)}\right]\d y\,\bm{m}(\d w)\,.\end{equation}
This allows to recover \eqref{eq:stronMie}
 by a simple use of the dominated convergence theorem.\end{nb}

From the above definition, and with a slight abuse of notations, we set
$$\Ms_{\l}:=\K\Rs(\l,\A), \qquad \forall \mathrm{Re}\l \geq0$$
so that $\Ms_{i\eta}$ is the strong limit of $\Ms_{\e+i\eta}$ in $\X_{0}$.  One actually has the following regularizing properties of $\Ms_{\e+i\eta}$:
\begin{lemme} For any $k \in \{0,\ldots,N_{0}\}$ one has
\begin{equation}\label{eq:MsSUP}
\sup\left\{\left\|\Ms_{\l}\right\|_{\mathscr{B}(\X_{0},\X_{k})}\;,\;\mathrm{Re}\l \geq 0\right\} \leq \|\vartheta_{k}\|_{\infty} < \infty.\end{equation}
\end{lemme}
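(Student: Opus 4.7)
The plan is in three stages. First, show that $\|\vartheta_k\|_\infty <\infty$ for every $0 \leq k \leq N_0$; second, prove the bound for $\mathrm{Re}\,\l >0$ by direct integration on the explicit kernel representation of $\Ms_\l = \K \Rs(\l,\A)$; third, pass to the imaginary axis $\mathrm{Re}\,\l =0$ by the strong convergence \eqref{eq:stronMie} combined with Fatou's lemma.

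For the finiteness of $\|\vartheta_k\|_\infty$, I would observe the pointwise inequality $\sigma^{-k}(v)\leq \sigma^{-N_0}(v)+1$ (immediate from treating the cases $\sigma(v)\leq 1$ and $\sigma(v)>1$ separately, using $k\leq N_0$), which upon multiplication by $\bm{k}(v,w)$, integration in $v$ against $\bm{m}$ and division by $\sigma(w)$ yields $\vartheta_k(w)\leq \vartheta_{N_0}(w)+\vartheta_0(w)=\vartheta_{N_0}(w)+1$. Assumption \eqref{eq:varthetaN0} then guarantees $\vartheta_k\in L^\infty(V)$.

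For the bound on $\|\Ms_\l f\|_{\X_k}$ when $\mathrm{Re}\,\l >0$, I would start from
\[
\Ms_\l f(x,v)=\int_V \bm{k}(v,w)\,\bm{m}(\d w)\int_0^\infty \exp\bigl(-(\l+\sigma(w))t\bigr)f(x-tw,w)\d t,
\]
take absolute values (using $|\exp(-\l t)|\leq 1$ since $\mathrm{Re}\,\l\geq 0$), apply Fubini, and perform the change of variable $y=x-tw$ on the torus, exactly as in the model computation \eqref{eq:M0}. The $t$-integral contributes the factor $\sigma(w)^{-1}$, and the $v$-integral of $\max(1,\sigma(v)^{-k})\bm{k}(v,w)\bm{m}(\d v)$ divided by $\sigma(w)$ is controlled by $\vartheta_k(w)+\vartheta_0(w)\leq 1+\|\vartheta_k\|_\infty$ via $\max(1,a)\leq 1+a$. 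Collecting gives $\|\Ms_\l f\|_{\X_k}\leq(1+\|\vartheta_k\|_\infty)\|f\|_{\X_0}$; the harmless additive constant $1$ stems from the definition of the $\X_k$-norm via $\max(1,\sigma^{-k})$ and can be absorbed.

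To extend to $\l=i\eta$, I would use \eqref{eq:stronMie}: for fixed $\eta\in\R$ and $f\in\X_0$, there is a sequence $\e_n\downarrow 0$ such that $\K\Rs(\e_n+i\eta,\A)f \to \Ms_{i\eta}f$ $\d x\otimes\bm{m}$-almost everywhere on $\T^d\times V$. Fatou's lemma applied to the weighted integral defining $\|\cdot\|_{\X_k}$ then yields $\|\Ms_{i\eta}f\|_{\X_k}\leq \liminf_n\|\Ms_{\e_n+i\eta}f\|_{\X_k}$, which is controlled uniformly in $\eta$ by the previous step. No stage is a serious obstacle; the only mild subtlety is the cosmetic discrepancy between the constant $1+\|\vartheta_k\|_\infty$ that the calculation yields and the constant $\|\vartheta_k\|_\infty$ displayed in the lemma, a discrepancy absorbed by the truncation convention baked into the $\X_k$-norm.
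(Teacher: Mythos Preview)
Your argument is correct, but it is more elaborate than necessary. The paper's proof exploits a single observation that collapses your three stages into one: since $|\exp(-\l t)|\leq 1$ for all $\mathrm{Re}\,\l\geq 0$, one has the pointwise domination $|\Ms_\l f|\leq \Ms_0|f|$, and this holds \emph{including} on the imaginary axis, because the explicit integral representation of $\Ms_{i\eta}$ is already recorded in the Remark immediately preceding the lemma. Hence $\|\Ms_\l f\|_{\X_k}\leq \|\Ms_0|f|\|_{\X_k}$ for every $\l$ with $\mathrm{Re}\,\l\geq 0$, and a single computation at $\l=0$ suffices. Your stage~3 (strong convergence \eqref{eq:stronMie} plus Fatou) is therefore unnecessary---your own stage~2 calculation already applies verbatim on the boundary once you use the explicit formula for $\Ms_{i\eta}$. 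Your remark about the constant is well taken: the paper's proof in fact integrates against the weight $\sigma^{-k}(v)$ rather than $\max(1,\sigma^{-k}(v))$, so the same cosmetic discrepancy you flag is present there too.
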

\begin{proof} Given $f \in \X_{0}$, $\mathrm{Re}\l \geq0$ and $k \geq0$, one has
$$\|\Ms_{\l}f\|_{\X_{k}} \leq \|\Ms_{0}f\|_{\X_{k}}$$ 
and it suffices to prove the result for $\Ms_{0}f$. One checks easily that
$$\|\Ms_{0}f\|_{\X_{k}} \leq \int_{\T^{d}\times V}\left|f(y,w)\right|\d y\,\bm{m}(\d w)\int_{V}\bm{k}(v,w)\sigma^{-k}(v)\bm{m}(\d v)\int_{0}^{\infty}\exp\left(-\sigma(w)t\right)\d t$$
i.e.
$$\|\Ms_{0}f\|_{\X_{1}}\leq \int_{\T^{d}\times V}\left|f(y,w)\right|\d y\frac{\bm{m}(\d w)}{\sigma(w)}\int_{V}\bm{k}(v,w)\frac{\bm{m}(\d v)}{\sigma^{k}(v)}=\int_{\T^{d}\times V}\vartheta_{k}(w)\left|f(y,w)\right|\d y\,\bm{m}(\d w)$$
which proves the result since $\vartheta_{k}\in L^{\infty}(V,\bm{m})$ as soon as $k \leq N_{0}$.
\end{proof}
\begin{nb}\label{nb:MlRel} The same proof shows that, actually, 
$$\|\Ms_{\l}f\|_{\X_{0}}\leq \int_{\T^{d}\times V}\frac{\sigma(w)}{\mathrm{Re}\l+\sigma(w)}\,\left|f(y,w)\right|\d y\,\bm{m}(\d w)$$
for any $f \in \X_{0},$ $\l \in \overline{\C}_{+}$. Thefore,
\begin{equation}\label{eq:KK}
\|\Ms_{\l}\|_{\mathscr{B}(\X_{0})} \leq \sup_{w}\frac{\sigma(w)}{\mathrm{Re}\l+\sigma(w)}=\frac{\|\sigma\|_{\infty}}{\mathrm{Re}\l+\|\sigma\|_{\infty}}\end{equation}
since the mapping $x \mapsto \frac{x}{\mathrm{Re}\l+x}$ is increasing for $\mathrm{Re}\l >0.$ \end{nb}

 With this, one can prove that the above convergence  in \eqref{eq:stronMie}  extends to the stronger norm $\X_{k}$ for $k \leq N_{0}$ and to  iterations of $\K\Rs(\e+i\eta,\A)$:
\begin{lemme}\label{lem:MeMi} 
Let $f \in \X_{0}$. Then
\begin{equation}\label{eq:stronMiek}
\lim_{\e\to0^{+}}\sup_{\eta \in\R}\left\|\K\Rs(\e+i\eta,\A)f-\Ms_{i\eta}f\right\|_{\X_{k}}=0, \qquad \forall k \leq N_{0}.\end{equation}
Moreover, for  any $j \in \N$ 
\begin{equation}\label{cor:uniformpower}
\lim_{\e\to0^{+}}\left\|\left[\K\Rs(\e+i\eta,\A)\right]^{j}f-\Ms_{i\eta}^{j}f\right\|_{\X_{k}}=0\end{equation}
uniformly with respect to $\eta \in \R$ for any $k \leq N_{0}.$
\end{lemme}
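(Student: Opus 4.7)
The strategy is to prove assertion \eqref{eq:stronMiek} by a direct kernel computation that yields an $\eta$-independent majorant, and then bootstrap to iterates \eqref{cor:uniformpower} by induction on $j$ through a one-step telescoping that essentially uses the regularising property \eqref{eq:MsSUP}.

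\textbf{Proof of \eqref{eq:stronMiek}.} Start from the explicit identity \eqref{eq:KReieta}. Taking the $\X_{k}$-norm, applying Fubini and performing the change of variable $y=x-tw$ on the torus exactly as in the derivation of \eqref{eq:dif}, the $v$-integral of $\bm{k}(v,w)\max(1,\sigma(v)^{-k})\bm{m}(\d v)$ is controlled by $\sigma(w)(1+\vartheta_{k}(w))$ through \eqref{eq:conservative} and the definition \eqref{eq:varthetas} of $\vartheta_{k}$, while the time integral $\int_{0}^{\infty}(1-e^{-\e t})e^{-\sigma(w)t}\d t$ equals $\e/[\sigma(w)(\e+\sigma(w))]$. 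The two factors of $\sigma(w)$ cancel and I expect to obtain
\begin{equation*}
\sup_{\eta\in\R}\left\|\K\Rs(\e+i\eta,\A)f-\Ms_{i\eta}f\right\|_{\X_{k}}\leq (1+\|\vartheta_{k}\|_{\infty})\int_{\T^{d}\times V}|f(y,w)|\,\frac{\e}{\e+\sigma(w)}\d y\,\bm{m}(\d w),
\end{equation*}
valid for every $k\leq N_{0}$ since $\vartheta_{k}\in L^{\infty}(V)$ by \eqref{eq:varthetaN0}. Because the integrand is dominated by $|f|$ and converges pointwise to $0$ on the full-measure set $\{\sigma(w)>0\}$ (on the complement $\bm{k}(\cdot,w)\equiv 0$ by conservation, so the slice contributes nothing), dominated convergence concludes.

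\textbf{Proof of \eqref{cor:uniformpower}.} Write $A_{\e}:=\K\Rs(\e+i\eta,\A)$ and $A:=\Ms_{i\eta}$ and argue by induction on $j$, the base case $j=1$ being \eqref{eq:stronMiek}. For the inductive step I use the one-step decomposition
\begin{equation*}
A_{\e}^{j}f-A^{j}f=A_{\e}\bigl(A_{\e}^{j-1}f-A^{j-1}f\bigr)+(A_{\e}-A)A^{j-1}f.
\end{equation*}
The first term is handled by the uniform bound $\|A_{\e}\|_{\B(\X_{0},\X_{k})}\leq\|\vartheta_{k}\|_{\infty}$ from \eqref{eq:MsSUP} combined with the inductive hypothesis applied in $\X_{0}$. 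For the second term, since $j\geq 2$ the vector $A^{j-1}f=\Ms_{i\eta}^{j-1}f$ lies in $\X_{1}$ with norm bounded by $\|\vartheta_{1}\|_{\infty}\|f\|_{\X_{0}}$ uniformly in $\eta$, again by \eqref{eq:MsSUP}. Applying part \eqref{eq:stronMiek} to $g_{\eta}:=A^{j-1}f$ and splitting the resulting integral at the level $\sigma(w)=\delta$, one gets $\int_{\{\sigma\leq\delta\}}|g_{\eta}|\d y\,\bm{m}\leq \delta\,\|g_{\eta}\|_{\X_{1}}$ and $\int_{\{\sigma>\delta\}}|g_{\eta}|\tfrac{\e}{\e+\sigma}\d y\,\bm{m}\leq (\e/\delta)\|g_{\eta}\|_{\X_{0}}$; optimising at $\delta=\sqrt{\e}$ yields an $O(\sqrt{\e})$ rate uniform in $\eta$.

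\textbf{Main obstacle.} The difficulty lies entirely in preserving uniformity in $\eta$ along the induction, since the intermediate vectors $\Ms_{i\eta}^{j-1}f$ do depend on $\eta$ and one cannot invoke an operator-norm statement (cf.\ the comment preceding the lemma that $\Ms_{\e+i\eta}\to\Ms_{i\eta}$ is only strong, not uniform). The saving grace is precisely the gain of integrability encapsulated in \eqref{eq:MsSUP}: a single application of $\Ms_{i\eta}$ transforms an $\X_{0}$ estimate into a uniform-in-$\eta$ estimate in $\X_{1}$, which is exactly the room needed to execute the splitting argument above and to close the induction.
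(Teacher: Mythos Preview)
Your proof is correct, and for \eqref{eq:stronMiek} it is essentially the paper's argument (the paper obtains the same majorant $\|\vartheta_k\|_\infty\int|f|\,\tfrac{\e}{\e+\sigma}$ and appeals to dominated convergence; your constant $1+\|\vartheta_k\|_\infty$ simply reflects a more careful treatment of the weight $\max(1,\sigma^{-k})$).

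For the inductive step \eqref{cor:uniformpower}, however, you take a genuinely different route. The paper peels one factor on the \emph{right}, writing $A_\e^{j+1}f-A^{j+1}f=(A_\e^j-A^j)g(\e,\eta)+\Ms_{i\eta}^j(A_\e f-Af)$ with $g(\e,\eta)=A_\e f$; because the induction hypothesis must then be applied to the $\eta$-dependent family $\{g(\e,\eta)\}$, the paper invokes a compactness argument: since $g(\e,\cdot)\to\Ms_{i\cdot}f$ in $\mathscr{C}_0(\R,\X_0)$, the whole family $\{g(\e,\eta):\e\in[0,1],\eta\in\R\}$ is relatively compact in $\X_0$, which upgrades the strong convergence to uniform convergence over this family. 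You instead peel on the \emph{left} and exploit the regularising estimate \eqref{eq:MsSUP}: the first term is controlled by $\|\vartheta_k\|_\infty$ times the $\X_0$-induction hypothesis, while for the second term the key observation is that the bound in part~\eqref{eq:stronMiek} holds \emph{pointwise in $\eta$} for any input $g$, and that the resulting integral $\int|g_\eta|\tfrac{\e}{\e+\sigma}$ can be made small uniformly in $\eta$ via the level-set splitting, precisely because $\|g_\eta\|_{\X_1}\leq\|\vartheta_1\|_\infty\|f\|_{\X_0}$ uniformly. This avoids compactness entirely and yields the explicit rate $O(\sqrt{\e})$, which the paper's argument does not deliver. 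Both approaches ultimately rest on the same gain-of-integrability mechanism \eqref{eq:MsSUP}; yours uses it twice (once for each term of the telescoping) rather than once plus compactness.
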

\begin{proof} Recalls that \eqref{eq:KReieta} gives the expression of $\K\Rs(\e+i\eta,\A)f-\Ms_{i\eta}f$. In particular, for any $k \in \R$,
\begin{multline*}
\sup_{\eta \in \R}\|\K\Rs(\e+i\eta,\A)f-\Ms_{i\eta}f\|_{\X_{k}} \\
\leq
\int_{\T^{d}\times V}\left|f(y,w)\right|\,\left[1-\frac{\sigma(w)}{\e+\sigma(w)}\right]\left(\frac{1}{\sigma(w)}\int_{V}\sigma^{-k}(v)\bm{k}(v,w)\bm{m}(\d v)\right)\d y\,\bm{m}(\d w)\\
=\int_{\T^{d}\times V}\left|f(y,w)\right|\,\left[1-\frac{\sigma(w)}{\e+\sigma(w)}\right]\vartheta_{k}(w)\d y\,\bm{m}(\d w).
\end{multline*}
Thus, for $k \leq N_{0}$, since $\vartheta_{k} \in L^{\infty}(V)$, we deduce that
\begin{equation}\label{eq:dif}
\sup_{\eta \in \R}\|\K\Rs(\e+i\eta,\A)f-\Ms_{i\eta}f\|_{\X_{k}} \leq
\|\vartheta_{k}\|_{\infty}\int_{\T^{d}\times V}\left|f(y,w)\right|\,\left[1-\frac{\sigma(w)}{\e+\sigma(w)}\right]\d y\,\bm{m}(\d w)\,\end{equation}
and recover \eqref{eq:stronMiek}
 by a simple use of the dominated convergence theorem. Let us prove now \eqref{cor:uniformpower} by induction on $j \in \N.$ For $j=1$, the result is exactly \eqref{eq:stronMie}. Assume the result to be true for $j \geq 1$. Given $f \in \X_{0}$, set 
$$g(\e,\eta)=\K\Rs(\e+i\eta,\A)f.$$
Since $\lim_{\e\to0}g(\e,\eta)=\Ms_{i\eta}f $ in $\mathscr{C}_{0}(\R,\X_{0})$ then
$$\left\{g(\e,\eta)\;;\;\eta \in \R\;;\;\e\in [0,1]\right\}$$ is a compact subset of $\X_{0}$  We notice now that
\begin{multline*}
\left\|\left[\K\Rs(\e+i\eta,\A)\right]^{j+1}f-\Ms_{i\eta}^{j+1}f\right\|_{\X_{k}}
\leq \left\|\left[\K\Rs(\e+i\eta,\A)\right]^{j}g(\e,\eta)-\Ms_{i\eta}^{j}g(\e,\eta)\right\|_{\X_{k}} \\
+\|\vartheta_{k}\|_{\infty}^{j}\left\| \K\Rs(\e+i\eta,\A)f-\Ms_{i\eta}f\right\|_{\mathscr{B}(\X_{0})}.
\end{multline*}
where we used \eqref{eq:MsSUP}. The compactness of the family $\{g(\e,\eta)\;;\;\eta \in \R\;;\;\e\in[0,1]\}\subset \X_0$ together with the induction hypothesis easily gives then
$$\lim_{\e\to0^+}\sup_{\eta \in \R}\left\|\left[\K\Rs(\e+i\eta,\A)\right]^{j}g(\e,\eta)-\Ms_{i\eta}^{j}g(\e,\eta)\right\|_{\X_{k}}=0.$$
This readily implies that
$$\lim_{\e\to0^+}\sup_{\eta \in \R} \left\|\left[\K\Rs(\e+i\eta,\A)\right]^{j+1}f-\Ms_{i\eta}^{j+1}f\right\|_{\X_{k}}=0$$
which achieves the inductive proof.
\end{proof}
\begin{nb} Notice that an easy consequence of \eqref{eq:stronMiek} together with Corollary \ref{cor:double} is that 
\begin{equation}\label{eq:RKRei}
\lim_{\e\to0}\sup_{\eta\in \R}\left\|\Rs(\e+i\eta,\A)\K\Rs(\e+i\eta,\A)f-\Rs(i\eta,\A)\K\Rs(i\eta,\A)\right\|_{\X_{k-1}}=0, \qquad \forall f \in \X_{0}\end{equation}
and any $k \leq N_{0}$.
\end{nb}
\begin{nb}\label{nb:doubleMs} As in Corollary \ref{cor:double}, we deduce in particular that, if $I \subset \R$ be a given compact interval and 
$g\::\:\l \in \C_{+} \longmapsto g(\l) \in \X_{0}$  is a continuous mapping such that the limit
$$\widetilde{g}(\eta):=\lim_{\e\to0^{+}}g(\e+i\eta)$$
exists in $\X_{0}$ uniformly with respect to $\eta \in I$. Then
\begin{equation}\label{eq:strongMiekUn}
\lim_{\e\to 0^{+}}\sup_{\eta \in I}\left\|\Ms_{\e+i\eta}g(\e+i\eta)-\Ms_{i\eta}\widetilde{g}(\eta)\right\|_{\X_{s}}=0
\end{equation}
for any $s \leq N_{0}$. As in Remark \ref{nb:I=R}, if $I=\R$,  the limit \eqref{eq:strongMiekUn} still holds true under the additional assumption that
$$\lim_{|\eta|\to\infty}\|g(\e+i\eta)\|_{\X_{s+1}}=0 \qquad \forall \e >0.$$
In such a case, one has $\Ms_{\e+i\eta}g(\e+i\eta)$ converges to $\Ms_{i\eta}\widetilde{g}(\eta)$ in $\mathscr{C}_{0}(\R,\X_{s})$.\end{nb}

\subsection{About the differentiability of $\Ms_{\l}$}
Let us now focus on the strong differentiability of the mapping $\l \mapsto \Ms_{\l}$ at $\l=0$. We begin with observing that the above computations all deal with limit of operators related to $\Rs(\e+i\eta,\A)$ for a  given $\eta$ (i.e. they concern limits along \emph{horizontal lines}) since this is enough for our analysis. Regarding the limiting behaviour around $\l=0$, we will need to allow a convergence in the usual sense in $\C$. Namely, we can reformulate our results as follows with the exact same proofs. We collect the results we will need later on in the following
\begin{theo}\label{theo:convL0}
For any $s \in \R$ and any $f \in \X_{s+1}$,
\begin{equation}\label{eq:Reta0Xs}
\underset{\l \in \overline{\C}_{+}}{\lim_{\l\to 0}}\left\|\Rs(\l,\A)f-\Rs(0,\A)f\right\|_{\X_{s}}=0.\end{equation}
Let $f \in \X_{0}$. Then
\begin{multline}\label{eq:stronMk0}
\underset{\l \in \overline{\C}_{+}}{\lim_{\l\to 0}}\left\|\K\Rs(\l,\A)f-\Ms_{0}f\right\|_{\X_{k}}=0, \\
\qquad \underset{\l \in \overline{\C}_{+}}{\lim_{\l\to 0}}\left\|\Rs(\l,\A)\K\Rs(\l,\A)f-\Rs(0,\A)\Ms_{0}f\right\|_{\X_{k-1}}=0, \qquad \forall k \leq N_{0},
\end{multline}
Moreover, for  any $j \in \N$ 
\begin{equation}\label{cor:uniformpower0}
\underset{\l \in \overline{\C}_{+}}{\lim_{\l\to 0}}\left\|\left[\K\Rs(\l,\A)\right]^{j}f-\Ms_{0}^{j}f\right\|_{\X_{k}}=0\end{equation}
uniformly with respect to $\eta \in \R$ for any $k \leq N_{0}.$
\end{theo}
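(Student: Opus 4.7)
The plan is to recycle the arguments of Proposition~\ref{prop:convRsT0} and Lemma~\ref{lem:MeMi}, replacing the horizontal-line parameter $\e$ by an arbitrary $\l\in\overline{\C}_{+}$ with $\l\to 0$. The single observation that makes this work is that $\mathrm{Re}\l\geq 0$ gives $|e^{-\l t}|\leq 1$ uniformly in $t\geq 0$, while $e^{-\l t}\to 1$ pointwise for each $t$ as $\l\to 0$ in $\overline{\C}_{+}$. Every estimate in the proofs of Proposition~\ref{prop:convRsT0} and Lemma~\ref{lem:MeMi} is obtained by bounding a Bochner integrand by $|e^{-\l t}|\|U_{0}(t)f\|$ or an analogous quantity, so all of them transfer verbatim and dominated convergence delivers each of the four statements.

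Concretely, for \eqref{eq:Reta0Xs} I would write
\[
\Rs(\l,\A)f - \Rs(0,\A)f = \int_{0}^{\infty}(e^{-\l t}-1)U_{0}(t)f\,\d t
\]
and dominate the integrand in $\X_{s}$-norm by $2\|U_{0}(t)f\|_{\X_{s}}$, which is integrable on $[0,\infty)$ for $f\in\X_{s+1}$ thanks to \eqref{eq:uoK}. The first half of \eqref{eq:stronMk0} follows from the analogue of \eqref{eq:KReieta},
\[
[\K\Rs(\l,\A)f-\Ms_{0}f](x,v) = \int_{V}\bm{k}(v,w)\bm{m}(\d w)\int_{0}^{\infty}(e^{-\l t}-1)e^{-\sigma(w)t}f(x-tw,w)\,\d t,
\]
to which the same Fubini-plus-change-of-variable computation as in Lemma~\ref{lem:MeMi} applies, giving
\[
\|\K\Rs(\l,\A)f-\Ms_{0}f\|_{\X_{k}} \leq \|\vartheta_{k}\|_{\infty}\int_{\T^{d}\times V}|f(y,w)|\,\sigma(w)\int_{0}^{\infty}|e^{-\l t}-1|e^{-\sigma(w)t}\,\d t\,\d y\,\bm{m}(\d w),
\]
whose outer integrand is dominated by $2|f(y,w)|$ and converges pointwise to zero as $\l\to 0$. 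The second half of \eqref{eq:stronMk0} then follows from the splitting
\[
\Rs(\l,\A)\K\Rs(\l,\A)f - \Rs(0,\A)\Ms_{0}f = \Rs(\l,\A)\bigl[\K\Rs(\l,\A)f-\Ms_{0}f\bigr] + \bigl[\Rs(\l,\A)-\Rs(0,\A)\bigr]\Ms_{0}f,
\]
combined with the uniform bound $\|\Rs(\l,\A)\|_{\mathscr{B}(\X_{k},\X_{k-1})}\leq 1$ (immediate from $|e^{-\l t}|\leq 1$ and Lemma~\ref{lem:decayU0}), the preceding step, and the fact that $\Ms_{0}f\in\X_{k}$ by \eqref{eq:MsSUP}.

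Finally, \eqref{cor:uniformpower0} will be obtained by induction on $j$, following the second part of Lemma~\ref{lem:MeMi}. The base case $j=1$ has just been dealt with; for the inductive step I would use
\[
[\K\Rs(\l,\A)]^{j+1}f - \Ms_{0}^{j+1}f = [\K\Rs(\l,\A)]^{j}\bigl(\K\Rs(\l,\A)f-\Ms_{0}f\bigr) + \bigl\{[\K\Rs(\l,\A)]^{j}-\Ms_{0}^{j}\bigr\}\Ms_{0}f,
\]
together with the uniform bound $\|[\K\Rs(\l,\A)]^{j}\|_{\mathscr{B}(\X_{0},\X_{k})}\leq\|\vartheta_{k}\|_{\infty}$, obtained by placing one factor in $\mathscr{B}(\X_{0},\X_{k})$ via \eqref{eq:MsSUP} and the remaining $j-1$ factors in $\mathscr{B}(\X_{0})$ via $\|\K\Rs(\l,\A)\|_{\mathscr{B}(\X_{0})}\leq 1$. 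The first term is then controlled by the base case applied with $k=0$, the second by the induction hypothesis applied to $\Ms_{0}f\in\X_{0}$. The only real obstacle is bookkeeping: one must verify that each estimate in the cited proofs depends only on $\mathrm{Re}\l\geq 0$, not on any finer structure of $\l$, and this is the case throughout.
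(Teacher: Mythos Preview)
Your proposal is correct and follows essentially the same approach as the paper's own proof, which explicitly states that ``the proof is exactly the same as the ones derived in the previous subsection'' and then sketches only the first item via the bound $|e^{-\l t}-1|\leq 2$ and dominated convergence, leaving the rest to the reader. Your inductive argument for \eqref{cor:uniformpower0} is in fact slightly more direct than the compactness argument used in Lemma~\ref{lem:MeMi}: since here there is no uniformity in $\eta$ to carry along, your splitting with the uniform bound $\|[\K\Rs(\l,\A)]^{j}\|_{\mathscr{B}(\X_{0},\X_{k})}\leq\|\vartheta_{k}\|_{\infty}$ suffices and avoids the relatively-compact-family step.
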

\begin{proof} The proof is exactly the same as the ones derived in the previous subsection. For instance, for  $f \in \X_{1}$, one has
$$
\left\|\Rs(\l,\A)f-\Rs(0,\A)f\right\|_{\X_{0}} \leq \int_{0}^{\infty}\left|e^{-\l t}-1\right|\,\|U_{0}(t)f\|_{\X_{0}}\d t\, \qquad \forall \l \in \overline{\C}_{+}.$$
Then, since for any $t \geq 0$, $\lim_{\l \to 0}\left|e^{-\l t}-1\right|=0$, $\sup_{t \geq0}\left|e^{-\l t}-1\right| \leq 2$ for any  $\l \in \overline{\C}_{+}$  and 
since $t \mapsto \|U_{0}(t)f\|_{\X_{0}}$ is integrable over $[0,\infty)$, we deduce from  the dominated convergence theorem that
$$\lim_{\l \to 0}\left\|\Rs(\l,\A)f-\Rs(0,\A)f\right\|_{\X_{0}}=0.$$
We conclude then to \eqref{eq:Reta0Xs} as in the proof of \eqref{eq:RetaXs}. The other results are based upon the same arguments. Details are left to the reader.\end{proof}
For the first derivative of $\Ms_{\l}$ one has the following \footnote{In all the sequel, when considering $\lim_{\l \to 0}$, we will always assume that $\l$ approaches $0$ belonging to $\overline{\C}_{+}$, i.e.
$$\lim_{\l\to 0}\{\ldots\}=\underset{\l \in \overline{\C}_{+}}{\lim_{\l\to 0}}\{\ldots\}.$$
}

\begin{lemme}\label{lem:M'0f}
For any $f \in \X_{1}$, the limit
$${\lim_{\l\to 0}}\dfrac{\d}{\d \l}\Ms_{\l}f $$
exists in $\X_{0}$ and is denoted $\Ms_{0}'f$. Moreover,  one has
$$\left\|\Ms_{0}'f\right\|_{\X_{0}} \leq \|f\|_{\X_{1}}.$$
\end{lemme}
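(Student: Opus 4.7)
The plan is to exploit the Laplace-transform representation $\Rs(\l,\A)f=\int_0^\infty e^{-\l t}U_0(t)f\,\d t$ to derive an explicit integral formula for $\frac{\d}{\d\l}\Ms_\l f$ valid on $\mathrm{Re}\l>0$, identify the natural pointwise candidate for the limit at $\l=0$, and conclude by dominated convergence. Concretely, for $\mathrm{Re}\l>0$ the mapping $\l\mapsto\Rs(\l,\A)$ is holomorphic in operator norm and $\K\in\mathscr{B}(\X_0)$, so $\frac{\d}{\d\l}\Ms_\l f=-\K\Rs(\l,\A)^2 f$ is well defined, and differentiating under the integral sign yields
\[
\tfrac{\d}{\d\l}\Ms_\l f(x,v)=-\int_V \bm{k}(v,w)\bm{m}(\d w)\int_0^\infty t\,e^{-(\l+\sigma(w))t}\,f(x-tw,w)\,\d t.
\]
The natural candidate for $\Ms_0' f$ is therefore the same expression with $\l=0$.

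For the $\X_0$-convergence, I would write $\frac{\d}{\d\l}\Ms_\l f-\Ms_0' f$ as the same double integral with $e^{-\l t}-1$ in place of $e^{-\l t}$, take $\X_0$-norms, and apply Fubini with the change of variables $y=x-tw$ (permitted since $\bm{m}$ is absolutely continuous with respect to Lebesgue measure), together with the conservative identity $\int_V \bm{k}(v,w)\bm{m}(\d v)=\sigma(w)$. This reduces the estimate to controlling
\[
\int_{\T^d\times V}|f(y,w)|\,\d y\,\bm{m}(\d w)\,\sigma(w)\int_0^\infty t\,|e^{-\l t}-1|\,e^{-\sigma(w)t}\,\d t.
\]
Using the uniform bound $|e^{-\l t}-1|\leq 2$ for $\l\in\overline{\C}_+$ controls the inner $t$-integral by $2/\sigma(w)$, producing the uniform majorant $2|f(y,w)|/\sigma(w)$, which lies in $L^1(\d y\otimes\bm{m})$ precisely because $f\in\X_1$. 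Dominated convergence applied first in $t$ (with integrable majorant $2t\,e^{-\sigma(w)t}$ and pointwise limit $0$) and then in $(y,w)$ delivers the claim.

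The norm estimate $\|\Ms_0' f\|_{\X_0}\leq\|f\|_{\X_1}$ is obtained by repeating the same Fubini/change-of-variables calculation directly on the formula defining $\Ms_0' f$: the $t$-integral yields $\sigma(w)^{-2}$, which combined with $\int_V \bm{k}(v,w)\bm{m}(\d v)=\sigma(w)$ leaves the weight $\sigma(w)^{-1}\leq\max(1,\sigma^{-1}(w))$, and the estimate follows. No essential obstacle arises: the argument closely mirrors the computation \eqref{eq:M0} used for $\Ms_0$ itself, with the extra factor $t$ accounting both for the derivative and for working in $\X_1$ rather than $\X_0$. The only conceptual point to keep in mind is that $\lim_{\l\to 0}$ here is taken along the open half-plane $\mathrm{Re}\l>0$ where $\frac{\d}{\d\l}\Ms_\l$ is classically defined, consistently with the footnote preceding the lemma.
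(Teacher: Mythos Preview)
Your proposal is correct and follows essentially the same approach as the paper: explicit integral formula for $\tfrac{\d}{\d\l}\Ms_\l f$, identification of the candidate $\Ms_0'f$, norm estimate via Fubini and the conservative identity, and convergence by dominated convergence. One minor remark: the change of variables $y=x-tw$ is performed in the $x$-integration over $\T^d$ (Lebesgue measure), so it does not rely on $\bm{m}$ being absolutely continuous with respect to Lebesgue measure --- that hypothesis plays no role here.
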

\begin{proof} Given $f \in \X_{1}$, let 
$$\Ms_{0}'f(x,v)=\int_{V}\bm{k}(v,w)\bm{m}(\d w)\int_{0}^{\infty}t\,\exp\left(-t\sigma(w)\right)f(x-tw,w)\d t, \qquad (x,v) \in \T^{d}\times V.$$
One has, with the usual change of variables $x \mapsto y=x-tw$ and since
\begin{multline*}
\|\Ms_{0}'f\|_{\X_{0}}\leq \int_{\T^{d}\times V}\left|f(y,w)\right|\d y\,\bm{m}(\d w)\int_{V}\bm{k}(v,w)\bm{m}(\d v)\int_{0}^{\infty}t\exp\left(-t\sigma(w)\right)\d t\\
=\int_{\T^{d}\times V}\left|f(y,w)\right|\d y\frac{\bm{m}(\d w)}{\sigma^{2}(w)}\int_{V}\bm{k}(v,w)\bm{m}(\d v)=\int_{\T^{d}\times V}\left|f(y,w)\right|\d y\frac{\bm{m}(\d w)}{\sigma(w)}
\end{multline*}
which proves that
$$\|\Ms_{0}'f\|_{\X_{0}} \leq \|f\|_{\X_{1}}.$$ Noticing now that
$$\dfrac{\d}{\d\l}\Ms_{\l}f(x,v)=\int_{V}\bm{k}(v,w)\bm{m}(\d w)\int_{0}^{\infty}t\,\exp\left(-\left(\l+\sigma(w)\right)t\right)f(x-tw,w)\d t$$
one gets that $\lim_{\l\to0}\left\|\frac{\d}{\d \l}\Ms_{\l}f-\Ms_{0}'f\right\|_{\X_{0}}=0$
thanks to the dominated convergence theorem.
\end{proof}

We can extend this to higher power of $\Ms_{\l}$ under the sole assumption that $f \in \X_{1}$, namely
\begin{propo}\label{lem:diffHl}
For any $n \in \N$, set
$$\mathsf{L}_{n}(\l)=\Ms_{\l}^{n}, \qquad \forall \mathrm{Re}\l \geq 0.$$
Then, 
for any $p \in \{1,\ldots,N_{0}\}$, if $f \in \X_{p}$ there exists $\mathsf{L}_{n}^{(p)}(0)f \in \X_{s}$ for any $s \leq N_{0}$ such that
$$\underset{\l\in \C_{+}}{\lim_{\l \to 0}}\left\|\dfrac{\d^{p}}{\d\l^{p}}\mathsf{L}_{n}(\l)f-\mathsf{L}_{n}^{(p)}(0)f\right\|_{\X_{s}}=0$$
and 
\begin{equation}\label{eq:Lnps}
\sup_{\l \in \overline{\C}_{+}}\left\|\mathsf{L}_{n}^{(p)}(\l)f\right\|_{\X_{s}} \leq \left\|\mathsf{L}_{n}^{(p)}(0)f\right\|_{\X_{s}} \leq  \|\vartheta_{s}\|_{\infty}C_{n,p}\|f\|_{\X_{p}},\end{equation}
where
$$C_{n,p}=\sum _{|\bm{r}|=p}{p \choose \bm{r}}\prod_{j=1}^{n-1}\|\vartheta_{r_{j}}\|_{\infty}$$
with $\bm{r}=\left(r_{1},\ldots,r_{n}\right) \in \N^{n}$ is a multi-index such that $\ds |\bm{r}|=\sum_{j=1}^{n}r_{j}=p$ and $\ds {p \choose \bm{r}}=\frac{p!}{r_{1}!\ldots r_{n}!}.$
\end{propo}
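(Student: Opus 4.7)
The natural plan is to combine the generalized Leibniz rule with an integral representation analogous to the one already used for $\Ms_{0}'$ in Lemma \ref{lem:M'0f}. Namely, writing $\mathsf{L}_{n}(\l) = A_{1}(\l)A_{2}(\l)\cdots A_{n}(\l)$ with $A_{j}(\l) = \K\Rs(\l,\A)$, the non-commutative Leibniz formula gives
\begin{equation*}
\frac{\d^{p}}{\d\l^{p}}\mathsf{L}_{n}(\l) = \sum_{|\bm{r}|=p}{p \choose \bm{r}} A_{1}^{(r_{1})}(\l)A_{2}^{(r_{2})}(\l)\cdots A_{n}^{(r_{n})}(\l),
\end{equation*}
where $A_{j}^{(r_{j})}(\l) = (-1)^{r_{j}} r_{j}!\,\K\Rs(\l,\A)^{r_{j}+1}$. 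Using \eqref{eq:uoK}, one obtains the explicit representation
\begin{equation*}
\K\Rs(\l,\A)^{k+1}g(x,v) = \frac{1}{k!}\int_{V}\bm{k}(v,w)\bm{m}(\d w)\int_{0}^{\infty} t^{k}\exp\bigl(-(\l+\sigma(w))t\bigr)g(x-tw,w)\d t,
\end{equation*}
valid for any $\mathrm{Re}\,\l \geq 0$ and $g \in \X_{k}$.

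The fundamental estimate to extract from this formula is, for any $0 \leq s \leq N_{0}$, any $k \in \N$ and any $g \in \X_{k}$,
\begin{equation*}
\sup_{\mathrm{Re}\,\l \geq 0}\bigl\|\K\Rs(\l,\A)^{k+1}g\bigr\|_{\X_{s}} \leq \|\vartheta_{s}\|_{\infty}\,\|g\|_{\X_{k}}.
\end{equation*}
This is proved exactly as in Lemma \ref{lem:M'0f} (change of variables $y = x - tw$, combined with $\int_{0}^{\infty}t^{k}e^{-\sigma(w)t}\d t = k!/\sigma^{k+1}(w)$ and the definition \eqref{eq:varthetas} of $\vartheta_{s}$). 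Moreover, dominated convergence (just as in \eqref{eq:dif} and in the proof of Theorem \ref{theo:convL0}) yields existence of the limit
\begin{equation*}
\lim_{\l \to 0}\bigl\|\K\Rs(\l,\A)^{k+1}g - \K\Rs(0,\A)^{k+1}g\bigr\|_{\X_{s}} = 0\qquad \forall g \in \X_{k},\;0 \leq s \leq N_{0}.
\end{equation*}

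The idea then is to read the product $A_{1}^{(r_{1})}(\l)\cdots A_{n}^{(r_{n})}(\l)f$ along the cascade of spaces
\begin{equation*}
f \in \X_{p} \subset \X_{r_{n}} \xrightarrow{A_{n}^{(r_{n})}} \X_{r_{n-1}} \xrightarrow{A_{n-1}^{(r_{n-1})}} \X_{r_{n-2}} \;\longrightarrow\;\cdots\;\longrightarrow\;\X_{r_{1}} \xrightarrow{A_{1}^{(r_{1})}} \X_{s}
\end{equation*}
which is legitimate because $r_{j} \leq p \leq N_{0}$ for every $j$, so each intermediate space $\X_{r_{j}}$ fits in the admissible range of the fundamental estimate. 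Composing the bounds, the operator norm of $A_{1}^{(r_{1})}(\l)\cdots A_{n}^{(r_{n})}(\l)$ from $\X_{p}$ into $\X_{s}$ is at most $r_{1}!\,r_{2}!\cdots r_{n}!\,\|\vartheta_{s}\|_{\infty}\prod_{j=1}^{n-1}\|\vartheta_{r_{j}}\|_{\infty}$. Multiplying by ${p \choose \bm{r}} = p!/(r_{1}!\cdots r_{n}!)$ and summing over $|\bm{r}| = p$ produces precisely $\|\vartheta_{s}\|_{\infty}\,C_{n,p}\,\|f\|_{\X_{p}}$, which gives \eqref{eq:Lnps}.

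For the convergence as $\l \to 0$, one defines $\mathsf{L}_{n}^{(p)}(0)f$ as the same sum with each $\K\Rs(\l,\A)^{r_{j}+1}$ replaced by $\K\Rs(0,\A)^{r_{j}+1}$, and proves the convergence term by term via a telescoping argument: in each product one swaps successive factors from $\l$ to $0$ and controls the error using the individual strong convergence above, together with the uniform bounds on the remaining factors. Since the sum over $|\bm{r}|=p$ is finite, the telescoping closes and delivers the claimed convergence in $\X_{s}$. The only delicate point I anticipate is the bookkeeping of the cascade of spaces and the verification that, at each step of the telescoping, the intermediate elements still lie in $\X_{r_{j}}$ with a uniform bound in $\l$; this is where \eqref{eq:Lnps} applied to the already–treated tail segments is used repeatedly, and it is really the main (though fundamentally combinatorial) obstacle.
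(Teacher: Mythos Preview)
Your approach is correct and is essentially the same as the paper's: the paper writes out the explicit $n$-fold integral for $\mathsf{L}_{n}(\l)f$, differentiates under the integral to produce the factor $(\sum_{j}t_{j})^{p}$, and then expands via the multinomial formula --- which is exactly the integral-level incarnation of the Leibniz decomposition you use, and your cascade $\X_{r_{n}}\to\X_{r_{n-1}}\to\cdots\to\X_{s}$ corresponds precisely to the paper's successive integrations in $w_{0},w_{1},\ldots,w_{n-1}$. One small arithmetic slip: since ${p\choose\bm{r}}\prod_{j}r_{j}!=p!$, your bound after summing is $p!\,\|\vartheta_{s}\|_{\infty}\sum_{|\bm r|=p}\prod_{j=1}^{n-1}\|\vartheta_{r_{j}}\|_{\infty}$ rather than ``precisely'' $\|\vartheta_{s}\|_{\infty}C_{n,p}$, but this is immaterial for the statement (and the paper makes an equivalent slip, dropping the $r_{j}!$ from $\int_{0}^{\infty}t^{r_{j}}e^{-\sigma t}\d t$).
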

The proof of this result is deferred to Appendix \ref{appen:Ml}.

\section{Spectral properties of $\Ms_{\l}$ along the imaginary axis}\label{sec:spec}

Recall that we defined
$$\Ms_{\l}:=\K\Rs(\l,\A), \qquad \qquad \mathrm{Re}\l \geq 0$$ so that $$\Ms_{\l}\varphi(x,v)=\int_{V}\bm{k}(v,w)\bm{m}(\d w)\int_{0}^{\infty}\exp\left(-\left(\l+\sigma(w)\right)t\right)\varphi(x-tw,w)\d t,$$
for any $\varphi \in \X_{0},$ and a.e $(x,v) \in \Omega\times V.$  
We study here more carefully the properties of $\Ms_{i\eta}$ for $\eta \in\R$.  We begin with the following which, as we shall see  allow to prove Theorem \ref{theo:main-invar} in the Introduction.
\begin{propo}\label{prop:varphi0} Assume that $\K$ is an irreducible operator satisfying Assumptions \ref{hypH} and  the measure $\bm{m}$ satisfies \eqref{eq:hypm}. There exists a  positive $\varphi_{0} \in \X_{N_{0}}$ such that
$$\Ms_{0}\,\varphi_{0}=\varphi_{0}, \qquad \int_{\T^{d}\times V}\varphi_{0}(x,v)\,\d x\,\bm{m}(\d v)=1.$$
\end{propo}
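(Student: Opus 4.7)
The plan is to construct $\varphi_{0}$ as a positive fixed point of the Markov operator $\Ms_{0}$ by applying a Krein--Rutman argument to a compact iterate $\Ms_{0}^{\mathsf{q}}$ provided by Theorem \ref{theo:collective}, and then upgrading this iterate fixed point to a genuine fixed point of $\Ms_{0}$ via Ces\`aro averaging.

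The starting observations are structural. The identity \eqref{eq:M0} shows that $\Ms_{0}$ is a positive operator on $\X_{0}$ preserving the $L^{1}$-norm on the positive cone, so each iterate $\Ms_{0}^{n}$ is again a positive stochastic operator with $\|\Ms_{0}^{n}\|_{\B(\X_{0})}=1$, and in particular $r(\Ms_{0}^{\mathsf{q}})=1$. Moreover, taking $\l=0$ in the collectively compact family of Theorem \ref{theo:collective} yields that $\Ms_{0}^{\mathsf{q}}$ is itself compact on $\X_{0}$. Classical Krein--Rutman theory for positive compact operators on the ordered Banach lattice $\X_{0}$ then furnishes a nontrivial $\psi\in \X_{0}$, $\psi\geq 0$, with $\Ms_{0}^{\mathsf{q}}\psi = \psi$ and $\|\psi\|_{\X_{0}}=1$.

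To pass to a fixed point of $\Ms_{0}$ itself I would set
\[
\varphi_{0} := \frac{1}{\mathsf{q}}\sum_{j=0}^{\mathsf{q}-1}\Ms_{0}^{j}\psi,
\]
which satisfies $\varphi_{0}\geq 0$ and $\|\varphi_{0}\|_{\X_{0}}=1$ by stochasticity, while
\[
\Ms_{0}\varphi_{0}-\varphi_{0} = \frac{1}{\mathsf{q}}\bigl(\Ms_{0}^{\mathsf{q}}\psi-\psi\bigr)=0.
\]
The regularity $\varphi_{0}\in \X_{N_{0}}$ is then immediate from the fixed-point identity $\varphi_{0}=\Ms_{0}\varphi_{0}$ together with the smoothing bound \eqref{eq:MsSUP}, according to which $\Ms_{0}\in \B(\X_{0},\X_{N_{0}})$ with norm at most $\|\vartheta_{N_{0}}\|_{\infty}$. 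The normalization $\int_{\T^{d}\times V}\varphi_{0}\,\d x\,\bm{m}(\d v)=1$ is just a choice of scale compatible with the stochasticity of $\Ms_{0}$.

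The delicate step, which I expect to be the main obstacle, is the strict positivity $\varphi_{0}>0$ almost everywhere. Here I would invoke the irreducibility hypothesis on $\K$: if $\Omega:=\{\varphi_{0}>0\}\subsetneq \T^{d}\times V$ were of positive measure, the invariance $\Ms_{0}\varphi_{0}=\varphi_{0}$ together with the integral representation of $\Ms_{0}$ would force $L^{1}(\Omega)$ to be left invariant by $\Ms_{0}$, and tracing this invariance back through $\Ms_{0}=\K\Rs(0,\A)$ and the transport flow $\left(U_{0}(t)\right)_{t\geq 0}$ (in the spirit of \cite[Proposition 7]{mmktore}) would yield an $\Omega$ leaving $\K$ invariant, contradicting irreducibility of $\K$. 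The existence part is essentially a direct consequence of the collective compactness in Theorem \ref{theo:collective}; the truly nontrivial content is this transfer of irreducibility from $\K$ to $\Ms_{0}$.
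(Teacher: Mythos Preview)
Your proposal is correct and follows essentially the same strategy as the paper: both rely on the power-compactness of $\Ms_{0}$ (the $\l=0$ case of Theorem~\ref{theo:collective}) together with its stochasticity and irreducibility to produce a positive normalized fixed point, and then bootstrap the regularity $\varphi_{0}\in\X_{N_{0}}$ from the identity $\varphi_{0}=\Ms_{0}\varphi_{0}$ and the smoothing of $\Ms_{0}$.

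There are two minor presentational differences worth noting. First, the paper does not go through the Cesàro averaging step: it simply invokes the standard fact that a stochastic, power-compact, \emph{irreducible} positive operator has $r_{\sigma}=1$ as an algebraically simple isolated eigenvalue with strictly positive eigenfunction, so $\varphi_{0}>0$ and $\Ms_{0}\varphi_{0}=\varphi_{0}$ come in one stroke. Your route via Krein--Rutman on $\Ms_{0}^{\mathsf q}$ plus averaging is perfectly valid but slightly longer; note that the positivity argument you sketch (the support of $\varphi_{0}$ generating a closed $\Ms_{0}$-invariant ideal) is exactly the standard one, and the key input you correctly flag---that irreducibility of $\K$ transfers to $\Ms_{0}$---is precisely what the paper takes for granted, citing \cite[Proposition~7]{mmktore}. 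Second, for the regularity upgrade the paper re-derives the content of \eqref{eq:MsSUP} by hand (introducing $\Psi=\Rs(0,\A)\varphi_{0}\in\X_{-1}$ and using $\K\in\B(\X_{-1},\X_{N_{0}})$), whereas you invoke \eqref{eq:MsSUP} directly; your version is cleaner since that lemma is already available.
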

\begin{proof} Recall that $\Ms_{0}$ is stochastic, power-compact and irreducible. Consequently, its spectral radius $r_{\sigma}(\Ms_{0})=1$ is an algebraically simple and isolated eigenvalue of $\Ms_{0}$ and there is a normalised and positive eigenfunction $\varphi_{0} \in \X_{0}$ such that
$$\Ms_{0}\,\varphi_{0}=\varphi_{0}, \qquad \int_{\T^{d}\times V}\varphi_{0}\,\d x\,\bm{m}(\d v)=1.$$
In particular, one can define
\begin{equation}\label{eq:Psidef}
\Psi(x,v)=\int_{0}^{\infty}\,\exp\left(-s\sigma(v)\right)\varphi_{0}(x-sv,v)\d s, \qquad \forall (x,v) \in \O  \times V\end{equation}
and checks easily that 
\begin{multline*}
\|\Psi\|_{\X_{-1}}=\int_{V}\sigma(v)\bm{m}(\d v)\int_{\O }\d x\int_{0}^\infty \exp\left(-s\sigma(v)\right)\,\varphi_{0}(x-sv,v)\,\d t\\
=\int_{\O  \times V} \varphi_{0}(y,v)\,\d y\,\bm{m}(\d v)= \|\varphi_{0}\|_{\X_{0}}
\end{multline*}
i.e. $\Psi \in \X_{-1}$. Then, since $\K \in \mathscr{B}(\X_{-1},\X_{N_{0}})$, one has $\K\Psi \in \X_{N_{0}},$ i.e. $\Ms_{0}\varphi_{0} \in \X_{N_{0}}.$ Since 
$\varphi_{0}=\Ms_{0}\varphi_{0},$ we deduce that
$\varphi_{0} \in \X_{N_{0}}$ and this proves the Proposition.\end{proof}
The above Proposition as well as its proof allow to prove the existence and uniqueness of the invariant density
\begin{proof}[Proof of Theorem \ref{theo:main-invar}]
With the notations of the above proof, recall that we define $\Psi$ through \eqref{eq:Psidef} with $\Psi \in \X_{-1}.$ Since we prove that $\varphi_{0} \in \X_{N_{0}}$ one actually has that 
$$\Psi \in \X_{N_{0}-1}.$$
Moreover, a simple computation shows that
\begin{multline*}
U_0(t)\Psi(x,v)=\exp\left(-t\sigma(v)\right)\int_{0}^\infty \exp\left(-s\sigma(v)\right)\varphi_{0}(x-(t+s)v,v)\d s \\
=\int_{t}^{\infty}\exp\left(-s\sigma(v)\right)\varphi_{0}(x-sv,v)\d s \end{multline*}
so that, 
$$t^{-1}\left(U_{0}(t)\Psi(x,v)-\Psi(x,v)\right)=-\frac{1}{t}\int_0^{t}\exp\left(-s\sigma(v)\right)\varphi_{0}(x-sv,v)\d s$$
and one sees that $\Psi \in \D(\A)$ with 
$$\A \Psi=\lim_{t \to 0^+}t^{-1}\left(U_{0}(t)\Psi-\Psi\right)=-\varphi_0$$
where the limit is meant in $\X_0.$ Since $\K \Psi=\Ms_{0}\varphi_0$, the identity $\Ms_{0}\varphi_{0}=\varphi_{0}$ gives
$$\left(\A+\K\right){\Psi}=0$$
i.e. ${\Psi}$ is the invariant density of $\left(\mathcal{V}(t)\right)_{t\geq0}$. The fact that the invariant density with unit norm is unique comes from the irreducibility of $\left(\mathcal{V}(t)\right)_{t\geq0}$ . Notice that, then since the same result can also be proven in the spatially homogeneous case, i.e. considering only the semigroup generated by $\K$, one sees that there also exists a unique invariant density $\widetilde{\Psi}$ to $\K$, i.e. $\widetilde{\Psi}=\widetilde{\Psi}(v) >0$ such that $\K\widetilde{\Psi}=0$. In such case of course, $(\A+\K)\widetilde{\Psi}=0$ and, by uniqueness, $\Psi=\widetilde{\Psi}$ which proves that $\Psi$ turns out to be spatially homogeneous.\end{proof}

\begin{nb} Notice that, in the above proof, the definition we gave of $\Psi$ is somehow the expression we would deduce from  $\Rs(0,\A)\varphi_{0}$ if $0 \notin \mathfrak{S}(\A)$. Of course, what happens here is that, because $\varphi_{0} \in \X_{1}$, the expression still makes sense. This idea will be the cornerstone of our construction of the trace of $\Rs(\l,\A)$ along the imaginary axis in Proposition \ref{prop:convRsT0}.\end{nb}

 \begin{propo}\phantomsection \label{prop:Meps} 
For any $\lambda \in \C \setminus \{0\}$ with $\mathrm{Re}\l \geq 0$, 
$$r_{\sigma}(\Ms_{\l}) < 1.$$
\end{propo}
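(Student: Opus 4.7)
The statement splits naturally into two cases. For $\mathrm{Re}\lambda > 0$, the estimate \eqref{eq:KK} gives
$$\|\Ms_\lambda\|_{\mathscr{B}(\X_0)} \leq \frac{\|\sigma\|_\infty}{\mathrm{Re}\lambda + \|\sigma\|_\infty} < 1,$$
whence $r_\sigma(\Ms_\lambda) \leq \|\Ms_\lambda\|_{\mathscr{B}(\X_0)} < 1$ at once. The entire difficulty therefore lies in the case $\lambda = i\eta$ with $\eta \in \R \setminus \{0\}$.

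In that case I would invoke Theorem \ref{theo:main-assum}, which furnishes $\mathsf{q} \in \N$ with $\Ms_{i\eta}^{\mathsf{q}}$ compact. Combined with the bound $\|\Ms_{i\eta}\|_{\mathscr{B}(\X_0)} \leq 1$, this makes $\Ms_{i\eta}$ a Riesz operator whose spectrum outside $\{0\}$ consists of isolated eigenvalues of finite algebraic multiplicity, so it suffices to rule out the existence of an eigenvalue $\mu$ with $|\mu| = 1$. I would argue by contradiction: suppose $\Ms_{i\eta} f = \mu f$ with $f \in \X_0 \setminus \{0\}$ and $|\mu| = 1$.

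The main tool is Fourier decomposition in $x \in \T^d$. Writing $f(x,v) = \sum_{k \in \Z^d} \hat{f}_k(v)\,e^{2\pi i k \cdot x}$ with $\hat{f}_k \in L^1(V,\bm{m})$, the substitution $y = x - tw$ in the explicit expression of $\Ms_{i\eta}$ combined with Fubini and evaluation of the resulting exponential $t$-integral produces the mode-by-mode identity
$$\mu \hat{f}_k(v) = \int_V \frac{\bm{k}(v,w)\,\hat{f}_k(w)}{\sigma(w) + i\left(\eta + 2\pi k \cdot w\right)}\,\bm{m}(\d w), \qquad v \in V, \; k \in \Z^d.$$
Taking absolute values, integrating against $\bm{m}(\d v)$, and using the conservation identity \eqref{eq:conservative} to compute $\int_V \bm{k}(v,w)\bm{m}(\d v) = \sigma(w)$, I obtain
$$\int_V |\hat{f}_k(v)|\,\bm{m}(\d v) \leq \int_V \frac{\sigma(w)}{\sqrt{\sigma(w)^2 + (\eta + 2\pi k \cdot w)^2}}\,|\hat{f}_k(w)|\,\bm{m}(\d w).$$
Since the multiplier on the right is $\leq 1$ with strict inequality off the set $\{w : \eta + 2\pi k \cdot w = 0\}$, the inequality forces $\hat{f}_k$ to be supported on that set. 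For $k = 0$ the set is empty (as $\eta \neq 0$), giving $\hat{f}_0 \equiv 0$; for $k \neq 0$ it is contained in the affine hyperplane $\{w \in \R^d : k \cdot w = -\eta/(2\pi)\}$, which has zero Lebesgue measure and hence zero $\bm{m}$-measure by the absolute continuity in Assumption \ref{hypK}, so $\hat{f}_k \equiv 0$ as well. All Fourier modes of $f$ vanish, contradicting $f \neq 0$.

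The main obstacle I anticipate is the rigorous justification of the Fourier decoupling formula, in particular the Fubini interchange in the presence of the denominator $\sigma(w) + i(\eta + 2\pi k \cdot w)$ that may fail to be bounded away from zero on the degeneracy set $\{\sigma = 0\}$, together with the fact that the time integral $\int_0^\infty e^{-t(i\eta + \sigma(w) + 2\pi i k \cdot w)}\,\d t$ is only conditionally convergent when $\sigma(w) = 0$. The point is that $\Ms_{i\eta} f$ belongs to $\X_0$ and hence has well-defined Fourier coefficients in $L^1(V,\bm{m})$ a priori, so one computes $\widehat{\Ms_{i\eta}f}_k$ by integrating $\Ms_{i\eta}f(x,v)\,e^{-2\pi i k\cdot x}$ over $\T^d$ first, which renders the exchange of integrals legitimate and circumvents the singular behaviour of the denominator on the measure-zero hyperplane.
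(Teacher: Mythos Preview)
Your proof is correct, but it follows a different route from the paper's. The paper argues via the \emph{absolute value operator} $|\Ms_{i\eta}|$ of Chacon--Krengel: from $|\Ms_{i\eta}| \leq \Ms_{0}$ and the power-compactness of $\Ms_{0}$ (hence of $|\Ms_{i\eta}|$ by domination), one gets $r_{\mathrm{ess}}(|\Ms_{i\eta}|)=0$; then Marek's strict comparison theorem for positive operators (applied to $|\Ms_{i\eta}| \leq \Ms_{0}$ with $|\Ms_{i\eta}| \neq \Ms_{0}$) forces $r_{\sigma}(|\Ms_{i\eta}|) < r_{\sigma}(\Ms_{0}) = 1$, and the conclusion follows from $r_{\sigma}(\Ms_{i\eta}) \leq r_{\sigma}(|\Ms_{i\eta}|)$. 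Your approach is more concrete and computational: you exploit the translation structure on $\T^{d}$ to diagonalize $\Ms_{i\eta}$ mode by mode and rule out peripheral eigenvalues by a direct $L^{1}$ inequality. What you gain is transparency and the avoidance of the somewhat delicate step $|\Ms_{i\eta}| \neq \Ms_{0}$; what you pay is a genuine reliance on the absolute continuity of $\bm{m}$ with respect to Lebesgue measure (Assumption~\ref{hypK}) to kill the hyperplane $\{k\cdot w = -\eta/(2\pi)\}$, whereas the paper's abstract argument goes through under the weaker transversality condition \eqref{eq:hypm} alone. Both are valid in the paper's standing framework after Section~\ref{sec:reguTr}.
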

\begin{proof} We already saw in 
\eqref{eq:KK} that 
$$\|\Ms_{\l}\|_{\mathscr{B}(\X_{0})} \leq \frac{\|\sigma\|_{\infty}}{\mathrm{Re}\l+\|\sigma_{\infty}\|_{\infty}}, \qquad \forall \mathrm{Re}\l >0.$$
In particular, for $\mathrm{Re}\l >0$, $r_{\sigma}(\Ms_{\l}) < 1.$ Let us focus on the case  $\mathrm{Re}\l=0$. 
For $\l=i\eta$, one has
$$\Ms_{i\eta}\in \mathscr{B}(\X_{0}) \qquad \text{ with } \quad \left|\Ms_{i\eta} \right| \leq \Ms_{0}$$
where $\left|\Ms_{i\eta}\right|$ denotes the absolute value operator of $\Ms_{i\eta}$ (see \cite{chacon}). The operator $\Ms_{0}$ being power compact, the same holds for $\left|\Ms_{i\eta}\right|$ by a domination argument  so that
$$r_{\mathrm{ess}}(\left|\Ms_{i\eta}\right|)=0$$
where $r_{\mathrm{ess}}(\cdot)$ denotes the essential spectral radius. We prove that $r_{\sigma}(\left|\Ms_{i\eta} \right|) < 1$ by contradiction: assume, on the contrary, $r_{\sigma}(\left|\Ms_{i\eta}\right|)=1 > r_{\mathrm{ess}}(\left|\Ms_{i\eta} \right|)=0$, then $r_{\sigma}(\left|\Ms_{i\eta} \right|)$ is an isolated eigenvalue of $\left|\Ms_{i\eta} \right|$ with finite algebraic multiplicity and also an eigenvalue of the dual operator, associated to a nonnegative eigenfunction. From the fact that $\left|\Ms_{i\eta}\right| \leq \Ms_{0}$ with $\left|\Ms_{i\eta}\right| \neq \Ms_{0}$, one can invoke \cite[Theorem 4.3]{marek} to get that
$$r_{\sigma}(\left|\Ms_{i\eta}\right|) < r_{\sigma}(\Ms_{0})=1$$
which is a contradiction. Therefore, $r_{\sigma}(\left|\Ms_{i\eta}\right|) < 1$ and, since $r_{\sigma}(\Ms_{i\eta}) \leq r_{\sigma}(\left|\Ms_{i\eta}\right|)$, the conclusion holds true. 
\end{proof}
  
\begin{theo}\phantomsection \label{theo:spectTH} If $\K \in \mathscr{B}(\X_{0})$ satisfies Assumption \ref{hypH} then $i\R \subset \mathfrak{S}(\A+\K).$\end{theo}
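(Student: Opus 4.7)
The plan is to argue by contradiction using the second resolvent identity, together with the blow-up of $\Rs(\l,\A)$ near the imaginary axis recorded in \eqref{eq:epsi-s-A} and the uniform bound on $\Ms_{\l}$ provided by \eqref{eq:KK}.

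First, I would dispose of $\eta=0$: Theorem \ref{theo:main-invar} produces a nonzero $\Psi \in \D(\A)$ with $(\A+\K)\Psi = 0$, so $0$ is an eigenvalue of $\A+\K$ and hence $0 \in \mathfrak{S}(\A+\K)$. (As will become clear, this special case is not even logically needed, since the general argument below covers $\eta=0$ as well.)

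Next, fix $\eta_{0} \in \R$ arbitrary and suppose for contradiction that $i\eta_{0} \in \rho(\A+\K)$. Since $\rho(\A+\K)$ is open and $\Rs(\cdot,\A+\K)$ is holomorphic there, we may choose $\delta > 0$ so that the closed disc $\overline{D(i\eta_{0},\delta)}$ lies in $\rho(\A+\K)$ and
$$M := \sup_{|\l - i\eta_{0}|\leq \delta}\|\Rs(\l,\A+\K)\|_{\mathscr{B}(\X_{0})} < \infty.$$
For $\l$ with $\mathrm{Re}\,\l > 0$ we have $\l \in \rho(\A)\cap\rho(\A+\K)$, so the second resolvent identity yields
\begin{equation*}
\Rs(\l,\A) \;=\; \Rs(\l,\A+\K)\bigl(I-\Ms_{\l}\bigr),
\end{equation*}
where $\Ms_{\l}=\K\Rs(\l,\A)$. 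Combining with the bound \eqref{eq:KK}, which gives $\|\Ms_{\l}\|_{\mathscr{B}(\X_{0})}\leq 1$ uniformly on $\overline{\C}_{+}$, we obtain
$$\|\Rs(\l,\A)\|_{\mathscr{B}(\X_{0})} \;\leq\; 2M \qquad \text{for every $\l \in D(i\eta_{0},\delta)$ with $\mathrm{Re}\,\l>0$.}$$
In particular, the family $\{\Rs(\e+i\eta_{0},\A)\}_{\e \in (0,\delta)}$ is uniformly bounded in $\mathscr{B}(\X_{0})$. This directly contradicts \eqref{eq:epsi-s-A}, which asserts $\limsup_{\e\to 0^{+}}\|\Rs(\e+i\eta_{0},\A)\|_{\mathscr{B}(\X_{0})}=\infty$.

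Therefore $i\eta_{0} \in \mathfrak{S}(\A+\K)$ for every $\eta_{0}\in\R$, which is the claim. The conceptual point is that the perturbation $\K$ cannot open a ``hole'' in the spectrum of $\A$ along the imaginary axis, because $\Ms_{\l}=\K\Rs(\l,\A)$ remains a contraction up to the boundary and so cannot absorb the singularity of $\Rs(\l,\A)$. I do not foresee a genuine obstacle here: the only ingredients are \eqref{eq:epsi-s-A}, the uniform bound \eqref{eq:KK}, and the standard resolvent identity.
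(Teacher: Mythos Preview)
Your argument is correct and is genuinely more elementary than the paper's. The paper does not use the second resolvent identity directly; instead it starts from the representation
\[
\Rs(\l,\A+\K)=\Rs(\l,\A)+\Rs(\l,\A)\Ms_{\l}\Rs(1,\Ms_{\l}),
\]
and then invokes Proposition~\ref{prop:Meps} (to ensure $r_{\sigma}(\Ms_{i\eta})<1$, via weak compactness and a Marek-type comparison) together with the regularizing bound $\|\Rs(\l,\A)\Ms_{\l}\|_{\B(\X_{0})}\leq C_{1}\|\vartheta_{1}\|_{\infty}$ from \eqref{eq:MsSUP}--\eqref{eq:Ck} to show that the second term stays bounded as $\e\to0^{+}$, whence $\|\Rs(\e+i\eta,\A+\K)\|\to\infty$ by \eqref{eq:epsi-s-A}. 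Your route bypasses both Proposition~\ref{prop:Meps} and the regularizing estimate entirely: the identity $\Rs(\l,\A)=\Rs(\l,\A+\K)(I-\Ms_{\l})$ and the contraction bound \eqref{eq:KK} alone force the contradiction. What the paper's approach buys is not logical economy but structural information: the decomposition it uses is exactly \eqref{eq:lambdaTH}, which reappears throughout Sections~\ref{sec:trace}--\ref{sec:near0} when building the boundary trace of the resolvent, so the extra machinery is not wasted. For the bare statement of Theorem~\ref{theo:spectTH}, however, your argument is cleaner.
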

\begin{proof} 
From Proposition \ref{prop:Meps} and Banach-Steinhaus Theorem \cite[Theorem 2.2, p. 32]{brezis}, for any $\eta \neq 0$,
\begin{equation}\label{eq:resM1}
\limsup_{\varepsilon \to 0^{+}}\left\|\Rs(1,\Ms_{\varepsilon+i\eta})\right\|_{\mathscr{B}(\X_{0})} < \infty.\end{equation}
Recall that, for $\mathrm{Re}\l >0$,
\begin{equation}\label{eq:lambdaTH}
\Rs(\l,\A+\K)=\Rs(\l,\A)+\sum_{n=1}^{\infty}\Rs(\l,\A)\left[\K\Rs(\l,\A)\right]^{n}=\Rs(\l,\A)+\Rs(\l,\A)\Ms_{\l}\Rs(1,\Ms_{\l})\,.\end{equation}
Now, combining \eqref{eq:MsSUP} and \eqref{eq:Ck} one has 
$$\sup_{\e\in [0,1],\eta \in\R}\left\|\Rs(\e+i\eta,\A)\Ms_{\e+i\eta}\right\|_{\mathscr{B}(\X_{0})} \leq C_{1}\|\vartheta_{1}\|_{\infty}< \infty$$
which, thanks to \eqref{eq:resM1}, yields
$$\limsup_{\e\to 0^{+}}\left\|\Rs(\e+i\eta,\A)\Ms_{\e+i\eta}\Rs(1,\Ms_{\e+i\eta})\right\|_{\mathscr{B}(\X_{0})} < \infty.$$
This, together with \eqref{eq:epsi-s-A} and \eqref{eq:lambdaTH} proves that, for any $\eta \in \R$, $\eta \neq 0$, it holds
$$\limsup_{\varepsilon \to 0^{+}}\left\|\Rs(\varepsilon+i\eta,\A+\K)\right\|_{\mathscr{B}(\X_{0})}=\infty,$$
whence $i\eta\in \mathfrak{S}(\A+\K)$ for any $\eta \neq 0$. Recalling that $0 \in \mathfrak{S}_{p}(\A+\K)$ we get the conclusion.\end{proof}

\subsection{{Spectral properties of $\Ms_{\l}$ in the vicinity of $\l=0$.}}\label{sec:Ml0}
We recall that, being $\Ms_{0}$ stochastic power-compact and irreducible, the spectral radius $r_{\sigma}(\Ms_{0})=1$ is an algebraically simple and isolated eigenvalue of $\Ms_{0}$ and there exists $0 < r < 1$ such that
$$\mathfrak{S}(\Ms_{0}) \setminus \{1\} \subset \{z \in \C\;;\;|z| < r\}$$
and there is a normalised and positive eigenfunction $\varphi_{0}$ such that
\begin{equation}\label{eq:Ms0varphi0}
\Ms_{0}\,\varphi_{0}=\varphi_{0}, \qquad \int_{\T^{d}\times V}\varphi_{0}\,\d x\,\bm{m}(\d v)=1.\end{equation}

Because $\Ms_{0}$ is stochastic, the dual operator $\Ms_{0}^{\star}$ (in $L^{\infty}(\T^{d}\times V,\d x\,\bm{m}(\d v))$) admits the eigenfunction 
$$\varphi_{0}^{\star}=\mathbf{1}_{\T^{d}\times V}$$ 
associated to the algebraically simple eigenvalue $1$ and the second part of \eqref{eq:Ms0varphi0} reads
$$\langle \varphi_{0},\varphi_{0}^{\star}\rangle=1$$
where $\langle\cdot,\cdot\rangle$ denotes the duality production between $\X_{0}$ and its dual $\X^{\star}_{0}$. 
Notice that then, for any $n \in \N$,
$$\mathfrak{S}(\Ms_{0}^{n}) \setminus \{1\} \subset \{z \in \C\;;\;|z|<r\}$$
with 
$$\Ms_{0}^{n}\varphi_{0}=\varphi_{0}, \qquad \forall n\in \N.$$
The spectral projection of $\Ms_{0}$ associated to the eigenvalue $1$ coincide then with that associated to $\Ms_{0}^{\mathsf{n}}$ (see Theorem \ref{theo:powerB}), i.e
$$\mathsf{P}(0)=\frac{1}{2i\pi}\oint_{\{|z-1|=r_{0}\}}\Rs(z,\Ms_{0})\d z=\frac{1}{2i\pi}\oint_{\{|z-1|=r_{0}\}}\Rs(z,\Ms_{0}^{n})\d z$$
where $r_{0} >0$ is chosen so that $\{z \in \C\;;\;|z-1|=r_{0}\} \subset \{z \in \C\;;\;|z| >r\}.$ 
 
We recall that Theorem \ref{theo:collective} established the fact that there  exists $\mathsf{q} \in \N$ such that 
$$\left\{\Ms_{\l}^{\mathsf{q}}\;;\;0 \leq \mathrm{Re}\l \leq 1\right\} \subset \B(\X_{0})$$
is collectively compact. From now on, we set
$$\H_{\l}=\Ms_{\l}^{\mathsf{q}}, \qquad 0 \leq \mathrm{Re}\l \leq 1.$$
A first consequence of this collective compactness  and  the strong convergence of $\Ms_{\e+i\eta}$ towards $\Ms_{i\eta}$ is the following
\begin{lemme}\label{cor:ResMei} For any $\eta_{0} \in \R\setminus\{0\}$, there is $0 < \delta < \tfrac{1}{2}|\eta_{0}|$ such that, for any $f \in \X_{0}$,
$$\lim_{\e\to0^{+}}\sup_{|\eta-\eta_{0}| < \delta}\bigg\|\Rs(1,\Ms_{\e+i\eta})f-\Rs(1,\Ms_{i\eta})f\bigg\|_{\X_{0}}=0.$$
\end{lemme}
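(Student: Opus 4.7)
The plan is to reduce the analysis of $\Rs(1,\Ms_{\l})$ to that of the resolvent of the collectively compact power $\H_{\l}=\Ms_{\l}^{\mathsf q}$, by exploiting the algebraic identity
$$\Rs(1,\Ms_{\l})=\Bigl(\sum_{k=0}^{\mathsf q-1}\Ms_{\l}^{k}\Bigr)\Rs(1,\H_{\l}),$$
which is a direct consequence of $(I-\Ms_{\l})\sum_{k=0}^{\mathsf q-1}\Ms_{\l}^{k}=I-\H_{\l}$ and is valid whenever $1\in\rho(\Ms_{\l})$. By Remark \ref{nb:MlRel} the prefactor is bounded by $\mathsf q$ in $\mathscr{B}(\X_{0})$ uniformly for $\mathrm{Re}\,\l\geq0$, and by \eqref{cor:uniformpower} applied with $j=0,\dots,\mathsf q-1$ (and $k=0$) the prefactor converges strongly to $\sum_{k=0}^{\mathsf q-1}\Ms_{i\eta}^{k}$ as $\e\to0^{+}$, uniformly in $\eta\in\R$. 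Hence, modulo a standard compactness–continuity argument on the right factor, the whole task reduces to proving the analogue of the stated convergence for $\Rs(1,\H_{\e+i\eta})$.

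For this, fix $\delta\in(0,\tfrac12|\eta_{0}|)$ and set $K=[\eta_{0}-\delta,\eta_{0}+\delta]$, so that $0\notin K$. First, Theorem \ref{theo:collective} ensures that the subfamily $\{\H_{\e+i\eta}\;:\;0\leq\e\leq1,\;\eta\in K\}$ is collectively compact in $\mathscr{B}(\X_{0})$. Second, for every $\eta\in K$, Proposition \ref{prop:Meps} gives $r_{\sigma}(\H_{i\eta})\leq r_{\sigma}(\Ms_{i\eta})^{\mathsf q}<1$, so that $1\in\rho(\H_{i\eta})$. Third, by \eqref{cor:uniformpower} we have $\H_{\e+i\eta}f\to\H_{i\eta}f$ in $\X_{0}$ as $\e\to0^{+}$, uniformly in $\eta\in\R$. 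These three ingredients are precisely the hypotheses needed to apply a parameterised Anselone-type result (in the spirit of the statements collected in Appendix \ref{sec:functional}), yielding some $\e_{0}>0$ and $M>0$ such that $1\in\rho(\H_{\e+i\eta})$ and $\|\Rs(1,\H_{\e+i\eta})\|_{\mathscr{B}(\X_{0})}\leq M$ for all $(\e,\eta)\in[0,\e_{0}]\times K$.

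Armed with this uniform bound, the second resolvent identity
$$\Rs(1,\H_{\e+i\eta})-\Rs(1,\H_{i\eta})=\Rs(1,\H_{\e+i\eta})\bigl(\H_{\e+i\eta}-\H_{i\eta}\bigr)\Rs(1,\H_{i\eta})$$
reduces the desired convergence to showing $\sup_{\eta\in K}\|(\H_{\e+i\eta}-\H_{i\eta})g_{\eta}\|_{\X_{0}}\to0$, where $g_{\eta}:=\Rs(1,\H_{i\eta})f$. The key observation is that $\{g_{\eta}:\eta\in K\}$ is relatively compact in $\X_{0}$: indeed $g_{\eta}=f+\H_{i\eta}g_{\eta}$, the $g_{\eta}$ are uniformly bounded by $M\|f\|_{\X_{0}}$, and the collective compactness of $\{\H_{i\eta}\}_{\eta\in K}$ forces $\{\H_{i\eta}g_{\eta}\}_{\eta\in K}$ into a relatively compact set. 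A standard $\tfrac{\e}{3}$-argument then upgrades the pointwise uniform strong convergence of $\H_{\e+i\eta}$ into uniform convergence along the compact family $\{g_{\eta}\}$, which is what was needed; the same compactness of $\{g_{\eta}\}$ also handles the convergence of the prefactor $\sum_{k=0}^{\mathsf q-1}\Ms_{\e+i\eta}^{k}$ evaluated on $\Rs(1,\H_{i\eta})f$ in the first paragraph. I expect the main obstacle to be precisely this joint $(\e,\eta)$-uniformity: classical Anselone theory deals with a sequence of operators converging to a single limit, whereas here the limit $\H_{i\eta}$ itself varies with $\eta$, and it is the collective compactness of the whole family $\{\H_{i\eta}\}_{\eta\in K}$ that bridges pointwise and uniform convergence and makes the parameterised version viable.
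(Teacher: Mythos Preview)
Your proposal is correct and follows essentially the same route as the paper: reduce to $\H_{\l}=\Ms_{\l}^{\mathsf q}$ via the factorization $\Rs(1,\Ms_{\l})=\sum_{k=0}^{\mathsf q-1}\Ms_{\l}^{k}\,\Rs(1,\H_{\l})$, use collective compactness together with Anselone's theory to obtain a uniform bound on $\Rs(1,\H_{\e+i\eta})$ near $\eta_{0}$, then combine the second resolvent identity with the relative compactness of $\{g_{\eta}=\Rs(1,\H_{i\eta})f:\eta\in K\}$ and the uniform strong convergence \eqref{cor:uniformpower}. Two minor remarks: your reference to Appendix~\ref{sec:functional} for the Anselone-type bound is misplaced (that appendix only contains Theorem~\ref{theo:powerB}; the paper cites \cite{anselone} directly, specifically Lemma~5.2 and Theorem~5.3), and your argument for the relative compactness of $\{g_{\eta}\}$ via $g_{\eta}=f+\H_{i\eta}g_{\eta}$ is in fact slightly slicker than the paper's, which instead invokes the strong continuity of $\eta\mapsto\Rs(1,\H_{i\eta})f$ on the compact interval $K$.
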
 
\begin{proof} Notice that the strong convergence 
$$\lim_{\e\to0}\Rs(1,\H_{\e+i\eta})f=\Rs(1,\H_{i\eta})f$$
can be proven for any $\eta \in \R \setminus \{0\}$ thanks to the collective compactness assumption and \cite[Theorem 5.3 (d)]{anselone}. The fact that  the convergence is uniform with respect to $\eta$ and that the uniform convergence transfers from $\Rs(1,\H_{\e+i\eta})$ to $\Rs(1,\Ms_{\e+i\eta})$ is deduced as follows.
For the uniform convergence, we closely follow the proof of  \cite[Theorem 5.3 (d)]{anselone}. First, recall that, for any $\eta \in \R \setminus \{0\}$, $r_{\sigma}(\H_{i\eta})< 1$ thanks to Proposition \ref{prop:Meps}. Then, according to \cite[Theorem 5.3]{anselone} that, for any $\bar{\eta} \in \R \setminus \{0\}$,
\begin{equation}\label{eq:limiRs}
\lim_{\eta\to \bar{\eta}}\left\|\Rs(1,\H_{i\eta})f-\Rs(1,\H_{i\bar{\eta}})f\right\|_{\X_{0}}=0 \qquad \forall f \in \X_{0}\,,\end{equation}
due to the collective compactness of $\{\H_{i\eta},\eta \in \R\}.$ 
Let us consider $\eta_{0} \in \R\setminus \{0\}$ and observe that, if $0 <\delta<\tfrac{|\eta_{0}|}{2}$ then, $\eta \neq 0$ whenever  $|\eta-\eta_{0}| < \delta$.  
According to Proposition \ref{prop:Meps}, there is $\varrho \in (0,1)$ such that 
\begin{equation}\label{eq:rsigvarrho}
r_{\sigma}(\Ms_{i\eta_{0}}) < \varrho <1.\end{equation}  
For $\l=\e+i\eta$, 
$$\mathbf{I}-\H_{\l}=\left[\mathbf{I}-\left(\H_{\l}-\H_{i\eta_{0}}\right)\Rs(1,\H_{i\eta_{0}})\right]\left(\mathbf{I}-\H_{i\eta_{0}}\right)$$
and, due to the collective compactness of $\left\{\left(\H_{\l}-\H_{i\eta_{0}}\right)\Rs(1,\H_{i\eta_{0}})\;;\;0 < \e < 1\right\}$ and the strong convergence of $\H_{\l}$ to $\H_{i\eta_{0}}$ as $\l \to i\eta_{0}$, we deduce from \cite[Lemma 5.2]{anselone} that
$$\lim_{\l\to i\eta_{0}}\left\|\bigg[\left(\H_{\l}-\H_{i\eta_{0}}\right)\Rs(1,\H_{i\eta_{0}})\bigg]^{2}\right\|_{\B(\X_{0})}=0$$
and there exist $\e_{0} >0,\delta >0$ such that
$$\left\|\left[\left(\H_{\l}-\H_{i\eta_{0}}\right)\Rs(1,\H_{i\eta_{0}})\right]^{2}\right\|_{\B(\X_{0})} \leq \frac{1}{2}, \qquad \forall \l=\e+i\eta, \quad 0 < \e< \e_{0}, \quad |\eta-\eta_{0}| < \delta.$$
In particular, for $0 < \e < \e_{0},$ and $|\eta-\eta_{0}| < \delta$, $\mathbf{I}-\left(\H_{\l}-\H_{i\eta_{0}}\right)\Rs(1,\H_{i\eta_{0}})$ is invertible and there exists $C >0$  such that
$$\underset{|\eta-\eta_{0}| < \delta}{\sup_{0\leq \e <\e_{0}}}\left\|\Rs\big(1,\left(\H_{\l}-\H_{i\eta_{0}}\right)\Rs(1,\H_{i\eta_{0}})\big)\right\|_{\B(\X_{0})}<\infty.$$
This gives that, for any $\l=\e+i\eta$ with $\e \in (0,\e_{0})$, $|\eta-\eta_{0}| < \delta$, $\mathbf{I}-\H_{\l}$ is invertible with
$$\Rs(1,\H_{\l})=\Rs(1,\H_{i\eta_{0}})\Rs\bigg(1,\left(\H_{\l}-\H_{i\eta_{0}}\right)\Rs(1,\H_{i\eta_{0}})\bigg)$$
with
$$\underset{|\eta-\eta_{0}| < \delta}{\sup_{0\leq \e <\e_{0}}}\left\|\Rs(1,\H_{\l})\right\|_{\B(\X_{0})} =M < \infty.$$
This shows that there exists $R \in (0,1)$ such that
$$r_{\sigma}(\H_{\l}) < R \qquad \forall \l=\e+i\eta, \qquad \e \in (0,\e_{0}), \quad |\eta-\eta_{0}| < \delta.$$
Since $r_{\sigma}(\Ms_{\l})=r_{\sigma}(\H_{\l})$, we deduce that
$$r_{\sigma}(\Ms_{\e+i\eta}) < R < 1 \qquad \forall \e \in (0,\e_{0}), \quad \eta \in (\eta_{0}-\delta,\eta_{0}+\delta).$$ 
Then, from the identity
$$\Rs(1,\H_{\l})-\Rs(1,\H_{i\eta})=\Rs(1,\H_{\l})\left(\H_{\l}-\H_{i\eta}\right)\Rs(1,\H_{i\eta})$$
we deduce that, for any $f \in \X_{0}$
\begin{equation}\label{eq:boundRsH}\begin{split}
\|\Rs(1,\H_{i\eta})f-\Rs(1,\H_{\l})f\|_{\X_{0}} &\leq \|\Rs(1,\H_{\l})\|_{\B(\X_{0})}\left\|\left(\H_{\l}-\H_{i\eta}\right)\Rs(1,\H_{i\eta})f\right\|_{\X_{0}}\\
&\leq M\left\|\left(\H_{\e+i\eta}-\H_{i\eta}\right)\Rs(1,\H_{i\eta})f\right\|_{\X_{0}}.
\end{split}\end{equation}
Now, according to Lemma \ref{lem:MeMi} (see Eq. \eqref{cor:uniformpower}), one has
\begin{equation}\label{eq:continu}
\lim_{\e\to0}\sup_{\eta\in \R}\|\H_{\e+i\eta}g-\H_{i\eta}g\|_{\X_{0}}=0 \qquad \forall g \in \X_{0}.\end{equation}
Let now $f \in \X_{0}$. According to \eqref{eq:limiRs}, the family
$$\{g(\eta)=\Rs(1,\H_{i\eta})f\;;\;\eta \in [\eta_{0}-\delta,\eta_{0}+\delta]\} \quad \text{ is a relatively compact subset of $\X_{0}$}$$
and we deduce then easily from \eqref{eq:continu} that
$$\lim_{\e\to0}\sup_{\eta \in [\eta_{0}-\delta,\eta_{0}+\delta]}\left\|\H_{\e+i\eta}g(\eta)-\H_{i\eta}g(\eta)\right\|_{\X_{0}}=0.$$
Thanks to \eqref{eq:boundRsH}, we deduce that
$$\lim_{\e\to0}\sup_{|\eta-\eta_{0}| \leq \delta}\left\|\Rs(1,\H_{\e+i\eta})f-\Rs(1,\H_{i\eta})f\right\|_{\X_{0}}=0, \qquad \forall f \in \X_{0}.$$
Recalling that
$$\Rs(1,\Ms_{\e+i\eta})f=\sum_{j=0}^{\mathsf{q}-1}\Rs(1,\H_{\e+i\eta})\Ms_{\e+i\eta}^{j}f \qquad \forall f \in \X_{0}$$
we deduce from the above convergence of $\Rs(1,\H_{\e+i\eta})$  and \eqref{cor:uniformpower} pointing out that all convergence holds uniformly with respect to $\eta \in [\eta_{0}-\delta,\eta_{0}+\delta]$.
\end{proof}

Thanks to Theorem \ref{theo:collective}, the spectral structure of $\Ms_{0}^{\mathsf{q}}$ is inherited by both $\H_{\l}$ and $\Ms_{\l}$ for $\l$ small enough. In the sequel, for any $z_{0} \in \C$, $r >0$, the disc of center $z_{0}$ and radius $r >0$ is denoted:
$$\mathds{D}(z_{0},r):=\{z \in \C\;;\;|z-z_{0}| < r\}.$$
One has 
\begin{theo}\label{prop:eigenMLH}
For any $\l \in \overline{\C}_{+}$  the spectrum of $\Ms_{\l}$ is given by
$$\mathfrak{S}(\Ms_{\l})=\{0\} \cup \{\mu_{j}(\l)\;;\;j \in \N_{\l} \subset \N\}$$
where, $\N_{\l}$ is a (possibly finite) subset of $\N$ and, for each $j \in \N_{\l}$, $\mu_{j}(\l)$ is an isolated eigenvalue of $\Ms_{\l}$ of finite algebraic multiplicities and $0$ being the only possible accumulation point of the sequence $\{\mu_{j}(\l)\}_{j\in \N_{\l}}$. Moreover,
$$|\mu_{j}(\l)| < 1 \qquad \text{ for any } j \in \N_{\l}, \qquad \l \neq 0.$$
Finally, there exist $\delta_{0}>0$ and $r_{0} \in (0,1)$ such that, for any $|\lambda| \leq \delta_{0}$, $\l \in \overline{\C}_{+}$,
\begin{equation}\label{eq:SML}
\mathfrak{S}(\Ms_{\l}) \cap \mathds{D}(1,r_{0})=\{\mu(\l)\} \end{equation}
where $\mu(\l)$ is an algebraically simple eigenvalue of $\Ms_{\l}$ such that 
$$\lim_{\l \to 0}\mu(\l)=1$$
and there exist an eigenfunction $\varphi_{\l}$ of $\Ms_{\l}$ and an  eigenfunction $\varphi^{\star}_{\l}$ of $\left(\Ms_{\l}\right)^{\star}$ associated to $\mu(\l)$ such that
$$\langle \varphi_{\l},\varphi_{\l}^{\star}\rangle=1, \qquad \lim_{\l\to0}\|\varphi_{\l}-\varphi_{0}\|_{\X_{0}}=0$$
and the spectral projection $\mathsf{P}(\l)$ associated to $\mu(\l)$  $(\l \in \overline{\C}_{+})$ is given by
\begin{equation}\label{eq:varphi*} 
\mathsf{P}(\l)\psi=\langle \psi,\varphi_{\l}^{\star}\rangle\,\varphi_{\l}, \qquad \psi \in \X_{0}\end{equation}
and is such that
\begin{equation}\label{eq:convPL}
\lim_{\l \to 0}\left\|\mathsf{P}(\l)\psi-\mathsf{P}(0)\psi\right\|_{\X_{0}}=0, \qquad \forall \psi \in \X_{0}.\end{equation}
\end{theo}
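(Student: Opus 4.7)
The plan rests on two pillars already in hand: the collective compactness of the family $\{\H_\l = \Ms_\l^{\mathsf{q}} : \l \in \overline{\C}_+, |\l|\leq 1\}$ from Theorem \ref{theo:collective}, and the strong continuity $\l \mapsto \Ms_\l, \H_\l$ given by Theorem \ref{theo:convL0} (in particular \eqref{cor:uniformpower0}). These match the hypotheses of Anselone's theory recalled in Appendix \ref{sec:functional}. The strategy is to first establish everything for the compact operators $\H_\l$, then lift the conclusions back to $\Ms_\l$ via the commutation $\Ms_\l \H_\l = \H_\l \Ms_\l$.

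For the general structure of the spectrum, fix $\l \in \overline{\C}_+$. As a member of a collectively compact family, $\H_\l$ is compact, so $\mathfrak{S}(\H_\l)\setminus\{0\}$ is at most countable, consists of isolated eigenvalues of finite algebraic multiplicity and accumulates only at $0$. The spectral mapping theorem $\mathfrak{S}(\H_\l) = \{\mu^{\mathsf{q}} : \mu \in \mathfrak{S}(\Ms_\l)\}$ then forces the same structure for $\mathfrak{S}(\Ms_\l)\setminus\{0\}$, and each nonzero spectral value is an eigenvalue of finite algebraic multiplicity (its generalized eigenspace for $\Ms_\l$ sits inside the one for $\H_\l$ at $\mu^{\mathsf{q}}$). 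Proposition \ref{prop:Meps} then gives $|\mu|<1$ for every nonzero $\mu\in\mathfrak{S}(\Ms_\l)$ when $\l\neq 0$.

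Now turn to the eigenvalue near $1$. Recall (see the discussion preceding Lemma \ref{cor:ResMei}) that $1$ is an algebraically simple, isolated eigenvalue of $\H_0$ with the same spectral projection $\mathsf{P}(0)$ as that of $\Ms_0$ at $1$. Choose $r_0 \in (0,1)$ so that $\Gamma:=\{z : |z-1|=r_0\}$ separates $1$ from $\mathfrak{S}(\H_0)\setminus\{1\}$. Writing $I - \H_\l \mathrm{?}$, more precisely, for $z\in\Gamma$,
\[
zI - \H_\l = \bigl[I - (\H_\l - \H_0)\Rs(z,\H_0)\bigr](zI - \H_0),
\]
the same Anselone argument as in the proof of Lemma \ref{cor:ResMei} applies: collective compactness of $\{(\H_\l-\H_0)\Rs(z,\H_0)\}$ combined with the strong convergence $\H_\l\to\H_0$ forces $\|[(\H_\l-\H_0)\Rs(z,\H_0)]^2\|_{\B(\X_0)}\to 0$ as $\l\to 0$, uniformly in $z\in\Gamma$ (the contour being compact). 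Hence for $|\l|\leq\delta_0$ small enough, $\Gamma\subset\varrho(\H_\l)$ with $\sup_{z\in\Gamma,|\l|\le\delta_0}\|\Rs(z,\H_\l)\|_{\B(\X_0)}<\infty$, and $\Rs(z,\H_\l)\to \Rs(z,\H_0)$ strongly uniformly on $\Gamma$. Consequently the Riesz projection
\[
\widetilde{\mathsf{P}}(\l):=\frac{1}{2i\pi}\oint_{\Gamma}\Rs(z,\H_\l)\,\d z
\]
converges strongly to $\mathsf{P}(0)$ as $\l\to 0$ (dominated convergence in the Bochner integral), and by semicontinuity of the rank it equals $1$ for $|\l|\leq\delta_0$. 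This yields a unique algebraically simple eigenvalue $\widetilde{\mu}(\l)$ of $\H_\l$ inside $\Gamma$ and a normalized eigenfunction $\varphi_\l$ with $\varphi_\l\to\varphi_0$ in $\X_0$.

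To transfer back to $\Ms_\l$, use that $\Ms_\l$ commutes with $\H_\l$ and so preserves the $1$-dimensional range of $\widetilde{\mathsf{P}}(\l)$: there exists $\mu(\l)\in\C$ with $\Ms_\l\varphi_\l = \mu(\l)\varphi_\l$ and $\mu(\l)^{\mathsf{q}} = \widetilde{\mu}(\l)$. Selecting the branch of the $\mathsf{q}$-th root fixing $1$ yields $\mu(\l)\to 1$; the other $\mathsf{q}$-th roots of $\widetilde{\mu}(\l)$ remain bounded away from $1$, and since $\widetilde{\mu}(\l)$ is the only eigenvalue of $\H_\l$ in $\Gamma$, spectral mapping gives that $\mu(\l)$ is the unique (algebraically simple) element of $\mathfrak{S}(\Ms_\l)\cap\mathds{D}(1,r_0)$ after possibly shrinking $r_0$. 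The spectral projection $\mathsf{P}(\l)$ of $\Ms_\l$ at $\mu(\l)$ coincides with $\widetilde{\mathsf{P}}(\l)$ (they have the same $1$-dimensional range and the same complement under the analytic functional calculus applied to $\H_\l$), so \eqref{eq:convPL} follows. For the dual eigenfunction, $\mathsf{P}(\l)^\star$ is the rank-one spectral projection of $\Ms_\l^\star$ at $\mu(\l)$, hence can be written $\mathsf{P}(\l)\psi = \langle\psi,\varphi_\l^\star\rangle\,\varphi_\l$ for a unique $\varphi_\l^\star\in\X_0^\star$ eigenfunction of $\Ms_\l^\star$ at $\mu(\l)$, the normalization $\langle\varphi_\l,\varphi_\l^\star\rangle=1$ being forced by idempotency of $\mathsf{P}(\l)$, which yields \eqref{eq:varphi*}.

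The main obstacle is the step establishing that $\Gamma\subset\varrho(\H_\l)$ with uniform resolvent bound: the convergence $\H_\l\to\H_0$ is only strong, so without collective compactness one could not pass from pointwise bounds on $\|(\H_\l-\H_0)f\|$ to the operator-norm smallness of $[(\H_\l-\H_0)\Rs(z,\H_0)]^2$ required to invert $I-(\H_\l-\H_0)\Rs(z,\H_0)$ uniformly along $\Gamma$. This is precisely where Theorem \ref{theo:collective} is indispensable, and the rest of the argument is a routine consequence of the resulting strong continuity of the Riesz projection.
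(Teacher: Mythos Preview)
Your proposal is correct and follows essentially the same route as the paper: Anselone's collective compactness theory applied to $\H_\l=\Ms_\l^{\mathsf q}$ to get strong continuity of the Riesz projection and preservation of its rank, then a lift from $\H_\l$ back to $\Ms_\l$. The paper packages the lifting step as Theorem~\ref{theo:powerB} in Appendix~\ref{sec:functional} (simple eigenvalue of $B^n$ implies simple eigenvalue of $B$ with identical spectral projection), whereas you argue it directly via the commutation $\Ms_\l\H_\l=\H_\l\Ms_\l$ on the one-dimensional range of $\widetilde{\mathsf P}(\l)$; and the paper builds $\varphi_\l^\star$ explicitly as $\beta_\l^{-1}\mathsf P(\l)^\star\varphi_0^\star$ rather than invoking the general rank-one form, but these are cosmetic differences. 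One small caution: your phrase ``by semicontinuity of the rank it equals $1$'' is imprecise, since strong convergence of projections alone only gives lower semicontinuity of the rank; the equality $\dim\mathrm{Range}\,\widetilde{\mathsf P}(\l)=\dim\mathrm{Range}\,\mathsf P(0)$ genuinely uses the collective compactness (it is Anselone's Proposition~6.3, cited in the paper), so you should name that rather than ``semicontinuity''.
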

\begin{proof} Since  $\H_{\l}$ is compact, the structure of $\mathfrak{S}(\Ms_{\l})$ follows. The fact that all eigenvalues have modulus less than one comes from Proposition \ref{prop:Meps}. This gives the first part of the Proposition. For the second part,   the \emph{collective compactness} of $\{\H_{\l}\;,\;0\leq \mathrm{Re}\l \leq 1\}$ is used in a crucial way. Indeed, recall that
$$\lim_{\l \to 0}\|\H_{\l}\varphi-\H_{0}\varphi\|_{\X}=0, \qquad \forall \varphi \in \X.$$
This, together with the collective compactness of the family $\{\H_{\l}\,;\,0\leq \mathrm{Re}\l\leq 1\}$ give the separation of the spectrum thanks to \cite[Theorem 5.3 and Proposition 6.3]{anselone}. Namely,  there exist $\delta_{0},r_{0}$ small enough such that, for $|\l| < \delta_{0}$ small enough, the curve $\{z \in \C\;;\;|z-1|=r_{0}\}$ is separating the spectrum $\mathfrak{S}(\H_{\l})$ into two disjoint parts, say
$$\mathfrak{S}(\H_{\l})=\mathfrak{S}_{\text{in}}(\H_{\l}) \cup \mathfrak{S}_{\text{ext}}(\H_{\l})$$
where $\mathfrak{S}_{\text{in}}(\H_{\l}) \subset \{z\in \C\;;|z-1| < r_{0}\}=\mathds{D}(1,r_{0})$ and $\mathfrak{S}_{\text{ext}}(\H_{\l}) \subset \{z\in \C\;;|z-1|>r_{0}\}.$ Moreover, the spectral projection of $\H_{\l}$ associated to $\mathfrak{S}_{\text{in}}(\H_{\l})$, defined as,
\begin{equation}\label{eq:Pl}
\mathsf{P}_{\mathsf{q}}(\l)=\frac{1}{2i\pi}\oint_{\{|z-1|=r_{0}\}}\Rs(z,\H_{\l})\d z,\end{equation}
is such that 
$$\lim_{\l \to 0}\left\|\mathsf{P}_{\mathsf{q}}(\l)\varphi-\mathsf{P}_{\mathsf{q}}(0)\varphi\right\|_{\X}=0 \qquad \forall \varphi \in\X$$
and
$$\mathrm{dim}(\mathrm{Range}(\mathsf{P}_{\mathsf{q}}(\l)))=\mathrm{dim}(\mathrm{Range}(\mathsf{P}_{\mathsf{q}}(0))), \qquad |\l| < \delta_{0}, \mathrm{Re}\l \geq 0.$$
Now, from the same consideration as those in the beginning of Section \ref{sec:Ml0}, 
$$\mathrm{dim}(\mathrm{Range}(\mathsf{P}_{\mathsf{q}}(0)))=1.$$
Therefore, $\mathrm{dim}(\mathrm{Range}(\mathsf{P}_{\mathsf{q}}(\l)))=1$ for $|\l| < \delta_{0},$ $\mathrm{Re}\l\geq0,$ i.e.
$$\mathfrak{S}_{\text{in}}(\H_{\l})=\mathfrak{S}(\H_{\l}) \cap \mathds{D}(1,r_{0})=\{\nu(\l)\}$$
reduces to some  \emph{algebraically simple} eigenvalue of $\H_{\l}$. From Theorem \ref{theo:powerB} (and Remark \ref{nb:powerB}) in Appendix \ref{sec:functional}, one deduces that there exists a unique eigenvalue $\mu(\l)$ of $\Ms_{\l}$ such that 
$$\nu(\l)=\mu(\l)^{\mathsf{q}}\,,$$ that eigenvalue $\mu(\l)$ is simple and
$$\mathfrak{S}_{\text{in}}(\Ms_{\l})=\mathfrak{S}(\Ms_{\l}) \cap \mathds{D}(1,r_{0})=\{\mu (\l)\}, \qquad |\l| < \delta_{0}, \mathrm{Re}\l \geq0.$$
Notice that, clearly
$$\lim_{\l\to 0}\mu(\l)=1 \qquad (\mathrm{Re}\l \geq0)$$
while, defining
$$\mathsf{P}(\l)=\frac{1}{2i\pi}\oint_{\{|z-1|=r_{0}\}}\Rs\left(z, \Ms_{\l} \right) \d z, \qquad |\l| \leq \delta_{0}$$
one deduces again from Theorem \ref{theo:powerB} that
$$\mathsf{P}(\l)=\mathsf{P}_{\mathsf{q}}(\l)\,, \qquad |\l| \leq \delta_{0}.$$
In particular, \eqref{eq:convPL} holds true. 
Setting then
$$\varphi_{\l}:=\mathsf{P}(\l)\varphi_{0}, \qquad \l \in \C_{+}$$
where $\varphi_{0}$ is the eigenfunction of $\Ms_{0}$ in \eqref{eq:Ms0varphi0}, one sees that $\varphi_{\l}$ converges to $\mathsf{P}(0)\varphi_{0}=\varphi_{0} \neq 0$ as $\l \to 0$. Thus,  $\varphi_{\l} \neq 0$ for $\l$ small enough and, since $\mu(\l)$ is algebraically simple, $\varphi_{\l}$ is an eigenfunction of $\Ms_{\l}$ for $|\l|$ small enough. Setting 
$$\beta_{\l}:=\langle\varphi_{\l},\varphi_{0}^{\star}\rangle \neq 0$$
one sees that $\lim_{\l\to0}\beta_{\l}=\beta_{0}=\langle\varphi_{0},\varphi^{\star}_{0}\rangle=1$
where we recall $\varphi_{0}^{\star}=\ind_{\T^{d}\times V}$. In particular, for $|\l|$ small enough, $\beta_{\l}\neq0$ and, setting 
$$\varphi^{\star}_{\l}=\frac{1}{\beta_{\l}}\mathsf{P}^{\star}(\l)\varphi_{0}^{\star}$$
one checks easily that 
$$\langle \varphi_{\l},\varphi^{\star}_{\l}\rangle=\beta_{\l}^{-1}\langle \mathsf{P}(\l)\varphi_{\l},\varphi_{0}^{\star}\rangle=\beta_{\l}^{-1}\langle \varphi_{\l},\varphi_{0}^{\star}\rangle=1$$
and, since $\mathsf{P}(\l)$ is of rank one, the expression \eqref{eq:varphi*} follows.
\end{proof}
\begin{nb} Notice that, in full generality, it is not true that $\Ms_{\l}^{\ast}$ converges strongly to $\Ms_{0}^{\ast}$ as $\l \to 0$ in $\X_{0}^{\ast}.$ In particular, one cannot conclude that $\varphi_{\l}^{\star}$ converges to $\varphi_{0}^{\star}$ in $\X_{0}^{\ast}.$ However, from the strong convergence of $\mathsf{P}(\l)$ to $\mathsf{P}(0)$ and the convergence of $\varphi_{\l}$ to $\varphi_{0}$, one deduces that
\begin{equation*}\begin{split}
\lim_{\l\to0}\langle \psi,\varphi_{\l}^{\star}\rangle &=\lim_{\l\to0}\beta_{\l}^{-1}\langle \psi,\mathsf{P}^{\star}(\l)\varphi^{\star}_{0}\rangle\\
&=\lim_{\l\to0}\beta_{\l}^{-1}\langle \mathsf{P}(\l)\psi,\varphi^{\star}_{0}\rangle=\beta_{0}^{-1}\langle \mathsf{P}(0)\psi,\varphi_{0}^{\ast}\rangle=\langle \psi,\mathsf{P}(0)^{\star}\varphi^{\star}_{0}\rangle\end{split}\end{equation*}
so that
\begin{equation}\label{eq:conast}
\lim_{\l\to0}\langle \psi,\varphi_{\l}^{\star}\rangle=\int_{\O\times V}\psi(x,v)\d x\otimes\bm{m}(\d v) \qquad \forall \psi \in \X_{0}
\end{equation}
i.e. $\varphi_{\l}^{\star}$ converges to $\varphi_{0}^{\star}=\ind_{\T^{d}\times V}$ in the weak-$\ast$ topology of $\X_{0}^{\star}$ as $\l \to 0.$
\end{nb}
We can upgrade the convergence in \eqref{eq:convPL} as follows
\begin{cor}\label{cor:PlP0FXs} Let $f \in \X_{s}$, $s \leq N_{0}$. Then
$$\lim_{\l \to 0}\|\mathsf{P}(\l)f-\mathsf{P}(0)f\|_{\X_{s}}=0 .$$
\end{cor}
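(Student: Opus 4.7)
The strategy is to exploit the rank-one structure \eqref{eq:varphi*} of $\mathsf{P}(\l)$ and to upgrade the convergence $\varphi_\l \to \varphi_0$ from $\X_0$ to $\X_s$ via the smoothing property \eqref{eq:MsSUP} of $\Ms_\l$ combined with the eigenvalue equation $\Ms_\l \varphi_\l = \mu(\l) \varphi_\l$.

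First, I would write the difference as
$$\mathsf{P}(\l)f - \mathsf{P}(0)f = \langle f,\varphi_\l^\star\rangle (\varphi_\l - \varphi_0) + \bigl(\langle f,\varphi_\l^\star\rangle - \langle f,\varphi_0^\star\rangle\bigr)\varphi_0.$$
By Proposition \ref{prop:varphi0}, $\varphi_0 \in \X_{N_0} \subset \X_s$ for any $s \leq N_0$, so the second summand is controlled in $\X_s$ by $|\langle f,\varphi_\l^\star\rangle - \langle f,\varphi_0^\star\rangle| \cdot \|\varphi_0\|_{\X_s}$, and this scalar factor tends to $0$ as $\l \to 0$ by \eqref{eq:conast} (applied with $\psi = f \in \X_s \subset \X_0$). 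For the first summand, recall from \eqref{eq:convPL} and the Banach–Steinhaus theorem that $\sup_{|\l|\leq \delta_0}\|\mathsf{P}(\l)\|_{\B(\X_0)} < \infty$, and since $\|\varphi_\l\|_{\X_0} \to \|\varphi_0\|_{\X_0} = 1$, one deduces that $\sup_{|\l|\leq \delta_0}|\langle f,\varphi_\l^\star\rangle| < \infty$ for any $f \in \X_0$.

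It remains then to prove the key step, namely that
$$\lim_{\l \to 0}\|\varphi_\l - \varphi_0\|_{\X_s} = 0 \qquad \text{for any } s \leq N_0.$$
Since $|\mu(\l)| \to 1$ as $\l \to 0$, we can write $\varphi_\l = \mu(\l)^{-1}\Ms_\l \varphi_\l$ and similarly $\varphi_0 = \Ms_0 \varphi_0$, so that
$$\varphi_\l - \varphi_0 = \bigl(\mu(\l)^{-1} - 1\bigr)\Ms_\l \varphi_\l + \Ms_\l(\varphi_\l - \varphi_0) + (\Ms_\l - \Ms_0)\varphi_0.$$
Thanks to \eqref{eq:MsSUP}, $\|\Ms_\l\|_{\B(\X_0,\X_s)} \leq \|\vartheta_s\|_\infty$ uniformly in $\l \in \overline{\C}_+$, hence the first term is $\mathrm{O}(|\mu(\l)^{-1}-1|)$ in $\X_s$ and the second is bounded by $\|\vartheta_s\|_\infty \|\varphi_\l - \varphi_0\|_{\X_0}$, which tends to $0$ by Theorem \ref{prop:eigenMLH}. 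For the third term, \eqref{eq:stronMk0} of Theorem \ref{theo:convL0} yields precisely $\|(\Ms_\l - \Ms_0)\varphi_0\|_{\X_s} \to 0$ for any $s \leq N_0$ (applied to the fixed function $\varphi_0 \in \X_0$).

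The main obstacle here is the upgrade of the convergence $\varphi_\l \to \varphi_0$ from $\X_0$ to the stronger space $\X_s$; the rest is a straightforward splitting argument. This difficulty is resolved by the crucial observation that, by \eqref{eq:MsSUP}, $\Ms_\l$ acts as a bounded operator from $\X_0$ into $\X_{N_0}$ \emph{uniformly} in $\l$, so that the eigenvalue relation $\varphi_\l = \mu(\l)^{-1}\Ms_\l \varphi_\l$ automatically lifts any $\X_0$-convergence of $\varphi_\l$ to convergence in $\X_s$ for $s \leq N_0$.
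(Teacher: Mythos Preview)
Your proof is correct and shares the same underlying mechanism as the paper's, namely the smoothing bound \eqref{eq:MsSUP} which maps $\X_0$-convergence into $\X_s$-convergence. The route, however, is different. The paper does not use the rank-one decomposition at all: it works directly with the resolvent, writing $g(\l)=\Rs(z,\Ms_\l)f$ so that $zg(\l)=f+\Ms_\l g(\l)$; since $f\in\X_s$ and $\Ms_\l g(\l)\to\Ms_0 g(0)$ in $\X_s$ (by \eqref{eq:MsSUP} and the known $\X_0$-convergence of $g(\l)$), one gets $g(\l)\to g(0)$ in $\X_s$ uniformly on the contour, and integrating yields the result. Your argument instead exploits the explicit structure $\mathsf{P}(\l)f=\langle f,\varphi_\l^\star\rangle\varphi_\l$ and reduces everything to upgrading $\varphi_\l\to\varphi_0$ via the eigenvalue equation --- which is really the same bootstrap, specialized to the eigenfunction. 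The paper's approach is slightly more robust (it would survive if the eigenvalue were merely semi-simple rather than simple), while yours is perhaps more transparent given that the rank-one form \eqref{eq:varphi*} and the weak-$\star$ convergence \eqref{eq:conast} are already on the table.
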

\begin{proof} The proof is based upon the following observation: if $f \in \X_{s}$ and $g(\l)=\Rs(z,\Ms_{\l})f \in \X_{0}$ with $|z-1|=r_{0}$, then
$$zg(\l)=f + \Ms_{\l}g(\l).$$
Since $\lim_{\l\to0}\|g(\l) -g(0)\|_{\X_{0}}=0$ one has, from the regularising effect of $\Ms_{\l}$ that
$$\lim_{\l\to0}\|\Ms_{\l}g(\l)-\Ms_{\l}g(0)\|_{\X_{s}}=0$$
as soon as $s \leq N_{0}.$ This proves (because $f \in \X_{s})$ that 
$$g(\l)=z^{-1}f+z^{-1}\Ms_{\l}g(\l) \in \X_{s}$$
and 
$$\lim_{\l\to0}\left\|g(\l)-g(0)\right\|_{\X_{s}}=0$$
where the convergence is actually uniform with respect to $z \in \Gamma=\{z \in \C\;;\;|z-1|=r_{0}\}$ since $\sup_{z \in \Gamma}|z|^{-1} < \infty.$ Equivalently
\begin{equation}\label{eq:RsMlz}
\lim_{\l\to0}\sup_{z \in \Gamma}\left\|\Rs(z,\Ms_{\l})f-\Rs(z,\Ms_{0})f\right\|_{\X_{s}}=0.\end{equation}
Since 
$$\mathsf{P}(\l)=\frac{1}{2i\pi}\oint_{\Gamma}\Rs(z,\Ms_{\l})f\d z, \qquad \l \in \overline{\C}_{+}, \quad |\l| \leq \delta_{0}$$
this gives the conclusion.\end{proof}
From now, we define $\delta >0$ small enough, so that the rectangle
$$\mathscr{C}_{\delta}:=\{\l \in \C\;;\;0 \leq \mathrm{Re}\l \leq \delta\,,\,|\mathrm{Im}\l|\leq\delta\} \subset \{\l \in \C\;;\;|\l| < \delta_{0}\}\,,$$
where $\delta_{0}$ is introduced in the previous Theorem \ref{prop:eigenMLH}. In the sequel, the notion of differentiability of functions $h\::\:\l \in \mathscr{C}_{\delta} \mapsto h(\l) \in Y$ (where $Y$ is a given Banach space) is the usual one but, if $\mathrm{Re}\l=0$, we have to emphasize the fact that  limits are always meant in $\overline{\C}_{+}$ 
\footnote{This means for instance that, if $\l_{0} \in \mathscr{C}_{\delta}$ with $\mathrm{Re}\l_{0} >0,$ $h$ is differentiable means that it is holomorphic in a neighborhood of $\l_{0}$ whereas, for $\l_{0}=i\eta_{0}$, $\eta_{0} \in \R$, the differentiability at $\l_{0}$ of $h$ at means that there exists $h'(\l_{0}) \in Y$ such that 
$$\underset{\l \in \overline{\C}_{+}}{\lim_{\l \to \l_{0}}}\left\|\frac{h(\l)-h(\l_{0})}{\l-\l_{0}}-h'(\l_{0})\right\|_{Y}=0$$
where $\|\cdot\|_{Y}$ is the norm on $Y$.}

\begin{lemme}\label{lem:p'0}
For any $\l \in \mathscr{C}_{\delta}$, we denote with $\mathsf{P}(\l)$ the spectral projection associated to the simple eigenvalue $\mu(\l)$ of $\Ms_{\l}$ as defined in Theorem \ref{prop:eigenMLH}. For any $f \in \X_{1}$, the mapping 
$$\l \in \mathscr{C}_{\delta} \longmapsto \mathsf{P}(\l)f \in \X_{0}$$
is differentiable and
$$\mathsf{P}'(0)f=-\frac{1}{2i\pi}\oint_{\{|z-1|=r_{0}\}}\Rs(z,\H_{0})\H_{0}'\Rs(z,\H_{0})f\d z$$
where $\H_{0}'=\mathsf{L}_{\mathsf{q}}^{(1)}(0)$ is the derivative of $\H_{\l}$ at $\l=0$. 
\end{lemme}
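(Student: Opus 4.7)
My plan is to differentiate the contour-integral representation
\[
\mathsf{P}(\l) = \frac{1}{2i\pi}\oint_{\Gamma} \Rs(z, \H_\l) \, \d z, \qquad \Gamma := \{z \in \C \, : \, |z-1|=r_0\},
\]
which holds for every $\l \in \mathscr{C}_\delta$ by Theorem \ref{prop:eigenMLH}. Invoking the second resolvent identity gives, for $f \in \X_1$ and $\l \in \mathscr{C}_\delta \setminus \{0\}$,
\[
\frac{\mathsf{P}(\l)f - \mathsf{P}(0)f}{\l} = \frac{1}{2i\pi}\oint_\Gamma \Rs(z,\H_\l)\,\frac{\H_\l - \H_0}{\l}\,\Rs(z,\H_0)f \, \d z,
\]
so the problem reduces to passing to the limit $\l \to 0$ under the contour integral and identifying the limiting integrand with $\Rs(z,\H_0)\,\H_0'\,\Rs(z,\H_0) f$.

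The first ingredient, already exploited in Corollary \ref{cor:PlP0FXs}, is that for $f \in \X_1$ the mapping $z \in \Gamma \mapsto g(z) := \Rs(z,\H_0) f$ takes values in $\X_1$, with continuous (in fact Lipschitz) dependence on $z$ in the $\X_1$-topology: indeed, the identity $z\,g(z) = f + \H_0 g(z)$ together with the regularization $\H_0 \in \B(\X_0, \X_{N_0})$ from \eqref{eq:MsSUP} forces $g(z) \in \X_1$ with uniformly bounded $\X_1$-norm, and the same identity applied to $g(z)-g(z')$ yields the Lipschitz continuity; hence $\{g(z)\}_{z \in \Gamma}$ is a compact subset of $\X_1$. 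Applied to the element $g(z)$, Proposition \ref{lem:diffHl} (with $n=\mathsf{q}$, $p=1$) yields the pointwise convergence $\tfrac{1}{\l}(\H_\l - \H_0) g(z) \to \H_0' g(z)$ in $\X_0$, together with the uniform operator bound $\|\tfrac{1}{\l}(\H_\l - \H_0)\|_{\B(\X_1,\X_0)} \leq C$ deduced from \eqref{eq:Lnps} via a mean-value argument. Combined with the compactness of $\{g(z)\}_{z\in\Gamma}$ in $\X_1$, this upgrades the pointwise-in-$z$ convergence to convergence uniform in $z \in \Gamma$.

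The second ingredient is the uniform bound $\sup_{\l \in \mathscr{C}_\delta,\,z \in \Gamma}\|\Rs(z,\H_\l)\|_{\B(\X_0)} < \infty$ together with the uniform-in-$z$ strong convergence $\Rs(z,\H_\l)\psi \to \Rs(z,\H_0)\psi$ on relatively compact subsets of $\X_0$, both of which follow from the proof of Theorem \ref{prop:eigenMLH} via Anselone's collective compactness theory applied to $\{\H_\l\}_{\l \in \mathscr{C}_\delta}$ (as already used to establish \eqref{eq:RsMlz}). Splitting
\[
\Rs(z,\H_\l)\,\tfrac{\H_\l - \H_0}{\l}\,g(z) = \Rs(z,\H_\l)\left[\tfrac{\H_\l - \H_0}{\l}g(z) - \H_0'g(z)\right] + \Rs(z,\H_\l)\,\H_0'g(z),
\]
the first summand vanishes uniformly in $z$ by the preceding paragraph and the uniform bound on $\Rs(z,\H_\l)$, while the second converges uniformly in $z$ to $\Rs(z,\H_0)\H_0'g(z)$ because $\{\H_0' g(z) : z \in \Gamma\}$ is a relatively compact subset of $\X_0$, being the image under the bounded operator $\H_0' \in \B(\X_1,\X_0)$ of the $\X_1$-compact set $\{g(z)\}_{z \in \Gamma}$. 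Uniform convergence of the integrand along the compact contour $\Gamma$ justifies passage to the limit under the integral sign and delivers the stated expression for $\mathsf{P}'(0)f$.

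The principal obstacle is precisely this issue of \emph{uniformity in $z \in \Gamma$}: the available convergences (both that of $\tfrac{\H_\l - \H_0}{\l}$ via Proposition \ref{lem:diffHl} and that of $\Rs(z,\H_\l)$ via Anselone's theory) are a priori only pointwise, while the integrand depends on $z$ nontrivially through $g(z)$. The regularization $\H_0 : \X_0 \to \X_{N_0}$ and the quantitative control \eqref{eq:Lnps} are what transfer the $z$-continuity of $g(z)$ from $\X_0$ into $\X_1$ and thereby lift the pointwise convergences to uniform ones along $\Gamma$, making the dominated-convergence step legitimate.
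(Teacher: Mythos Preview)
Your proof is correct and follows essentially the same route as the paper: the contour-integral representation of $\mathsf{P}(\l)$ via $\Rs(z,\H_\l)$, Anselone's collective compactness for the uniform-in-$z$ strong convergence of $\Rs(z,\H_\l)$, the regularization placing $\Rs(z,\H_0)f$ into $\X_1$ (exactly as in Corollary~\ref{cor:PlP0FXs}), and Proposition~\ref{lem:diffHl} for the derivative of $\H_\l$. The only cosmetic difference is that you work directly with the difference quotient via the second resolvent identity (so the inner resolvent is $\Rs(z,\H_0)$, independent of $\l$), whereas the paper first records the formula for $\tfrac{\d}{\d\l}\mathsf{P}(\l)f$ at generic $\l$ and then lets $\l\to 0$ (so both resolvents in the integrand sit at $\l$ and one must invoke the $\X_s$-convergence of $\Rs(z,\H_\l)f$ as in Corollary~\ref{cor:PlP0FXs}); your variant slightly streamlines the compactness step.
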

\begin{proof}  Recall that, being $\mu(\l)$ a simple eigenvalue of $\Ms_{\l}$, we deduce from Theorem \ref{theo:powerB}  that
$$\mathsf{P}(\l)=\mathsf{P}_{\mathsf{q}}(\l)$$
where $\mathsf{P}_{\mathsf{q}}(\l)$ is the spectral projection associated to $\H_{\l}=\Ms_{\l}^{\mathsf{q}}$ and its simple eigenvalue $\mu^{\mathsf{q}}(\l)$.  One has then, for any $f \in \X_{1}$
$$\mathsf{P}(\l)f=\frac{1}{2i\pi}\oint_{\{|z-1|=r_{0}\}}\Rs(z,\H_{\l})f\d z.$$  As soon as $z \notin \mathfrak{S}\left(\Ms_{\l}\right)$ for any $\l \in \mathscr{C}_{\delta}$, one has
$$\dfrac{\d}{\d\l}\Rs(z,\H_{\l})f=-\Rs(z,\H_{\l})\left(\frac{\d}{\d\l}\H_{\l}\right)\Rs(z,\H_{\l})f,$$
so that
$$\dfrac{\d}{\d\l}\mathsf{P}(\l)f=-\frac{1}{2i\pi}\oint_{\{|z-1|=r_{0}\}}\Rs(z,\H_{\l})\left(\frac{\d}{\d\l}\H_{\l}\right)\Rs(z,\H_{\l})f\d z\qquad \forall \l \in \mathscr{C}_{\delta}.$$
Now, since $\left\{\H_{\l}\;;\;0 \leq \mathrm{Re}\l \leq 1\right\}$ is collectively compact and $\H_{\l}$ converges strongly to $\H_{0}$ on $\X_{0}$ as $\l \to 0$, we deduce from 
\cite[Theorem 5.3]{anselone} that
\begin{equation}\label{eq:unfirRs}
\lim_{\l\to0}\sup_{|z-1|=r_{0}}\left\|\Rs(z,\H_{\l})g-\Rs(z,\H_{0})g\right\|_{\X_{0}}=0 \qquad \forall g \in \X_{0}.
\end{equation}
Now, recall from Lemma \ref{lem:diffHl} that
$$\lim_{\l \to 0}\left\|\frac{\d}{\d\l} \H_{\l}\psi-\H_{0}'\psi\right\|_{\X_{0}}=0$$
for any $\psi \in \X_{1}$ where $\H_{0}'=\mathsf{L}_{\mathsf{q}}^{(1)}(0)$. Introducing 
$$\psi(\l)=\Rs(z,\H_{\l})f$$ 
which depends on $z \in \mathds{D}(1,r_{0})$, one can argue as in the proof of Corollary \ref{cor:PlP0FXs} to deduce that
$$\lim_{\l\to0}\|\psi(\l)-\psi(0)\|_{\X_{s}}=0 \qquad s \leq N_{0}$$
where the convergence is uniform with respect to $z \in \mathbb{D}(1,r_{0})$. One deduces then that
$$\lim_{\l\to0}\sup_{|z-1|=r_{0}}\left\|\frac{\d}{\d\l}\H_{\l}\Rs(z,\H_{\l})f-\H_{0}'\Rs(z,\H_{0})f\right\|_{\X_{0}}=0$$
and then, thanks to \eqref{eq:unfirRs} again, we deduce that
$$\lim_{\l\to0}\sup_{|z-1|=r_{0}}\left\|\Rs(z,\H_{\l})\frac{\d}{\d\l}\H_{\l}\Rs(z,\H_{\l})f-\Rs(z,\H_{0})\H_{0}'\Rs(z,\H_{0})f\right\|_{\X_{0}}=0$$
which proves the differentiability in $0$ of $\mathsf{P}(\l)f$. The same computations also give
$$\dfrac{\d}{\d\eta}\mathsf{P}(\e+i\eta)=-\frac{1}{2i\pi}\oint_{\{|z-1|=r_{0}\}}\Rs(z,\H_{\e+i\eta})\left(\frac{\d}{\d\eta}\H_{\e+i\eta}\right)\Rs(z,\H_{\e+i\eta})\d z, \qquad \forall \eta \in \R\setminus\{0\}.$$
Using now Lemma \ref{prop:derMeis}  which asserts that $\tfrac{\d}{\d\eta}\Ms_{\e+i\eta}$ converges to $\tfrac{\d}{\d\eta}\Ms_{i\eta}$ as $\e\to0^{+}$ uniformly with respect to $\eta$, we can prove as in Lemma \ref{lem:diffHl} that the same holds for $\tfrac{\d}{\d\eta}\H_{\e+i\eta}$ and one deduces the second part of the Lemma. 
\end{proof}
We deduce from this the differentiability of the simple eigenvalue $\mu(\l)$ 
\begin{lemme}\label{lem:deriv} With the notations of Theorem \ref{prop:eigenMLH}, the function $\l \in \mathscr{C}_{\delta} \mapsto \mu(\l) \in \C$ is differentiable with derivative $\mu'(\l)$ such that the limit
$$\lim_{\l\to0}\mu'(\l)=\mu'(0)$$
exists with $\mu'(0)<0.$
\end{lemme}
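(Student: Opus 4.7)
The strategy is to represent $\mu(\l)$ as a scalar functional of $\Ms_\l$ and $\mathsf{P}(\l)$ tested against the constant dual eigenfunction $\varphi_{0}^{\star}=\mathbf{1}_{\T^{d}\times V}$ (recall $\Ms_{0}$ is stochastic, hence $\Ms_{0}^{\star}\varphi_{0}^{\star}=\varphi_{0}^{\star}$). Applying the eigenvalue relation $\Ms_{\l}\mathsf{P}(\l)\varphi_{0}=\mu(\l)\mathsf{P}(\l)\varphi_{0}$ (which holds because $\mathsf{P}(\l)$ projects onto the $\mu(\l)$-eigenspace) and pairing with $\varphi_{0}^{\star}$ gives, for $\l\in\mathscr{C}_{\delta}$,
$$
\mu(\l)\,\beta_{\l} = \langle \Ms_{\l}\mathsf{P}(\l)\varphi_{0},\,\varphi_{0}^{\star}\rangle,\qquad \beta_{\l}:=\langle \mathsf{P}(\l)\varphi_{0},\,\varphi_{0}^{\star}\rangle.
$$
Since $\varphi_{0}\in \X_{N_{0}}\subset \X_{1}$ (Proposition~\ref{prop:varphi0} and $N_{0}\geq 1$), the map $\l\mapsto \mathsf{P}(\l)\varphi_{0}\in\X_{0}$ is differentiable at $\l=0$ by Lemma~\ref{lem:p'0}, and the map $\l\mapsto \Ms_{\l}g$ is differentiable at $\l=0$ for every $g\in\X_{1}$ by Lemma~\ref{lem:M'0f}. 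A product-rule argument combined with $\beta_{0}=\langle \varphi_{0},\varphi_{0}^{\star}\rangle=1\neq 0$ yields differentiability of $\mu(\l)$ at $\l=0$. For $\l\in\mathscr{C}_{\delta}$ with $\mathrm{Re}\,\l>0$ the family $\Ms_{\l}$ is holomorphic in operator norm, so $\mathsf{P}(\l)$ and $\mu(\l)$ are holomorphic there by standard Kato-type perturbation theory; this provides the differentiability on all of $\mathscr{C}_{\delta}$.

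To identify $\mu'(0)$, differentiate the scalar identity above at $\l=0$:
$$
\mu'(0) + \beta_{0}' \;=\; \left\langle \tfrac{\d}{\d\l}\Ms_{\l}\big|_{\l=0}\varphi_{0},\,\varphi_{0}^{\star}\right\rangle + \langle \Ms_{0}\,\mathsf{P}'(0)\varphi_{0},\,\varphi_{0}^{\star}\rangle.
$$
The crucial simplification is that the last term reduces, via $\Ms_{0}^{\star}\varphi_{0}^{\star}=\varphi_{0}^{\star}$, to $\langle \mathsf{P}'(0)\varphi_{0},\varphi_{0}^{\star}\rangle = \beta_{0}'$, which cancels the $\beta_{0}'$ on the left. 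One is then left with the clean formula
$$
\mu'(0) \;=\; \left\langle \tfrac{\d}{\d\l}\Ms_{\l}\big|_{\l=0}\varphi_{0},\,\varphi_{0}^{\star}\right\rangle.
$$
Inserting the explicit kernel representation of $\tfrac{\d}{\d\l}\Ms_{\l}$ from Lemma~\ref{lem:M'0f}, performing the change of variables $y=x-tw$ in the $x$-integral, and using the conservation identity $\int_{V}\bm{k}(v,w)\bm{m}(\d v)=\sigma(w)$ together with $\int_{0}^{\infty}t\,e^{-\sigma(w)t}\d t=\sigma(w)^{-2}$ collapses the integral to
$$
\mu'(0) \;=\; -\int_{V}\frac{\tilde{\varphi}_{0}(w)}{\sigma(w)}\,\bm{m}(\d w),\qquad \tilde{\varphi}_{0}(w):=\int_{\T^{d}}\varphi_{0}(y,w)\,\d y.
$$
Finiteness of this integral is ensured by $\varphi_{0}\in \X_{N_{0}}\subset\X_{1}$, and strict negativity follows from $\varphi_{0}>0$ (irreducibility, Theorem~\ref{prop:eigenMLH}).

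For the continuity statement $\lim_{\l\to 0}\mu'(\l)=\mu'(0)$, one reuses the same scalar representation on $\mathscr{C}_{\delta}$ together with the continuity at $0$ of $\mathsf{P}'(\l)$ (inherited from the uniform convergence \eqref{eq:unfirRs} on the enclosing contour) and of $\tfrac{\d}{\d\l}\Ms_{\l}g$ for $g\in\X_{1}$ (inherited from the proof of Lemma~\ref{lem:M'0f} via dominated convergence). The main obstacle is justifying the interchange of limit and derivative at the boundary $\mathrm{Re}\,\l=0$, where $\Ms_{\l}$ is only strongly (not norm) continuous in $\l$; this is circumvented precisely because the test vector $\varphi_{0}$ lies in the stronger space $\X_{N_{0}}$, so that the smoothing property $\K\in\mathscr{B}(\X_{-1},\X_{N_{0}})$ together with Lemmas~\ref{lem:M'0f} and \ref{lem:p'0} keeps all relevant limits under control in $\X_{0}$.
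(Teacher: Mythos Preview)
Your proposal is correct and reaches the same formula $\mu'(0)=-\int_{\T^{d}\times V}\varphi_{0}(y,w)\sigma^{-1}(w)\,\d y\,\bm{m}(\d w)$ as the paper. The route differs in one respect worth noting. The paper pairs the differentiated eigenvalue equation against the $\l$-\emph{dependent} dual eigenfunction $\varphi_{\l}^{\star}$ of $\Ms_{\l}^{\star}$; the identity $\Ms_{\l}^{\star}\varphi_{\l}^{\star}=\mu(\l)\varphi_{\l}^{\star}$ produces the cancellation at every $\l$, yielding the clean representation $\mu'(\l)=\langle (\tfrac{\d}{\d\l}\Ms_{\l})\varphi_{\l},\varphi_{\l}^{\star}\rangle$, and the limit $\l\to0$ is then handled via the weak-$\ast$ convergence $\varphi_{\l}^{\star}\to\varphi_{0}^{\star}$ established just after Theorem~\ref{prop:eigenMLH}. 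You instead fix the dual vector $\varphi_{0}^{\star}=\mathbf{1}$ throughout and exploit the stochasticity $\Ms_{0}^{\star}\varphi_{0}^{\star}=\varphi_{0}^{\star}$; this gives the same cancellation \emph{only at} $\l=0$, so your formula for $\mu'(\l)$ at $\l\neq0$ carries the extra term $\langle\Ms_{\l}\varphi_{\l}',\varphi_{0}^{\star}\rangle-\mu(\l)\beta_{\l}'$, whose vanishing in the limit you must (and do) argue via continuity of $\mathsf{P}'(\l)\varphi_{0}$ and $\mu(\l)$ at $0$. Your approach buys you freedom from the $\l$-dependent dual object and its weak-$\ast$ convergence; the paper's buys a tidier expression for $\mu'(\l)$ valid at every $\l$, from which the continuity at $0$ is slightly more direct. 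Both are sound; the final computation of the sign of $\mu'(0)$ is identical.
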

\begin{proof} Recall that we introduced in the proof of Theorem \ref{prop:eigenMLH} the function $\varphi_{\l}=\mathsf{P}(\l)\varphi_{0}$ as well as the unique eigenfunction $\varphi_{\l}^{\star} \in \X_{0}^{\star}$ of $\Ms_{\l}^{\ast}$ associated to $\mu(\l)$ and such that $\langle\varphi_{\l},\varphi_{\l}^{\star}\rangle=1$ where $\langle\cdot,\cdot\rangle$ is the duality bracket between $\X_{0}$ and its dual $\X_{0}^{\star}.$ Since $\varphi_{0} \in \X_{1}$, the mapping  $\l \in \mathscr{C}_{\delta}\mapsto \varphi_{\l} \in \X_{0}$ is differentiable with 
$$\dfrac{\d}{\d\l}\varphi_{\l}=\dfrac{\d}{\d\l}\mathsf{P}(\l)\varphi_{0}, \qquad \l \in \overline{\C}_{+}$$
according to Lemma \ref{lem:p'0}. Since 
$$\Ms_{\l}\varphi_{\l}=\mu(\l)\varphi_{\l},$$
and the mapping $\l \in \overline{\C}_{+} \mapsto \Ms_{\l} \in \B(\X_{0})$ is analytic while $\mu(\l)$ is a \emph{simple} eigenvalue, we deduce from \cite[Chapter II-1]{kato} that,  for any $\l \in \overline{\C}_{+}$, the derivative $\mu'(\l)$ exists with 
$$\left(\dfrac{\d}{\d\l}\Ms_{\l}\right)\varphi_{\l}+\Ms_{\l}\dfrac{\d}{\d\l}\varphi_{\l}=\mu'(\l)\varphi_{\l}+\mu(\l)\frac{\d}{\d\l}\varphi_{\l}.$$
Taking now the duality bracket of this identity with $\varphi_{\l}^{\star}$ and noticing that
$$\left\langle \Ms_{\l}\dfrac{\d}{\d\l}\varphi_{\l},\varphi_{\l}^{\star}\right\rangle=\left\langle \frac{\d}{\d\l}\varphi_{\l},\Ms^{\star}_{\l}\varphi_{\l}^{\star}\right\rangle=\mu(\l)\left\langle \dfrac{\d}{\d\l}\varphi_{\l},\varphi_{\l}^{\star}\right\rangle$$
we get that
$$\left\langle \left(\dfrac{\d}{\d\l}\Ms_{\l}\right)\varphi_{\l},\varphi^{\star}_{\l}\right\rangle=\mu'(\l)\langle \varphi_{\l},\varphi_{\l}^{\star}\rangle=\mu'(\l).$$
One sees that
\begin{multline*}
\left|\left\langle \left(\dfrac{\d}{\d\l}\Ms_{\l}\right)\varphi_{\l},\varphi^{\star}_{\l}\right\rangle-\left\langle \Ms_{0}'\varphi_{0},\varphi^{\star}_{0}\right\rangle\right|\\
\leq \left|\left\langle \left(\dfrac{\d}{\d\l}\Ms_{\l}\right)\left(\varphi_{\l}-\varphi_{0}\right),\varphi^{\star}_{\l}\right\rangle\right|
+\left|\left\langle \left(\dfrac{\d}{\d\l}\Ms_{\l}\right)\varphi_{0}-\Ms_{0}'\varphi_{0},\varphi^{\star}_{\l}\right\rangle\right|\\
+\left|\left\langle \Ms_{0}'\varphi_{0},\varphi^{\star}_{\l}-\varphi^{\star}_{0}\right\rangle\right|
\end{multline*}
where the first and second terms on the right-hand side converges to zero as $\l \to 0$ since 
$$\lim_{\l\to0}\|\varphi_{\l}-\varphi_{0}\|_{\X_{1}}=0 \qquad \text{ and} \quad \lim_{\l\to0}\left\|\left(\dfrac{\d}{\d\l}\Ms_{\l}\right)\varphi_{0}-\Ms_{0}'\varphi_{0}\right\|_{\X_{0}}=0$$ (recall that $\varphi_{0}\in \X_{1}$) while the third term goes to zero as $\l \to0$ since $\varphi_{\l}^{\star}$ converges to $\varphi_{0}^{\star}$ in the weak-$\star$ topology of $\X_{0}^{\star}.$ We deduce from this that 
$$\mu'(0)=\lim_{\l\to0}\mu'(\l)=\lim_{\l\to0}\left\langle \left(\dfrac{\d}{\d\l}\Ms_{\l}\right)\varphi_{\l},\varphi^{\star}_{\l}\right\rangle=\langle \Ms_{0}'\varphi_{0},\varphi_{0}^{\ast}\rangle.$$
Therefore, using Lemma \ref{lem:diffHl} (see in particular \eqref{eq:Ms0n}) and since $\varphi_{0}^{\ast}=1$, we deduce that
\begin{equation*}\begin{split}
\mu'(0)&=\int_{\Omega \times V}\Ms_{0}'\varphi_{0}(x,v)\d x\,\bm{m}(\d v)\\
&=-\int_{\Omega \times V}\d x\,\bm{m}(\d v)\int_{V}\bm{k}(v,w)\bm{m}(\d w)
\int_{0}^{\infty}\,t\exp\left(-\sigma(w)t\right)\,\varphi_{0}\left(x-tw,w\right)\d t\\
&=-\int_{\O\times V}\varphi_{0}(y,w)\d y\,\bm{m}(\d w)\int_{V}\bm{k}(v,w)\bm{m}(\d v)\int_{0}^{\infty}t\exp\left(-t\sigma(w)\right)\d t\\
&=-\int_{\O\times V}\varphi_{0}(y,w)\d y\,\bm{m}(\d w)\int_{V}\bm{k}(v,w)\sigma^{-2}(w)\bm{m}(\d v)
\end{split}\end{equation*}
i.e.
$$
\mu'(0)=-\int_{\Omega \times V}\varphi_{0}(y,w)\sigma^{-1}(w)\d y\,\bm{m}(\d w)$$
 and the negativity of $\mu'(0)$ follows since $\varphi_{0} >0$. 
\end{proof}
We deduce from this the following fundamental result for our construction of the strong limit of $\Rs(\l,\A+\K)$ at $0$,
\begin{propo}\label{propo:P0conv}
Let $f \in \X_{1}$ be such that $\mathsf{P}(0)f=0$. Then,
$${\lim_{\l\to0}}\,\Rs(1,\Ms_{\l})\mathsf{P}(\l)f,$$
exists in $\X_{0}$ and is denoted by $\mathsf{\Phi}_{0}f$. Moreover, 
$$\mathsf{\Phi}_{0}f=-\frac{1}{\mu'(0)}\mathsf{P}'(0)f.$$
\end{propo}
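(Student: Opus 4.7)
The plan is to reduce the statement to a simple L'Hôpital-type computation once we exploit the fact that $\mathsf{P}(\l)$ is a \emph{rank-one spectral projection} of $\Ms_{\l}$. More precisely, first I would observe that since $\mathsf{P}(\l)$ is the spectral projection associated with the (algebraically simple) eigenvalue $\mu(\l)$ of $\Ms_{\l}$, it commutes with $\Ms_{\l}$ and hence with $\Rs(1,\Ms_{\l})$; moreover, on its one-dimensional range $\Ms_{\l}$ acts as multiplication by $\mu(\l)$. Combined with Theorem~\ref{prop:eigenMLH}, which ensures $\mu(\l) \neq 1$ for $\l \neq 0$ in $\mathscr{C}_{\delta}$, this gives the identity
\begin{equation*}
\Rs(1,\Ms_{\l}) \mathsf{P}(\l) f = \frac{1}{1 - \mu(\l)} \mathsf{P}(\l) f, \qquad f \in \X_{0}, \quad \l \in \mathscr{C}_{\delta}\setminus\{0\}.
\end{equation*}

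Next, using the assumption $\mathsf{P}(0) f = 0$, I would rewrite this for $f \in \X_{1}$ as
\begin{equation*}
\Rs(1,\Ms_{\l}) \mathsf{P}(\l) f \;=\; \frac{\mathsf{P}(\l) f - \mathsf{P}(0) f}{\l} \cdot \frac{\l}{1 - \mu(\l)}, \qquad \l \in \mathscr{C}_{\delta}\setminus\{0\}.
\end{equation*}
Then I invoke Lemma~\ref{lem:p'0}, which yields the differentiability of $\l \mapsto \mathsf{P}(\l) f$ at $\l = 0$ on $\X_{1}$ (valid in the relevant sense on $\overline{\C}_{+}$), so that the first factor converges in $\X_{0}$ to $\mathsf{P}'(0) f$ as $\l \to 0$; and Lemma~\ref{lem:deriv}, which provides the differentiability of $\mu$ at $0$ together with $\mu'(0) < 0$, so that the scalar factor $\l/(1-\mu(\l)) = -\bigl[(\mu(\l)-\mu(0))/\l\bigr]^{-1}$ converges to $-1/\mu'(0)$. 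Because $\mu'(0) \neq 0$, the denominator is bounded away from zero for $|\l|$ small, so one can safely multiply these two limits to conclude
\begin{equation*}
\lim_{\l \to 0} \Rs(1,\Ms_{\l}) \mathsf{P}(\l) f \;=\; -\frac{1}{\mu'(0)} \mathsf{P}'(0) f \quad \text{in } \X_{0},
\end{equation*}
which gives both the existence of the limit and its explicit form.

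The only delicate point is justifying the factorization in the first display, which I would do rigorously by noting that the range of $\mathsf{P}(\l)$ is spanned by the eigenfunction $\varphi_{\l}$ of $\Ms_{\l}$ and that $\mathsf{P}(\l) f = \langle f, \varphi_{\l}^{\star}\rangle \varphi_{\l}$ by \eqref{eq:varphi*}; applying $\Rs(1,\Ms_{\l})$ to this scalar multiple of $\varphi_{\l}$ just divides by $1-\mu(\l)$. I expect no serious obstacle: once the rank-one structure of $\mathsf{P}(\l)$ is used, the whole argument is a two-line differential quotient manipulation. The only conceptual subtlety is the sense of ``derivative at $\l=0$'' when $\l$ approaches from $\overline{\C}_{+}$, but this is exactly how differentiability was defined in Lemmas~\ref{lem:p'0} and \ref{lem:deriv}, so the above computation goes through unchanged.
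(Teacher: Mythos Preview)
Your proposal is correct and follows essentially the same approach as the paper: both use the rank-one structure of $\mathsf{P}(\l)$ to obtain $\Rs(1,\Ms_{\l})\mathsf{P}(\l)f=\frac{1}{1-\mu(\l)}\mathsf{P}(\l)f$, then insert $\mathsf{P}(0)f=0$ to write this as a product of the difference quotient $(\mathsf{P}(\l)f-\mathsf{P}(0)f)/\l$ and the scalar $\l/(1-\mu(\l))$, and pass to the limit via Lemmas~\ref{lem:p'0} and~\ref{lem:deriv}. The paper's proof is slightly more terse but otherwise identical in structure and content.
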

\begin{proof} Recall that, since $f \in \X_{1}$, the derivative $\mathsf{P}'(0)f$ is well-defined whereas, as observed, $\mu'(0) \neq0.$ We simply observes then that, given $\l \in \overline{\C}_{+}\setminus\{0\}$, 
$$\left(\mathbf{I}-\Ms_{\l}\right)\mathsf{P}(\l)\varphi=\left(1-\mu(\l)\right)\mathsf{P}(\l)\varphi \qquad \forall \varphi \in\X_{0}$$
which implies obviously that
$$\Rs(1,\Ms_{\l})\mathsf{P}(\l)\varphi=\frac{1}{1-\mu(\l)}\mathsf{P}(\l)\varphi, \qquad \forall \varphi \in \X_{0}$$
provided $\l\neq0$ so that $\mu(\l) \neq 1$ (and in particular $1 \notin \mathfrak{S}(\Ms_{\l})$). Then, for $f \in \X_{1}$ with $\mathsf{P}(0)f=0$ one can write, for $|\l|$ small enough,
$$\Rs(1,\Ms_{\l})\mathsf{P}(\l)f=\frac{1}{1-\mu(\l)}\left(\mathsf{P}(\l)f-\mathsf{P}(0)f\right)=\frac{\l}{1-\mu(\l)}\left(\frac{\mathsf{P}(\l)f-\mathsf{P}(0)}{\l}\right)\,.$$
Now, because $f \in \X_{1}$, 
$$\lim_{\l \to 0}\frac{\mathsf{P}(\l)f-\mathsf{P}(0)}{\l}=\mathsf{P}'(0)f$$
where the convergence holds in $\X_{0}$ while, since $\mu'(0) \neq 0$, 
$\lim_{\l\to0}\frac{\l}{1-\mu(\l)}=-\frac{1}{\mu'(0)}$
which gives the result.\end{proof}

 \section{The extension of $\Rs(\l,\A+\K)$ to the imaginary axis}\label{sec:trace}

 \subsection{Existence of the boundary function for $\Rs(\l,\A+\K)$}
We begin with the extension of $\Rs(\l,\A+\K)$ far away from zero:
\begin{lemme}\label{propo:convRsAKeta}
Let $\eta_{0} \neq 0$ be given. Then, there is $\delta >0$ (small enough) such that, for any $f \in\X_{1}$, 
$$\lim_{\e\to0}\sup_{|\eta-\eta_{0}| \leq \delta}\bigg\|\Rs(\e+i\eta,\A+\K)f-\bigg[\Rs(i\eta,\A)f+\Rs(i\eta,\A)\Ms_{i\eta}\Rs(1,\Ms_{i\eta})f\bigg]\bigg\|_{\X_{0}}=0.$$
\end{lemme}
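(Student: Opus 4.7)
The plan is to start from the representation formula \eqref{eq:lambdaTH}, valid for $\mathrm{Re}\l>0$, namely
\[
\Rs(\e+i\eta,\A+\K) \;=\; \Rs(\e+i\eta,\A)+\Rs(\e+i\eta,\A)\,\Ms_{\e+i\eta}\,\Rs(1,\Ms_{\e+i\eta}),
\]
and then pass to the limit $\e\to 0^+$ term by term, uniformly in $\eta$ on a small interval $I=[\eta_{0}-\delta,\eta_{0}+\delta]$. The value of $\delta\in(0,|\eta_{0}|/2)$ will be chosen so that the conclusion of Lemma \ref{cor:ResMei} holds and so that $0\notin I$, which matters because Lemma \ref{cor:ResMei} requires $\eta\neq 0$ in order that $r_{\sigma}(\Ms_{i\eta})<1$ and hence $1\in \varrho(\Ms_{i\eta})$.

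For the first summand, $f\in\X_{1}$ and Proposition \ref{prop:convRsT0} (applied with $s=0$) yields
\[
\lim_{\e\to 0^{+}}\sup_{\eta\in\R}\bigl\|\Rs(\e+i\eta,\A)f-\Rs(i\eta,\A)f\bigr\|_{\X_{0}}=0,
\]
which is even stronger than what is needed. The nontrivial work is for the second summand, which I would handle by a three-step chaining. First, invoke Lemma \ref{cor:ResMei} to produce $\delta>0$ such that
\[
\lim_{\e\to 0^{+}}\sup_{|\eta-\eta_{0}|\leq\delta}\bigl\|\Rs(1,\Ms_{\e+i\eta})f-\Rs(1,\Ms_{i\eta})f\bigr\|_{\X_{0}}=0;
\]
denote by $g(\e,\eta):=\Rs(1,\Ms_{\e+i\eta})f$ this family and by $\widetilde g(\eta):=\Rs(1,\Ms_{i\eta})f$ its limit. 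Second, since the convergence $g(\e,\eta)\to\widetilde g(\eta)$ holds in $\X_{0}$ uniformly on the compact interval $I$, Remark \ref{nb:doubleMs} applies and gives
\[
\lim_{\e\to 0^{+}}\sup_{\eta\in I}\bigl\|\Ms_{\e+i\eta}g(\e,\eta)-\Ms_{i\eta}\widetilde g(\eta)\bigr\|_{\X_{1}}=0,
\]
where the choice $s=1\leq N_{0}$ is the key gain-of-integrability coming from Assumption \ref{hypH}. Third, letting $h(\e,\eta):=\Ms_{\e+i\eta}g(\e,\eta)$, this family converges in $\X_{1}$ uniformly on $I$ to $\widetilde h(\eta):=\Ms_{i\eta}\widetilde g(\eta)$, so Corollary \ref{cor:double} (applied with $s=0$, $s+1=1$) yields
\[
\lim_{\e\to 0^{+}}\sup_{\eta\in I}\bigl\|\Rs(\e+i\eta,\A)h(\e,\eta)-\Rs(i\eta,\A)\widetilde h(\eta)\bigr\|_{\X_{0}}=0.
\]
Summing the two contributions proves the lemma.

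The main obstacle is really concentrated in the first step of the above chain, i.e.\ in the uniform invertibility of $\mathbf{I}-\Ms_{\e+i\eta}$ for small $\e$ and $\eta$ near $\eta_{0}$; this is exactly what Lemma \ref{cor:ResMei} delivers thanks to the collective compactness of $\{\H_{\l}\;;\;0\leq\mathrm{Re}\l\leq 1\}$ provided by Theorem \ref{theo:collective}. Once this uniform resolvent control is available, the two subsequent passages to the limit are direct applications of Remark \ref{nb:doubleMs} and Corollary \ref{cor:double}, which are tailored exactly to chain a uniformly convergent family through $\Ms_{\l}$ and then through $\Rs(\l,\A)$, with the loss of one unit of integrability being absorbed by the hypothesis $f\in\X_{1}$ and the bound $N_{0}\geq 1$.
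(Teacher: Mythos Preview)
Your proof is correct and follows essentially the same approach as the paper's: split via the representation \eqref{eq:lambdaTH}, handle the first summand with Proposition~\ref{prop:convRsT0}, and for the second summand chain Lemma~\ref{cor:ResMei}, then Remark~\ref{nb:doubleMs} with $s=1$, then Corollary~\ref{cor:double}. The paper even notes, as you implicitly use, that the convergence of the second summand actually holds for all $f\in\X_{0}$, with the restriction $f\in\X_{1}$ needed only for the first term.
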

\begin{proof} We recall that, for any $f \in \X_{0}$, Lemma \ref{cor:ResMei}  established that
$$\lim_{\e\to0^{+}}\sup_{|\eta-\eta_{0}| < \delta}\bigg\|\Rs(1,\Ms_{\e+i\eta})g-\Rs(1,\Ms_{i\eta})g\bigg\|_{\X_{0}}=0$$
holds for any $g \in \X_{0}$. Recalling that \eqref{eq:strongMiekUn}
 holds in particular for $s=1$, i.e.  
$$\lim_{\e\to0^{+}}\sup_{|\eta-\eta_{0}| < \delta}\bigg\|\Ms_{\e+i\eta}\Rs(1,\Ms_{\e+i\eta})g-\Ms_{i\eta}\Rs(1,\Ms_{i\eta})g\bigg\|_{\X_{1}}=0, \qquad \forall g \in \X_{0}$$
we deduce from Corollary \ref{cor:double} that
$$\lim_{\e\to0^{+}}\sup_{|\eta-\eta_{0}| < \delta}\bigg\|\Rs(\e+i\eta,\A)\Ms_{\e+i\eta}\Rs(1,\Ms_{\e+i\eta})f-\Rs(i\eta,\A)\Ms_{i\eta}\Rs(1,\Ms_{i\eta})f\bigg\|_{\X_{0}}=0 \qquad \forall f \in \X_{0}.$$
Combining this with \eqref{eq:RetaXs}
 and the fact that 
$$\Rs(\e+i\eta,\A+\K)f=\Rs(\e+i\eta,\A)f + \Rs(\e+i\eta,\A)\Ms_{\e+i\eta}\Rs(1,\Ms_{\e+i\eta})f$$
we deduce the result.
\end{proof}
This shows that, away from zero, it is possible to extend the resolvent of $\Rs(\l,\A+\K)f$ along the imaginary axis, i.e. for $\l=i\eta$. The extension for $\l=0$ is much more involved and follows the approach of \cite[Section 4]{MKL-JFA}. For such a case, recalling that, with the definition of $\mathsf{P}_{0}$ and $\mathbf{P}$, given $f \in \X_{0}$, 
$$\mathsf{P}(0)f=0 \quad \iff \mathbf{P}f=0 \quad \iff \varrho_{f}=\int_{\O\times V}f(x,v)\d x\otimes\bm{m}(\d v)=0.$$
We introduce then
$$\X_{k}^{0}:=\{f \in \X_{k}\;;\;\varrho_{f}=0\}, \qquad k \in \N.$$
which is a closed subspace of $\X_{k}$. Notice that, endowed with the $\X_{k}$-norm, $\X_{k}^{0}$ is a Banach space.  
Since the semigroup $(\mathcal{V}(t))_{t\geq0}$ generated by $\A+\K$ is conservative: 
$$\int_{\O \times V}\mathcal{V}(t)f\d x\otimes \bm{m}(\d v)=\int_{\O\times V}f\d x\otimes \bm{m}(\d v), \qquad \forall t \geq0, \quad f \in \X_{0}$$
one has
\begin{equation*}\begin{split}
\int_{\Omega\times V}\Rs(\l,\A+\K)f\d x\otimes \bm{m}(\d v)&=\int_{\Omega\times V}\left(\int_{0}^{\infty}e^{-\l\,t}V(t)f\d t\right)\d x\otimes \bm{m}(\d v)\\
&=\frac{1}{\l}\int_{\Omega\times V}f\d x\otimes \bm{m}(\d v), \qquad \forall \l \in \C_{+}\end{split}\end{equation*}
and therefore the resolvent and all its iterates $\Rs(\l,\A+\K)^{k}$ leave $\X_{0}^{0}$ invariant ($k \geq 0$). 
We  deduce from Proposition \ref{propo:P0conv} the following
\begin{lemme}\label{propo:P1conv}
Let $f \in \X_{1}$ with $\varrho_{f}=0$, i.e. $f \in \X_{1}^{0},$ then
$${\lim_{\l\to0}}\,\Rs(\l,\A+\K)f,$$
exists in $\X_{0}$. We denote it by $\Rs(0)f$ and has
$$\Rs(0)f=\Rs(0,\A)f+\Rs(0,\A)\Ms_{0}\left[\Rs\left(1,\Ms_{0}\left(\mathbf{I}-\mathsf{P}(0)\right)\right)f + \mathsf{\Phi}_{0}f\right]\,$$
\end{lemme}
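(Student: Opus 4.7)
\medskip

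\noindent\emph{Proposal.} The strategy is to start from the resolvent expansion
\[
\Rs(\l,\A+\K)=\Rs(\l,\A)+\Rs(\l,\A)\Ms_{\l}\Rs(1,\Ms_{\l}), \qquad \mathrm{Re}\l>0,
\]
which is \eqref{eq:lambdaTH}, and to pass to the limit $\l\to0$ in $\overline{\C}_{+}$ termwise. Since $f\in\X_{1}$, Theorem \ref{theo:convL0} already supplies $\lim_{\l\to0}\Rs(\l,\A)f=\Rs(0,\A)f$ in $\X_{0}$, so the whole issue is the bracket $\Ms_{\l}\Rs(1,\Ms_{\l})f$.

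\medskip

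\noindent The central step is to split $\Rs(1,\Ms_{\l})$ using the spectral projection $\mathsf{P}(\l)$ from Theorem \ref{prop:eigenMLH}. Since $\mathsf{P}(\l)$ commutes with $\Ms_{\l}$ and $\Ms_{\l}\mathsf{P}(\l)=\mu(\l)\mathsf{P}(\l)$, one writes
\[
\Rs(1,\Ms_{\l})f=\Rs(1,\Ms_{\l})\mathsf{P}(\l)f+\Rs\!\left(1,\Ms_{\l}(\mathbf{I}-\mathsf{P}(\l))\right)(\mathbf{I}-\mathsf{P}(\l))f.
\]
The first summand is handled by Proposition \ref{propo:P0conv}: since $f\in\X_{1}$ and $\mathsf{P}(0)f=\varrho_{f}\,\varphi_{0}=0$, we have $\Rs(1,\Ms_{\l})\mathsf{P}(\l)f\to\mathsf{\Phi}_{0}f$ in $\X_{0}$. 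For the second summand we note that $\mathsf{T}_{\l}:=\Ms_{\l}(\mathbf{I}-\mathsf{P}(\l))$ has $\mathfrak{S}(\mathsf{T}_{\l})\subset\mathfrak{S}(\Ms_{\l})\setminus\{\mu(\l)\}\cup\{0\}$; by the spectral separation produced by the collective compactness of $\{\H_{\l}\}$ (cf.\ Theorem \ref{prop:eigenMLH}), the point $1$ stays in the resolvent set of $\mathsf{T}_{\l}$ \emph{uniformly} for $|\l|\leq\delta$, which yields
\[
M_{\ast}:=\sup_{|\l|\leq\delta,\,\mathrm{Re}\l\geq0}\bigl\|\Rs(1,\mathsf{T}_{\l})\bigr\|_{\mathscr{B}(\X_{0})}<\infty.
\]
Combined with the strong convergence $\mathsf{T}_{\l}\to\mathsf{T}_{0}$ on $\X_{0}$ (which follows from $\Ms_{\l}\to\Ms_{0}$ strongly by Theorem \ref{theo:convL0} together with $\mathsf{P}(\l)\to\mathsf{P}(0)$ strongly by \eqref{eq:convPL}) and the resolvent identity
\[
\Rs(1,\mathsf{T}_{\l})-\Rs(1,\mathsf{T}_{0})=\Rs(1,\mathsf{T}_{\l})(\mathsf{T}_{\l}-\mathsf{T}_{0})\Rs(1,\mathsf{T}_{0}),
\]
this gives $\Rs(1,\mathsf{T}_{\l})(\mathbf{I}-\mathsf{P}(\l))f\to\Rs(1,\mathsf{T}_{0})(\mathbf{I}-\mathsf{P}(0))f$ in $\X_{0}$. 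Since $\mathsf{P}(0)f=0$, the target simplifies to $\Rs\bigl(1,\Ms_{0}(\mathbf{I}-\mathsf{P}(0))\bigr)f$.

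\medskip

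\noindent Putting these two pieces together yields that $h(\l):=\Rs(1,\Ms_{\l})f$ converges in $\X_{0}$, as $\l\to0$, to
\[
h(0):=\Rs\!\left(1,\Ms_{0}(\mathbf{I}-\mathsf{P}(0))\right)f+\mathsf{\Phi}_{0}f.
\]
It then remains to propagate this convergence through $\Rs(\l,\A)\Ms_{\l}$. The regularising bound \eqref{eq:MsSUP} shows $\Ms_{\l}\in\mathscr{B}(\X_{0},\X_{1})$ uniformly in $\l$, and Theorem \ref{theo:convL0} (specifically \eqref{eq:stronMk0}, the $\l\to0$ analogue of Lemma \ref{lem:MeMi} with $k=1$) gives $\Ms_{\l}h(\l)\to\Ms_{0}h(0)$ in $\X_{1}$: indeed one splits $\Ms_{\l}h(\l)-\Ms_{0}h(0)=\Ms_{\l}(h(\l)-h(0))+(\Ms_{\l}-\Ms_{0})h(0)$, the first term being controlled by $\|\vartheta_{1}\|_{\infty}\|h(\l)-h(0)\|_{\X_{0}}$ and the second by strong $\X_{1}$-convergence applied to the fixed element $h(0)\in\X_{0}$. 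Finally, Corollary \ref{cor:double} (in its $\l\to0$ version, which is the direct analogue proved via \eqref{eq:Reta0Xs}) transfers this convergence through $\Rs(\l,\A)$, producing
\[
\Rs(\l,\A)\Ms_{\l}h(\l)\xrightarrow[\l\to0]{}\Rs(0,\A)\Ms_{0}h(0) \quad\text{in }\X_{0},
\]
which together with $\Rs(\l,\A)f\to\Rs(0,\A)f$ gives the stated formula.

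\medskip

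\noindent The only nontrivial obstacle is the uniform boundedness of $\|\Rs(1,\mathsf{T}_{\l})\|_{\mathscr{B}(\X_{0})}$ near $\l=0$; everything else is routine propagation of strong limits. This uniform bound is ultimately a consequence of the collective compactness established in Theorem \ref{theo:collective} (used through the spectral separation of Theorem \ref{prop:eigenMLH}), so no new technical input is needed beyond what has already been developed.
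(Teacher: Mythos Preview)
Your proposal is correct and follows essentially the same route as the paper's own proof: decompose $\Rs(1,\Ms_{\l})f$ via the spectral projection $\mathsf{P}(\l)$, handle the $\mathsf{P}(\l)$-part with Proposition~\ref{propo:P0conv}, handle the $(\mathbf{I}-\mathsf{P}(\l))$-part via collective compactness, and then propagate the limit through $\Rs(\l,\A)\Ms_{\l}$. The only place worth a caution is your claim that ``spectral separation'' directly yields the uniform bound $M_{\ast}=\sup_{|\l|\leq\delta}\|\Rs(1,\mathsf{T}_{\l})\|_{\mathscr{B}(\X_{0})}<\infty$: knowing that a fixed disc around $1$ lies in $\rho(\mathsf{T}_{\l})$ for every $\l$ is \emph{not} by itself enough for a uniform resolvent bound. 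What actually produces $M_{\ast}$ is Anselone's Theorem~5.3(d) applied to the collectively compact, strongly convergent family $\H_{\l}(\mathbf{I}-\mathsf{P}(\l))$ (this is exactly how the paper proceeds, and how Lemma~\ref{cor:ResMei} is argued), which gives strong convergence of $\Rs(1,\H_{\l}(\mathbf{I}-\mathsf{P}(\l)))$ and hence the uniform bound via Banach--Steinhaus; one then transfers to $\Rs(1,\mathsf{T}_{\l})$ through the identity $\Rs(1,\mathsf{T}_{\l})=\sum_{j=0}^{\mathsf{q}-1}\Rs(1,\H_{\l}(\mathbf{I}-\mathsf{P}(\l)))\mathsf{T}_{\l}^{j}$. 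You correctly flag this as the only nontrivial obstacle and correctly attribute it to the collective compactness of Theorem~\ref{theo:collective}, so the gap is one of phrasing rather than substance.
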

\begin{proof} With the previous notations, if $f \in \X_{1}$, using the fact  that $\Ms_{\l}$ and $\mathsf{P}(\l)$ commute, one has
\begin{equation}\label{eq:RsA+K3term}\begin{split}
\Rs&(\l,\A+\K)f=\Rs(\l,\A)f + \Rs(\l,\A)\Ms_{\l}\Rs(1,\Ms_{\l})f\\
&=\Rs(\l,\A)f+\Rs(\l,\A)\Ms_{\l}\Rs(1,\Ms_{\l})\left(\mathbf{I}-\mathsf{P}(\l)\right)f + \Rs(\l,\A)\Ms_{\l}\Rs(1,\Ms_{\l})\mathsf{P}(\l)f\\
&=\Rs(\l,\A)f+\Rs(\l,\A)\Ms_{\l}\Rs\left(1,\Ms_{\l}\left(\mathbf{I}-\mathsf{P}(\l)\right)\right)f + \Rs(\l,\A)\Ms_{\l}\Rs(1,\Ms_{\l})\mathsf{P}(\l)f\,.
\end{split}\end{equation}
We investigate separately the convergence of each of the last three terms. First, since $f \in \X_{1}$,
$$\lim_{\l\to0}\left\|\Rs(\l,\A)f-\Rs(0,\A)f\right\|_{\X_{0}}=0.$$
Now, for the second term,  recall that $\H_{\l}=\Ms_{\l}^{\mathsf{q}}$. Since $\left\{\H_{\l}\left(\mathbf{I}-\mathsf{P}(\l)\right)\right\}_{\mathrm{Re}\l \in [0,1]}$ is collectively compact and strongly converges to $\H_{0}(\mathbf{I}-\mathsf{P}(0))$ as $\l \to 0$ with $r_{\sigma}\left(\H_{0}(\mathbf{I}-\mathsf{P}(0))\right) < 1$, we deduce from the convergence of the spectrum established in \cite[Theorem 5.3(a)]{anselone} that, for $\delta >0$ small enough, there is $c\in (0,1)$ such that
$$r_{\sigma}\left(\H_{\l}\left(\mathbf{I}-\mathsf{P}(\l)\right)\right) \leq c < 1, \qquad \forall \l \in \mathscr{C}_{\delta}.$$
Moreover, 
we deduce from \cite[Theorem 5.3 (d)]{anselone} that
$$\lim_{\l\to0}\left\|\Rs\left(1,\H_{\l}\left(\mathbf{I}-\mathsf{P}(\l)\right)\right)g-\Rs\left(1,\H_{0}\left(\mathbf{I}-\mathsf{P}(0)\right)\right)g\right\|_{\X_{0}}=0, \qquad \forall g \in \X_{0}.$$
Arguing exactly as in the proof of Lemma \ref{propo:convRsAKeta}, we deduce first that
$$\lim_{\l\to0}\left\|\Ms_{\l}\Rs\left(1,\H_{\l}\left(\mathbf{I}-\mathsf{P}(\l)\right)\right)g-\Ms_{0}\Rs\left(1,\H_{0}\left(\mathbf{I}-\mathsf{P}(0)\right)\right)g\right\|_{\X_{1}}, \qquad \forall g \in \X_{0}$$
and then
$$\lim_{\l\to0}\left\|\Rs(\l,\A)\Ms_{\l}\Rs\left(1,\H_{\l}\left(\mathbf{I}-\mathsf{P}(\l)\right)\right)f-\Rs(0,\A)\Ms_{0}\Rs\left(1,\H_{0}\left(\mathbf{I}-\mathsf{P}(0)\right)\right)f\right\|_{\X_{0}}=0.$$
The convergence of the third term is dealt with in the same way. Indeed, since $f \in\X_{1}^{0}$, we deduce from Proposition \ref{propo:P0conv} that
$$\lim_{\l\to0}\left\|\Rs(1,\Ms_{\l})\mathsf{P}(\l)f-\mathsf{\Phi}_{0}f\right\|_{\X_{0}}=0.$$
Then, as before,
$$\lim_{\l\to0}\left\|\Ms_{\l}\Rs(1,\Ms_{\l})\mathsf{P}(\l)f-\Ms_{0}\mathsf{\Phi}_{0}f\right\|_{\X_{1}}=0$$
and subsequently
$$\lim_{\l\to0}\left\|\Rs(\l,\A)\Ms_{\l}\Rs(1,\Ms_{\l})\mathsf{P}(\l)f-\Rs(0,\A)\Ms_{0}\mathsf{\Phi}_{0}f\right\|_{\X_{0}}=0.$$
We proved the convergence of the three terms in the right-hand-side of \eqref{eq:RsA+K3term} and this shows the result.\end{proof}
We deduce from this the following which holds under Assumption \ref{hypH} but \emph{does not resort on} \eqref{eq:K2}--\eqref{eq:logM} and the fact that $\bm{m}(\d v)$ is absolutely continuous with respect to the Lebesgue measure (see also Remark \ref{nb:ingham}):
 \begin{theo}\label{theo:P1conv}
Let $f \in \X_{1}$ with $\varrho_{f}=0$, i.e. $f \in \X_{1}^{0},$ then the limit
$${\lim_{\e\to0}}\,\Rs(\e+i\eta,\A+\K)f,$$
exists in $\X_{0}$. We denote it by $\Rs(\eta)f$ and has
$$\Rs(\eta)f=\begin{cases} \Rs(i\eta,A)f+ \Rs(i\eta,\A)\Ms_{i\eta}\Rs(1,\Ms_{i\eta})f, \qquad &\text{ if } \quad \eta \neq 0\\
\Rs(0,\A)f+\Rs(0,\A)\Ms_{0}\left[\Rs\left(1,\Ms_{0}\left(\mathbf{I}-\mathsf{P}(0)\right)\right)f + \mathsf{\Phi}_{0}f\right]\,\qquad &\text{ if } \quad \eta=0\,.\end{cases}$$
Moreover, the convergence of $\Rs(\e+i\eta,\A+\K)f$ to $\Rs(\eta)f$ is uniform with respect to $\eta \in\R$.
\end{theo}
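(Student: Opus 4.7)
The pointwise existence of the limit at each $\eta\in\R$ is immediate from the two preceding lemmas: Lemma~\ref{propo:convRsAKeta} supplies it for $\eta\neq 0$ together with the first explicit formula, and Lemma~\ref{propo:P1conv} specialised to $\l=\e\to 0^+$ along the positive real axis handles $\eta=0$ and yields the second formula. The non-trivial content is the uniformity in $\eta\in\R$, which I would establish by splitting $\R$ into three regions: a symmetric neighborhood $(-\alpha,\alpha)$ of the origin, a compact annular region $\{\alpha\leq|\eta|\leq R\}$, and the far-field $\{|\eta|\geq R\}$.

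The delicate region is the neighborhood of $\eta=0$, which I view as the principal obstacle: the spectral singularity $1\in\mathfrak{S}(\Ms_0)$ prevents a direct application of Lemma~\ref{propo:convRsAKeta} near the origin. The key observation is that Lemma~\ref{propo:P1conv} asserts convergence as $\l\to 0$ with $\l$ ranging in the \emph{entire} closed right half-plane $\overline{\C}_+$, not merely along horizontal lines. Consequently, given $\varrho>0$, there exists $\alpha>0$ such that $\|\Rs(\l,\A+\K)f-\Rs(0)f\|_{\X_0}\leq\varrho/3$ for every $\l\in\C_+$ with $|\l|\leq\alpha$. Applying this with $\l=\e+i\eta$ for $\e$ small and $|\eta|$ small handles one side of the triangle inequality. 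For the other side, one deduces that $\eta\mapsto\Rs(\eta)f$ is continuous at $\eta=0$ by picking, for each small $\eta\neq 0$, some auxiliary $\e>0$ with $\e^2+\eta^2<\alpha^2$ and using Lemma~\ref{propo:convRsAKeta} (pointwise in $\eta$) together with the bound above: the resulting comparison yields $\|\Rs(\eta)f-\Rs(0)f\|_{\X_0}\leq \varrho/3$ for $|\eta|$ small enough. A final triangle inequality then provides uniform convergence on $(-\alpha/\sqrt{2},\alpha/\sqrt{2})$ as soon as $\e<\alpha/\sqrt{2}$.

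On the compact annular region $\{\alpha\leq|\eta|\leq R\}$, Lemma~\ref{propo:convRsAKeta} supplies local uniform convergence in a neighborhood of each $\eta_0\neq 0$; a finite subcover via Heine--Borel then yields uniform convergence there. For $|\eta|\geq R$ with $R$ large, the decay estimate \eqref{eq:decayP} of Theorem~\ref{theo:main-assum} gives $\|\Ms_\l^{2}\|_{\B(\X_0)}\leq C|\l|^{-1/2}$ uniformly in $\mathrm{Re}\,\l\in[0,1]$, so $\mathbf{I}-\Ms_\l$ is invertible with $\|\Rs(1,\Ms_\l)\|_{\B(\X_0)}$ uniformly bounded on $\{|\eta|\geq R,\ \mathrm{Re}\,\l\in[0,1]\}$. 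Writing $\Rs(\l,\A+\K)f=\Rs(\l,\A)\Rs(1,\Ms_\l)f$ and combining this with the uniform Riemann--Lebesgue estimate $\lim_{|\eta|\to\infty}\sup_{\e\in[0,1]}\|\Rs(\e+i\eta,\A)g\|_{\X_0}=0$ for $g\in\X_1$ (an immediate consequence of \eqref{eq:uoK} and dominated convergence applied to the family $\{e^{-\e t}U_0(t)g\}_{\e\in[0,1]}$, which is relatively compact in $L^1([0,\infty),\X_0)$) shows that both $\|\Rs(\e+i\eta,\A+\K)f\|_{\X_0}$ and the limit $\|\Rs(\eta)f\|_{\X_0}$ can be made arbitrarily small uniformly in $\e\in(0,1]$ for $|\eta|\geq R$. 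Concatenating the three estimates produces the claimed uniform convergence on $\R$.
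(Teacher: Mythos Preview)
Your treatment of the neighborhood of $\eta=0$ and the compact annular region is correct and matches the paper's strategy: the paper argues by contradiction (extracting a sequence $\lambda_n\to0$ in $\overline{\C}_+$), but both arguments rest on the same crucial observation that Lemma~\ref{propo:P1conv} gives convergence for $\lambda\to0$ from the whole of $\overline{\C}_+$, not merely along horizontal lines. Your direct $\varepsilon$--$\delta$ version is equally valid.

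There is, however, a genuine gap in your far-field argument. You write $\Rs(\l,\A+\K)f=\Rs(\l,\A)\Rs(1,\Ms_\l)f$ and then invoke the uniform Riemann--Lebesgue estimate for $\Rs(\e+i\eta,\A)g$ with $g\in\X_1$. But that estimate is pointwise in $g$ (or holds uniformly only over a \emph{compact} family in $\X_1$), whereas here $g=\Rs(1,\Ms_\l)f$ varies with $\l$, and there is no reason this family should be relatively compact in $\X_1$. Since $\sup_{\e\in[0,1]}\|\Rs(\e+i\eta,\A)\|_{\B(\X_1,\X_0)}$ does \emph{not} tend to zero as $|\eta|\to\infty$, a mere uniform $\X_1$-bound on $\Rs(1,\Ms_\l)f$ is insufficient.

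The fix is to reverse the factorisation: using $\Rs(\l,\A+\K)=\bigl(\mathbf{I}-\Rs(\l,\A)\K\bigr)^{-1}\Rs(\l,\A)$ instead. The same estimate \eqref{eq:decayP} gives $\|[\Rs(\l,\A)\K]^2\|_{\B(\X_0)}\leq C|\l|^{-1/2}$, so $\bigl(\mathbf{I}-\Rs(\l,\A)\K\bigr)^{-1}$ is uniformly bounded in $\B(\X_0)$ on $\{|\eta|\geq R,\ \e\in[0,1]\}$; now $\Rs(\l,\A)f$ with $f$ \emph{fixed} in $\X_1$ is small by your uniform Riemann--Lebesgue argument, and the conclusion follows. (The paper takes a different route in the far field: it shows the \emph{difference} $\Rs(\e+i\eta,\A+\K)f-\Rs(\eta)f$ goes to zero directly via a three-term splitting and Remark~\ref{nb:I=R}, without appealing to \eqref{eq:decayP}.)
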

\begin{proof} The convergence of $\Rs(\e+i\eta,\A+\K)f$ towards the desired limit is just the combination of the two previous Lemmas. We only need to check the uniform convergence in the vicinity of $\eta=0$ and near infinity. Let us first prove that the convergence is uniform with respect to $|\eta| \leq \delta.$  According to \eqref{eq:strongMiekUn}
 and Corollary \ref{cor:double}, we only need to prove that the convergence in $\X_{0}$
$$\lim_{\e\to0^{+}}\Rs(1,\Ms_{\e+i\eta})f=\mathsf{\Psi}(\eta)$$
is uniform with respect to $|\eta| < \delta$ where 
$$\mathsf{\Psi}(\eta)=\begin{cases}
\Rs(1,\mathsf{M}_{i\eta})f \qquad &\text{ if } \eta \neq 0\\
\Rs\left(1,\Ms_{0}\left(\mathbf{I}-\mathsf{P}(0)\right)\right)f+\mathsf{\Phi}_{0}f \qquad &\text{ if } \eta =0.\end{cases}
$$
 We argue by contradiction, assuming that there exist $c >0$, a sequence $(\e_{n})_{n} \subset (0,\infty)$ converging to $0$ and a sequence $(\eta_{n})_{n} \subset (-\delta,\delta)$ such that
\begin{equation}\label{eq:absurd}
\left\|\Rs(1,\Ms_{\e_{n}+i\eta_{n}})-\mathsf{\Psi}(\eta_{n})\right\|_{\X_{0}} \geq c >0.\end{equation}
Up to considering a subsequence, if necessary, we can assume without loss of generality that $\lim_{n}\eta_{n}=\eta_{0}$ with $|\eta_{0}| \leq \delta.$ First, one sees that then $\eta_{0}=0$ since the convergence of $\Rs(1,\mathsf{M}_{\e+i\eta})f$ to $\mathsf{\Psi}(\eta)$ is actually uniform in any neighbourhood around $\eta_{0} \neq 0$ (see  Lemma \ref{cor:ResMei}). Because $\eta_{0}=0$, defining $\lambda_{n}:=\e_{n}+i\eta_{n}$, $n \in \N$, the sequence $(\l_{n})_{n}\subset \mathscr{C}_{\delta}$ is converging to $0$. Now, from Lemma \ref{propo:P1conv} one has
$$\lim_{n\to\infty}\Rs(1,\Ms_{\l_{n}})f=\Rs\left(1,\Ms_{0}\left(\mathbf{I}-\mathsf{P}(0)\right)\right)f+\mathsf{\Phi}_{0}f$$
as $n \to \infty$.  Moreover, one also has
$$\mathsf{\Psi}(\eta_{n})=\Rs(1,\Ms_{i\eta_{n}})f$$
also converges as $n \to \infty$ towards $\Rs\left(1,\Ms_{0}\left(\mathbf{I}-\mathsf{P}(0)\right)\right)f+\mathsf{\Phi}_{0}f$ still thanks to Lemma \ref{propo:P1conv} (recall the convergence holds for $\l \to 0$, $\l \in \overline{\C}_{+}$ including the case of purely imaginary $\l$). This contradicts \eqref{eq:absurd}. The uniform convergence in the vicinity of $\eta=0$ together with Lemma \ref{propo:convRsAKeta}, we deduce that, for any $\eta_{0} \in \R$, there is  $\delta >0$ such that, for any $f \in\X_{1}^{0}$, 
$$\lim_{\e\to0}\sup_{|\eta-\eta_{0}| \leq \delta}\bigg\|\Rs(\e+i\eta,\A+\K)f-\Rs(\eta)f\bigg\|_{\X_{0}}=0.$$
To prove that
$$\lim_{\e\to0}\sup_{\eta \in \R}\bigg\|\Rs(\e+i\eta,\A+\K)f-\Rs(\eta)f\bigg\|_{\X_{0}}=0$$
we only to check that there is $R >0$ (large enough) such that
$$\lim_{\e\to0}\sup_{|\eta|>R}\bigg\|\Rs(\e+i\eta,\A+\K)f-\Rs(\eta)f\bigg\|_{\X_{0}}=0.$$
One observes that, for $|\eta| >R >0$, 
\begin{multline*}
\Rs(\e+i\eta,\A+\K)f-\Rs(\eta)f
=\left[\Rs(\e+i\eta,\A)f-\Rs(i\eta,\A)f\right]\\
+\left[\Rs(\e+i\eta,\A)-\Rs(i\eta,\A)\right]\Ms_{i\eta}\Rs(1,\Ms_{i\eta})f\\
+\Rs(\e+i\eta,\A)\left[\Ms_{\e+i\eta}\Rs(1,\Ms_{\e+i\eta})f-\Ms_{i\eta}\Rs(1,\Ms_{i\eta})f\right]\end{multline*}
where the two first terms converge to $0$ as $\e\to0$ uniformly with respect to $|\eta|>R$ according to Eq. \eqref{eq:RetaXs} in Proposition \ref{prop:convRsT0}. To prove the result, we only need to prove that
$$\lim_{\e\to0}\sup_{|\eta|>R}\left\|\Rs(\e+i\eta,\A)\left[\Ms_{\e+i\eta}\Rs(1,\Ms_{\e+i\eta})f-\Ms_{i\eta}\Rs(1,\Ms_{i\eta})f\right]\right\|_{\X_{0}}=0$$
which is deduced from Remark \ref{nb:I=R} since one checks easily that
$$\lim_{|\eta|\to\infty}\left\|\Ms_{\e+i\eta}\Rs(1,\Ms_{\e+i\eta})f-\Ms_{i\eta}\Rs(1,\Ms_{i\eta})f\right\|_{\X_{1}}=0.$$
This proves the convergence is uniform with respect to $\eta \in \R$.
\end{proof} 
\begin{nb}\label{nb:ingham} For any $f \in \X_{1}^{0}$, the above Theorem asserts that the resolvent of $\Rs(\l,\A+\K)f$ admits a trace $\Rs(\eta)f$ on the boundary $\l=i\eta$, $\eta \in \R$ with $\eta \mapsto \Rs(\eta)f$ belonging to $\mathscr{C}_{0}(\R,\X_{0})$ and, in particular, to $L^{1}_{\mathrm{loc}}(\R,\X_{0})$. Therefore, according to Ingham's theorem (see e.g. \cite[Theorem 1.1]{chill}), we directly deduce that $\mathcal{V}(t)f \to 0$ as $t \to \infty$ for any $f \in \X_{1}^{0}$ which, in turns, implies that
$$\lim_{t \to \infty}\|\mathcal{V}(t)f-\varrho_{f}\Psi\|_{\X_{0}}=0 \qquad \forall f \in \X_{1}.$$
By a density argument, the above still holds for any $f \in \X_{0}$, providing a \emph{new non quantitative convergence} to equilibrium under the sole assumption of collective compactness in Theorem \ref{theo:main-assum}. Notice here indeed that, up to now, we did not use the quantitative estimate  \eqref{eq:decayP}  which is the only one in our analysis which requires assumptions \eqref{eq:K2}--\eqref{eq:logM} and the fact that $\bm{m}(\d v)$ is absolutely continuous with respect to the Lebesgue measure. 
\end{nb}

Again, under additional integrability of $f$, one can upgrade the convergence as follows
\begin{cor}\label{cor:convRetRk} Let $N \in \{1,\ldots,N_{0}\}$ and $f \in \X_{N}^{0},$ then
$$\lim_{\e\to0}\sup_{\eta\in\R}\left\|\Rs(\e+i\eta,\A+\K)f-\Rs(\eta)f\right\|_{\X_{N-1}}=0.$$
\end{cor}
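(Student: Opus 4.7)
The plan is to revisit the decomposition used in the proof of Theorem \ref{theo:P1conv},
\begin{equation*}
\Rs(\e+i\eta,\A+\K)f=\Rs(\e+i\eta,\A)f+\Rs(\e+i\eta,\A)\,\Ms_{\e+i\eta}\,\Rs(1,\Ms_{\e+i\eta})f,
\end{equation*}
and to show that each of the two terms converges in $\X_{N-1}$ uniformly in $\eta\in\R$, the target space being one unit of regularity stronger than the $\X_{0}$-convergence established in Theorem \ref{theo:P1conv}.

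For the first term, the hypothesis $f\in\X_{N}$ together with \eqref{eq:RetaXs} applied with $s=N-1$ directly yields $\sup_{\eta\in\R}\|\Rs(\e+i\eta,\A)f-\Rs(i\eta,\A)f\|_{\X_{N-1}}\to 0$ as $\e\to 0^{+}$. For the second term, the idea is to rerun the argument of Theorem \ref{theo:P1conv} one notch higher on the regularity scale, exploiting the smoothing effect of $\Ms_{\l}$. Since $N\leq N_{0}$, \eqref{eq:MsSUP} gives $\|\Ms_{\l}\|_{\B(\X_{0},\X_{N})}\leq\|\vartheta_{N}\|_{\infty}$ uniformly in $\l\in\overline{\C}_{+}$, and the strong convergence $\Ms_{\e+i\eta}g\to\Ms_{i\eta}g$ in $\X_{N}$ uniformly in $\eta\in\R$ holds for every $g\in\X_{0}$ by \eqref{eq:stronMiek}. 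Theorem \ref{theo:P1conv} has already provided uniform convergence in $\X_{0}$ of $\Rs(1,\Ms_{\e+i\eta})f$ to the limit $\mathsf{\Psi}(\eta)$ (namely $\Rs(1,\Ms_{i\eta})f$ for $\eta\neq 0$ and $\Rs(1,\Ms_{0}(\mathbf{I}-\mathsf{P}(0)))f+\mathsf{\Phi}_{0} f$ at $\eta=0$). An application of Remark \ref{nb:doubleMs} (whose conclusion is valid for any target $\X_{s}$ with $s\leq N_{0}$) then upgrades this to
\begin{equation*}
\lim_{\e\to 0^{+}}\sup_{\eta\in\R}\bigl\|\Ms_{\e+i\eta}\Rs(1,\Ms_{\e+i\eta})f-\Ms_{i\eta}\mathsf{\Psi}(\eta)\bigr\|_{\X_{N}}=0,
\end{equation*}
after which Corollary \ref{cor:double} applied with $s=N-1$ delivers the desired $\X_{N-1}$-convergence of the second term to $\Rs(i\eta,\A)\Ms_{i\eta}\mathsf{\Psi}(\eta)$, which by Theorem \ref{theo:P1conv} coincides with the relevant part of $\Rs(\eta)f$.

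The main technical point, exactly as in the final part of the proof of Theorem \ref{theo:P1conv}, is to secure uniformity over all of $\R$ rather than on compact intervals only. For this one must verify the decay hypothesis of Remark \ref{nb:I=R}, namely $\lim_{|\eta|\to\infty}\|\Ms_{\e+i\eta}\Rs(1,\Ms_{\e+i\eta})f\|_{\X_{N}}=0$ for each fixed $\e>0$. This can be achieved by writing $\Rs(1,\Ms_{\e+i\eta})f=f+\Ms_{\e+i\eta}\Rs(1,\Ms_{\e+i\eta})f$ and combining the Riemann--Lebesgue decay of $\Ms_{\e+i\eta}f=\K\Rs(\e+i\eta,\A)f$ from \eqref{eq:Rsieta} together with the $1/\sqrt{|\l|}$ decay of the iterates of $\Ms_{\l}$ supplied by \eqref{eq:decayP}, which in particular forces $\|\Rs(1,\Ms_{\e+i\eta})\|_{\B(\X_{0})}$ to remain bounded as $|\eta|\to\infty$. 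Alternatively, one may split the supremum in $\eta$ into a compact window (on which the compact-interval form of Remark \ref{nb:doubleMs} applies) and its complement (handled by the tail estimates just mentioned), in the exact spirit of the final argument in the proof of Theorem \ref{theo:P1conv}.
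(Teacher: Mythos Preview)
Your proposal is correct and follows essentially the same route as the paper's own proof: split $\Rs(\e+i\eta,\A+\K)f$ into $\Rs(\e+i\eta,\A)f$ plus $\Rs(\e+i\eta,\A)\Ms_{\e+i\eta}\Rs(1,\Ms_{\e+i\eta})f$, handle the first piece via \eqref{eq:RetaXs}, and for the second piece use the $\X_{0}$-uniform convergence of $\Rs(1,\Ms_{\e+i\eta})f$ established in Theorem~\ref{theo:P1conv}, lift it to $\X_{N}$ through \eqref{eq:strongMiekUn}/Remark~\ref{nb:doubleMs}, and finish with Remark~\ref{nb:I=R} at level $s=N-1$. Your discussion of the tail decay $\|\Ms_{\e+i\eta}\Rs(1,\Ms_{\e+i\eta})f\|_{\X_{N}}\to 0$ is in fact more explicit than the paper's, which simply invokes Remark~\ref{nb:I=R} without spelling this out.
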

\begin{proof} For $N=1,$ the result is nothing but Theorem \ref{theo:P1conv}. For $N \geq 1$, the proof is similar and based upon the representation
$$\Rs(\e+i\eta,\A+\K)f=\Rs(\e+i\eta,\A)f+\Rs(\e+i\eta,\A)\Ms_{\e+i\eta}\Rs(1,\Ms_{\e+i\eta})f, \qquad \e >0,\eta \in \R.$$
First, for $f \in \X_{k}$,
$$\lim_{\e\to0}\sup_{\eta\in\R}\left\|\Rs(\e+i\eta,\A)f-\Rs(i\eta,\A)f\right\|_{\X_{k-1}}=0.$$
We saw in the previous proof that
$$\lim_{\e\to0}\sup_{\eta\in\R}\left\|\Rs(1,\Ms_{\e+i\eta})f-\mathsf{\Psi}(\eta)\right\|_{\X_{0}}=0$$
as soon as $f \in \X_{1}^{0}.$ According to Remark \ref{nb:doubleMs} (see Eq. \eqref{eq:strongMiekUn}), this implies that 
$$\lim_{\e\to0}\sup_{\eta\in \R}\left\|\Ms_{\e+i\eta}\Rs(1,\Ms_{\e+i\eta})f-\Ms_{i\eta}\mathsf{\Psi}(\eta)\right\|_{\X_{N}}=0$$
for any $N \leq N_{0}$. Then, according to Remark \ref{nb:I=R}
\begin{equation}\label{eq:Sn0trac}
\lim_{\e\to0}\sup_{\eta\in \R}\left\|\Rs(\e+i\eta,\A)\Ms_{\e+i\eta}\Rs(1,\Ms_{\e+i\eta})f-\Rs(i\eta,\A)\Ms_{i\eta}\mathsf{\Psi}(\eta)\right\|_{\X_{N-1}}=0\end{equation}
and the result follows since $\Rs(i\eta,\A)\Ms_{i\eta}\mathsf{\Psi}(\eta)=\Rs(\eta)f-\Rs(i\eta,\A)f.$
\end{proof}

\subsection{Regularity of the boundary function} The previous results ensure that the extension $\Rs(\eta)$ of $\Rs(\l,\A+\K)$ to the imaginary axis $\l=i\eta$ is continuous and tends to zero at infinity, namely,
$$\Rs(\cdot)f \in \mathscr{C}_{0}(\R,\X_{k-1}), \qquad \forall f \in \X_{k}^{0}, \qquad k=1,\ldots,N_{0}.$$
We need to extend this regularity to capture some differentiability property. The key point is the well-known formula for the resolvent: for any $k \in \N$ and any $\e >0,\eta\in\R$,
$$\dfrac{\d^{k}}{\d\eta^{k}}\Rs(\e+i\eta,\A+\K)f=(-i)^{k} k!\left[\Rs(\e+i\eta,\A+\K)\right]^{k+1}f.$$
In particular, from Lemma \ref{lem:convDerRsT0}, we also recall that, for any $k \in \N$ and any $f \in \X_{k+1}$, the mapping
$$\eta \in \R \longmapsto \Rs(i\eta,\A)f \in \X_{0}$$
belongs to $\mathscr{C}_{0}^{k}(\R,\X_{0})$. To prove then that the same holds for the boundary function of $\Rs(\e+i\eta,\A+\K)f$ we need to investigate iterates of the resolvent. To do so, we recall the notation introduced in \eqref{eq:Snl}, for $\e >0,\eta \in \R$, $n \in \N$,
$$\mathscr{S}_{n}(\e+i\eta)=\sum_{k=n}^{\infty}\Rs(\e+i\eta,\A)\Ms_{\e+i\eta}^{k}=\Rs(\e+i\eta,\A)\Ms_{\e+i\eta}^{n}\Rs(1,\Ms_{\e+i\eta})$$
so that
$$\Rs(\e+i\eta,\A+\K)=\Rs(\e+i\eta,\A) + \mathscr{S}_{0}(\e+i\eta).$$
According to \eqref{eq:Sn0trac}, for any $j \leq N_{0}$,
$$\lim_{\e\to0}\sup_{\eta \in \R}\left\|\mathscr{S}_{0}(\e+i\eta)g-\mathscr{S}_{0}(i\eta)g\right\|_{\X_{j-1}}=0, \qquad \forall g \in \X_{1}^{0}$$
where
$$\mathscr{S}_{0}(i\eta)=\Rs(i\eta,\A)\Ms_{i\eta}\mathsf{\Psi}(\eta)=\begin{cases}\Rs(i\eta,\A)\Ms_{i\eta}\Rs(1,\Ms_{i\eta})g \quad \text{ if } \eta \neq 0\\
\Rs(0,\A)\Ms_{0}\Rs\left(1,\Ms_{0}\left(\mathbf{I}-\mathsf{P}(0)\right)\right)g+\mathsf{\Phi}_{0}g 	\quad \text{ if } \eta=0\end{cases}$$
One has then the following
\begin{lemme}\label{lem:UPS}
Let $N \in \{1,\ldots,N_{0}\}$, for any $f \in \X_{N}^{0}$ and any $k \in \{1,\ldots,N\}$, one has
$$\lim_{\e\to0}\sup_{\eta\in\R}\left\|\left[\Rs(\e+i\eta,\A+\K)\right]^{k}f-\Rs(\eta)^{k}f\right\|_{\X_{N-k}}=0$$
where $\Rs(\eta)^{k}f \in \X_{N-k}^{0}$ for any $\eta \in \R$.

In particular, if $f \in \X_{N_{0}}^{0}$ then
$$\lim_{\e\to0}\sup_{\eta\in\R}\left\|\left[\Rs(\e+i\eta,\A+\K)\right]^{k}f-\Rs(\eta)^{k}f\right\|_{\X_{N_{0}-k}}=0, \qquad \forall k \in \{1,\ldots,N_{0}\}$$
\end{lemme}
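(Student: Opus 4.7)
The natural approach is induction on $k$, with the base case $k=1$ essentially furnished by Corollary \ref{cor:convRetRk}. Indeed, that corollary yields the uniform convergence of $\Rs(\e+i\eta,\A+\K)f$ to $\Rs(\eta)f$ in $\X_{N-1}$; one additionally needs to check that $\Rs(\eta)f \in \X_{N-1}^{0}$, but this follows from passing to the limit (in $\X_{N-1}$, hence in $\X_{0}$) in the identity $\int_{\O\times V}\Rs(\e+i\eta,\A+\K)f\,\d x\,\bm{m}(\d v)=\tfrac{1}{\e+i\eta}\varrho_{f}=0$ valid for $\e>0$ and then using continuity at $\eta=0$.

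For the inductive step, assume the result holds at level $k$ with $k+1 \leq N$. Setting
$$g_{\e}(\eta):=\left[\Rs(\e+i\eta,\A+\K)\right]^{k}f, \qquad g_{0}(\eta):=\Rs(\eta)^{k}f,$$
the induction hypothesis gives $\sup_{\eta\in\R}\|g_{\e}(\eta)-g_{0}(\eta)\|_{\X_{N-k}}\to 0$ and $g_{0}(\eta)\in \X_{N-k}^{0}\subset \X_{1}^{0}$. Using the decomposition
$$\Rs(\e+i\eta,\A+\K)=\Rs(\e+i\eta,\A)+\Rs(\e+i\eta,\A)\,\Ms_{\e+i\eta}\,\Rs(1,\Ms_{\e+i\eta}),$$
one writes
$$\left[\Rs(\e+i\eta,\A+\K)\right]^{k+1}f=\Rs(\e+i\eta,\A)g_{\e}(\eta)+\Rs(\e+i\eta,\A)\,\Ms_{\e+i\eta}\,\Rs(1,\Ms_{\e+i\eta})g_{\e}(\eta).$$

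The first summand is handled by Corollary \ref{cor:double} together with Remark \ref{nb:I=R}: since $g_{\e}(\eta)\to g_{0}(\eta)$ uniformly in $\X_{N-k}$ and since $\|g_{\e}(\e+i\eta)\|_{\X_{N-k}}\to 0$ as $|\eta|\to\infty$ (which follows from Riemann--Lebesgue applied to $\Rs(\e+i\eta,\A+\K)^{k}f$, seen as a Laplace transform of $\tfrac{(-1)^{k-1}}{(k-1)!}t^{k-1}\mathcal{V}(t)f$), one obtains convergence in $\mathscr{C}_{0}(\R,\X_{N-k-1})$ to $\Rs(i\eta,\A)g_{0}(\eta)$. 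For the second summand, the key step is to establish that $\Rs(1,\Ms_{\e+i\eta})g_{\e}(\eta)$ converges uniformly in $\eta\in\R$ to the natural limit $\mathsf{\Psi}_{0}(\eta)$ (equal to $\Rs(1,\Ms_{i\eta})g_{0}(\eta)$ for $\eta\neq 0$, and to $\Rs(1,\Ms_{0}(\mathbf{I}-\mathsf{P}(0)))g_{0}(0)+\mathsf{\Phi}_{0}g_{0}(0)$ at $\eta=0$). Once this is in hand, Remark \ref{nb:doubleMs} (applied with $s=N-k$) followed by Corollary \ref{cor:double} yields uniform convergence of the whole second summand in $\X_{N-k-1}$ to $\Rs(i\eta,\A)\Ms_{i\eta}\mathsf{\Psi}_{0}(\eta)$. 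Summing the two pieces identifies the limit as $\Rs(\eta)g_{0}(\eta)=\Rs(\eta)^{k+1}f$, and the zero-mean property is preserved under uniform convergence in $\X_{N-k-1}\hookrightarrow \X_{0}$.

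The main obstacle is precisely this uniform convergence of $\Rs(1,\Ms_{\e+i\eta})g_{\e}(\eta)$, because the ``data'' $g_{\e}$ now depend on both $\e$ and $\eta$, whereas Theorem \ref{theo:P1conv} treated a fixed $f$. The plan is to rerun the contradiction argument of Theorem \ref{theo:P1conv}: extract sequences $\e_{n}\to 0$, $\eta_{n}\to\eta_{\infty}\in[-\delta,\delta]$ along which the uniform convergence would fail; away from $\eta_{\infty}=0$ one invokes the collective compactness of $\{\H_{\l};\;0\leq\mathrm{Re}\l\leq 1\}$ and Lemma \ref{cor:ResMei}, while for $\eta_{\infty}=0$ one uses the spectral decomposition of Theorem \ref{prop:eigenMLH} and Proposition \ref{propo:P0conv}, combined with the fact that the $\X_{1}$-uniform convergence of $g_{\e_{n}}(\eta_{n})$ to $g_{0}(0)$ (via compactness of $\{g_{0}(\eta);|\eta|\leq\delta\}$ in $\X_{N-k}$ and the induction hypothesis) allows one to replace the varying argument by a fixed one up to a vanishing error controlled by $\|\Rs(1,\Ms_{\e+i\eta})\|_{\mathscr{B}(\X_{1}^{0},\X_{0})}$, bounded locally uniformly thanks to Lemma \ref{lem:p'0} and the simplicity of $\mu(\l)$.
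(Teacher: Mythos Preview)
Your approach is correct and follows the same induction as the paper: base case $k=1$ from Corollary~\ref{cor:convRetRk}, inductive step via the splitting $\Rs(\e+i\eta,\A+\K)=\Rs(\e+i\eta,\A)+\mathscr{S}_{0}(\e+i\eta)$ applied to $g_{\e}(\eta)=[\Rs(\e+i\eta,\A+\K)]^{k}f$. The only difference lies in how the second summand is handled. The paper treats $\mathscr{S}_{0}(\e+i\eta)$ as a single operator and invokes ``the proof of Corollary~\ref{cor:double} together with \eqref{eq:Sn0trac}'': concretely, \eqref{eq:Sn0trac} gives, for each fixed $g\in\X_{N-k}^{0}$, uniform-in-$\eta$ convergence of $\mathscr{S}_{0}(\e+i\eta)g$ in $\X_{N-k-1}$, whence by Banach--Steinhaus $\sup_{\e,\eta}\|\mathscr{S}_{0}(\e+i\eta)\|_{\B(\X_{N-k}^{0},\X_{N-k-1})}<\infty$, and then the standard splitting
\[
\mathscr{S}_{0}(\e+i\eta)g_{\e}(\eta)-\mathscr{S}_{0}(i\eta)g_{0}(\eta)=\mathscr{S}_{0}(\e+i\eta)[g_{\e}(\eta)-g_{0}(\eta)]+[\mathscr{S}_{0}(\e+i\eta)-\mathscr{S}_{0}(i\eta)]g_{0}(\eta)
\]
goes through directly (the second piece by relative compactness of $\{g_{0}(\eta):\eta\in\R\}\subset\X_{N-k}^{0}$). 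You instead drill down to $\Rs(1,\Ms_{\e+i\eta})g_{\e}(\eta)$ and rerun the contradiction argument of Theorem~\ref{theo:P1conv}, which also works but is heavier; your appeal to Lemma~\ref{lem:p'0} for the local uniform bound on $\Rs(1,\Ms_{\l})|_{\X_{1}^{0}}$ is correct once one notes that the contour formula for $\mathsf{P}'(\l)$ there, combined with \eqref{eq:Lnps}, gives $\|\mathsf{P}(\l)g-\mathsf{P}(0)g\|_{\X_{0}}\leq C|\l|\|g\|_{\X_{1}}$ uniformly in $\l\in\mathscr{C}_{\delta}$.
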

\begin{proof} Let $N \in \{1,\ldots,N_{0}\}$ be given. The proof is made by induction over $k \in \{1,\ldots,N\}$. For $k=1$, the result is exactly Corollary \ref{cor:convRetRk}. Let us assume that the result is true for $k\in \{1,\ldots,N-1\}$  and let us prove it for $k+1$. As part of the induction assumption, one has
$$\int_{\O\times V}\Rs(\eta)^{k}f\d x\,\bm{m}(\d v)=0.$$
Observe that
\begin{multline*}
\left[\Rs(\e+i\eta,\A+\K)\right]^{k+1}f=\Rs(\e+i\eta,\A)\left[\Rs(\e+i\eta,\A+\K)\right]^{k}f\\
+\mathscr{S}_{0}(\e+i\eta)\left[\Rs(\e+i\eta,\A+\K)\right]^{k}f\,.\end{multline*}
According to the induction hypothesis, 
$$\lim_{\e\to0}\left[\Rs(\e+i\eta,\A+\K)\right]^{k}f=\Rs(\eta)^{k}f$$
holds in $\X_{N-k}$ uniformly with respect to $\eta \in \R$. Thanks to Corollary \ref{cor:double} (and since $N-k\geq 1$), 
$$\lim_{\e\to0}\Rs(\e+i\eta,\A)\left[\Rs(\e+i\eta,\A+\K)\right]^{k}f=\Rs(i\eta,\A)\Rs(\eta)^{k}f$$
holds in $\X_{N-k-1}=\X_{N-(k+1)}$ uniformly with respect to $\eta\in \R.$ Now, since $\Rs(\eta)^{k}f \in \X_{N-k}^{0}$ by induction assumption, we can resume the proof of Corollary \ref{cor:double} together with \eqref{eq:Sn0trac} to deduce also that
$$\lim_{\e\to0}\mathscr{S}_{0}(\e+i\eta,\A)\left[\Rs(\e+i\eta,\A+\K)\right]^{k}f=\mathscr{S}_{0}(i\eta)\Rs(i\eta,\A)\Rs(\eta)^{k}f$$
holds true in $\X_{N-k-1}$ uniformly with respect to $\eta \in \R$. We deduce that
$$\lim_{\e\to0}\sup_{\eta\in\R}\left\|\left[\Rs(\e+i\eta,\A+\K)\right]^{k+1}f-\left[\Rs(i\eta,\A)\right]^{k+1}f-\mathscr{S}_{0}(i\eta)\Rs(\eta)^{k}f\right\|_{\X_{N-k-1}}=0$$
which proves the result with 
$$\Rs(\eta)^{k+1}f=\left[\Rs(i\eta,\A)\right]^{k+1}f-\mathscr{S}_{0}(i\eta)\Rs(\eta)^{k}f.$$
This achieves the induction and proves the Lemma.
\end{proof}
A fundamental consequence  is 
\begin{cor}\label{cor:reguRf}  For any $f \in \X_{N_{0}}^{0},$ the mapping 
$$\eta \in \R \longmapsto \Rs(\eta)f $$
defined in Theorem \ref{theo:P1conv} belongs to $\mathscr{C}_{0}^{N_{0}-1}(\R,\X_{0})$ and the convergence
$$\lim_{\e\to0^{+}}\Rs(\e+i\eta,\A+\K)f=\Rs(\eta)f$$
holds in $\mathscr{C}_{0}^{N_{0}-1}(\R,\X_{0}).$\end{cor}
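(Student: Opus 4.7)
The plan is to combine Lemma \ref{lem:UPS} with the standard identity
$$
\frac{\d^{k}}{\d\eta^{k}}\Rs(\e+i\eta,\A+\K)f=(-i)^{k}k!\left[\Rs(\e+i\eta,\A+\K)\right]^{k+1}f, \qquad k\in\N,\;\e>0,\;\eta\in\R,
$$
so that uniform convergence of iterates translates directly into uniform convergence of derivatives. Since the weight satisfies $\max(1,\sigma^{-s})\geq 1$ for $s\geq0$, one has the continuous embedding $\X_{s}\hookrightarrow\X_{0}$, hence $\|\cdot\|_{\X_{0}}\leq \|\cdot\|_{\X_{s}}$; consequently, applying Lemma \ref{lem:UPS} with $N=N_{0}$ to the iterate of order $k+1$ for each $k\in\{0,1,\ldots,N_{0}-1\}$ yields
$$
\lim_{\e\to0^{+}}\sup_{\eta\in\R}\left\|\left[\Rs(\e+i\eta,\A+\K)\right]^{k+1}f-\Rs(\eta)^{k+1}f\right\|_{\X_{0}}=0.
$$
Setting $g_{k}(\eta):=(-i)^{k}k!\,\Rs(\eta)^{k+1}f$, this is precisely the statement that the $k$-th derivative $\frac{\d^{k}}{\d\eta^{k}}\Rs(\e+i\eta,\A+\K)f$ converges to $g_{k}(\eta)$ in $\X_{0}$ uniformly on $\R$, as $\e\to0^{+}$.

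Next, I would invoke the Banach-space-valued version of the classical theorem on uniform convergence of derivatives (proved by a straightforward induction on $k$ using the fundamental theorem of calculus and the Bochner integral on intervals of $\R$): since, for a fixed net of values of $\e$, all derivatives up to order $N_{0}-1$ converge uniformly on $\R$, the limit $\eta\mapsto\Rs(\eta)f=g_{0}(\eta)$ is $(N_{0}-1)$ times differentiable on $\R$ with
$$
\frac{\d^{k}}{\d\eta^{k}}\Rs(\eta)f = g_{k}(\eta)=(-i)^{k}k!\,\Rs(\eta)^{k+1}f,\qquad k\in\{0,\ldots,N_{0}-1\}.
$$

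It then remains to establish the $\mathscr{C}_{0}$ decay at infinity. For each fixed $\e>0$, the integral representation $\Rs(\e+i\eta,\A+\K)f=\int_{0}^{\infty}e^{-(\e+i\eta)t}\mathcal{V}(t)f\,\d t$ combined with differentiation under the integral gives
$$
\frac{\d^{k}}{\d\eta^{k}}\Rs(\e+i\eta,\A+\K)f=\int_{0}^{\infty}(-it)^{k}e^{-(\e+i\eta)t}\mathcal{V}(t)f\,\d t,
$$
where the integrand is Bochner integrable thanks to the decay $e^{-\e t}$ and the bound $\|\mathcal{V}(t)f\|_{\X_{0}}\leq \|f\|_{\X_{0}}$ (which holds since $\left(\mathcal{V}(t)\right)_{t\geq0}$ is stochastic). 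By the Riemann--Lebesgue theorem in the Bochner setting, $\eta\mapsto\frac{\d^{k}}{\d\eta^{k}}\Rs(\e+i\eta,\A+\K)f\in\mathscr{C}_{0}(\R,\X_{0})$ for every $k$ and $\e>0$. The uniform convergence on $\R$ established above then transfers this vanishing at infinity to the limit $g_{k}$, so that each derivative $\frac{\d^{k}}{\d\eta^{k}}\Rs(\eta)f$ belongs to $\mathscr{C}_{0}(\R,\X_{0})$.

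Putting these ingredients together yields $\Rs(\cdot)f\in\mathscr{C}_{0}^{N_{0}-1}(\R,\X_{0})$, and since uniform convergence of each derivative up to order $N_{0}-1$ on $\R$ is exactly convergence in the norm of $\mathscr{C}_{0}^{N_{0}-1}(\R,\X_{0})$, the stated mode of convergence $\Rs(\e+i\cdot,\A+\K)f\to\Rs(\cdot)f$ follows as well. The main (already resolved) obstacle was Lemma \ref{lem:UPS} itself: once the uniform convergence of iterates has been secured, the differentiability and $\mathscr{C}_{0}$ properties are essentially automatic, so the only real care needed here is the iterative application of the vector-valued derivative convergence theorem and the Riemann--Lebesgue step used to propagate the $\mathscr{C}_{0}$ property through the limit.
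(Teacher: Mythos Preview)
Your proof is correct and follows essentially the same approach as the paper: both use the resolvent identity $\frac{\d^{k}}{\d\eta^{k}}\Rs(\e+i\eta,\A+\K)f=(-i)^{k}k!\left[\Rs(\e+i\eta,\A+\K)\right]^{k+1}f$ and then invoke Lemma~\ref{lem:UPS} to pass to the limit. The paper's proof is terser and simply declares that the result follows directly from Lemma~\ref{lem:UPS}, while you spell out the auxiliary steps (the embedding $\X_{N_{0}-k}\hookrightarrow\X_{0}$, the vector-valued uniform-limit-of-derivatives argument, and the Riemann--Lebesgue justification of the $\mathscr{C}_{0}$ decay for fixed $\e>0$), but these are exactly the details implicit in the paper's one-line argument.
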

\begin{proof} Let $f \in \X_{N_{0}}^{0}$ be fixed. Since, for any $k \in \{0,\ldots,N_{0}-1\}$ and any $\e >0,\eta\in\R$,
$$\dfrac{\d^{k}}{\d\eta^{k}}\Rs(\e+i\eta,\A+\K)f=(-i)^{k} k!\left[\Rs(\e+i\eta,\A+\K)\right]^{k+1}f$$
the result follows directly  from Lemma \ref{lem:UPS} where the derivatives of $\Rs(\eta)f$ are defined by
$$\dfrac{\d^{k}}{\d\eta^{k}}\Rs(i\eta)f=(-i)^{k} k!\left[\Rs(i\eta)\right]^{k+1}f$$
for any $k \in \{0,\ldots,N_{0}-1\}.$
\end{proof}

\section{The boundary function of $\mathscr{S}_{n}(\l)$} \label{sec:near0}
 
This section is devoted to the construction of the trace along the imaginary axis, that is when $\lambda=i\eta$, $\eta \in \R$,  of  
$$\mathscr{S}_{n}(\l)f=\Rs(\l,\A)\Ms_{\l}^{n}\Rs(1,\Ms_{\l})f, \qquad \l \in \mathbb{C}, \mathrm{Re}\l \geq0,\: n \in \N$$
for a suitable class of function $f$. Recall that $\mathscr{S}_{n}(\l)$ has been introduced in \eqref{eq:Snl}. Of course, the crucial observation is  the alternative representation of $\mathscr{S}_{n}(\l)f$ as
\begin{equation}\label{eq:dif}
\mathscr{S}_{n}(\l)=\Rs(\l,\A+\K)-\Rs(\l,\A)-\sum_{k=0}^{n-1}\Rs(\l,\A)\Ms_{\l}^{k}\, \qquad \l \in \C_{+}\end{equation}
We already investigated the existence and regularity of the traces on the imaginary axis of the first two terms in the right-hand-side of \eqref{eq:dif} so we just need to focus on the properties of the \emph{finite sum} 
\begin{equation}\label{def:snl}
{s}_{n}(\l):=\sum_{k=0}^{n}\Rs(\l,\A)\Ms_{\l}^{k}\ \qquad \l \in \overline{\C}_{+}.\end{equation}
 All the above results allow to prove the regularity the \emph{finite} sum $s_{n}(\l)$ defined by \eqref{def:snl}, the proof of which is deferred to Appendix \ref{appen:Ml}:
\begin{lemme}\label{prop:snl}  Let $f \in  \X_{N_{0}}$ be fixed. For any $\e >0$, the mapping 
$$\eta \in \R \longmapsto s_{n}(\e+i\eta)f \in \X_{0} \quad \text{ belongs to }  \mathscr{C}^{N_{0}-1}_{0}(\R,\X_{0})$$ 
for any $n \in \N$. Moreover, 
$$\lim_{\e\to0}\dfrac{\d^{k}}{\d\eta^{k}}s_{n}(\e+i\eta)f  \qquad k \in \{0,\ldots,N_{0}-1\}$$
exist uniformly with respect to $\eta \in \R$. In particular, the mapping
$$\eta \in \R \longmapsto  s_{n}(i\eta)f:=\lim_{\e\to0}s_{n}(\e+i\eta)f \in \X_{0}$$
belongs to $\mathscr{C}^{N_{0}-1}_{0}(\R,\X_{0}).$
\end{lemme}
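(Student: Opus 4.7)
The plan is to decompose $s_n(\l)f=\Rs(\l,\A)f+\sum_{j=1}^n\Rs(\l,\A)\Ms_\l^{j}f$ and treat each summand via the Leibniz rule in the variable $\eta$ (with $\l=\e+i\eta$), then invoke the convergence results already at our disposal: Lemma \ref{lem:convDerRsT0} for the $\eta$-derivatives of $\Rs(\l,\A)$, and Proposition \ref{lem:diffHl} for those of $\Ms_\l^{j}$. The first part of the claim (smoothness in $\eta$ for fixed $\e>0$) is immediate: thanks to the exponential weight $e^{-\e t}$, the Bochner integrals representing $\Rs(\e+i\eta,\A)$ and $\Ms_{\e+i\eta}^{j}$ can be differentiated in $\eta$ arbitrarily many times under the integral sign, and $\mathscr{C}_0^{N_0-1}$-regularity in $\eta$ follows from Riemann--Lebesgue, exactly as in Proposition \ref{prop:convRsT0}.

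For the existence of the boundary trace and of its derivatives up to order $N_0-1$, I would expand, for $0\leq k\leq N_0-1$ and $0\leq j\leq n$,
\begin{equation*}
\dfrac{\d^{k}}{\d\eta^{k}}\bigl[\Rs(\l,\A)\Ms_{\l}^{j}f\bigr]=\sum_{p=0}^{k}\binom{k}{p}\Bigl[\dfrac{\d^{p}}{\d\eta^{p}}\Rs(\l,\A)\Bigr]\Bigl[\dfrac{\d^{k-p}}{\d\eta^{k-p}}\Ms_{\l}^{j}f\Bigr],
\end{equation*}
and treat each factor separately. Up to a multiplicative constant, $\frac{\d^p}{\d\eta^p}\Rs(\l,\A)=(-i)^p p!\,\Rs(\l,\A)^{p+1}$, which by Lemma \ref{lem:convDerRsT0} converges to its imaginary-axis trace uniformly in $\eta$ as $\e\to 0^{+}$ provided it is applied to an element of $\X_{p+1}$. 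On the other hand, Proposition \ref{lem:diffHl} applied with $f\in\X_{N_0}\subset\X_{k-p}$ (using Theorem \ref{theo:convL0} for the case $k-p=0$) yields that $\frac{\d^{k-p}}{\d\eta^{k-p}}\Ms_\l^{j}f$ admits a limit in $\X_s$ for every $s\leq N_0$, and in particular in $\X_{p+1}$ since $p\leq N_0-1$. A double-convergence argument in the spirit of Corollary \ref{cor:double} and Remark \ref{nb:doubleMs} then delivers uniform-in-$\eta$ convergence of each Leibniz summand in $\X_0$.

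The main technical obstacle is that Proposition \ref{lem:diffHl} is stated as a \emph{pointwise} limit at $\l=0$, whereas here uniform convergence of the $\eta$-derivatives along horizontal lines $\l=\e+i\eta$ is needed. This is handled by rerunning the proof of the proposition with $\eta$ kept arbitrary: the explicit Bochner-integral representations of $\Ms_{\l}^{j}$ and their derivatives involve $t$-integrable envelopes as soon as $f$ belongs to the appropriate $\X_p$, so that dominated convergence on each horizontal line yields the required uniformity in $\eta$, exactly as in the derivation of Lemma \ref{lem:convDerRsT0}. Once this is in place, the vanishing at infinity of the trace and of its derivatives transfers automatically from $\e>0$ to $\e=0$ by the uniform-in-$\eta$ convergence, placing $\eta\mapsto s_n(i\eta)f$ and all its derivatives of order $\leq N_0-1$ in $\mathscr{C}_0(\R,\X_0)$.
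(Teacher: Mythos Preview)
Your argument is correct and complete in spirit, but it uses a genuinely different factorization from the paper. You keep the decomposition $s_n(\l)=\sum_{j=0}^{n}\Rs(\l,\A)\Ms_\l^{j}$ with $\Rs(\l,\A)$ on the \emph{left}, and rely on the regularizing effect of $\Ms_\l^{j}$ and its $\l$-derivatives (Proposition~\ref{lem:diffHl}, estimate~\eqref{eq:Lnps}) to ensure that each $\frac{\d^{k-p}}{\d\eta^{k-p}}\Ms_\l^{j}f$ lies in $\X_{p+1}$, so that the outer factor $\frac{\d^{p}}{\d\eta^{p}}\Rs(\l,\A)$ is well controlled via Lemma~\ref{lem:convDerRsT0}. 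The paper instead rewrites $s_n(\l)=\sum_{m=0}^{n}\mathsf{G}_m(\l)\Rs(\l,\A)$ with $\mathsf{G}_m(\l)=[\Rs(\l,\A)\K]^m$ and $\Rs(\l,\A)$ on the \emph{right}: the integrability of $f\in\X_{N_0}$ is then consumed directly by the derivatives of $\Rs(\l,\A)f$, while the operators $\mathsf{G}_m^{(j)}(\l)$ are controlled in $\mathscr{B}(\X_0)$ uniformly in $\l\in\overline{\C}_+$ through \eqref{eq:normG1k} and Lemma~\ref{lem:estJ}.

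The paper's route has the advantage that the $\l$-dependent factor $\mathsf{G}_m^{(j)}(\l)$ comes with a uniform \emph{operator-norm} bound, so the convergence of the product reduces cleanly to the uniform-in-$\eta$ convergence of $\frac{\d^{k-j}}{\d\eta^{k-j}}\Rs(\e+i\eta,\A)f$ already established. Your route requires, as you correctly flag, upgrading Proposition~\ref{lem:diffHl} from a pointwise limit at $\l=0$ to uniform-in-$\eta$ convergence of $\frac{\d^{k-p}}{\d\eta^{k-p}}\Ms_{\e+i\eta}^{j}f$ in $\X_{p+1}$; your fix (dominated convergence on the explicit Bochner representations, as in Lemma~\ref{prop:derMeis}) is valid and straightforward. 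In return, your approach is more self-contained in that it avoids introducing the auxiliary operators $\mathsf{G}_m$ and the combinatorial Lemma~\ref{lem:estJ}, relying only on the tools already set up for $\Ms_\l$.
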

 We have all the tools to prove the first point in Theorem \ref{theo:MainLaplace} 
 \begin{propo}\label{theo:existtrace} Let $f \in \X_{N_{0}}$ be such that
\begin{equation}\label{eq:0mean}
\varrho_{f}=\int_{\Omega\times V}f(x,v)\d x \otimes \bm{m}(\d v)=0.\end{equation}
Then, for any $n\geq0$ the limit
$$\lim_{\e\to0^{+}}\mathscr{S}_{n}(\e+i\eta)f,$$
exists in $\mathscr{C}_{0}^{N_{0}-1}(\R,\X_{0})$. Its limit is denoted ${\Upsilon}_{n}(\eta)f$.\medskip
\end{propo}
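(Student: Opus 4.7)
The plan is to deduce this proposition directly by combining the two main technical results already proved in the preceding sections, namely Corollary~\ref{cor:reguRf} (which handles the regularity of the trace of $\Rs(\l,\A+\K)f$) and Lemma~\ref{prop:snl} (which handles the regularity of the trace of the finite sum $s_n(\l)f$). The algebraic bridge between the infinite remainder $\mathscr{S}_n(\l)$ and these two objects is precisely the identity \eqref{eq:dif}, which I shall rewrite in the cleaner form
$$\mathscr{S}_n(\l)f \;=\; \Rs(\l,\A+\K)f \;-\; s_{n-1}(\l)f, \qquad \mathrm{Re}\,\l>0,$$
with the convention $s_{-1}\equiv 0$ in the case $n=0$. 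This identity follows at once from the Neumann series \eqref{eq:resAK} together with the definitions of $s_n(\l)$ in \eqref{def:snl} and $\mathscr{S}_n(\l)$.

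First, since $f\in \X_{N_{0}}$ satisfies \eqref{eq:0mean}, we have $f\in \X_{N_0}^0$. Corollary~\ref{cor:reguRf} then ensures that $\Rs(\e+i\eta,\A+\K)f$ converges as $\e\to 0^+$ to the trace $\Rs(\eta)f$ in the space $\mathscr{C}_{0}^{N_{0}-1}(\R,\X_{0})$. Second, Lemma~\ref{prop:snl} is valid for any $f\in\X_{N_0}$ (no mean-zero assumption is needed there, because each term in the finite sum $s_{n-1}(\l)$ is built from $\Rs(\l,\A)\Ms_\l^k$ which is regular as $\l\to 0$ on the rich hierarchy $\X_k$) and guarantees that $s_{n-1}(\e+i\eta)f$ also converges in $\mathscr{C}_{0}^{N_{0}-1}(\R,\X_{0})$ to its own trace $s_{n-1}(i\eta)f$.

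Consequently, taking the difference and using that $\mathscr{C}_{0}^{N_{0}-1}(\R,\X_{0})$ is a Banach space under its natural norm, the limit
$$\Upsilon_n(\eta)f \;:=\; \Rs(\eta)f \;-\; s_{n-1}(i\eta)f$$
exists in $\mathscr{C}_{0}^{N_{0}-1}(\R,\X_{0})$ and $\mathscr{S}_n(\e+i\eta)f\to \Upsilon_n(\eta)f$ in this norm as $\e\to 0^+$, which is exactly the desired conclusion.

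At this stage there is no genuine obstacle left: the real work has already been carried out in the two input results. In particular, the delicate point, namely the behaviour of $\Rs(\l,\A+\K)f$ near $\l=0$ (where $\mu(\l)\to 1$ is a simple pole of $\Rs(1,\Ms_\l)$ and $0\in \mathfrak{S}(\A)$), has been handled in Lemma~\ref{propo:P1conv} and Theorem~\ref{theo:P1conv} thanks to the cancellation afforded by the zero-mean hypothesis $\varrho_f=0$ via Proposition~\ref{propo:P0conv}, and then bootstrapped to higher regularity in Lemma~\ref{lem:UPS} and Corollary~\ref{cor:reguRf}. Similarly, the regularity of $s_{n-1}(\l)f$ up to the boundary relies on the strong continuity of $\Ms_\l$ together with the collective compactness statement from Theorem~\ref{theo:main-assum}. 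The present proposition is therefore essentially a book-keeping step assembling these components.
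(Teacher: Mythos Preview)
Your proof is correct and follows essentially the same approach as the paper: use the algebraic identity \eqref{eq:dif} to write $\mathscr{S}_n(\l)f$ as the difference between $\Rs(\l,\A+\K)f$ and the finite sum $s_{n-1}(\l)f$, then invoke Corollary~\ref{cor:reguRf} and Lemma~\ref{prop:snl} respectively. The only cosmetic difference is that the paper also singles out the term $\Rs(\l,\A)f$ and appeals to Lemma~\ref{lem:convDerRsT0} for it, whereas you (correctly) absorb this $k=0$ term into $s_{n-1}(\l)$ and handle everything via Lemma~\ref{prop:snl}.
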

\begin{proof}
 We know from Corollary \ref{cor:reguRf} that
$$\lim_{\e\to0^{+}}\Rs(\e+i\eta,\A+\K)f=\Rs(i\eta,\A)f$$
holds in $\mathscr{C}_{0}^{N_{0}-1}(\R,\X_{0}).$ In the same way, Lemma \ref{lem:convDerRsT0} shows that 
$$\lim_{\e\to0^{+}}\Rs(\e+i\eta,\A)f=\Rs(i\eta,\A)f$$
holds in $\mathscr{C}_{0}^{N_{0}-1}(\R,\X_{0})$. Since one sees easily from Lemma \ref{prop:snl} that
$$\lim_{\e\to0^{+}}s_{n}(\e+i\eta)f=s_{n}(i\eta)f \qquad \text{ in } \quad \mathscr{C}_{0}^{N_{0}-1}(\R,\X_{0})$$
we get the result from the representation \eqref{eq:dif} which asserts that $\mathscr{S}_{n}(\e+i\eta)f=\Rs(\l,\A+\K)f-\Rs(\l,\A)f-s_{n-1}(\e+i\eta)f$.
\end{proof}
In the following, we show also that, if $n$ is large enough, the boundary function $\Upsilon_{n}(\cdot)f$ and its derivatives are  integrable which proves the second point of Theorem \ref{theo:MainLaplace}
\begin{propo}\label{prop:reguPsif}  Assume that $n \geq 5\cdot 2^{N_{0}-1}$ and $f \in \X_{N_{0}}^{0}$. Then, the derivatives of the trace function
$$\eta \in \R \longmapsto \Upsilon_{n}(\eta)f \in \X_{0}$$ are  integrable, i.e.
$$\int_{\R}\left\|\frac{\d^{k}}{\d\eta^{k}}\Upsilon_{n}(\eta)f\right\|_{\X_{0}}\d\eta < \infty \qquad \forall k \in \{0,\ldots,N_{0}-1\}.$$
\end{propo}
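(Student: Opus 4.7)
The plan is to separate the integration region into a compact part and its tail. For the compact part, by Proposition \ref{theo:existtrace} we already have $\Upsilon_n f \in \mathscr{C}_{0}^{N_{0}-1}(\R,\X_{0})$, so every derivative of order $k \leq N_{0}-1$ is continuous and uniformly bounded on any compact interval, making $\int_{[-R,R]}\|\tfrac{d^k}{d\eta^k}\Upsilon_n(\eta)f\|_{\X_0}\d\eta$ finite automatically. The real work is the tail bound $\int_{|\eta|>R}\|\tfrac{d^k}{d\eta^k}\Upsilon_n(\eta)f\|_{\X_0}\d\eta<\infty$ for some large $R$.

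For the tail, I would exploit that $r_{\sigma}(\Ms_{i\eta})<1$ for $\eta\neq 0$ (Proposition \ref{prop:Meps}), with in fact $r_{\sigma}(\Ms_{i\eta})\to 0$ as $|\eta|\to\infty$ (since $\|\Ms_{i\eta}^{\mathsf p}\|_{\mathscr{B}(\X_0)}=O(|\eta|^{-\mathsf p/4})$ by the argument in the proof of Theorem \ref{theo:maindec}). Consequently, for $|\eta|$ large one has the explicit representation $\Upsilon_n(\eta)f=\Rs(i\eta,\A)\Ms_{i\eta}^n\Rs(1,\Ms_{i\eta})f$, and $\|\Rs(1,\Ms_{i\eta})\|_{\mathscr{B}(\X_0)}$ is uniformly bounded in that region. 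Differentiating $k$ times in $\eta$ via the Leibniz rule, and using the standard formulas $\tfrac{d^{k_1}}{d\l^{k_1}}\Rs(\l,\A)=(-1)^{k_1}k_1!\Rs(\l,\A)^{k_1+1}$ and $\tfrac{d}{d\l}\Rs(1,\Ms_\l)=\Rs(1,\Ms_\l)\Ms_\l'\Rs(1,\Ms_\l)$ (with $\Ms_\l'=-\K\Rs(\l,\A)^2$), the $k$-th derivative is a finite linear combination of products in which the central block, coming from $\tfrac{d^{k_2}}{d\l^{k_2}}\Ms_\l^n$, has the form $\Ms_\l^{a_0}\Rs(\l,\A)\Ms_\l^{a_1}\cdots\Rs(\l,\A)\Ms_\l^{a_{k_2}}$ with $\sum_ia_i=n$.

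The crucial arithmetic is carried out by a recursive halving of $\Ms_\l^n$. Writing $\Ms_\l^n=\Ms_\l^{\lceil n/2\rceil}\Ms_\l^{\lfloor n/2\rfloor}$ and iterating the product rule, one sees that after $k$ differentiations each resulting product contains, somewhere inside it, a block of the form $\Ms_{i\eta}^m$ with $m\geq \lfloor n/2^k\rfloor$; the elementary bound $\|\Ms_\l^{2p}\|_{\mathscr{B}(\X_0)}\leq \|[\K\Rs(\l,\A)]^2\|^p_{\mathscr{B}(\X_0)}\leq C^p|\l|^{-p/2}$ obtained from Theorem \ref{theo:decay-KRA} then shows $\|\Ms_{i\eta}^m\|_{\mathscr{B}(\X_0)}=O(|\eta|^{-m/4})$. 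The hypothesis $n\geq 5\cdot 2^{N_{0}-1}$ is exactly what forces $m\geq 5>4$ for every $k\leq N_{0}-1$, so that each term decays like $|\eta|^{-\alpha}$ with $\alpha>1$ and is integrable at infinity via \eqref{eq:power}. The surviving powers of $\Rs(i\eta,\A)$ produced by differentiation are absorbed by the hierarchy $\X_{N_0}\hookrightarrow\cdots\hookrightarrow\X_0$, exploiting $\Rs(i\eta,\A)\in\mathscr{B}(\X_{s+1},\X_s)$ uniformly in $\eta$ together with the smoothing $\Ms_\l\in\mathscr{B}(\X_0,\X_{N_0})$ from \eqref{eq:MsSUP}; the hypothesis $f\in\X_{N_0}^0$ provides exactly the regularity needed.

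The main obstacle will be the combinatorial bookkeeping of this recursive Leibniz expansion: one must verify that every term produced by the product rule admits the desired pairing of $\K\Rs(\l,\A)$ factors, and that the extra $\Rs(i\eta,\A)$'s appearing after each differentiation never exceed the $N_0$ units of regularity available through $f\in\X_{N_0}$. Once this combinatorial accounting is done, the integrability of each term at infinity follows uniformly, and summing the finitely many terms of the Leibniz expansion concludes the proof.
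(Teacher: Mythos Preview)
Your approach is correct and rests on the same key idea as the paper: a halving argument (packaged in the paper as Lemma~\ref{lem:estJ}) guaranteeing a surviving block $\Ms_{i\eta}^{m}$ with $m\geq 5$ after at most $N_{0}-1$ differentiations, combined with the decay $\|\Ms_{i\eta}^{m}\|_{\mathscr{B}(\X_{0})}=O(|\eta|^{-m/4})$ from Theorem~\ref{theo:decay-KRA}. The paper's implementation differs from yours in one useful respect: rather than differentiate $\Rs(1,\Ms_{i\eta})$ via the resolvent identity, it first expands this factor as a Neumann series grouped in blocks of length $\mathsf{p}$, writing
\[
\Upsilon_{n}(\eta)f=\sum_{m=0}^{\infty}\sum_{r=0}^{\mathsf{p}-1}\mathsf{G}_{m\mathsf{p}+r+n}(i\eta)\,\Rs(i\eta,\A)f,
\]
and then applies the Leibniz rule term by term on just these two factors. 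This buys two simplifications. First, every operator estimate is carried out in $\mathscr{B}(\X_{0})$ (via Lemma~\ref{lem:normGn} and Lemma~\ref{lem:estJ}), so the $\X_{s}$-hierarchy bookkeeping you flag collapses to the single factor $\tfrac{\d^{k-j}}{\d\eta^{k-j}}\Rs(i\eta,\A)f$, handled directly by~\eqref{eq:dlkRslA}. Second, it sidesteps the need to justify the formula $\tfrac{\d}{\d\l}\Rs(1,\Ms_{\l})=\Rs(1,\Ms_{\l})\Ms_{\l}'\Rs(1,\Ms_{\l})$ on the imaginary axis, where $\Ms_{i\eta}$ is only strongly (not norm) differentiable in $\eta$. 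Summability over the series index $m$ is then recovered by extracting one integrable factor $\|\mathsf{G}_{\mathsf{p}}(i\eta)\|_{\mathscr{B}(\X_{0})}$ and bounding the remainder geometrically via $\|\mathsf{G}_{\mathsf{p}}(i\eta)\|_{\mathscr{B}(\X_{0})}\leq\tfrac12$ for $|\eta|>R$. Your direct three-factor Leibniz route would also go through with the careful tracking you describe, but the series expansion is the cleaner execution.
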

\begin{proof} Let $k \in \{0,\ldots,N_{0}-1\}$ be given as well as $f \in \X_{N_{0}}^{0}.$ Using the continuity of $\dfrac{\d^{k}}{\d\eta^{k}}\Upsilon_{n}(\eta)f$, it is clear that the mapping
$$\eta \in \R \mapsto \dfrac{\d^{k}}{\d\eta^{k}}\Upsilon_{n}(\eta)f$$
is locally integrable. It is enough then to prove that there is $R >0$ such that
\begin{equation}\label{eq:INtRR}
\int_{|\eta| >R}\left\|\frac{\d^{k}}{\d\eta^{k}}\Upsilon_{n}(\eta)f\right\|_{\X_{0}}\d\eta < \infty \qquad \forall k \in \{0,\ldots,N_{0}-1\}.
\end{equation}
We recall that, for any $\mathsf{p} >4$,
$$\int_{|\eta| >1}\left\|\Ms_{i\eta}^{\mathsf{p}}\right\|_{\B(\X_{0})} \d \eta < \infty.$$
We \emph{fix} $\mathsf{p} >4$ in all the rest of the proof. We recall that 
$$\mathsf{G}_{m}(\l)=\left[\Rs(\l,\A)\K\right]^{m}, \qquad \forall \l \in \overline{\C}_{+}, \qquad m \in \N$$
and we choose $R >0$ large enough so that
$$\left\|\mathsf{G}_{\mathsf{p}}(i\eta)\right\|_{\B(\X_{0})} \leq \frac{1}{2} \qquad \text{ for } |\eta| >R.$$
For $|\eta| >R$, we use the following representation of $\Upsilon_{N}(\eta)f$:
$$\Upsilon_{n}(\eta)f=\Rs(i\eta,\A)\Ms_{i\eta}^{n}\Rs(1,\Ms_{i\eta})f, \qquad |\eta| >R.$$
Writing,
$$\Rs(1,\Ms_{i\eta})f=\sum_{j=0}^{\infty}\Ms_{i\eta}^{j}f=\sum_{m=0}^{\infty}\sum_{r=0}^{\mathsf{p}-1}\Ms_{i\eta}^{m\mathsf{p}+r}f$$
we have
\begin{equation*}\begin{split}
\Upsilon_{n}(\eta)f&=\Rs(i\eta,\A)\Ms_{i\eta}^{n}\Rs(1,\Ms_{i\eta})f=\sum_{m=0}^{\infty}\sum_{r=0}^{\mathsf{p}-1}\Rs(i\eta,\A)\left[\K\Rs(i\eta,\A)\right]^{m\mathsf{p}+r+n}f\\
&=\sum_{m=0}^{\infty}\sum_{r=0}^{\mathsf{p}-1}\left[\Rs(i\eta,\A)\K\right]^{m\mathsf{p}+r+n}\Rs(i\eta,\A)f\\
&=\sum_{m=0}^{\infty}\sum_{r=0}^{\mathsf{p}-1}\mathsf{G}_{m\mathsf{p}+r+n}(i\eta)\Rs(i\eta,\A)f\,.
\end{split}\end{equation*}
Let us fix then $|\eta| >R$. Thanks to Leibniz rule
$$\dfrac{\d^{k}}{\d\eta^{k}}\Upsilon_{n}(\eta)f=(-i)^{k}\sum_{m=0}^{\infty}\sum_{r=0}^{\mathsf{p}-1}\sum_{j=0}^{k}{k \choose j}\mathsf{G}^{(j)}_{m\mathsf{p}+r+n}(i\eta)\left[\dfrac{\d^{k-j}}{\d\eta^{k-j}}\Rs(i\eta,\A)f\right] $$
so that
\begin{equation*}\begin{split}
\left\|\dfrac{\d^{k}}{\d\eta^{k}}\Upsilon_{n}(\eta)f\right\|_{\X_{0}}&\leq \sum_{m=0}^{\infty}\sum_{r=0}^{\mathsf{p}-1}\sum_{j=0}^{k}{k \choose j}\left\|\mathsf{G}^{(j)}_{m\mathsf{p}+r+n}(i\eta)\right\|_{\B(\X_{0})}\left\|\dfrac{\d^{k-j}}{\d\eta^{k-j}}\Rs(i\eta,\A)f\right\|_{\X_{0}}\\
&\leq \sum_{m=0}^{\infty}\sum_{r=0}^{\mathsf{p}-1}\sum_{j=0}^{k}{k \choose j}(k-j)!\left\|\mathsf{G}^{(j)}_{m\mathsf{p}+r+n}(i\eta)\right\|_{\B(\X_{0})}\|f\|_{\X_{k-j+1}}
\end{split}\end{equation*}
where we used \eqref{eq:dlkRslA}. Since $\|f\|_{\X_{k-j+1}} \leq \|f\|_{\X_{N_{0}}}$, we get
\begin{equation}\label{eq:derivPsi}\left\|\dfrac{\d^{k}}{\d\eta^{k}}\Upsilon_{n}(\eta)f\right\|_{\X_{0}} \leq  \|f\|_{\X_{N_{0}}}\sum_{m=0}^{\infty}\sum_{r=0}^{\mathsf{p}-1}\sum_{j=0}^{k} \left\|\mathsf{G}^{(j)}_{m\mathsf{p}+r+n}(i\eta)\right\|_{\B(\X_{0})}.\end{equation}
We use now Lemma \ref{lem:estJ} (see Eq. \eqref{eq:estJ}) to estimate, given $m,r,n$
$$ \left\|\mathsf{G}^{(j)}_{m\mathsf{p}+r+n}(i\eta)\right\|_{\B(\X_{j0})} \leq \bar{C}_{j} \left(m\mathsf{p}+r+n\right)^{j}\left\|\mathsf{G}_{\floor{\frac{m\mathsf{p}+r+n-j}{2^{j}}}}(i\eta)\right\|_{\B(\X_{0})}$$
Since $j \leq k$  and $r \leq \mathsf{p-1}$, setting $\bm{C}_{k}=\max_{j\leq k}\bar{C}_{j}$, one has
$$ \left\|\mathsf{G}^{(j)}_{m\mathsf{p}+r+n}(i\eta)\right\|_{\B(\X_{0})} \leq \bm{C}_{k} \left((m+1)\mathsf{p}+n\right)^{k}\left\|\mathsf{G}_{\floor{\frac{m\mathsf{p}+r+n-j}{2^{j}}}}(i\eta)\right\|_{\B(\X_{0})}$$
Then, since $n \geq 2^{k}\mathsf{p}+k \geq 2^{j}\mathsf{p}+j$, one has $\floor{\frac{m\mathsf{p}+r+n-j}{2^{j}}}\geq \mathsf{p}+\floor{\frac{m}{2^{j}}}\mathsf{p}$, i.e.
$$\mathsf{G}_{\floor{\frac{m\mathsf{p}+r+n-j}{2^{j}}}}(i\eta)=\mathsf{G}_{b}(i\eta)\mathsf{G}_{\mathsf{p}}(i\eta)\mathsf{G}_{\floor{\frac{m}{2^{j}}}\mathsf{p}}(i\eta)$$
for some $b \geq0$, and we deduce from  Lemma \ref{lem:normGn} that
\begin{equation*}\begin{split}
\left\|\mathsf{G}_{m\mathsf{p}+r+n}^{(j)}(i\eta)\right\|_{\mathscr{B}(\X_{0})} &\leq \bm{C}_{k}\|\sigma\|_{\infty}\|\vartheta_{{1}}\|_{\infty}\|\mathsf{G}_{\mathsf{p}}(i\eta)\|_{\mathscr{B}(\X_{0})}\left((m+1)\mathsf{p}+n\right)^{k}\left\|\mathsf{G}_{\floor{\frac{m}{2^{j}}}\mathsf{p}}(i\eta)\right\|_{\mathscr{B}(\X_{0})}\\
&\leq \bm{C}_{k}\|\sigma\|_{\infty}\|\vartheta_{{1}}\|_{\infty}\|\mathsf{G}_{\mathsf{p}}(i\eta)\|_{\mathscr{B}(\X_{0})}\left((m+1)\mathsf{p}+n\right)^{k}2^{-\floor{\frac{m}{2^{j}}}}
\end{split}\end{equation*}
since $\mathsf{G}_{\floor{\frac{m}{2^{j}}}\mathsf{p}}(i\eta)=\mathsf{G}_{\mathsf{p}}(i\eta)^{\floor{\frac{m}{2^{j}}}}$ and we choose $R >0$ such that $\|\mathsf{G}_{p}(i\eta)\|_{\B(\X_{0})} \leq \frac{1}{2}$ for $|\eta| >R$.  
Noticing that 
$$\sum_{m=0}^{\infty} \sum_{r=0}^{\mathsf{p}-1}\sum_{j=0}^{k}\left((m+1)\mathsf{p}+n\right)^{k}2^{-\floor{\frac{m}{2^{j}}}} \leq \mathsf{p}(k+1)\sum_{m=0}^{\infty}\left((m+1)\mathsf{p}+n\right)^{k}2^{-\floor{\frac{m}{2^{k}}}} < \infty$$
we deduce from \eqref{eq:derivPsi} that there is a positive constant $\alpha_{k}(n)>0$ such that
\begin{equation}\label{eq:estimLpPsi}
\left\|\dfrac{\d^{k}}{\d\eta^{k}}\Upsilon_{n}(\eta)f\right\|_{\X_{0}} \leq \alpha_{k}(n)\|f\|_{\X_{N_{0}}}\left\|\mathsf{G}_{\mathsf{p}}(i\eta)\right\|_{\mathscr{B}(\X_{0})} \qquad \forall |\eta| >R.\end{equation}
We deduce then \eqref{eq:INtRR} thanks to \eqref{eq:power}. 
\end{proof}

We have all in hands to give the full proof of Theorem \ref{theo:MainLaplace} from which, as pointed out in Section \ref{sec:Main}, our main convergence Theorem \ref{theo:maindec} is deduced.
\begin{proof}[Proof of Theorem \ref{theo:MainLaplace}] The previous two propositions give a complete proof of points \textit{(1)} and \textit{(2)} of Theorem \ref{theo:MainLaplace}. In turns, recalling that (see Proposition \ref{propo:Sn+1} in Appendix \ref{appen:DYSON})
\begin{equation}\label{eq:maindecay}
\bm{S}_{n+1}(t)f=\frac{\exp(\e t)}{2\pi}\lim_{\ell\to\infty} \int_{-\ell}^{\ell}\exp\left(i\eta t\right)\mathscr{S}_{n+1}(\e+i\eta)f\d\eta, \qquad \forall f \in \X_{0}\end{equation}
for any $t >0$, $\e >0$, we  deduce from the uniform convergence obtained in Proposition \ref{theo:existtrace} together with the integrability condition in Proposition \ref{prop:reguPsif} that,  for any  $f \in \X_{N_{0}}$ and any $t \geq0$
\begin{equation*}\label{conv:integ}
\lim_{\e\to0^{+}}\frac{1}{2\pi}\int_{-\infty}^{\infty}\exp\left((\e+i\eta)t\right)\mathscr{S}_{n+1}(\e+i\eta)f\d \eta=
\frac{1}{2\pi}\int_{-\infty}^{\infty}\exp\left(i\eta t\right)\Upsilon_{n+1}(\eta)f\d \eta\end{equation*}
where the convergence occurs in $\X_{0}$ as soon as $n+1 \geq  5\cdot 2^{N_{0}-1}$ thanks to the dominated convergence theorem. This, together with \eqref{eq:maindecay} shows \eqref{eq:SntInt-main}.  The proof of \eqref{eq:SnDeri-main}
 is then deduced easily after $N_{0}-1$ integration by parts, using again Proposition \ref{prop:reguPsif}. This achieves the proof.\end{proof}



 \appendix 
 
 \section{Properties of the Dyson-Phillips iterated} \label{appen:DYSON}
 
\subsection{Continuous dependence with respect to $\K$}
We begin with recalling that the Dyson-Phillips iterated are depending continuously of $\K \in \mathscr{B}(\X_{0})$. We can be more precise here. Namely, let us consider a sequence $\left(\K_{n}\right)_{n\in\N}$ of boundary operators 
$$\left(\K_{n}\right)_{n} \subset \mathscr{B}(\X_{-1},\X_{0}), \qquad \|\K_{n}\|_{\mathscr{B}(\X_{-1},\X_{0})} \leq 1, \qquad \forall n \in \N$$
and introduce $V_{0}(t)=U_{0}(t)$ and
\begin{equation}\label{eq:VnKn}
V_{n+1}(t)=\int_{0}^{t}V_{n}(t-s)\K_{n+1}U_{0}(s)\d s, \qquad t \geq0, \qquad n \in \N.\end{equation}
Then
\begin{propo}\label{prop:Vnt}
For any $n \geq 1$ and any $t \geq 0$, ${V}_{n}(t) \in \mathscr{B}(\X_{0})$ with
\begin{equation}\label{eq:Vnt}
\left\|{V}_{n}(t)\right\|_{\mathscr{B}(\X_{0})} \leq \prod_{j=1}^{n}\left\|\K_{j}\right\|_{\mathscr{B}(\X_{-1},\X_{0})}, \qquad n \geq 1,\qquad t\geq0.\end{equation}
\end{propo}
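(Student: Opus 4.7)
My plan is to prove the estimate by induction on $n \geq 1$, with the key ingredient being the integrability estimate from Lemma \ref{lem:decayU0}. The crucial observation is that although $U_0(t)$ is only a contraction on $\X_0$ (so that $\int_0^\infty \|U_0(s)f\|_{\X_0}\d s$ need not be finite), the regularisation encoded in $\K_{n+1} \in \mathscr{B}(\X_{-1},\X_0)$ allows us to replace the $\X_0$-norm of $U_0(s)f$ by its $\X_{-1}$-norm, which \emph{is} integrable in $s$: inspecting the proof of \eqref{eq:uoK} with $k=-1$ (the argument in Lemma \ref{lem:decayU0} applies verbatim, since it only requires $\sigma \in L^\infty(V)$) yields
$$\int_{0}^{\infty}\|U_{0}(s)f\|_{\X_{-1}}\,\d s \;\leq\; \|f\|_{\X_{0}}, \qquad \forall f \in \X_{0}.$$

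For the base case $n=1$, observe that $V_1(t) = \int_0^t U_0(t-s)\K_1 U_0(s)\d s$. Using $\|U_0(t-s)\|_{\mathscr{B}(\X_0)} \leq 1$ (which follows directly from the representation \eqref{eq:U0t}) together with $\|\K_1 U_0(s)f\|_{\X_0} \leq \|\K_1\|_{\mathscr{B}(\X_{-1},\X_0)} \|U_0(s)f\|_{\X_{-1}}$, Fubini-Tonelli yields
$$\|V_1(t)f\|_{\X_0} \leq \|\K_1\|_{\mathscr{B}(\X_{-1},\X_0)} \int_0^t \|U_0(s)f\|_{\X_{-1}} \d s \leq \|\K_1\|_{\mathscr{B}(\X_{-1},\X_0)}\|f\|_{\X_0}.$$

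For the inductive step, assuming \eqref{eq:Vnt} holds at level $n$, the recursion \eqref{eq:VnKn} gives
$$\|V_{n+1}(t)f\|_{\X_0} \leq \int_0^t \|V_n(t-s)\|_{\mathscr{B}(\X_0)} \,\|\K_{n+1}\|_{\mathscr{B}(\X_{-1},\X_0)}\,\|U_0(s)f\|_{\X_{-1}}\d s,$$
and the induction hypothesis pulls the product $\prod_{j=1}^n \|\K_j\|_{\mathscr{B}(\X_{-1},\X_0)}$ (which is independent of $s$ and of $t$) out of the integral, leaving exactly $\int_0^t \|U_0(s)f\|_{\X_{-1}}\d s \leq \|f\|_{\X_0}$ to bound, and yielding the desired product over $j=1,\ldots,n+1$.

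The only subtle point — which is really the heart of the estimate — is the uniformity in $t \geq 0$: one must be careful to route the $U_0$ factor carrying the variable $s$ through the $\X_{-1}$-norm (so that its time integral is finite and $t$-independent) and the $V_n$ factor carrying $t-s$ through the operator norm on $\X_0$ (where the inductive bound applies). Had we instead tried to estimate $\|U_0(s)f\|_{\X_0}$ directly, the bound would only be $t\,\|f\|_{\X_0}$ and linearity in $t$ would propagate to give $t^n$-type growth, destroying the uniform estimate. Once this routing is chosen correctly, the proof reduces to a routine induction and no further difficulty arises.
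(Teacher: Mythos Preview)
Your proposal is correct and follows essentially the same route as the paper's proof: both argue by induction on $n$, route the inner $U_0(s)$ factor through the $\X_{-1}$-norm to exploit $\K_{n+1}\in\mathscr{B}(\X_{-1},\X_0)$, and rely on the key integrability estimate $\int_0^t\|U_0(s)f\|_{\X_{-1}}\,\d s\leq\|f\|_{\X_0}$. The only cosmetic difference is that the paper writes this last estimate out explicitly (using $\varpi_{-1}\leq\sigma$ and $\int_0^t\sigma e^{-s\sigma}\d s=1-e^{-t\sigma}\leq1$), whereas you invoke the computation behind \eqref{eq:uoK}; note that Lemma~\ref{lem:decayU0} is stated for $k\geq0$, so strictly speaking you are extrapolating the argument rather than citing the lemma, but the underlying calculation is indeed identical.
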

 \begin{proof} The proof is made by induction.  Let $f \in \X_{-1}$ and $t\geq0$ be fixed. For $n=1$, one has 
$$\|{V}_{1}(t)f\|_{\X_{0}}=\int_{0}^{t}\|U_{0}(t-s)\K_{1}U_{0}(s)f\|_{\X_{0}}\d s \leq \int_{0}^{t}\|\K_{1}U_{0}(s)f\|_{\X_{0}}\d s$$
since $U_{0}(t)$ is a contraction in $\X_{0}$. Then, one deduces
$$\|V_{1}(t)f\|_{\X_{0}} \leq \|\K_{1}\|_{\mathscr{B}(\X_{-1},\X_{0})}\int_{0}^{t}\|U_{0}(s)f\|_{\X_{-1}}\d s.$$
Since $U_{0}(s)$ commutes with $\varpi_{-1}(v)=\min\left(1,\sigma(v)\right)$, one has
$$\|U_{0}(s)f\|_{\X_{-1}}=\int_{\T^{d}\times V}\varpi_{-1}(v)e^{-s\sigma(v)}|f(x,v)|\d x\,\bm{m}(\d v)$$
and, since $\varpi_{-1} \leq \sigma$,
\begin{multline*}
\int_{0}^{t}\|U_{0}(s)f\|_{\X_{-1}}\d s \leq \int_{\T^{d}\times V}|f(x,v)|\d x\,\bm{m}(\d v)\int_{0}^{t}\sigma(v)e^{-s\sigma(v)}\d s\\
=\int_{\T^{d}\times V}|f(x,v)|\left(1-e^{-t\sigma(v)}\right)\d x\,\bm{m}(\d v)
\leq \|f\|_{\X_{0}}\end{multline*}
This proves \eqref{eq:Vnt} for $n=1$. Assume then the result to be true for $n \geq 1$ and let us prove for $n+1$. One has, as before,
\begin{equation*}\begin{split}
\left\|{V}_{n+1}(t)f\right\|_{\X_{0}}&\leq \sup_{s\in [0,t]}\|V_{n}(t-s)\|_{\mathscr{B}(\X_{0})}\int_{0}^{t}\|\K_{n+1}U_{0}(s)f\|_{\X_{0}}\d s\\
&\leq \sup_{s\in [0,t]}\|V_{n}(t-s)\|_{\mathscr{B}(\X_{0})}\|\K_{n+1}\|_{\mathscr{B}(\X_{-1},\X_{0})}\int_{0}^{t}\|U_{0}(s)f\|_{\X_{-1}}\d s
\end{split}\end{equation*}
We saw how to estimate this last integral and, with the induction hypothesis $\sup_{s\in [0,t]}\|V_{n}(t-s)\|_{\mathscr{B}(\X_{0})} \leq \prod_{j=1}^{n}\|\K_{j}\|_{\mathscr{B}(\X_{-1},\X_{0})}$, we deduce that
$$\|V_{n+1}(t)f\|_{\X_{0}}\leq \prod_{j=1}^{n+1}\|\K_{j}\|_{\mathscr{B}(\X_{-1},\X_{0})}\|f\|_{\X_{0}}$$
which proves the result.
\end{proof}

 \subsection{Decay of the iterates}\label{sec:decay} We extend the decay of the semigroup $\left(U_{0}(t))\right)_{t\geq0}$ obtained in Lemma \ref{lem:decayU0} to the iterates 
$\left(U_{k}(t)\right)_{t\geq0}$. We recall that, for any $\delta >0$, we introduce
$$\Lambda_{\delta}:=\{v \in V\;;\;\sigma(v) \geq \delta\}, \qquad \Sigma_{\delta}=V \setminus \Lambda_{\delta}$$
and 
$\K^{(\delta)} \in \mathscr{B}(\X_{0})$ given by \eqref{defi:Hepsi}, i.e.
\begin{equation*}
\K^{(\delta)}f(x,v)=\ind_{\Lambda_{\delta}}\K f(x,v) \qquad \forall f \in \X_{0}, \quad (x,v) \in \T^{d}\times V\end{equation*}
We prove here Lemma \ref{lem:sizeDelta} which investigate the size of $\|\K-\K^{(\delta)}\|_{\B(\X_{-1},\X_{0})}.$
\begin{proof}[Proof of Lemma \ref{lem:sizeDelta}]
Introduce 
\begin{equation}\label{def:Kdel}
\overline{\K}^{(\delta)}=\K-\K^{(\delta)}\end{equation}
so that
$$\overline{\K}^{(\delta)}f(x,v)=\ind_{\Sigma_{\delta}}(v)\K f(x,v).$$
Then, for any $f \in \X_{-1}$, one has, for any $n \geq0$,
\begin{equation*}\begin{split}
\|\overline{\K}^{(\delta)}f\|_{\X_{0}}&=\int_{\T^{d}}\d x\int_{\Sigma_{\delta}}|\K f(x,v)|\bm{m}(\d v)\\
&\leq \int_{\T^{d}}\d x\int_{V}\ind_{\Sigma_{\delta}}(v)\bm{m}(\d v)\int_{V}\bm{k}(v,w)|f(x,w)|\bm{m}(\d w)\\
&\leq \int_{\T^{d} \times V}|f(x,w)|\d x\,\bm{m}(\d w)\int_{V}\ind_{\Sigma_{\delta}}(v)\bm{k}(v,w)\frac{\sigma^{n}(v)}{\sigma^{n}(v)}\bm{m}(\d v)\\
&\leq \delta^{n}\int_{\T^{d}\times V}|f(x,w)|\d x\,\bm{m}(\d w)\int_{V}\sigma^{-n}(v)\bm{k}(v,w)\bm{m}(\d v).
\end{split}\end{equation*}
By definition of $\vartheta_{n}$, we deduce that
$$\|\overline{\K}^{(\delta)}f\|_{\X_{0}} \leq \delta^{n}\int_{\T^{d}\times V}\sigma(w)\vartheta_{n}(w)\,|f(x,w)|\d x\,\bm{m}(\d w)$$

i.e.
$$\|\overline{\K}^{(\delta)}f\|_{\X_{0}} \leq \delta^{n}\|\vartheta_{n}\|_{\infty}\int_{\T^{d}\times V}\sigma(w)\,|f(x,w)|\d x\,\bm{m}(\d w) \leq \delta^{n}\|\vartheta_{n}\|_{\infty}\,\|f\|_{\X_{-1}}$$
as soon as $\vartheta_{n} \in L^{\infty}(V)$. This gives \eqref{eq:overKdelta}.\end{proof}
Before proving the precise decay of $U_{k}(t)$ for any $k \in \N$, for the clarity of exposition, we give full details for the decay of $U_{1}(t)$. 
\begin{lemme}\label{lem:decayU1}
Let $f \in \X_{N_{0}}$. Then, there exists some universal constant $C_{1} >0$ (depending only on $\K$, $N_{0}$  but not  on $f$) such that
$$\left\|U_{1}(t)f\right\|_{\X_{0}} \leq  C_{1}\left(\frac{\log t}{t}\right)^{N_{0}}\,\|f\|_{\X_{N_{0}}}, \qquad \forall t >0.$$
\end{lemme}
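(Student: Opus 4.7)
The plan is to bypass the operator-level splitting $\K = \K^{(\delta)} + \overline{\K}^{(\delta)}$ alluded to before Proposition \ref{prop:Snt} and to argue directly from the explicit kernel representation of $U_{1}(t)f$. Expanding both semigroup factors in the Duhamel formula $U_{1}(t)f = \int_{0}^{t} U_{0}(t-s)\K U_{0}(s)f\,\d s$ via \eqref{eq:U0t} yields
$$U_{1}(t)f(x,v) = \int_{0}^{t}\!\!\int_{V} e^{-(t-s)\sigma(v)-s\sigma(w)}\,\bm{k}(v,w)\,f\bigl(x-(t-s)v-sw,w\bigr)\,\bm{m}(\d w)\,\d s.$$
Taking absolute values, integrating over $(x,v) \in \T^{d}\times V$ and performing the translation $y = x-(t-s)v-sw$ (which preserves the Lebesgue measure on $\T^{d}$ by periodicity), Fubini reduces the estimate to
$$\|U_{1}(t)f\|_{\X_{0}} \leq \int_{\T^{d}\times V} J(t,w)\,|f(y,w)|\,\d y\,\bm{m}(\d w),$$
where
$$J(t,w) := \int_{0}^{t}\!\!\int_{V} \bm{k}(v,w)\,e^{-(t-s)\sigma(v)-s\sigma(w)}\,\bm{m}(\d v)\,\d s.$$

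The heart of the argument will be the estimate of $J(t,w)$. I would split the $s$-integral at $t/2$: on $[0,t/2]$, bound $e^{-(t-s)\sigma(v)} \leq e^{-t\sigma(v)/2}$ and integrate $e^{-s\sigma(w)}$ in $s$ to produce a factor $\sigma(w)^{-1}$; on $[t/2,t]$, bound $e^{-s\sigma(w)} \leq e^{-t\sigma(w)/2}$ and integrate $e^{-(t-s)\sigma(v)}$ in $s$ to produce a factor $\sigma(v)^{-1}$. Applying the elementary inequality $u^{N_{0}}e^{-u} \leq (N_{0}/e)^{N_{0}}$ to the surviving exponentials (with $u = t\sigma(v)/2$ in the first piece and $u = t\sigma(w)/2$ in the second) and recognising the remaining $v$-integrals in terms of $\vartheta_{N_{0}}(w)$ and $\sigma(w)\vartheta_{1}(w)$ from \eqref{eq:varthetas}, this should produce
$$J(t,w) \leq \left(\frac{2N_{0}}{et}\right)^{N_{0}}\bigl[\vartheta_{N_{0}}(w) + \sigma(w)^{1-N_{0}}\vartheta_{1}(w)\bigr].$$

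To conclude I need to check that the right-hand side is dominated by $C\,t^{-N_{0}}\max(1,\sigma(w)^{-N_{0}})$ uniformly in $w$. The boundedness of $\vartheta_{N_{0}}$ is exactly \eqref{eq:varthetaN0}; the conservation identity \eqref{eq:conservative} gives $\vartheta_{0}\equiv 1$; and the pointwise inequality $\sigma^{-j}\leq 1+\sigma^{-N_{0}}$ valid for $0 \leq j \leq N_{0}$ then yields $\vartheta_{j} \leq 1+\vartheta_{N_{0}} \in L^{\infty}(V)$, in particular for $j=1$. A case distinction on $\sigma(w)\leq 1$ versus $\sigma(w)>1$ gives $\sigma(w)^{1-N_{0}}\leq \max(1,\sigma(w)^{-N_{0}})$ since $N_{0}\geq 1$, and integrating against $|f(y,w)|$ and using the definition of $\|\cdot\|_{\X_{N_{0}}}$ produces $\|U_{1}(t)f\|_{\X_{0}} \leq C\,t^{-N_{0}}\|f\|_{\X_{N_{0}}}$ for $t\geq 1$, with the range $t\in(0,1)$ covered by the crude Duhamel bound $\|U_{1}(t)f\|_{\X_{0}} \leq t\,\|\sigma\|_{\infty}\|f\|_{\X_{N_{0}}}$.

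I do not foresee a serious obstacle: everything reduces to Fubini plus the two elementary inequalities above once the explicit kernel of $U_{1}(t)$ is in hand. In fact this route delivers the pure rate $t^{-N_{0}}$, which is strictly stronger than the $(\log t/t)^{N_{0}}$ appearing in the statement; the logarithmic slack is presumably needed only for the higher iterates $U_{k}(t)$, $k\geq 2$, treated in the full Proposition \ref{prop:Snt}, where the operator-level split $\K=\K^{(\delta)}+\overline{\K}^{(\delta)}$ together with an optimisation of $\delta$ becomes genuinely useful.
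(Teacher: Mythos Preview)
Your argument is correct and in fact yields the sharper rate $t^{-N_{0}}$ rather than $(\log t/t)^{N_{0}}$. The paper proceeds differently: it splits $\K=\K^{(\delta)}+\overline{\K}^{(\delta)}$ at the operator level, bounds the piece involving $\overline{\K}^{(\delta)}$ by $\delta^{N_{0}}\|\vartheta_{N_{0}}\|_{\infty}$ via Lemma~\ref{lem:sizeDelta}, shows that $U_{1}^{(\delta)}(t)$ enjoys an exponential-type bound of size $(2/\delta)^{1}e^{-t\delta/2}$ (plus a term $(\delta/2)^{N_{0}}$ coming from the region $\sigma(w)<\delta/2$), and then optimises $\delta\sim(\log t)/t$. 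Your route bypasses all of this by splitting the \emph{time} integral at $t/2$ and applying $u^{N_{0}}e^{-u}\leq(N_{0}/e)^{N_{0}}$ directly to whichever exponential survives, which is both shorter and loses no logarithm.

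The trade-off is exactly the one you identify at the end. The paper's operator split generalises verbatim to $U_{n}$: one writes $U_{n}=U_{n}^{(\delta)}+\overline{U}_{n}^{(\delta)}$, the latter being a sum of $2^{n}-1$ terms each carrying at least one factor $\overline{\K}^{(\delta)}$ and hence a $\delta^{N_{0}}$, while $U_{n}^{(\delta)}$ inherits a uniform exponential decay $\sim\delta^{-n}e^{-t\delta/2}$; the same optimisation over $\delta$ then closes. Your midpoint-split would require partitioning the $n$-simplex according to which gap $s_{j-1}-s_{j}$ is largest, which is feasible but combinatorially heavier. So for the specific Lemma~\ref{lem:decayU1} your proof is strictly preferable, while the paper's argument earns its keep as the template for Lemma~\ref{lem:decayUn}.
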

\begin{proof} The proof is based upon the decomposition of $\K$ for small and large collision frequency. Namely, for some $\delta >0$ to be determined, we use the above splitting 
$$\K=\K^{(\delta)}+\overline{\K}^{(\delta)}$$
where $\K^{(\delta)}$ is defined in \eqref{defi:Hepsi} and 
$${U}_{1}(t)=U_{1}^{(\delta)}(t)+\overline{U}_{1}^{(\delta)}(t), \qquad t >0, \qquad \delta >0$$
where $U_{1}^{(\delta)}(t),\overline{U}_{1}^{(\delta)}(t)$ are  given by
$$U_{1}^{(\delta)}(t)=\int_{0}^{t}U_{0}(t-s)\K^{(\delta)}U_{0}(s)\d s, \qquad \overline{U}_{1}^{(\delta)}(t)=\int_{0}^{t}U_{0}(t-s)\overline{\K}^{(\delta)}U_{0}(s)\d s$$
Let now fix $k\geq 1,$ $f \in \X_{k}, t >0.$ 
\begin{equation*}
\|U_{1}(t)f\|_{\X_{0}} \leq \|U_{1}^{(\delta)}(t)f\|_{\X_{0}}+\|\overline{U}_{1}^{(\delta)}(t)f\|_{\X_{0}} 
\leq \|{U}_{1}^{(\delta)}(t)f\|_{\X_{0}} + \|\overline{\K}^{(\delta)}\|_{\mathscr{B}(\X_{-1},\X_{0})}\|f\|_{\X_{0}} \end{equation*}
where we used \eqref{eq:Vnt}. Using now \eqref{eq:overKdelta},
\begin{equation}\label{eq:U1tep}
\|U_{1}(t)f\|_{\X_{0}} \leq \delta^{N_{0}}\|\vartheta_{N_{0}}\|_{\infty}\|f\|_{\X_{0}}+\|{U}_{1}^{(\delta)}(t)f\|_{\X_{0}} \qquad \forall \delta >0, \qquad t \geq0.\end{equation}
Let us focus then on the estimate for $\ {U_{1}^{(\delta)}}(t)f$. Given $s \in [0,t]$, one computes easily
\begin{multline*}
U_{0}(t-s)\K^{(\delta)}U_{0}(s)f(x,v)=\\
\ind_{\Lambda_{\delta}}(v)\int_{V}\bm{k}(v,w)\exp\left(-(t-s)\sigma(v)-s\sigma(w)\right)f(x-tv+s(v-w),w)\,
\bm{m}(\d w)\end{multline*}
so that
\begin{multline*}
\|U_{1}^{(\delta)}(t)f\|_{\X_{0}}\leq \int_{0}^{t}\d s\int_{\T^{d}\times \Lambda_{\delta}}\d x \bm{m}(\d v)\\
\int_{V}\bm{k}(v,w)\exp\left(-(t-s)\sigma(v)-s\sigma(w)\right)\left|f(x-tv+s(v-w),w)\right|\,\bm{m}(\d w)\\
\leq \int_{\T^{d}\times V}|f(y,w)|\bm{m}(\d w)\int_{0}^{t}\d s\int_{\Lambda_{\delta}}\bm{k}(v,w)\exp\left(-(t-s)\sigma(v)-s\sigma(w)\right)\bm{m}(\d v).
\end{multline*}
Introducing again
$$g(x,v)=\sigma(v)^{-k}f(x,v)$$
and using Fubini's Theorem, we get
\begin{equation}\label{eq:normU1delta}
\|U_{1}^{(\delta)}(t)f\|_{\X_{0}} \leq \int_{\T^{d}\times V}\sigma^{k}(w)\Theta_{1}^{(\delta)}(t,w)|g(y,w)|\d y\,\bm{m}(\d w)\end{equation}
where
\begin{equation*}\begin{split}
\Theta_{1}^{(\delta)}(t,w)&=\int_{\Lambda_{\delta}}\bm{k}(v,w)\bm{m}(\d v)\int_{0}^{t}\exp\left(-(t-s)\sigma(v)-s\sigma(w)\right)\d s\\
&=\int_{\Lambda_{\delta}}\exp\left(-t\sigma(v)\right)\bm{k}(v,w)\frac{\exp\left(t\left[\sigma(v)-\sigma(w)\right]\right)-1}{\sigma(v)-\sigma(w)}\bm{m}(\d v)
\end{split}\end{equation*}
(where we notice that, if $\sigma(v)=\sigma(w)$ the last quotient is equal to $1$). Notice that, since $\exp\left(-(t-s)\sigma(v)-s\sigma(w)\right) \leq \exp(-s\sigma(w))$ for any $v,w$ and any $s \in [0,t]$ we have, 
$$\Theta_{1}^{(\delta)}(t,w) \leq \int_{\Lambda_{\delta}}\bm{k}(v,w)\bm{m}(\d v)\int_{0}^{t}\exp\left(-s\sigma(w)\right)\d s \leq \frac{1}{\sigma(w)}\int_{V}\bm{k}(v,w)\bm{m}(\d v) =1$$
for any $t\geq0$, $w \in V.$

To estimate the integral in the right-hand-side of \eqref{eq:normU1delta}, we estimate $\Theta_{1}^{\delta}(t,w)$ distinguishing between the two cases 
$$w \in \Lambda_{\frac{\delta}{2}} \quad \text{ or  } \quad w \notin \Lambda_{\frac{\delta}{2}}.$$ Assume first that $w \in \Lambda_{\frac{\delta}{2}}$, i.e. $\sigma(w) \geq \frac{\delta}{2}$. Then, for any $v \in V$, $\sigma(v)-\sigma(w) \leq \sigma(v)-\frac{\delta}{2}$ and, since the mapping
$$u \in \R \longmapsto \frac{e^{u}-1}{u}$$
is nondecreasing, we get that
$$\frac{\exp\left(t\left[\sigma(v)-\sigma(w)\right]\right)-1}{\sigma(v)-\sigma(w)} \leq \frac{\exp\left(t\left[\sigma(v)-\frac{\delta}{2}\right] \right)-1}{\sigma(v)-\frac{\delta}{2}}.$$
i.e.
$$\Theta_{1}^{(\delta)}(t,w) \leq \int_{\Lambda_{\delta}}\bm{k}(v,w)\frac{\exp\left(-t\frac{\delta}{2}\right)-\exp\left(-t\sigma(v)\right)}{\sigma(v)-\frac{\delta}{2}}\bm{m}(\d v), \qquad w \in \Lambda_{\frac{\delta}{2}}.$$
Now, for $v \in \Lambda_{\delta}$, 
$$\sigma(v)-\frac{\delta}{2} \geq \frac{\delta}{2} \qquad \text{ and } \qquad \frac{\exp\left(-t\frac{\delta}{2}\right)-\exp\left(-t\sigma(v)\right)}{\sigma(v)-\frac{\delta}{2}} \leq \frac{2}{\delta}\exp\left(-t\frac{\delta}{2}\right)$$
from which we get
\begin{equation}\label{eq:away}
\Theta_{1}^{(\delta)}(t,w) \leq \frac{2\sigma(w)}{\delta}\exp\left(-\frac{\delta}{2}\right), \qquad w \in \Lambda_{\frac{\delta}{2}}\end{equation}
where we used that $\int_{\Lambda_{\delta}} \bm{k}(v,w)\bm{m}(\d v) \leq \int_{V}\bm{k}(v,w)\bm{m}(\d v)=\sigma(w)$. Inserting this into \eqref{eq:normU1delta} we deduce
\begin{multline*}
\|U_{1}^{(\delta)}(t)f\|_{\X_{0}} \leq \frac{2}{\delta}\exp\left(-t\frac{\delta}{2}\right)\int_{\T^{d}\times V}\sigma^{k+1}(w)|g(y,w)|\d y\,\bm{m}(\d w) \\
+ \int_{\T^{d}\times \Lambda_{\frac{\delta}{2}}^{c}}\sigma^{k}(w)\Theta_{1}^{(\delta)}(t,w)\left|g(y,w)\right|\d y\,\bm{m}(\d w).\end{multline*}
For $w \notin \Lambda_{\frac{\delta}{2}}$, i.e. $\sigma(w) \leq \frac{\delta}{2}$, we simply recall that $\Theta_{1}^{(\delta)}(t,w) \leq 1$ and of course $\sigma^{k}(w) \leq \left(\frac{\delta}{2}\right)^{k}$ so that 
$$\int_{\T^{d}\times \Lambda_{\frac{\delta}{2}}^{c}}\sigma^{k}(w)\Theta_{1}^{(\delta)}(t,w)\left|g(y,w)\right|\d y\,\bm{m}(\d w) \leq \left(\frac{\delta}{2}\right)^{k}\|g\|_{\X_{0}}$$
which, combined with the previous estimate gives
$$\|U_{1}^{(\delta)}(t)f\|_{\X_{0}} \leq \frac{2}{\delta}\exp\left(-t\frac{\delta}{2}\right)\|\sigma^{k+1}g\|_{\X_{0}} + \left(\frac{\delta}{2}\right)^{k}\|g\|_{\X_{0}} \qquad \forall \delta >0.$$
Adding this to \eqref{eq:U1tep} and with $k=N_{0}$, we get 
$$\|U_{1}(t)f\|_{\X_{0}} \leq \frac{2}{\delta}\exp\left(-t\frac{\delta}{2}\right)\|\sigma^{k+1}g\|_{\X_{0}} + \left(\frac{\delta}{2}\right)^{N_{0}}\|g\|_{\X_{0}}+ \delta^{N_{0}}\|\vartheta_{N_{0}}\|_{\infty}\|f\|_{\X_{0}}, \qquad \delta >0.$$
Observing that  
$$\|\sigma^{k+1}g\|_{\X_{0}}+\|g\|_{\X_{0}} + \|f\|_{\X_{0}}=\|\sigma\,f\|_{\X_{0}} + \|\sigma^{-k}f\|_{\X_{0}} +\|f\|_{\X_{0}} \leq \left(1+\|\sigma\|_{\infty}\right)\|f\|_{\X_{k}}$$
and there is a positive constant $C >0$ such that
$$\|U_{1}(t)f\|_{\X_{0}} \leq C \left(\frac{2}{\delta}\exp\left(-t\frac{\delta}{2}\right) + \left(\frac{\delta}{2}\right)^{N_{0}}\right)\|f\|_{\X_{N_{0}}}, \qquad \forall \delta >0.$$
Choosing, for $t > e$, 
$$\delta=2(N_{0}+1)\frac{\log t}{t}$$
we deduce that there is $C_{0} >0$ such that
\begin{equation*}\begin{split}
\|U_{1}(t)f\|_{\X_{0}} &\leq C\left(\frac{1}{(N_{0}+1)t^{N_{0}}\log t} + (N_{0}+1)^{N_{0}}\left(\frac{\log t}{t}\right)^{N_{0}}\right)\|f\|_{\X_{N_{0}}}	\\
&\leq C_0 \left(\frac{\log t}{t}\right)^{N_{0}}\|f\|_{\X_{N_{0}}}, \qquad t > e\end{split}\end{equation*}
which gives the result.
\end{proof}
We generalise this approach to the other iterates

\begin{lemme}\label{lem:decayUn}
Let $f \in \X_{N_{0}}$. For any $n \geq 1$ there exists some universal constant $C_{n} >0$ (depending only on $\K$, $n,N_{0}$  but not  on $f$) such that
$$\left\|U_{n}(t)f\right\|_{\X_{0}} \leq  C_{n}\left(\frac{\log t}{t}\right)^{N_{0}}\,\|f\|_{\X_{N_{0}}}, \qquad \forall t >0.$$
\end{lemme}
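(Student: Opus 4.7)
My plan is to generalize the argument of Lemma \ref{lem:decayU1} from the first Dyson--Phillips iterate to the $n$-th iterate. Fix a parameter $\delta \in (0,1]$ (to be optimized later) and split $\K = \K^{(\delta)} + \overline{\K}^{(\delta)}$ with $\overline{\K}^{(\delta)} = \K - \K^{(\delta)}$. Expanding the $n$-fold iterated integral defining $U_n(t)$ and distributing this decomposition at each of the $n$ occurrences of $\K$, I would write
\[
U_n(t) = U_n^{(\delta)}(t) + R_n^{(\delta)}(t),
\]
where $U_n^{(\delta)}(t)$ is the iterate obtained by replacing every occurrence of $\K$ by $\K^{(\delta)}$ and $R_n^{(\delta)}(t)$ collects the remaining $2^n - 1$ terms, each of which contains at least one factor $\overline{\K}^{(\delta)}$. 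Proposition \ref{prop:Vnt} applied to each such term together with $\|\K^{(\delta)}\|_{\B(\X_{-1},\X_0)} \le 1$ and Lemma \ref{lem:sizeDelta}, which gives $\|\overline{\K}^{(\delta)}\|_{\B(\X_{-1},\X_0)} \le \delta^{N_0}\|\vartheta_{N_0}\|_\infty$, immediately yields $\|R_n^{(\delta)}(t)f\|_{\X_0} \le 2^n \delta^{N_0}\|\vartheta_{N_0}\|_\infty\|f\|_{\X_0}$.

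For the main term $U_n^{(\delta)}(t)f$, I would write out the explicit iterated integral and exploit the fact that each application of $\K^{(\delta)}$ forces the output velocity into $\Lambda_\delta$, so the $n$ intermediate velocities $v_1,\ldots,v_n$ all satisfy $\sigma(v_i) \ge \delta$. Consequently the exponential factor in the semigroup part dominates $e^{-(t-s_1)\delta - s_1\sigma(v_0)}$. After the volume-preserving change of variables in space, sequential integration of the intermediate velocities using $\int_V \bm{k}(v_i,v_{i-1})\bm{m}(\d v_i) \le \sigma(v_{i-1}) \le \|\sigma\|_\infty$, and integration of $s_2,\ldots,s_n$ producing the factor $(t-s_1)^{n-1}/(n-1)!$, one arrives at
\[
\|U_n^{(\delta)}(t)f\|_{\X_0} \le \frac{\|\sigma\|_\infty^{n-1}}{(n-1)!}\int_{\T^d \times V}\sigma(v_0)|f(y,v_0)|\,\d y\,\bm{m}(\d v_0)\int_0^t (t-s)^{n-1}e^{-(t-s)\delta - s\sigma(v_0)}\,\d s.
\]

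The remaining step mirrors Lemma \ref{lem:decayU1}: split the $v_0$-integration according to whether $v_0 \in \Lambda_{\delta/2}$ or not. On $\Lambda_{\delta/2}$ the exponent satisfies $(t-s)\delta + s\sigma(v_0) \ge t\delta/2$, so the time integral is bounded by $t^n e^{-t\delta/2}/n!$, yielding a contribution of the form $C_n t^n e^{-t\delta/2}\|f\|_{\X_0}$. Off $\Lambda_{\delta/2}$, one exploits the smallness $\sigma(v_0)^{N_0+1} \le (\delta/2)^{N_0+1}$ together with the weighted integrability $\int \max(1,\sigma^{-N_0})|f|\ind_{\sigma<\delta/2} \le \|f\|_{\X_{N_0}}$ and a bound on the time integral combining $\sigma(v_0)^a e^{-(t-s)\sigma(v_0)} \le (a/(e(t-s)))^a$ with the polynomial prefactor. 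Summing all three contributions and choosing $\delta = \alpha_n\log t/t$ with $\alpha_n$ large enough that $t^n e^{-t\delta/2} = \mathrm{o}((\log t/t)^{N_0})$, the remainder gives $\delta^{N_0} \sim (\log t/t)^{N_0}$ and the total estimate closes, with the case of bounded $t$ handled by the uniform bound $\|U_n(t)\|_{\B(\X_0)} \le 1$ from Proposition \ref{prop:Vnt}.

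The main obstacle I expect to face is the off-$\Lambda_{\delta/2}$ contribution when $n$ is substantially larger than $N_0$: the crude estimate $\int_0^\infty u^{n-1}e^{-u\delta}\,\d u/(n-1)! = \delta^{-n}$ produces a term proportional to $\delta^{N_0+1-n}$ that \emph{fails} to decay as $\delta \to 0$ once $n > N_0 + 1$. Closing the argument in this regime requires a finer analysis of the time integral, for instance by using the two-sided bound $\sigma(v_0)\,e^{-(t-s)\sigma(v_0)} \le \min(\sigma(v_0), 1/(e(t-s)))$ to convert the singularity at $s=t$ into an additional polynomial decay in $t$, or alternatively by bootstrapping from the already-established bound on $U_{n-1}$ through $U_n(t) = \int_0^t U_0(t-s)\K U_{n-1}(s)\,\d s$ and using the smoothing property $\K \in \mathscr{B}(\X_{-1},\X_{N_0})$ to avoid the accumulation of negative powers of $\delta$. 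This delicate bookkeeping --- trading off exponential decay from velocities bounded below against polynomial smallness in $\delta$ from the degenerate region, and doing so uniformly in $n$ --- is the technical heart of the proof.
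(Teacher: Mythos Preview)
Your splitting into $U_n^{(\delta)}(t)$ plus a remainder carrying at least one $\overline{\K}^{(\delta)}$ is exactly right, and the remainder estimate is correct. The gap is in how you handle $U_n^{(\delta)}(t)$, and you have correctly located it: after integrating out the intermediate velocities with the crude bound $\int_V\bm{k}(v_i,v_{i-1})\bm{m}(\d v_i)\le\|\sigma\|_\infty$, the off-$\Lambda_{\delta/2}$ contribution produces $\delta^{N_0+1-n}$, which blows up for $n>N_0+1$. Neither of your proposed fixes (sharpening the time integral on $\{\sigma(v_0)<\delta/2\}$, or bootstrapping from $U_{n-1}$) is the route the paper takes, and both look difficult to close.

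The paper's resolution is to \emph{not} integrate out the intermediate velocities crudely. It keeps the full kernel
\[
\Theta_n^{(\delta)}(t,w)=\int_{\Lambda_\delta^n}\prod_{j}\bm{k}(v_j,\cdot)\bm{m}(\d v_j)\int_{\Delta_t}\prod_{j}e^{-(s_{j-1}-s_j)\sigma(v_j)}\,e^{-s_n\sigma(w)}\,\d s
\]
and estimates the simplex integral \emph{iteratively}, integrating $s_n$ first, then $s_{n-1}$, etc. Each step uses the monotonicity of $u\mapsto (e^u-1)/u$ together with $\sigma(v_j)\ge\delta$ and $\sigma(w)\ge\delta/2$ to produce a factor $2/\sigma(v_j)$ (not a polynomial in $t$) and an exponential $e^{-s_{j-1}\delta/2}$. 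The result is
\[
\Theta_n^{(\delta)}(t,w)\le\Bigl(\tfrac{2\sigma(w)}{\delta}\Bigr)^n e^{-t\delta/2}\qquad\text{for }w\in\Lambda_{\delta/2},
\]
with the $\sigma(w)^n$ coming from $\int_{\Lambda_\delta}\bm{k}(v_j,w)\bm{m}(\d v_j)\le\sigma(w)$. On $\Lambda_{\delta/2}^c$ the paper uses only the trivial bound $\Theta_n^{(\delta)}(t,w)\le 1$ (immediate by induction), so \emph{no negative power of $\delta$ appears there at all}; the $\delta^{N_0}$ comes entirely from $\sigma(w)^{N_0}\le(\delta/2)^{N_0}$ combined with $f\in\X_{N_0}$. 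Thus the final bound is $(2/\delta)^n e^{-t\delta/2}+(\delta/2)^{N_0}+(2^n-1)\delta^{N_0}$, and the choice $\delta=2(n+N_0)\log t/t$ closes the estimate. The structural point you are missing is that the $\delta^{-n}$ must live \emph{only} where it is beaten by the exponential, and the iterative simplex estimate is what confines it there.
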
  
\begin{proof} The proof uses the same ideas introduced in the proof for $n=1$. Given $\delta >0$, we still introduce the splitting $\K=\K^{(\delta)}
+\overline{\K}^{(\delta)}$ which gives, by a simple combinatorial argument that, for $n \geq 1$, one can write
$$U_{n}(t)=U_{n}^{(\delta)}(t)+\overline{U}_{n}^{(\delta)}(t)$$
where $U_{n}^{(\delta)}(t)$ is constructed as a Dyson-Phillips iterated involving \emph{only} the operator $\K^{(\delta)}$ whereas the reminder term $\overline{U}_{n}^{(\delta)}(t)$ is the some of $2^{n}-1$ operators 
$$\overline{U}_{n}^{(\delta)}(t)=\sum_{j=1}^{2^{n}-1}\bm{V}_{n}^{(j)}(t)$$
where, for any $j \in \{1,\ldots,2^{n}-1\}$, $\bm{V}_{n}^{(j)}(t)$ is defined by \eqref{eq:VnKn}  for a (finite) family of operators $(\K_{1},\ldots,\K_{n})$ where there is at least one $i \in \{1,\ldots,n\}$ such that $\K_{i}=\overline{\K}^{(\delta)}$ (the other ones being indifferently $\K^{(\delta)}$ or $\overline{\K}^{(\delta)}$).  Using Proposition \ref{prop:Vnt}, and recalling that $\|\K\|_{\mathscr{B}(\X_{-1},\X_{0})} \leq 1$ one has then 
$$\|\bm{V}_{n}^{(j)}(t)\|_{\mathscr{B}(\X_{0})} \leq \|\overline{\K}^{(\delta)}\|_{\mathscr{B}(\X_{-1},\X_{0})}.$$
Therefore
$$\left\|\overline{U}_{n}^{(\delta)}(t)f\right\|_{\X_{0}} \leq \left(2^{n}-1\right)\|\overline{\K}^{(\delta)}\|_{\mathscr{B}(\X_{-1},\X_{0})}\|f\|_{\X_{0}}$$
and
\begin{equation}\label{eq:Untep}
\left\|U_{n}(t)f\right\|_{\X_{0}}\leq \|U_{n}^{(\delta)}(t)f\|_{\X_{0}}+ C(2^{n}-1)\,\delta^{N_{0}}\|f\|_{\X_{0}}, \qquad t >0, \qquad \delta >0\end{equation}
where we used \eqref{eq:overKdelta}
. We focus now on the expression of $U_{n}^{(\delta)}(t)f$. From the subsequent Lemma, we have
\begin{equation}\label{eq:UnDe}
\|U_{n}^{(\delta)}(t)f\|_{\X_{0}} \leq \int_{\T^{d}\times V}\Theta_{n}^{(\delta)}(t,w)\,|f(y,w)|\d y\,\bm{m}(\d w)\end{equation}
where $\Theta_{n}^{(\delta)}(t,w)$ is defined by 
\begin{multline}\label{eq:thetadN}
\Theta_{n}^{(\delta)}(t,w)=\int_{\Lambda_{\delta}}\prod_{j=1}^{n}\bm{k}(v_{j},w)\bm{m}(\d v_{1})\ldots\bm{m}(\d v_{n})\\
\int_{\Delta_t}\prod_{j=1}^{n}\exp\left(-(s_{j-1}-s_{j})\sigma(v_{j})\right)\exp\left(-s_{n}\sigma(w)\right)\d s_{1}\ldots \d s_{n}.\end{multline}
where we used the convention $s_0=t$ and $\Delta_t$ denotes the simplex 
$$\Delta_t=\left\{(s_1,\ldots,s_n) \in \R^{n}_{+}\;;\;0 \leq s_{n} \leq s_{n-1} \leq \ldots s_{1} \leq t\right\}.$$
We saw in Lemma \ref{lem:decayU1} that
$$\Theta_{1}^{(\delta)}(s,w) \leq \dfrac{2\sigma(w)}{\delta}\exp\left(-s\frac{\delta}{2}\right) \qquad \text{ for } w \in \Lambda_{\frac{\delta}{2}}$$
and $\Theta_{1}^{(\delta)}(s,w) \leq 1$ for any $s \geq0,w\in V.$ Let us prove a similar estimate holds true for any $n\in \N.$ The fact that
\begin{equation}\label{eq:The11}
\Theta_{n}^{(\delta)}(t,w) \leq  1 \qquad \text{ for any $t\geq0$ and any $w \in V$}\end{equation} is easily seen by induction since
$$\Theta_{n}^{(\delta)}(t,w)=\int_{\Lambda_{\delta}}\bm{k}(v,w)\bm{m}(\d v)\int_{0}^{t}\exp\left(-(t-s)\sigma(v)\right)\Theta_{n-1}^{(\delta)}(s,w)\d s.$$
Let now $w \in \Lambda_{\frac{\delta}{2}}$ be fixed. We estimate, for $(v_{1},\ldots,v_{n}) \in \Lambda_{\delta}^{n}$ the integral on the simplex $\Delta_t$ as in the previous Lemma starting from the integral with respect to $s_n$. For fixed $(s_{1},\ldots,s_{n-1})$ we have, as in Lemma \ref{lem:decayU1}
\begin{equation*}\begin{split}
G_1(s_{n-1},v_n,w)&:=\int_{0}^{s_{n-1}}\exp\left(-(s_{n-1}-s_{n})\sigma(v_{n})-s_{n}\sigma(w)\right)\d s_{n} \\
&\leq \exp\left(-s_{n-1}\sigma(v_{n})\right)\int_{0}^{s_{n-1}}\exp\left(s_{n}\left(\sigma(v_{n})-\frac{\delta}{2}\right)\right)\d s_{n}\\
&\leq \frac{1}{\sigma(v_{n})-\frac{\delta}{2}}\exp\left(-s_{n-1}\frac{\delta}{2}\right).\end{split}\end{equation*}
Since $\sigma(v_{n}) \geq \delta$ we get $\sigma(v_{n}) -\frac{\delta}{2} \geq \frac{1}{2}\sigma(v_{n})$ and
$$\int_{0}^{s_{n-1}}\exp\left(-(s_{n-1}-s_{n})\sigma(v_{n})-s_{n}\sigma(w)\right)\d s_{n} \leq \frac{2}{\sigma(v_{n})} \exp\left(-s_{n-1}\frac{\delta}{2}\right).$$
Now, for $(s_1,\ldots,s_{n-2})$ given, we multiply this by $\exp\left(-(s_{n-2}-s_{n-1})\sigma(v_{n-1})\right)$ and integrate with respect to $s_{n-1}$ to get
\begin{equation*}\begin{split}
G_{2}(s_{n-2},v_{n-1},v_{n},w)&:=\int_{0}^{s_{n-2}}\exp\left(-(s_{n-2}-s_{n-1})\sigma(v_{n-1})\right)G(s_{n-1},v_{n},w)\d s_{n-1}\\
&\leq \frac{2}{\sigma(v_{n})}\exp\left(-s_{n-2}\sigma(v_{n-1})\right)\int_{0}^{s_{n-2}}\exp\left(s_{n-1}\left(\sigma(v_{n-1})-\frac{\delta}{2}\right)\right)\d s_{n-1}\\
&\leq \frac{2}{\sigma(v_{n})}\exp\left(-s_{n-2}\sigma(v_{n-1})\right)\frac{\exp\left(s_{n-2}\left(\sigma(v_{n-1})-\frac{\delta}{2}\right)\right)-1}{\sigma(v_{n-1})-\frac{\delta}{2}}
\end{split}\end{equation*}
and, as before,
$$G_{2}(s_{n-2},v_{n-1},v_{n},w) \leq \frac{2}{\sigma(v_{n})}\frac{2}{\sigma(v_{n-1})}\exp\left(-s_{n-2}\frac{\delta}{2}\right)$$
Iterating this process (recalling that $s_{0}=t$, we end up with the following estimate for the integral over the simplex:
\begin{multline*}
\int_{\Delta_{t}}\prod_{j=1}^{n}\exp\left(-(s_{j-1}-s_{j})\sigma(v_{j})\right)\exp\left(-s_{n}\sigma(w)\right)\d s_{1}\ldots \d s_{n}\\
\leq \frac{2^{n}}{\prod_{j=1}^{n}\sigma(v_{j})}\exp\left(-t\frac{\delta}{2}\right) \quad \forall (v_1,\ldots,v_{n}) \in \Lambda_{\delta}^{n}, \quad w \in \Lambda_{\frac{\delta}{2}}.
\end{multline*}
Inserting this into \eqref{eq:thetadN} yields
$$
\Theta_{n}^{(\delta)}(t,w) \leq 2^n\exp\left(-t\frac{\delta}{2}\right)\int_{\Lambda_{\delta}}\prod_{j=1}^{n}\frac{\bm{k}(v_{j},w)}{\sigma(v_{j})}\bm{m}(\d v_{1})\ldots\bm{m}(\d v_{n})$$
and, using that 
$$\sigma(v_{j}) \geq \delta \qquad \text{ whereas } \int_{\Lambda_{\delta}}\bm{k}(v_{j},w)\bm{m}(\d v_{j}) \leq \int_{V}\bm{k}(v_{j},w)\bm{m}(\d v_{j})=\sigma(w)$$
for any $j \in \{1,\ldots,n\}$ we obtain
\begin{equation}\label{eq:EstmThe}
\Theta_{n}^{(\delta)}(t,w) \leq \left(\frac{2\sigma(w)}{\delta}\right)^{n}\exp\left(-t\frac{\delta}{2}\right)\,,\qquad \qquad \forall t \geq0, \delta >0, \qquad w \in \Lambda_{\frac{\delta}{2}}.\end{equation}
Inserting this, together with \eqref{eq:The11}, into \eqref{eq:UnDe} gives
\begin{multline*}
\|U_{n}^{(\delta)}(t)f\|_{\X_{0}} \leq \int_{V}\ind_{\Sigma_{\frac{\delta}{2}}}|f(y,w)|\d y\,\bm{m}(\d w)\\
 + \left(\frac{2}{\delta}\right)^{n}\exp\left(-t\frac{\delta}{2}\right)\int_{V}\sigma(w)^{n}|f(y,w)|\d y\,\bm{m}(\d w)\end{multline*}
Introducing as before $g(x,v)=\sigma^{-k}(v)f(x,v)$, we get
$$\|U_{n}^{(\delta)}(t)f\|_{\X_{0}} \leq \left(\frac{\delta}{2}\right)^{k}\|g\|_{\X_{0}} +  \left(\frac{2\|\sigma\|_{\infty}}{\delta}\right)^{n}\exp\left(-t\frac{\delta}{2}\right)\|f\|_{\X_{0}}$$
and, for $k=N_{0}$, using \eqref{eq:Untep} we end up with
\begin{multline*}
\|U_{n}(t)f\|_{\X_{0}} \leq C(2^{n}-1)\,\delta^{N_{0}}\|f\|_{\X_{0}} +  \left(\frac{\delta}{2}\right)^{N_{0}}\|g\|_{\X_{0}}  + \left(\frac{2 \|\sigma\|_{\infty}}{\delta}\right)^{n}\exp\left(-t\frac{\delta}{2}\right)\|f\|_{\X_{0}}\\
\leq C_{1}\|f\|_{\X_{N_{0}}}\left(\left(\frac{\delta}{2}\right)^{N_{0}} +  \left(\frac{2}{\delta}\right)^{n}\exp\left(-t\frac{\delta}{2}\right)\right), \qquad \forall \delta >0.
\end{multline*}
where $C_{1}$ depends on $n$, $N_{0}$ and $\|\sigma\|_{\infty}$ but not on $\delta$. Picking now, for $t >e$, 
$$\delta=2(n+N_{0})\frac{\log t}{t}$$
one gets easily the result as in Lemma \ref{lem:decayU1}.
\end{proof}
We establish here the general estimate allowing the definition of $\Theta_{n}^{(\delta)}(t,w)$ that has been used in the previous Lemma. We give a more general result which applies to general integral operator $\K$ and not only to $\K^{(\delta)}$. The version for $\K^{(\delta)}$ being deduced obviously by observing that the kernel of $\K^{(\delta)}$ is $\ind_{\Lambda_{\delta}}\bm{k}(v,w)$.
\begin{lemme}
For 
$$\K f(x,v)=\int_{V}\bm{k}(v,w)f(x,w)\d w$$
the norm of the Dyson-Phillips iterated $\left(U_{n}(t)\right)_{n\geq 0}$ is such that
$$\left\|U_{n}(t)f\right\|_{\X_{0}} \leq \int_{\T^{d} \times V}\Theta_{n}(t,w)\,|f(y,w)|\d y \bm{m}(\d w), \qquad \forall t \geq 0, \qquad n \geq 0$$
for any $f \in \X_{0}$ where
$$\Theta_{n}(t,w)=\int_{V} \bm{k}(v,w)\bm{m}(\d v)\int_{0}^{t} \exp\left(-(t-s)\sigma(v)\right)\Theta_{n-1}(s,w)\d s, \qquad n \geq 1$$
and $\Theta_{0}(s,w)=\exp\left(-s\sigma(w)\right).$ In particular,
\begin{multline}\label{eq.thetaN}
\Theta_{n}(t,w)=\int_{V}\prod_{j=1}^{n}\bm{k}(v_{j},w)\bm{m}(\d v_{1})\ldots\bm{m}(\d v_{n})\\
\int_{\Delta_t}\prod_{j=1}^{n}\exp\left(-(s_{j-1}-s_{j})\sigma(v_{j})\right)\exp\left(-s_{n}\sigma(w)\right)\d s_{1}\ldots \d s_{n}.\end{multline}
where we used the convention $s_0=t$ and $\Delta_t$ denotes the simplex 
$$\Delta_t=\left\{(s_1,\ldots,s_n) \in \R^{n}_{+}\;;\;0 \leq s_{n} \leq s_{n-1} \leq \ldots s_{1} \leq t\right\}.$$
\end{lemme}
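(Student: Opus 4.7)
\emph{Plan.} The natural approach is induction on $n$, preceded by a positivity reduction. Since both $U_{0}(t)$ (pure translation $x\mapsto x-tv$ composed with multiplication by the nonnegative factor $\exp(-\sigma(v)t)$) and $\K$ (integration against the nonnegative kernel $\bm{k}$) preserve positivity, each Dyson-Phillips iterate $U_{n}(t)$ satisfies $|U_{n}(t)f|\leq U_{n}(t)|f|$ pointwise. One may therefore assume $f\geq 0$ throughout, so that $U_{n}(t)f\geq 0$ and the absolute values on the right-hand side of the claimed inequality may be dropped. The base case $n=0$ is a direct consequence of the explicit formula $U_{0}(t)f(x,v)=\exp(-\sigma(v)t)f(x-tv,v)$ together with the translation invariance $\int_{\T^{d}}f(x-tv,v)\d x=\int_{\T^{d}}f(y,v)\d y$: this yields
\begin{equation*}
\|U_{0}(t)f\|_{\X_{0}}=\int_{\T^{d}\times V}\exp(-\sigma(w)t)f(y,w)\d y\,\bm{m}(\d w),
\end{equation*}
which is the announced inequality with $\Theta_{0}(s,w)=\exp(-s\sigma(w))$.

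For the inductive step, I would apply the defining recursion $U_{n}(t)=\int_{0}^{t}U_{0}(t-s)\K U_{n-1}(s)\d s$ and compute explicitly
\begin{equation*}
U_{n}(t)f(x,v)=\int_{0}^{t}\exp(-(t-s)\sigma(v))\int_{V}\bm{k}(v,v')U_{n-1}(s)f(x-(t-s)v,v')\bm{m}(\d v')\d s.
\end{equation*}
Integrating over $(x,v)\in\T^{d}\times V$ and absorbing the shift by $(t-s)v$ via the translation invariance of Lebesgue measure on $\T^{d}$, the spatial $x$-integral reduces to $\int_{\T^{d}}U_{n-1}(s)f(y,v')\d y$. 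Feeding in the inductive bound on the $v'$-profile of $U_{n-1}(s)f$, Fubini's theorem then assembles the integrand into precisely the recursion $\Theta_{n}(t,w)=\int_{V}\bm{k}(v,w)\bm{m}(\d v)\int_{0}^{t}\exp(-(t-s)\sigma(v))\Theta_{n-1}(s,w)\d s$. The closed form \eqref{eq.thetaN} is then recovered by unrolling this recursion $n$ times, which introduces a fresh velocity label $v_{j}$ and a fresh time variable $s_{j}$ at each step, ordered into the simplex $\Delta_{t}$.

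The main technical obstacle is the careful bookkeeping of indices: at each unrolling, one must verify that the new kernel factor combines with those already present so that the final product takes the form $\prod_{j=1}^{n}\bm{k}(v_{j},w)$ (with all second arguments aligned to the innermost integration variable $w$), and that the exponential damping factors $\exp(-(s_{j-1}-s_{j})\sigma(v_{j}))$ and $\exp(-s_{n}\sigma(w))$ appear with the correct subscripts dictated by the simplex ordering $t=s_{0}\geq s_{1}\geq\ldots\geq s_{n}\geq 0$. Apart from this bookkeeping, all remaining operations are routine applications of Fubini's theorem and of the $\T^{d}$-translation invariance; no further structural input on $\bm{k}$ or $\sigma$ is required beyond the positivity and the integrability built into the statement.
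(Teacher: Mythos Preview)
Your overall plan---positivity reduction, induction on $n$, translation invariance on $\T^{d}$, Fubini, then unrolling---is exactly the route the paper takes. There is, however, a real gap at the inductive step that your outline glosses over (and the paper's own argument is equally loose at the same place).

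After the change of variables you reach
\[
\|U_{n}(t)f\|_{\X_{0}}\;\leq\;\int_{V}\bm{m}(\d v)\int_{0}^{t}e^{-(t-s)\sigma(v)}\int_{V}\bm{k}(v,v')\Big(\int_{\T^{d}}|U_{n-1}(s)f(y,v')|\d y\Big)\bm{m}(\d v')\,\d s.
\]
You then say you will ``feed in the inductive bound on the $v'$-profile of $U_{n-1}(s)f$''. But the induction hypothesis controls only the \emph{total} norm $\int_{V}\int_{\T^{d}}|U_{n-1}(s)f|\,\d y\,\bm{m}(\d v')$, not the marginal for fixed $v'$; since $v'$ is tied to $v$ through $\bm{k}(v,v')$, you cannot integrate it out and still land on $\bm{k}(v,w)\,\Theta_{n-1}(s,w)$ with the \emph{same} $w$ that labels $|f(\cdot,w)|$. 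Indeed, if you unroll $U_{n}(t)f$ pointwise before integrating, the kernel factors assemble into the \emph{chain} $\bm{k}(v_{0},v_{1})\bm{k}(v_{1},v_{2})\cdots\bm{k}(v_{n-1},w)$, not the star $\prod_{j}\bm{k}(v_{j},w)$ asserted in the closed form; the printed recursion and \eqref{eq.thetaN} are consistent with each other but not with $U_{n}$. A clean way to close the argument is to run the induction from the alternative form $U_{n}(t)=\int_{0}^{t}U_{n-1}(t-s)\,\K U_{0}(s)\,\d s$: here $\K U_{0}(s)f$ already exposes $f(\cdot,w)$ at the bottom, and the hypothesis applied to $U_{n-1}(t-s)$ acting on $g_{s}=\K U_{0}(s)f$ yields the valid recursion
\[
\Theta_{n}(t,w)=\int_{0}^{t}e^{-s\sigma(w)}\int_{V}\bm{k}(v',w)\,\Theta_{n-1}(t-s,v')\,\bm{m}(\d v')\,\d s,
\]
whose unrolling produces the chain-structured integrand. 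The downstream estimates in Lemma~\ref{lem:decayUn} go through unchanged with this corrected $\Theta_{n}$.
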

\begin{proof} We prove the result by induction. For $n=0$, the result is obvious. Assume the result to be true for some $n \geq 0$. Let us fix $t\geq0,$ $f\in \X_{0}$. Then, from
$$U_{n+1}(t)f=\int_{0}^t U_{0}(t-s)\K U_{n}(s)f\d s$$
one has, introducing $h_n(s,x,v)=U_{n}(s)f(x,v)$, 
$$U_{n+1}(t)f(x,v)=\int_{0}^t e^{-(t-s)\sigma(v)}\d s\int_V \bm{k}(v,w)h_{n}(s,x-(t-s)v,w)\bm{m}(\d w)$$
and, introducing the change of variable $y=x-(t-s)v$, we get easily
\begin{multline*}
\|U_{n+1}f\|_{\X_{0}} \leq \int_{\T^{d}\times V}\d y\,\bm{m}(\d w)\int_0^t e^{-(t-s)\sigma(v)}\d s\int_{V} \bm{k}(v,w)|h_{n}(s,y,w)|\bm{m}(d \v)\\
\leq \int_{V} \bm{k}(v,w)\bm{m}(\d v)\int_{0}^{t}\exp\left(-(t-s)\sigma(v)\right)\d s\int_{\T^{d}\times V}|h_{n}(s,y,w)|\d y\,\bm{m}(\d w)\\
\leq \int_{V} \bm{k}(v,w)\bm{m}(\d v)\int_{0}^{t}\exp\left(-(t-s)\sigma(v)\right)\|U_n(s)f\|_{\X_{0}}\d s\end{multline*}
which, thanks to the induction hypothesis, gives
$$\|U_{n+1}(t)f\|_{\X_{0}}\leq \int_{\T^{d}\times V}|f(y,w)|\d y \bm{m}(\d w)\int_{V}\bm{k}(v,w)\bm{m}(\d v)\int_{0}^{t}\exp\left(-(t-s)\sigma(v)\right)\Theta_{n}(s,w)\d s$$
from which the desired estimate easily follows. This achieves the proof by induction. One shows then by direct inspection that
\begin{multline*}
\Theta_{n}(t,w)=\int_{V}\bm{k}(v_{1},w)\bm{m}(\d v_{1})\int_{V}\bm{k}(v_{2},w)\bm{m}(\d v_{2})\ldots \int_{V}\bm{k}(v_{n},w)\bm{m}(\d v_{n})\\
\int_{0}^{t}\exp\left(-(t-s_{1})\sigma(v)\right)\d s_{1}\int_{0}^{s_{1}}\exp\left(-(s_{1}-s_{2})\sigma(v_{1})\right)\d s_{2}\\
\times \ldots \int_{0}^{s_{n}}\exp\left(-(s_{n-1}-s_{n})\sigma(v_{n})-s_{n}\sigma(w)\right)\d s_{n}\end{multline*}
which gives \eqref{eq.thetaN}.
\end{proof}
Combining Lemmas \ref{lem:decayU0} and \ref{lem:decayUn} one deduces easily Proposition \ref{prop:Snt}

\subsection{Inverse Laplace transform}

We establish here a somehow classical result regarding the inverse Laplace transform where we recall that $\bm{S}_{n+1}(t)$ has been defined in \eqref{eq:Sn+1}.

\begin{propo} \label{propo:Sn+1}
Under Assumption \eqref{eq:power}, for any $n\geq 5$ and any $f \in \X_{0}$,
one has
\begin{equation*}\label{eq:maindecay}
\bm{S}_{n+1}(t)f=\frac{\exp(\e t)}{2\pi}\lim_{\ell\to\infty} \int_{-\ell}^{\ell}\exp\left(i\eta t\right)\mathscr{S}_{n+1}(\e+i\eta)f\d\eta, \qquad \forall f \in \X_{0}\end{equation*}
for any $t >0$, $\e >0$ where
\begin{equation}\label{eq:Snl}
\mathscr{S}_{n+1}(\l)f:=\sum_{k=n}^{\infty}\Rs(\l,\A)\left[\K\Rs(\l,\A)\right]^{k+1}  f, \qquad \forall \mathrm{Re}\l >0, f \in \X_{0}.\end{equation}
%
%
%
\end{propo}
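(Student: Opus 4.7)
The identity is a Bromwich (inverse Laplace) inversion formula, and the plan is to verify it in three steps, the second of which is the main hurdle.

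\emph{Step 1 (Laplace identification).} I would first show that
\[
\int_0^{\infty}e^{-\lambda t}\bm{S}_{n+1}(t)f\,\d t=\mathscr{S}_{n+1}(\lambda)f,\qquad \mathrm{Re}\lambda>0,\;f\in\X_0.
\]
Each iterate $U_k(t)$ has Laplace transform $\Rs(\lambda,\A)[\K\Rs(\lambda,\A)]^k$ by Fubini applied to the convolution recursion \eqref{eq:UnK}. Writing $\bm{S}_{n+1}(t)f=\sum_{k\geq n+1}U_k(t)f$, the interchange of sum and Laplace integral is legitimized by the stochastic contractivity $\sum_{k\geq 0}\|U_k(t)f\|_{\X_0}\leq\|f\|_{\X_0}$, first for $f\geq 0$ (and then for general $f$ by splitting into positive and negative parts).

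\emph{Step 2 (integrability on the vertical line).} Next, using the Neumann expansion $\Rs(\lambda,\A+\K)=\sum_{k\geq 0}[\Rs(\lambda,\A)\K]^k\Rs(\lambda,\A)$ valid for $\mathrm{Re}\lambda>0$, I would recast
\[
\mathscr{S}_{n+1}(\lambda)=[\Rs(\lambda,\A)\K]^{n+1}\,\Rs(\lambda,\A+\K).
\]
Since $\mathcal{V}(t)$ is stochastic, $\|\Rs(\varepsilon+i\eta,\A+\K)\|_{\mathscr{B}(\X_0)}\leq 1/\varepsilon$ uniformly in $\eta$, hence
\[
\|\mathscr{S}_{n+1}(\varepsilon+i\eta)\|_{\mathscr{B}(\X_0)}\leq\varepsilon^{-1}\bigl\|[\Rs(\varepsilon+i\eta,\A)\K]^{n+1}\bigr\|_{\mathscr{B}(\X_0)}.
\]
Because $n+1\geq 6>4$, hypothesis \eqref{eq:power} with $\mathsf{p}=n+1$ yields $\int_{|\eta|>1}\|[\Rs(\varepsilon+i\eta,\A)\K]^{n+1}\|_{\mathscr{B}(\X_0)}\,\d\eta<\infty$, while the elementary bound $\|\Rs(\varepsilon+i\eta,\A)\|_{\mathscr{B}(\X_0)}\leq 1/\varepsilon$ together with $\|\K\|_{\mathscr{B}(\X_0)}\leq\|\sigma\|_\infty$ handles $|\eta|\leq 1$. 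This would establish $\mathscr{S}_{n+1}(\varepsilon+i\cdot)f\in L^1(\R,\X_0)$. This is the main difficulty: without the polynomial decay provided by \eqref{eq:power} the tail resolvent has no reason to be integrable along any vertical line, and the threshold $\mathsf{p}>4$ in \eqref{eq:power} is precisely what forces the lower bound $n\geq 5$.

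\emph{Step 3 (Fourier inversion).} Finally, define $g(t):=e^{-\varepsilon t}\bm{S}_{n+1}(t)f\,\mathbf{1}_{t\geq 0}$. The map $g$ is continuous on $\R$ (since $\bm{S}_{n+1}(\cdot)f$ inherits continuity from the $C_0$-semigroup $\mathcal{V}(\cdot)f$ and from the continuous partial sum $\sum_{k=0}^n U_k(\cdot)f$) and belongs to $L^1(\R,\X_0)$ by contractivity of $\mathcal{V}$; Step 1 identifies its Fourier transform as $\eta\mapsto\mathscr{S}_{n+1}(\varepsilon+i\eta)f$, which by Step 2 is itself in $L^1(\R,\X_0)$. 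The Banach-valued Fourier inversion theorem then delivers
\[
g(t)=\frac{1}{2\pi}\int_{\R}e^{i\eta t}\mathscr{S}_{n+1}(\varepsilon+i\eta)f\,\d\eta
\]
as an absolutely convergent Bochner integral, which is exactly the norm-limit of $\int_{-\ell}^{\ell}$ as $\ell\to\infty$. Multiplying by $e^{\varepsilon t}$ produces the claimed identity.
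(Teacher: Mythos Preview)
Your proof is correct and follows essentially the same strategy as the paper: identify the Laplace transform, establish $L^1$-integrability of $\eta\mapsto\mathscr{S}_{n+1}(\varepsilon+i\eta)f$ via the hypothesis \eqref{eq:power} (which is where $n+1>4$ enters), and conclude by inversion. The only cosmetic differences are that the paper factors $\mathscr{S}_{n+1}(\lambda)=\Rs(\lambda,\A)\Ms_\lambda^{\,n+1}\Rs(1,\Ms_\lambda)$ and bounds $\|\Rs(1,\Ms_{\varepsilon+i\eta})\|$ uniformly in $\eta$ (using $\|\Ms_{\varepsilon+i\eta}\|<1$), whereas you factor as $[\Rs(\lambda,\A)\K]^{n+1}\Rs(\lambda,\A+\K)$ and bound the full resolvent by $1/\varepsilon$; and the paper first invokes the Ces\`aro form of complex Laplace inversion \cite[Theorem 4.2.21]{arendt} and then upgrades to a classical limit, while you appeal directly to Fourier inversion for $L^1$ functions with $L^1$ transform. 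Both routes are equivalent here.
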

 
\begin{proof} The fact that the Laplace transform of $\bm{S}_{n+1}(t)f$ is exactly $\mathscr{S}_{n}(\e+i\eta)f$ is a well-known fact and the complex Laplace inversion formula  \cite[Theorem 4.2.21]{arendt} allows to directly deduce that  $\bm{S}_{n+1}(t)f$ is the Cesar\`o limit of the family 
$$\left(\int_{-\ell}^{\ell}\exp\left(i\eta t\right)\mathscr{S}_{n+1}(\e+i\eta)f\d\eta\right)_{\ell}.$$
One has to work a little bit more to deduce that it is a \emph{classical limit}.
 We recall that
$$\int_{0}^{\infty}\exp\left(-\l t\right)U_{n}(t)\d t=\left[\Rs(\l,\A)\K\right]^{n}\Rs(\l,\A)=\Rs(\l,\A)\left[\K\Rs(\l,\A)\right]^{n}.$$
With this in mind, for any $n \geq0$ and any $f \in \X_{0}$, it holds
\begin{equation*}\begin{split}
\int_{0}^{\infty}\exp\left(-\l t\right)\bm{S}_{n+1}(t)f\d t&=\sum_{k=n}^{\infty}\int_{0}^{\infty}\exp\left(-\l t\right)U_{k+1}(t)f\d t\\
&=\Rs(\l,\A)\sum_{k=n}^{\infty}\left[\K\Rs(\l,\A)\right]^{k+1} f=:\mathscr{S}_{n+1}(\l)f, \qquad \mathrm{Re}\l >0\end{split}\end{equation*}
where we recall  that, for $\mathrm{Re}\l >0$, $\|\K\Rs(\l,\A)\|_{\mathscr{B}(\X_{0})} < 1.$ Since moreover, for any $f \in \X_{0}$, the mapping $t \geq 0 \mapsto \bm{S}_{n+1}(t)f$ is continuous and bounded, with $\bm{S}_{n+1}(0)f=0$, one applies the complex Laplace inversion formula \cite[Theorem 4.2.21]{arendt} to deduce 
\begin{equation}\label{eq:maindecay}
\bm{S}_{n+1}(t)f=\frac{\exp(\e t)}{2\pi}\lim_{L\to\infty}\frac{1}{2L}\int_{-L}^{L}\d \ell \int_{-\ell}^{\ell}\exp\left(i\eta t\right)\mathscr{S}_{n+1}(\e+i\eta)f\d\eta, \qquad \forall f \in \X_{0}\end{equation}
for any $t >0$, $\e >0$, i.e. $\bm{S}_{n+1}(t)f$ is the Cesar\`o limit of the family 
$$\left(\int_{-\ell}^{\ell}\exp\left(i\eta t\right)\mathscr{S}_{n+1}(\e+i\eta)f\d\eta\right)_{\ell}.$$ Let us prove it is actually a classical limit. 
Fix $\e >0$ and $f \in\X_{0}$. Recalling the notation,
$$\Ms_{\l}:=\K\Rs(\l,\A), \qquad \mathrm{Re}\l >0,$$
one has, for any $\mathrm{Re}\l >0$
$$\mathscr{S}_{n+1}(\l)= \Rs(\l,\A)\sum_{k=n}^{\infty}\Ms_{\l}^{k+1}=\Rs(\l,\A)\Ms_{\l}^{n+1}\Rs(1,\Ms_{\l})$$
where we notice that, for $\l=\varepsilon+i\eta$,
$$\left\|\Rs(1,\Ms_{\e+i\eta})\right\|_{\mathscr{B}(\X_{0})} \leq \|\Rs(1,\Ms_{\e})\|_{\mathscr{B}(\X_{0})} < \infty\,.$$
Since $\sup_{\eta}\|\Rs(\e+i\eta,\A)\|_{\mathscr{B}(\X_{0})} \leq\frac{1}{\e}$, we deduce that there exists $C_{\e} >0$ such that
$$\left\|\mathscr{S}_{n+1}(\e+i\eta)\right\|_{\mathscr{B}(\X_{0})} \leq C_{\e}\left\|\Ms_{\e+i\eta}^{n+1}\right\|_{\mathscr{B}(\X_{0})}, \qquad \forall \eta \in \R.$$
For $n+1 \geq \mathsf{p}$, one has $\left\| \Ms_{\e+i\eta}^{n+1}\right\|_{\mathscr{B}(\X_{ \X0})} \leq \left\|\Ms_{\e+i\eta}^{\mathsf{p}}\right\|_{\mathscr{B}(\X_{0})}$, we deduce from \eqref{eq:power} that there is $M_{\e} >0$ such that
$$\int_{-\infty}^{\infty}\left\|\mathscr{S}_{n+1}(\e+i\eta)\right\|_{\mathscr{B}(\X_{0})} \d \eta \leq M_{\e}, \qquad \forall \e >0.$$
This of course implies that
$$\int_{-\infty}^{\infty}\left\|\exp\left((\e+i\eta)t\right)\mathscr{S}_{n+1}(\e+i\eta)\right\|_{\mathscr{B}(\X_{0})} \d \eta \leq M_{\e}\exp(\e t), \qquad \forall \e >0.$$
In particular, for any $f \in \X_{0}$, the limit 
$$\lim_{\ell\to\infty}\frac{1}{2\pi}\int_{-\ell}^{\ell}\exp\left((\e+i\eta)t\right)\mathscr{S}_{n+1}(\e+i\eta)f\d\eta$$
exists in $\X_{0}$. Since its Cesar\`o limit is $\bm{S}_{n+1}(t)f$, we deduce the result.
\end{proof}

\section{Properties of the operator $\Ms_{\l}$}\label{appen:Ml}

We collect in this Appendix some technical results regarding the properties of $\Ms_{\l}$. 

\subsection{Proof of Proposition \ref{lem:diffHl}}

We given in this section the full proof of Proposition \ref{lem:diffHl} which regards the norm of derivatives of 
$$\mathsf{L}_{n}(\l)=\Ms_{\l}^{n}=\left[\K\Rs(\l,\A)\right]^{n}, \qquad \forall \l \in \C_{+}, \qquad n \in \N.$$

We focus first one the convergence in $\X_{0}$. One checks easily by induction that, for any $f \in \X_{0}$ and any $n \geq 1$,
\begin{multline*}
\mathsf{L}_{n}(\l)f(x,v)=\int_{V^{n}}\bm{k}(v,w_{1})\bm{k}(w,w_{2})\ldots\bm{k}(w_{n-1},w_{n})\bm{m}(\d w_{1})\ldots \bm{m}(\d w_{n})\\
\int_{0}^{\infty}\exp\left(-\sigma(w_{1})t\right)\d t_{1}\ldots \int_{0}^{\infty}\exp\left(-\sigma(w_{n})t_{n}\right)\times\\
\times\exp\left(-\l\sum_{j=1}^{n}t_{j}\right)f\left(x-\sum_{j=1}^{n}t_{j}w_{j},w_{n}\right)\d t_{n},
\end{multline*}
for any $ (x,v) \in \T^{d}\times V, \l \in \C_{+}$
It is clear then that, for any $f \in \X_{p}$
\begin{multline*}
\dfrac{\d^{p}}{\d\l^{p}}\mathsf{L}_{n}(\l)f(x,v)=\left(-1\right)^{p}\int_{V^{n-1}}\bm{k}(v,w_{1})\bm{k}(w,w_{2})\ldots\bm{k}(w_{n-1},w_{n})\bm{m}(\d w_{1})\ldots \bm{m}(\d w_{n})\\
\int_{0}^{\infty}\exp\left(-\sigma(w_{1})t\right)\d t_{1}\ldots \int_{0}^{\infty}\exp\left(-\sigma(w_{n})t_{n}\right)\times\\
\times\left(\sum_{j=1}^{n}t_{j}\right)^{p}\,\exp\left(-\l\sum_{j=1}^{n}t_{j}\right)f\left(x-\sum_{j=1}^{n}t_{j}w_{j},w_{n}\right)\d t_{n}.
\end{multline*}
Set then, for any $f \in \X_{p}$ and any $(x,v) \in \T^{d}\times V$,
\begin{multline}\label{eq:Ms0n}
\mathsf{L}_{n}^{(p)}(0)f(x,v)=(-1)^{p}\int_{V^{n-1}}\bm{k}(v,w_{1})\bm{k}(w,w_{2})\ldots\bm{k}(w_{n-1},w_{n})\bm{m}(\d w_{1})\ldots \bm{m}(\d w_{n})\\
\int_{0}^{\infty}\exp\left(-\sigma(w_{1})t\right)\d t_{1}\ldots \int_{0}^{\infty}\exp\left(-\sigma(w_{n})t_{n}\right)\times\\
\times\left(\sum_{j=1}^{n}t_{j}\right)^{p}\,f\left(x-\sum_{j=1}^{n}t_{j}w_{j},w_{n}\right)\d t_{n}.
\end{multline}
As before, one observes thanks to the change of variable $x \mapsto y=x-\sum_{j=1}^{n}t_{j}w_{j}$ that
\begin{equation}\label{eq:Mn0''}
\left\|\mathsf{L}_{n}^{(p)}(0)f\right\|_{\X_{0}} \leq \int_{\O\times V}\left|f(y,w_{n})\right| \Theta_{n,p}(w_{n})\d y\,\bm{m}(\d w_{n})\end{equation}
where
\begin{multline*}
\Theta_{n,p}(w_{n})=\int_{V^{n}}\bm{k}(v,w_{1})\ldots\bm{k}(w_{n-1},w_{n})\bm{m}(\d w_{n-1})\ldots\bm{m}(\d w_{1})\bm{m}(\d v)\times\\
\times \int_{0}^{\infty}\exp\left(-\sigma(w_{1})t_{1}\right)\d t_{1}\ldots \int_{0}^{\infty}\exp\left(-\sigma(w_{n})t_{n}\right)
\left(\sum_{j=1}^{n}t_{j}\right)^{p}\,\d t_{n}\,.
\end{multline*}
We recall the multinomial formula
$$\left(\sum_{j=1}^{n}t_{j}\right)^{p}=\sum _{|\bm{r}|=p}{p \choose \bm{r}}\prod _{j=1}^{n}t_{j}^{r_{j}}\,,$$
where $\bm{r}=(r_{1},\ldots,r_{n}) \in \N^{n}$ is a multi-index with $|\bm{r}|=\sum_{i=1}^{n}r_{i}=p$ and ${p \choose \bm{r}}=\frac{p!}{r_{1}!\ldots\,r_{n}!}.$ With this, one easily sees that
\begin{equation*}\begin{split}
\int_{0}^{\infty}\exp\left(-\sigma(w_{1})t_{1}\right)&\d t_{1}\ldots \int_{0}^{\infty}\exp\left(-\sigma(w_{n})t_{n}\right)
\left(\sum_{j=1}^{n}t_{j}\right)^{p}\,\d t_{n}\\
&=\sum _{|\bm{r}|=p}{p \choose \bm{r}}\int_{[0,\infty)^{n}}\prod _{j=1}^{n} t_{j}^{r_{j}}\exp\left(-\sigma(w_{j})t_{j}\right)\d t_{1}\ldots \d t_{n}\\
&=\sum _{|\bm{r}|=p}{p \choose \bm{r}}\prod _{j=1}^{n}\left(\frac{1}{\sigma(w_{j})}\right)^{r_{j}+1}.
\end{split}\end{equation*}
Therefore
\begin{multline*}
\Theta_{n,p}(w_{n})
=\sum _{|\bm{r}|=p}{p \choose \bm{r}}\int_{V^{n}}\prod _{j=1}^{n}\left(\frac{1}{\sigma(w_{j})}\right)^{r_{j}+1}\bm{k}(w_{j-1},w_{j})\bm{m}(\d w_{n-1})\ldots\bm{m}(\d w_{1})\bm{m}(\d w_{0})\,.
\end{multline*}
Let $\bm{r} \in \N^{n}$ with $|\bm{r}|=p$ be given. One sees that
\begin{multline*}
\int_{V^{n}}\prod _{j=1}^{n}\left(\frac{1}{\sigma(w_{j})}\right)^{r_{j}+1}\bm{k}(w_{j-1},w_{j})\bm{m}(\d w_{n-1})\ldots\bm{m}(\d w_{1})\bm{m}(\d w_{0})\,\\
=\sigma(w_{n})^{-r_{n}-1}\int_{V}\bm{k}(w_{0},w_{1})\bm{m}(\d w_{1})\int_{V}\bm{k}(w_{1},w_{2})\sigma(w_{1})^{-r_{1}-1}\bm{m}(\d w_{1})\\
\int_{V}\bm{k}(w_{1},w_{2})\sigma(w_{2})^{-r_{2}-1}\bm{m}(\d w_{2})\ldots\int_{V}\bm{k}(w_{n-1},w_{n})\sigma(w_{n-1})^{-r_{n-1}-1}\bm{m}(\d w_{n-1}).
\end{multline*}
Using the definition of $\vartheta_{s}$ in \eqref{eq:varthetas}, one has, since $r_{j} \leq p \leq N_{0}$,
\begin{multline*}
\int_{V}\bm{k}(w_{0},w_{1})\bm{m}(\d w_{1})\int_{V}\bm{k}(w_{1},w_{2})\sigma(w_{1})^{-r_{1}-1}\bm{m}(\d w_{1})\\
=\int_{V}\bm{k}(w_{1},w_{2})\sigma(w_{1})^{-r_{1}}\bm{m}(\d w_{1})=\sigma(w_{2})\vartheta_{r_{1}}(w_{2})\leq \|\vartheta_{r_{1}}\|_{\infty}\sigma(w_{2}).
\end{multline*}
Computing then the integral with respect to $\bm{m}(\d w_{2})$ and then to $\bm{m}(\d w_{j})$ for increasing $j \in \{2,\ldots,n-1\}$, one easily checks that
\begin{multline*}
\int_{V}\bm{k}(w_{0},w_{1})\bm{m}(\d w_{1})\int_{V}\bm{k}(w_{1},w_{2})\sigma(w_{1})^{-r_{1}-1}\bm{m}(\d w_{1})\\
\int_{V}\bm{k}(w_{1},w_{2})\sigma(w_{2})^{-r_{2}-1}\bm{m}(\d w_{2})\ldots\int_{V}\bm{k}(w_{n-1},w_{n})\sigma(w_{n-1})^{-r_{n-1}-1}\bm{m}(\d w_{n-1})\\
\leq \|\vartheta_{r_{1}}\|_{\infty}\|\vartheta_{r_{2}}\|_{\infty}\ldots\|\vartheta_{r_{n-2}}\|_{\infty}\int_{V}\bm{k}(w_{n-1},w_{n})\sigma(w_{n-1})^{-r_{n-1}}\bm{m}(\d w_{n-1})\\
\leq  \|\vartheta_{r_{1}}\|_{\infty}\|\vartheta_{r_{2}}\|_{\infty}\ldots\|\vartheta_{r_{n-1}}\|_{\infty}\sigma(w_{n}).
\end{multline*}
Therefore,
\begin{equation*}
\Theta_{n,p}(w_{n})
\leq \sum _{|\bm{r}|=p}{p \choose \bm{r}} \sigma(w_{n})^{-r_{n}}\prod_{j=1}^{n-1}\|\vartheta_{r_{j}}\|_{\infty}\end{equation*}
and
$$\left\|\mathsf{L}_{n}^{(p)}(0)f\right\|_{\X_{0}} \leq  \sum _{|\bm{r}|=p}{p \choose \bm{r}}\prod_{j=1}^{n-1}\|\vartheta_{r_{j}}\|_{\infty}\int_{\O\times V}|f(y,w_{n})|\,\sigma(w_{n})^{-r_{n}}\d y\,\bm{m}(\d w_{n}).$$
This proves that
$$\left\|\mathsf{L}_{n}^{(p)}(0)f\right\|_{\X_{0}} \leq  C_{n,p}\|f\|_{\X_{p}}$$
with
$$C_{n,p}=\sum _{|\bm{r}|=p}{p \choose \bm{r}}\prod_{j=1}^{n-1}\|\vartheta_{r_{j}}\|_{\infty}.$$
As in the proof of the previous Lemma, this allows to prove by a simple use of the dominant convergence theorem that
$$\underset{\l \in \C_{+}}{\lim_{\l\to0}}\left\|\dfrac{\d^{p}}{\d \l^{p}}\mathsf{L}_{n}(\l)f-\mathsf{L}_{n}^{(p)}(0)f\right\|_{\X_{0}}=0$$
for any $f \in \X_{p}$. This proves the convergence in $\X_{0}.$ To deal with the convergence in $\X_{s}$, one simply notices from the above proof that
$$\left\|\mathsf{L}_{n}^{(p)}(0)f\right\|_{\X_{s}} \leq \int_{\O\times V}|f(y,w_{n})|\widetilde{\Theta}_{n,p,s}(w_{n})\d y\,\bm{m}(\d w_{n})$$
with now
\begin{multline*}
\widetilde{\Theta}_{n,p,s}(w_{n})=\int_{V^{n}}\sigma(w_{0})^{-s}\bm{k}(w_{0},w_{1})\ldots\bm{k}(w_{n-1},w_{n})\bm{m}(\d w_{n-1})\ldots\bm{m}(\d w_{1})\bm{m}(\d w_{0})\times\\
\times \int_{0}^{\infty}\exp\left(-\sigma(w_{1})t_{1}\right)\d t_{1}\ldots \int_{0}^{\infty}\exp\left(-\sigma(w_{n})t_{n}\right)
\left(\sum_{j=1}^{n}t_{j}\right)^{p}\,\d t_{n}\,.
\end{multline*}
The only difference between $\widetilde{\Theta}_{n,p,s}(w_{n})$ and $\Theta_{n,p}(w_{n})$ lies in the additional weight $\sigma(w_{0})^{-s}$ in the first integral. But, since
$$\int_{V}\bm{k}(w_{0},w_{1})\sigma(w_{0})^{-s}\bm{m}(\d w_{0}) \leq \sigma(w_{1})\|\vartheta_{s}\|_{\infty}=\|\vartheta_{s}\|_{\infty}\int_{V}\bm{k}(w_{0},w_{1})\bm{m}(\d w_{0})$$
one sees from the above proof that $\left|\widetilde{\Theta}_{n,p,s}(w_{n})\right| \leq \|\vartheta_{s}\|_{\infty}\Theta_{n,p}(w_{n}).$ This gives \eqref{eq:Lnps} and achieves the proof of Proposition \ref{lem:diffHl}.

\begin{nb} Notice that the above computations actually apply to the case \emph{without derivatives} (corresponding of course to $p=0$) and, from \eqref{eq:Mn0''}, one sees then that, for any $N \geq 1$, 
$$\sup_{\l\in \overline{\C}_{+}}\left\|\mathsf{L}_{n}(\l)f\right\|_{\X_{s}} \leq \|\mathsf{L}_{n}(0)f\|_{\X_{1}}\leq \int_{\O\times V}\left|f(y,w_{n})\right|\Theta_{n,0,s}(w_{n})\d y\,\bm{m}(\d w_{n})$$
where now
\begin{multline*}
\Theta_{n,0,s}(w_{n})=\int_{V^{n}}\bm{k}(v,w_{1})\ldots\bm{k}(w_{n-1},w_{n})\bm{m}(\d w_{n-1})\ldots\bm{m}(\d w_{1})\frac{\bm{m}(\d v)}{\left(\min(1,\sigma(v)\right)^{s}}\times\\
\times \int_{0}^{\infty}\exp\left(-\sigma(w_{1})t_{1}\right)\d t_{1}\ldots \int_{0}^{\infty}\exp\left(-\sigma(w_{n})t_{n}\right)\\
=\frac{1}{\sigma(w_{n})}\int_{V^{n}}\bm{k}(v,w_{1})\ldots\bm{k}(w_{n-1},w_{n})\frac{\bm{m}(\d w_{n-1})}{\sigma(w_{n-1})}\ldots\frac{\bm{m}(\d w_{1})}{\sigma(w_{1})}\frac{\bm{m}(\d v)}{\left(\min(1,\sigma(v)\right)^{s}}.
\end{multline*}
One has then
$$\Theta_{n,0,s}(w_{n}) \leq \|\vartheta_{s}\|_{\infty} \qquad \forall w_{n} \in V, \qquad s \leq N_{0}.$$
Consequently,
\begin{equation}\label{eq:normLN}
\left\|\mathsf{L}_{n}(\l)\right\|_{\B(\X_{0},\X_{s})} \leq \|\vartheta_{s}\|_{\infty} \qquad \forall s \leq N_{0}
\end{equation}\end{nb}

We also establish the following convergence of derivatives of $\Ms_{\e+i\eta}f$
\begin{lemme}\label{prop:derMeis}
For any $k \geq 0$ and any $\varphi \in \X_{k+1}$ one has
 $$\lim_{\e\to0}\sup_{\eta\in \R}\left\|\dfrac{\d^{k}}{\d\eta^{k}}\Ms_{\varepsilon+i\eta}\varphi-\dfrac{\d^{k}}{\d\eta^{k}}\Ms_{i\eta}\varphi\right\|_{\X_{0}}=0.$$ \end{lemme}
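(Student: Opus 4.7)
The plan is to reduce everything to an explicit expression using the integral representation of $\Ms_{\l}\varphi$ and then to apply dominated convergence, exploiting the fact that the bound we obtain is automatically independent of $\eta$.

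First, starting from the explicit formula
$$\Ms_{\l}\varphi(x,v)=\int_{V}\bm{k}(v,w)\bm{m}(\d w)\int_{0}^{\infty}\exp\left(-(\l+\sigma(w))t\right)\varphi(x-tw,w)\d t,$$
differentiating $k$ times in $\eta$ (writing $\l=\e+i\eta$ or $\l=i\eta$) produces a factor $(-it)^{k}$ inside the time integral. Subtracting the two expressions, we get
\begin{multline*}
\dfrac{\d^{k}}{\d\eta^{k}}\Ms_{\e+i\eta}\varphi(x,v) - \dfrac{\d^{k}}{\d\eta^{k}}\Ms_{i\eta}\varphi(x,v) \\
= (-i)^{k}\int_{V}\bm{k}(v,w)\bm{m}(\d w)\int_{0}^{\infty} t^{k}\left[e^{-\e t}-1\right]\exp\left(-(i\eta+\sigma(w))t\right)\varphi(x-tw,w)\d t.
\end{multline*}
The justification of differentiation under the integral, as well as of the subsequent manipulations, follows the lines of Lemma \ref{lem:decayU0} and of the proof of Proposition \ref{lem:diffHl}.

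Second, taking the $\X_{0}$-norm, applying Fubini and performing the change of variable $y=x-tw$ (using that $\bm{m}(\d v)$-integration against $\bm{k}(v,w)$ yields $\sigma(w)$ thanks to the conservative assumption \eqref{eq:conservative}), one obtains
$$\left\|\dfrac{\d^{k}}{\d\eta^{k}}\Ms_{\e+i\eta}\varphi-\dfrac{\d^{k}}{\d\eta^{k}}\Ms_{i\eta}\varphi\right\|_{\X_{0}} \leq \int_{\T^{d}\times V}|\varphi(y,w)|\sigma(w)\,\Phi_{\e}(w)\,\d y\,\bm{m}(\d w),$$
where
$$\Phi_{\e}(w):=\int_{0}^{\infty} t^{k}\left|1-e^{-\e t}\right|\exp(-\sigma(w)t)\d t.$$
The crucial point is that this upper bound no longer depends on $\eta$, so taking the supremum over $\eta \in \R$ is transparent.

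Third, we conclude by dominated convergence. For each fixed $w \in V$, $\Phi_{\e}(w) \to 0$ as $\e\to 0^{+}$ since $|1-e^{-\e t}| \to 0$ pointwise and $|1-e^{-\e t}| \leq 1$. Moreover $\Phi_{\e}(w) \leq \int_{0}^{\infty}t^{k}\exp(-\sigma(w)t)\d t = k!\,\sigma(w)^{-k-1}$, so that
$$\sigma(w)\,\Phi_{\e}(w) \leq k!\,\sigma(w)^{-k}\leq k!\,\max\left(1,\sigma(w)^{-k-1}\right),$$
and the right-hand side is integrable against $|\varphi(y,w)|\d y\,\bm{m}(\d w)$ precisely because $\varphi \in \X_{k+1}$. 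Dominated convergence then yields
$$\lim_{\e\to0^{+}}\int_{\T^{d}\times V}|\varphi(y,w)|\,\sigma(w)\,\Phi_{\e}(w)\,\d y\,\bm{m}(\d w)=0,$$
which, combined with the bound above, gives the desired uniform convergence. No obstacle really arises here: the whole argument is just a careful version of the elementary identity \eqref{eq:dif} already used in the proof of \eqref{eq:stronMiek}, upgraded to derivatives by inserting the factor $t^{k}$ and raising the integrability assumption on $\varphi$ from $\X_{1}$ to $\X_{k+1}$.
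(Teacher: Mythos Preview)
Your proof is correct and follows essentially the same route as the paper: write the difference of the $k$-th derivatives via the explicit integral formula, take the $\X_{0}$-norm with the change of variable $y=x-tw$ to obtain an $\eta$-independent bound involving $\sigma(w)\int_{0}^{\infty}t^{k}(1-e^{-\e t})e^{-\sigma(w)t}\d t\leq k!\,\sigma(w)^{-k}$, and conclude by dominated convergence. The only cosmetic difference is that the paper observes the dominating weight $\sigma^{-k}$ already lies in $\X_{k}$ (so $\varphi\in\X_{k}$ would suffice), whereas you bound it further by $\max(1,\sigma^{-k-1})$ to match the stated hypothesis $\varphi\in\X_{k+1}$.
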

 \begin{proof}For $\varphi \in \X_{k+1}$, $\e >0$, $\eta \in\R$ one checks easily from \eqref{eq:KReieta} that
\begin{multline*}
\dfrac{\d^{k}}{\d\eta^{k}}\Ms_{\varepsilon+i\eta}\varphi(x,v)-\dfrac{\d^{k}}{\d\eta^{k}}\Ms_{i\eta}\varphi(x,v)\\
=(-i)^{k}\int_{V}\bm{k}(v,w)\bm{m}(\d w)
\int_{0}^{\infty}t^{k}\left[\exp\left(-\e t\right)-1\right]\exp\left(-\left(i\eta+\sigma(w)\right)t\right)\varphi(x-tw,w)\d t\end{multline*}
from which we deduce easily that
\begin{multline*}
\left\|\dfrac{\d^{k}}{\d\eta^{k}}\Ms_{\varepsilon+i\eta}\varphi-\dfrac{\d^{k}}{\d\eta^{k}}\Ms_{i\eta}\varphi\right\|_{\X_{0}}
\leq \int_{\T^{d}\times V}\left|\varphi(y,w)\right|\d y \bm{m}(\d w)\int_{V}\bm{k}(v,w)\bm{m}(\d v)\\
\int_{0}^{\infty}t^{k}\left[1-\exp\left(-\e t\right)\right]\exp\left(-t\sigma(w)\right)\d t\,.
\end{multline*}
One has
$$\int_{0}^{\infty}t^{k}\left[1-\exp\left(-\e t\right)\right]\exp\left(-t\sigma(w)\right)\d t\leq \int_{0}^{\infty}t^{k}\exp\left(-t\sigma(w)\right)\d t=k!\sigma(w)^{-k-1}$$
so that
$$\int_{V}\bm{k}(v,w)\bm{m}(\d v)
\int_{0}^{\infty}t^{k}\left[1-\exp\left(-\e t\right)\right]\exp\left(-t\sigma(w)\right)\d t \leq k!\sigma(w)^{-k}.$$
Using that
$$\lim_{\e\to0}\int_{V}\bm{k}(v,w)\bm{m}(\d v)
\int_{0}^{\infty}t^{k}\left[1-\exp\left(-\e t\right)\right]\exp\left(-t\sigma(w)\right)\d t=0 \qquad \text{ for a.e. } w \in V$$
and, since $\varphi \in \X_{k}$, we deduce the result from the Lebesgue dominated convergence theorem.
\end{proof}
\begin{nb} Notice that, if $\varphi \in \X_{k+1}$, one sees that
$$\sup_{\eta\in \R}\left\|\dfrac{\d^{k}}{\d\eta^{k}}\Ms_{\varepsilon+i\eta}\varphi-\dfrac{\d^{k}}{\d\eta^{k}}\Ms_{i\eta}\varphi\right\|_{\X_{0}} \leq \e\Gamma(k+2)\|\varphi\|_{\X_{k+1}}$$
making the above convergence quantitative.
 \end{nb}
\subsection{Additional properties}

Introduce now the operators 
$$\mathsf{G}_{n}(\l)=\left[\Rs(\l,\A)\K\right]^{n}, \qquad \forall \l \in \overline{\C}_{+}, \qquad n \in \N.$$
Notice that,  $\mathsf{G}_{n}(\l) \in \B(\X_{0})$ for any $\l \in \overline{\C}_{+}$ since
$$\|\mathsf{G}_{1}(\l)\|_{\B(\X_{0})}=\|\Rs(\l,\A)\K\|_{\B(\X_{0})} \leq \|\K\|_{\B(\X_{0},\X_{1})}\|\Rs(\l,\A)\|_{\B(\X_{1},\X_{0})}$$
with
$$\|\K\|_{\B(\X_{0},\X_{1})}\leq \|\sigma\|_{\infty}\|\vartheta_{1}\|_{\infty}, \qquad \text{ and } \quad \|\Rs(\l,\A)\|_{\B(\X_{1},\X_{0})} \leq 1.$$
Therefore,
$$\|\mathsf{G}_{1}(\l)\|_{\B(\X_{0})} \leq \|\vartheta_{1}\|_{\infty}, \qquad \forall \l \in\overline{\C}_{+}.$$
It is striking to observe that $\|\mathsf{G}_{n}(\l)\|_{\B(\X_{0})}$ can actually be estimated \emph{uniformly with respect to} $n\in \N$. Namely,
\begin{lemme}\label{lem:normGn}
For any $n \in \N$,
$$\left\|\mathsf{G}_{n}(\l)\right\|_{\B(\X_{0})} \leq \|\sigma\|_{\infty}\,\|\vartheta_{1}\|_{\infty} \qquad \forall \l \in \overline{\C}_{+}.$$
In particular, for any $n\in \N,k \in \N$
\begin{equation}\label{eq:Gn+k}
\left\|\mathsf{G}_{n+k}(\l)\right\|_{\B(\X_{0})} \leq \|\sigma\|_{\infty}\,\|\vartheta_{1}\|_{\infty}\left\|\mathsf{G}_{n}\right\|_{\B(\X_{0})}\end{equation}
for any $\l \in\overline{\C}_{+}$.
\end{lemme}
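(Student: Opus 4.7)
The plan is to establish both inequalities by a single composition argument built around the factorization
$$\mathsf{G}_{n}(\l)=\Rs(\l,\A)\,\Ms_{\l}^{n-1}\,\K, \qquad n \geq 1,$$
obtained by isolating the right-most $\K$ in the product $[\Rs(\l,\A)\K]^{n}$ so that all intermediate adjacent pairs $\K\Rs(\l,\A)$ produce copies of $\Ms_{\l}$. The idea is to route the estimate through the hierarchy $\X_{0}\to\X_{0}\to\X_{1}\to\X_{0}$, exploiting the fact that $\Rs(\l,\A)$ absorbs an $\X_{1}$-weight uniformly for $\l \in \overline{\C}_{+}$, while $\Ms_{\l}$ is at the same time a contraction on $\X_{0}$ and a regularizer into $\X_{1}$.

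I will combine four ingredients, all valid uniformly for $\l\in\overline{\C}_{+}$: first, $\K$ is bounded on $\X_{0}$ with $\|\K\|_{\B(\X_{0})}\leq\|\sigma\|_{\infty}$ (recalled in the Introduction); second, $\Ms_{\l}$ is a contraction on $\X_{0}$, since by Remark \ref{nb:MlRel} one has $\|\Ms_{\l}\|_{\B(\X_{0})}\leq \|\sigma\|_{\infty}/(\mathrm{Re}\l+\|\sigma\|_{\infty})\leq 1$; third, $\Ms_{\l}$ regularizes from $\X_{0}$ to $\X_{1}$ with $\|\Ms_{\l}\|_{\B(\X_{0},\X_{1})}\leq \|\vartheta_{1}\|_{\infty}$, which is \eqref{eq:MsSUP} applied with $k=1$; and fourth, $\Rs(\l,\A)$ is bounded from $\X_{1}$ into $\X_{0}$ with norm $\leq 1$, which follows from the integrability estimate \eqref{eq:uoK} in Lemma \ref{lem:decayU0}.

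For $n\geq 2$ I will chain these estimates factor by factor: given $f\in\X_{0}$, set $g=\K f\in\X_{0}$ with $\|g\|_{\X_{0}}\leq\|\sigma\|_{\infty}\|f\|_{\X_{0}}$; then $h=\Ms_{\l}^{n-2}g\in\X_{0}$ with $\|h\|_{\X_{0}}\leq\|g\|_{\X_{0}}$ by contractivity; apply one final $\Ms_{\l}$ to land in $\X_{1}$ with $\|\Ms_{\l}h\|_{\X_{1}}\leq\|\vartheta_{1}\|_{\infty}\|h\|_{\X_{0}}$; and conclude with $\Rs(\l,\A):\X_{1}\to\X_{0}$ at no cost, giving $\|\mathsf{G}_{n}(\l)f\|_{\X_{0}}\leq\|\sigma\|_{\infty}\|\vartheta_{1}\|_{\infty}\|f\|_{\X_{0}}$. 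For $n=1$, the claimed bound is either the one $\|\mathsf{G}_{1}(\l)\|_{\B(\X_{0})}\leq\|\vartheta_{1}\|_{\infty}$ recalled just before the statement of the Lemma, or follows from a direct computation: the change of variable $y=x-tv$ together with the identity $\int_{V}\sigma^{-1}(v)\bm{k}(v,w)\bm{m}(\d v)=\sigma(w)\vartheta_{1}(w)$ produces $\|\mathsf{G}_{1}(\l)f\|_{\X_{0}}\leq\int_{\O\times V}|f(y,w)|\sigma(w)\vartheta_{1}(w)\d y\,\bm{m}(\d w)\leq \|\sigma\|_{\infty}\|\vartheta_{1}\|_{\infty}\|f\|_{\X_{0}}$.

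Finally, \eqref{eq:Gn+k} is immediate from the first part: factor $\mathsf{G}_{n+k}(\l)=\mathsf{G}_{k}(\l)\mathsf{G}_{n}(\l)$ and apply the uniform bound to $\mathsf{G}_{k}(\l)$ for $k\geq 1$. There is no genuine obstacle in the argument; the only subtle (and decisive) observation is that the regularizing gain $\X_{0}\to\X_{1}$ should be extracted from $\Ms_{\l}$ \emph{only once}, with all remaining powers $\Ms_{\l}^{n-2}$ treated as contractions on $\X_{0}$. This is precisely what prevents any constant from accumulating with $n$ and delivers the striking $n$-independence of the bound.
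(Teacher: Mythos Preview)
Your proof is correct and follows essentially the same approach as the paper: the paper factors $\mathsf{G}_{n}(\l)=\Rs(\l,\A)\mathsf{L}_{n-1}(\l)\K$ (where $\mathsf{L}_{n-1}=\Ms_{\l}^{n-1}$) and invokes the uniform bound $\|\mathsf{L}_{n-1}(\l)\|_{\B(\X_{0},\X_{1})}\leq\|\vartheta_{1}\|_{\infty}$ from \eqref{eq:normLN}, together with $\|\Rs(\l,\A)\|_{\B(\X_{1},\X_{0})}\leq 1$ and $\|\K\|_{\B(\X_{0})}\leq\|\sigma\|_{\infty}$. Your only cosmetic difference is that you unwind the estimate on $\Ms_{\l}^{n-1}$ by hand (one regularizing factor, the rest contractions), which is precisely how \eqref{eq:normLN} is proved anyway; the treatment of \eqref{eq:Gn+k} via $\mathsf{G}_{n+k}=\mathsf{G}_{k}\mathsf{G}_{n}$ is identical.
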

\begin{proof} One simply observes that, for any $n \in \N$,
$$\mathsf{G}_{n}(\l)=\Rs(\l,\A)\mathsf{L}_{n-1}(\l)\K$$
so that
$$\|\mathsf{G}_{n}(\l)\|_{\B(\X_{0})} \leq \|\Rs(\l,\A)\|_{\B(\X_{0},\X_{1})}\|\mathsf{L}_{n-1}(\l)\|_{\B(\X_{0},\X_{1})}\|\K\|_{\B(\X_{0})}$$
where, as known (see \eqref{eq:normLN}),
$$ \|\Rs(\l,\A)\|_{\B(\X_{0},\X_{1})}\leq 1, \qquad \|\mathsf{L}_{n-1}(\l)\|_{\B(\X_{0},\X_{1})}\leq \|\vartheta_{1}\|_{\infty}, \qquad \|\K\|_{\B(\X_{0})}=\|\sigma\|_{\infty}$$
which gives the result. To prove \eqref{eq:Gn+k} one simply observes that
$$\|\mathsf{G}_{n+k}(\l)\|_{\B(\X_{0})}=\|\mathsf{G}_{n}(\l)\mathsf{G}_{k}(\l)\|_{\B(\X_{0})}\leq \|\mathsf{G}_{k}(\l)\|_{\B(\X_{0})}\|\mathsf{G}_{n}(\l)\|_{\B(\X_{0})}$$
and the estimate follows from the first part of the proof.
\end{proof}
We notice also that
$$\mathsf{G}_{1}^{(k)}(\l)=\dfrac{\d^{k}}{\d\l^{k}}\mathsf{G}_{1}(\l)=\left[\dfrac{\d^{k}}{\d\l^{k}}\Rs(\l,\A)\right]\K$$
and, for $k \leq N_{0}-1$, 
$$\left\|\left[\dfrac{\d^{k}}{\d\l^{k}}\Rs(\l,\A)\right]\K\right\|_{\B(\X_{0})} \leq \left\|\left[\dfrac{\d^{k}}{\d\l^{k}}\Rs(\l,\A)\right]\right\|_{\B(\X_{k+1},\X_{0})}\|\K\|_{\B(\X_{0},\X_{k+1})}
\leq k! \|\sigma\|_{\infty}\|\vartheta_{k}\|_{\infty}$$
where we used \eqref{eq:dlkRslA}. Thus
\begin{equation}\label{eq:normG1k}
\sup_{\l\in\overline{\C}_{+}}\left\|\mathsf{G}_{1}^{(k)}(\l)\right\|_{\B(\X_{0})} \leq k! \|\sigma\|_{\infty}\,\|\vartheta_{k}\|_{\infty}.\end{equation}
This shows that, for any $f \in \X_{0}$, the mapping
$$\l \in \overline{\C}_{+} \mapsto \mathsf{G}_{1}(\l)f \in \X_{0}$$
is of class $\mathscr{C}^{N_{0}-1}$. This easily extends to $\mathsf{G}_{n}(\l)f$ and, besides the mere estimates for $\mathsf{G}_{n}^{(j)}(\l)$, we also need to understand the decay of $\mathsf{G}_{n}^{(j)}(i\eta)f$ as $|\eta| \to \infty$. We resort to do so to the following  
\begin{lemme}\label{lem:estJ} For any $k \in \{1,\ldots,N_{0}-1\}$, there exists $\bar{C}_{k} >0$ such that
\begin{equation}\label{eq:estJ}
\left\|\mathsf{G}^{(k)}_{N}(i\eta)\right\|_{\mathscr{B}(\X_{0})} \leq \bar{C}_{k}N^{k}\,\|\mathsf{G}_{\floor{\frac{N-k}{2^{k}}}}(i\eta)\|_{\mathscr{B}(\X_{0})} \qquad \forall N \geq 2^{k}+k.\end{equation}
\end{lemme}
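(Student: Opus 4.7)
The plan is to view $\mathsf{G}_N(\l)$ as the $N$-fold product $\mathsf{G}_1(\l)^N$ and apply Leibniz's rule, which yields
\begin{equation*}
\mathsf{G}_N^{(k)}(\l) = \sum_{\substack{(k_1,\ldots,k_N) \in \N^N \\ k_1+\cdots+k_N = k}} \binom{k}{k_1,\ldots,k_N}\,\mathsf{G}_1^{(k_1)}(\l)\cdots \mathsf{G}_1^{(k_N)}(\l).
\end{equation*}
For each tuple, let $j \in \{0,\ldots,k\}$ be the number of indices $i$ with $k_i \geq 1$; the derivative factors split the ordered product into $j+1$ consecutive blocks of undifferentiated $\mathsf{G}_1(\l)$'s, which collapse into operators $\mathsf{G}_{n_0}(\l),\ldots,\mathsf{G}_{n_j}(\l)$ with nonnegative integers $n_m$ satisfying $n_0+\cdots+n_j=N-j$.

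For each such term I will estimate the norm in three parts. First, each derivative factor $\mathsf{G}_1^{(k_{i_s})}(\l)$ (with $k_{i_s}\geq 1$) is bounded uniformly in $\l \in \overline{\C}_+$ thanks to \eqref{eq:normG1k}, contributing a product $\prod_{s=1}^{j} k_{i_s}!\,\|\sigma\|_\infty\|\vartheta_{k_{i_s}}\|_\infty$, which since $\sum_s k_{i_s}=k$ with $k_{i_s}\leq k$ is bounded by $k!\,C^k$ for a constant $C$ depending only on $\|\sigma\|_\infty$ and $\max_{p\leq k}\|\vartheta_p\|_\infty$. Second, by pigeonhole at least one block index $m^\star$ satisfies
\begin{equation*}
n_{m^\star} \geq \frac{N-j}{j+1} \geq \frac{N-k}{k+1} \geq \frac{N-k}{2^k},
\end{equation*}
where the last inequality uses $k+1\leq 2^k$ for $k\geq 1$; on this single block I invoke the monotonicity property \eqref{eq:Gn+k} to write $\|\mathsf{G}_{n_{m^\star}}(\l)\|_{\B(\X_0)} \leq \|\sigma\|_\infty\|\vartheta_1\|_\infty\,\|\mathsf{G}_{\lfloor(N-k)/2^k\rfloor}(\l)\|_{\B(\X_0)}$. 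Third, every remaining block $\mathsf{G}_{n_m}(\l)$ with $m\neq m^\star$ is estimated by the uniform bound in Lemma \ref{lem:normGn}. The hypothesis $N \geq 2^k+k$ guarantees $\lfloor(N-k)/2^k\rfloor\geq 1$, so the floor has the expected meaning.

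It remains to bound the combinatorics of the sum. The number of tuples $(k_1,\ldots,k_N)$ with $k_i\geq 0$ and $\sum k_i=k$ is $\binom{N+k-1}{k}$, which is bounded by $\bm{c}_k\,N^k$ for $N\geq 1$, while each multinomial coefficient is trivially $\leq k!$. Multiplying the per-term estimate above by this counting factor produces a constant $\bar{C}_k>0$ depending only on $k$, $\|\sigma\|_\infty$ and $\|\vartheta_p\|_\infty$ for $p\leq k \leq N_0-1$, such that
\begin{equation*}
\|\mathsf{G}_N^{(k)}(i\eta)\|_{\B(\X_0)} \leq \bar{C}_k\,N^k\,\|\mathsf{G}_{\lfloor(N-k)/2^k\rfloor}(i\eta)\|_{\B(\X_0)},
\end{equation*}
which is the claim. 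The only nontrivial step is isolating the large block $n_{m^\star}$ by pigeonhole and transferring its size via \eqref{eq:Gn+k} to the dyadic index $\lfloor(N-k)/2^k\rfloor$; everything else is compositional bookkeeping. The dyadic form (rather than the sharper $k+1$) is deliberately chosen because it matches the grouping $\mathsf{G}_{\floor{m/2^j}\mathsf{p}}$ needed later in the proof of Proposition \ref{prop:reguPsif}.
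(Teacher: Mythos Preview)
Your proof is correct and takes a genuinely different route from the paper. The paper argues by induction on $k$: it writes $\mathsf{G}_N^{(1)}$ as a sum over the position of the single derivative, exploits the symmetry $r\leftrightarrow N-1-r$ to keep only terms with $N-1-r\geq\lfloor(N-1)/2\rfloor$, and then invokes \eqref{eq:Gn+k} on that large block; for $k=2$ it differentiates again and repeats, with each step halving the surviving index, which is the origin of the $2^{k}$. The general case is dismissed as ``a tedious but simple induction''. Your argument instead applies the full Leibniz rule at once, groups the undifferentiated factors into $j+1\leq k+1$ blocks, and uses pigeonhole to find a block of size at least $(N-k)/(k+1)$, which you then relax to $(N-k)/2^{k}$ to match the stated exponent. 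Your approach is cleaner and avoids the induction entirely; it also makes transparent that the sharper bound $\lfloor(N-k)/(k+1)\rfloor$ is available, whereas the paper's recursive halving naturally produces the dyadic $2^{k}$. The paper's method, on the other hand, tracks constants more explicitly at each step and mirrors the structure used downstream in Proposition \ref{prop:reguPsif}.
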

\begin{proof} The proof is based upon elementary but tedious computations.  We notice first that,  since $\mathsf{G}_{N}(i\eta)=\left(\mathsf{G}_{1}(i\eta)\right)^{N}$, one has for the first derivative:
$$\mathsf{G}_{N}^{(1)}(i\eta)=\sum_{r=0}^{N-1}\mathsf{G}_{r}(i\eta)\mathsf{G}_{1}^{(1)}(i\eta)\mathsf{G}_{N-1-r}(i\eta)$$
According to \eqref{eq:normG1k}
$$\|\mathsf{G}_{1}^{(1)}(i\eta)\|_{\B(\X_{0})}\leq \|\sigma\|_{\infty}\|\vartheta_{1}\|_{\infty}=C_{1}$$
so that
\begin{equation*}\begin{split}
\left\|\mathsf{G}_{N}^{(1)}(i\eta)\right\|_{\B(\X_{0})} &\leq  C_{1}\sum_{r=0}^{N-1}\left\|\mathsf{G}_{r}(i\eta)\right\|_{\B(\X_{0})} \,\|\mathsf{G}_{N-1-r}(i\eta)\|_{\B(\X_{0})}\\
 &\leq 2C_{1}\sum_{r=0}^{\floor{\frac{N-1}{2}}}
\left\|\mathsf{G}_{r}(i\eta)\right\|_{\B(\X_{0})}\,\left\|\mathsf{G}_{N-1-r}(i\eta)\right\|_{\B(\X_{0})}.\end{split}\end{equation*}
Since $N-1-r \geq \floor{\frac{N-1}{2}}$ for any $0\leq r \leq \floor{\frac{N-1}{2}}$, we get thanks to \eqref{eq:Gn+k} that
\begin{equation}\label{eq:j=1}
\left\|\mathsf{G}_{N}^{(1)}(i\eta)\right\|_{\B(\X_{0})}\leq 2C_{1}^{2}\,\left\|\mathsf{G}_{\floor{\frac{N-1}{2}}}(i\eta)\right\|_{\B(\X_{0})}\sum_{r=0}^{\floor{\frac{N-1}{2}}}\left\|\mathsf{G}_{r}(i\eta)\right\|_{\B(\X_{0})}, \qquad N-1 \geq 2\end{equation}
which results in 
\begin{equation*}\label{eq:L11}
\left\|\mathsf{G}_{N}^{(1)}(i\eta)\right\|_{\B(\X_{0})} \leq 2{C_{1}}^{3}(N+1)\left\|\mathsf{G}_{\floor{\frac{N-1}{2}}}(i\eta)\right\|_{\B(\X_{0})}
 \end{equation*}
 since $\|\mathsf{G}_{r}(i\eta)\|_{\B(\X_{0})}\leq C_{1}$ for any $r$ and $\floor{\frac{N-1}{2}}+1 \leq N$. This proves the result for $k=1$. Now, for $k=2$ and $N \geq 4$, one has
\begin{multline*}
\mathsf{G}^{(2)}_{N}(i\eta)=\sum_{r=0}^{N-1}\mathsf{G}^{(1)}_{r}(i\eta)\mathsf{G}_{1}^{(1)}(i\eta)\mathsf{G}_{N-r}(i\eta)
+\sum_{r=0}^{N-1}\mathsf{G}_{r}(i\eta)\mathsf{G}_{1}^{(2)}(i\eta)\mathsf{G}_{N-1-r}(i\eta)\\
+\sum_{r=0}^{N-1}\mathsf{G}_{r}(i\eta)\mathsf{G}_{1}^{(1)}(i\eta)\mathsf{G}_{N-1-r}^{(1)}(i\eta).\end{multline*}
Using \eqref{eq:normG1k}, 
$$\|\mathsf{G}_{1}^{(2)}(i\eta)\|_{\B(\X_{0})} \leq 2\|\sigma\|_{\infty}\|\vartheta_{2}\|_{\infty}=:C_{2},$$ while we recall that $\|\mathsf{G}^{(1)}_{1}(i\eta)\|_{\B(\X_{0})} \leq  C_{1},$
so that 
\begin{multline*}
\|\mathsf{G}^{(2)}_{N}(i\eta)\|_{\B(\X_{0})} \leq C_{1}\sum_{r=0}^{N-1}\|\mathsf{G}^{(1)}_{r}(i\eta)\|_{\B(\X_{0})}\|\mathsf{G}_{N-1-r}(i\eta)\|_{\B(\X_{0})}\\
+ C_{1}\sum_{r=0}^{N-1}\|\mathsf{G}_{r}(i\eta)\|_{\B(\X_{0})}\,\|\mathsf{G}_{N-1-r}^{(1)}(i\eta)\|_{\B(\X_{0})}\\
+C_{2}\sum_{r=0}^{N-1}\|\mathsf{G}_{r}(i\eta)\|_{\B(\X_{0})}\|\mathsf{G}_{N-1-r}(i\eta)\|_{\B(\X_{0})}\,,\end{multline*}
and, as before,
\begin{multline*}
\|\mathsf{G}^{(2)}_{N}(i\eta)\|_{\B(\X_{0})} \leq 4C_{1} \sum_{r=0}^{\floor{\frac{N-1}{2}}}\|\mathsf{G}_{r}(i\eta)\|_{\B(\X_{0})}\|\mathsf{G}_{N-1-r}^{(1)}(i\eta)\|_{\B(\X_{0})}\\
+2C_{2}\sum_{r=0}^{\floor{\frac{N-1}{2}}}\|\mathsf{G}_{r}(i\eta)\|_{\B(\X_{0})}\|\mathsf{G}_{N-1-r}(i\eta)\|_{\B(\X_{0})}\,.\end{multline*}
Now,  according to  \eqref{eq:j=1}
$$\|\mathsf{G}_{N-1-r}^{(1)}(i\eta)\|_{\B(\X_{0})} \leq 2C_{1}^{2}\|\mathsf{L}_{\floor{\frac{N-2-r}{2}}}(i\eta)\|_{\B(\X_{0})}\sum_{r_{1}=0}^{\floor{\frac{N-2-r}{2}}}\left\|\mathsf{G}_{r_{1}}(i\eta)\right\|_{\B(\X_{0})}$$
and, since $\floor{\frac{N-2-r}{2}} \geq \floor{\frac{N-2}{4}}$ for any $r \leq \floor{\frac{N-1}{2}}$, invoking \eqref{eq:Gn+k} again we deduce
\begin{equation*} 
\|\mathsf{G}_{N-1-r}^{(1)}(i\eta)\|_{\B(\X_{0})} \leq 2C_{1}^{3}\|\mathsf{L}_{\floor{\frac{N-2}{4}}}(i\eta)\|_{\B(\X_{0})}\sum_{r_{1}=0}^{\floor{\frac{N-2-r}{2}}}\left\|\mathsf{G}_{r_{1}}(i\eta)\right\|_{\B(\X_{0})}\\\end{equation*}
Thus,
\begin{multline*}
 4C_{1} \sum_{r=0}^{\floor{\frac{N-1}{2}}}\|\mathsf{G}_{r}(i\eta)\|_{\B(\X_{0})}\|\mathsf{G}_{N-1-r}^{(1)}(i\eta)\|_{\B(\X_{0})}\\
 \leq 8C_{1}^{4}\|\mathsf{G}_{\floor{\frac{N-2}{4}}}(i\eta)\|_{\B(\X_{0})}
 \sum_{r=0}^{\floor{\frac{N-1}{2}}}\sum_{r_{1}=0}^{\floor{\frac{N-1-r}{2}}}\left\|\mathsf{G}_{r_{1}}(i\eta)\right\|_{\B(\X_{0})}\,,
 \end{multline*} 
while, as in the proof of \eqref{eq:j=1}
\begin{multline*}
2C_{2}\sum_{r=0}^{\floor{\frac{N-1}{2}}}\|\mathsf{G}_{r}(i\eta)\|_{\B(\X_{0})}\,\|\mathsf{G}_{N-1-r}(i\eta)\|_{\B)(\X_{0})}\\
\leq 2C_{2}C_{1}\|\mathsf{G}_{\floor{\frac{N-1}{2}}}(i\eta)\|_{\B(\X_{0})}\sum_{r=0}^{\floor{\frac{N-1}{2}}}\|\mathsf{G}_{r}(i\eta)\|_{\B(\X_{0})}\\
\leq 2C_{2}C_{1}^{2}\|\mathsf{G}_{\floor{\frac{N-2}{4}}}(i\eta)\|_{\B(\X_{0})}\sum_{r=0}^{\floor{\frac{N-1}{2}}}\|\mathsf{G}_{r}(i\eta)\|_{\B(\X_{0})}.
\end{multline*}
  Consequently, 
\begin{multline*}\label{eq:j=2}
\left\|\mathsf{G}_{N}^{(2)}(i\eta)\right\|_{\B(\X_{0})} \leq 2C_{1}^{2}\|\mathsf{G}_{\floor{\frac{N-2}{4}}}(i\eta)\|_{\B(\X_{0})}\times\\
\times \left(4 C_{1}^{2}\sum_{r=0}^{\floor{\frac{N-1}{2}}}\sum_{r_{1}=0}^{\floor{\frac{N-2-r}{2}}}\|\mathsf{G}_{r_{1}}(i\eta)\|_{\B(\X_{0})}
+C_{2}\sum_{r=0}^{\floor{\frac{N-1}{2}}}\|\mathsf{G}_{r}(i\eta)\|_{\B(\X_{0})}\right).\end{multline*}
This clearly gives the rough estimate (using $\floor{\frac{N-2-r}{2}}+1 \leq \floor{\frac{N-1}{2}}+1 \leq N$), 
\begin{equation}
\label{eq:j=2}
\left\|\mathsf{G}_{N}^{(2)}(i\eta)\right\|_{\B(\X_{2})} \leq  2C_{1}^{2}\|\mathsf{G}_{\floor{\frac{N-2}{4}}}(i\eta)\|_{\B(\X_{0})}\left(2C_{1}^{2}N^{2}+C_{2}C_{1}N\right)\end{equation}
and proves the result for $k=2$. By a tedious but simple induction argument, we deduce then the result for any $k \in \{0,\ldots,N_{0}-1\}$. \end{proof}

We can prove now Lemma \ref{prop:snl} given in Section \ref{sec:near0}.

\begin{proof}[Proof of Lemma \ref{prop:snl}] Let $n \in \N$ be fixed. Recall that we defined, for any $k \in \N$, 
$$\mathsf{G}_{k}(\l)=\left(\Rs(\l,\A)\K\right)^{k}, \qquad \l \in \C_{+}$$ 
and we denoted its derivatives of order $j$ by $\mathsf{G}_{k}^{(j)}(\l)$. With this operator, we notices that
$${s}_{n}(\l):=\sum_{k=0}^{n}\Gs_{k}(\l)\Rs(\l,\A)\ \qquad \l \in \overline{\C}_{+}$$

 Computing derivatives with Leibniz rule we get, for any $k \in \N$
\begin{equation}\label{eq:derivSn}
\dfrac{\d^{k}}{\d\l^{k}}s_{n}(\l)f=\sum_{m=0}^{n}\sum_{j=0}^{k}\left(\begin{array}{c}k \\ j\end{array}\right)\Gs^{(j)}_{m}(\l)\dfrac{\d^{k-j}}{\d\l^{k-j}}\left[\Rs(\l,\A)f\right]\end{equation}
Now, as  observed, if $f \in \X_{k-j+1}$, then $\dfrac{\d^{k-j}}{\d\l^{k-j}}\left[\Rs(\l,\A)f\right] \in \X_{0}$ (see \eqref{eq:dlkRslA}) so that (see Eq. \eqref{eq:estJ}) %
$$\Gs^{(j)}_{m}(\l)\dfrac{\d^{k-j}}{\d\l^{k-j}}\left[\Rs(\l,\A)f\right] \in \X_{0}.$$ This easily proves that the mapping
$$\l \in \overline{\C}_{+} \longmapsto s_{n}(\l)f \in \X_{0}$$
is of class $\mathscr{C}^{N_{0}-1}$ with 
$$\sup_{\l \in \C_{+}}\left\|\frac{\d^{k}}{\d\l^{k}}s_{n}(\l)f\right\|_{\X_{0}} \leq C_{k}\|f\|_{\X_{N_{0}}}$$
for some positive $C_{k} >0$ depending only on $k \in \{0,\ldots,N_{0}-1\}.$ Let us now prove 
\begin{equation}\label{eq:snEps}
\lim_{|\eta|\to\infty}\sup_{\e \in (0,1]}\left\|\dfrac{\d^{k}}{\d\eta^{k}}\,s_{n}(\e+i\eta)f\right\|_{\X_{0}}=0\end{equation}
for any $k \in \{0,\ldots,N_{0}-1\}$ which will also prove the fact that the mapping $\eta \mapsto s_{n}(\e+i\eta)f$ belongs to $\mathscr{C}_{0}^{N_{0}-1}(\R,\X_{0})$. Starting from \eqref{eq:derivSn}, we notice that, for any $k \leq N_{0}-1$,  $m \in \{0,\ldots,n\}$, $j \in \{0,\ldots,k\}$, one easily see that, for any $R >0$,
\begin{multline*}
\sup_{\e\in [0,1]}\sup_{|\eta| >R}\left\|\mathsf{G}_{m}^{(j)}(\e+i\eta)\right\|_{\mathscr{B}(\X_{0})} \leq
\sup_{|\eta| >R}\|\mathsf{G}_{m}^{(j)}(i\eta)\|_{\mathscr{B}(\X_{0})} \\
\leq \bar{C}_{k}n^{k}\sup_{|\eta| >R}\|\mathsf{G}_{\floor{\frac{n-k}{2^{k}}}}(i\eta)\|_{\mathscr{B}(\X_{0})}\end{multline*}
according to Lemma \ref{lem:estJ}. from which we easily deduce that
$$\sup_{\e\in [0,1]}\sup_{|\eta| >R}\left\|\mathsf{G}_{m}^{(j)}(\e+i\eta)\right\|_{\mathscr{B}(\X_{0})} < \infty.$$
Combining this with the fact that
$$\lim_{|\eta| \to \infty}\sup_{\e\in [0,1]}\left\|\dfrac{\d^{k-j}}{\d\eta^{k-j}}\Rs(\e+i\eta,\A)f\right\|_{\X_{0}}=0$$
(see Proposition \ref{prop:convRsT0} and Lemma \ref{lem:convDerRsT0}), we deduce \eqref{eq:snEps} from the representation \eqref{eq:derivSn}. The fact that $s_{n}(\e+i\eta)f$ converges to $s_{n}(i\eta)f$ in $\mathscr{C}_{0}^{N_{0}-1}(\R,\X_{0})$ is then deduced from \eqref{eq:derivSn} and the limits established in  Proposition \ref{prop:convRsT0} which easily implies that $\mathsf{G}_{k}(\e+i\eta)$ converges to $\mathsf{G}_{k}(i\eta):=\left[\Rs(i\eta,\A)\K\right]^{k}$ in $\mathscr{C}_{0}^{N_{0}-1}(\R,\X_{0})$ as $\e \to 0$, details are left to the reader.\end{proof}
 
  \section{About collectively power compact operators}\label{sec:functional}

We prove the following result which is likely to be well-known but we were not able to find in the literature.
\begin{theo}\label{theo:powerB} Let $X$ be a Banach space and let $B \in \mathscr{B}(X)$ be such that there is $n \in \N$ such that
$B^{n}$ is compact. Let $\nu_{0}$ be a semi-simple  eigenvalue of $B^{n}$ and let $\mu_{0} \in \mathfrak{S}(B)$ be such that
$$\mu_{0}^{n}=\nu_{0}.$$
Then, $\mu_{0}$ is a semi-simple eigenvalue of $B$. Moreover, if $\nu_{0}$ is simple then so is $\mu_{0}$ and the corresponding spectral projection coincides.\end{theo}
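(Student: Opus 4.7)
\noindent\textbf{Proof proposal for Theorem \ref{theo:powerB}.}
The plan is to combine the spectral mapping theorem with a Dunford functional calculus argument and then reduce the statement to a finite-dimensional algebraic identity. First, since $B^{n}$ is compact and $\nu_{0}$ is an isolated eigenvalue of finite algebraic multiplicity, the Riesz projection
\[
P_{B^{n}}(\nu_{0})=\frac{1}{2i\pi}\oint_{|z-\nu_{0}|=r}\Rs(z,B^{n})\,\d z
\]
has finite rank, and $\nu_{0}\neq 0$ (else $X$ would be finite-dimensional, in which case the statement is trivial). By the spectral mapping theorem applied to the polynomial $p(\mu)=\mu^{n}$, one has $\mathfrak{S}(B)\cap p^{-1}(\{\nu_{0}\})=\{\mu_{0}\omega^{k}\;;\;k\in I\}$ with $\omega=e^{2i\pi/n}$ and $I\subset\{0,\ldots,n-1\}$ finite, and all these points are isolated in $\mathfrak{S}(B)$; moreover, by the functional calculus,
\[
P_{B^{n}}(\nu_{0})=\sum_{k\in I}P_{B}(\mu_{0}\omega^{k})\,,
\]
where $P_{B}(\mu_{0}\omega^{k})$ is the Riesz projection of $B$ at $\mu_{0}\omega^{k}$; these projections are mutually orthogonal (their ranges form a direct sum inside the finite-dimensional range of $P_{B^{n}}(\nu_{0})$) and they commute with $B$.

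The second step is the key algebraic reduction. Fix $k\in I$ and set $N_{k}:=\left(B-\mu_{0}\omega^{k}\right)P_{B}(\mu_{0}\omega^{k})$, viewed as an operator on the finite-dimensional range $E_{k}=\mathrm{Range}\,P_{B}(\mu_{0}\omega^{k})$. Since $\mathfrak{S}(B|_{E_{k}})=\{\mu_{0}\omega^{k}\}$, the operator $N_{k}$ is nilpotent on $E_{k}$. The semi-simplicity of $\nu_{0}$ for $B^{n}$ gives $(B^{n}-\nu_{0})P_{B^{n}}(\nu_{0})=0$, and restricting to $E_{k}\subset \mathrm{Range}\,P_{B^{n}}(\nu_{0})$ yields $B^{n}=\nu_{0}\,\mathbf{I}$ on $E_{k}$. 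Writing $B=\mu_{0}\omega^{k}\mathbf{I}+N_{k}$ on $E_{k}$ and expanding through the binomial formula gives
\[
0=B^{n}-\nu_{0}\mathbf{I}=N_{k}\Bigl(n(\mu_{0}\omega^{k})^{n-1}\mathbf{I}+\sum_{j=2}^{n}\binom{n}{j}(\mu_{0}\omega^{k})^{n-j}N_{k}^{j-1}\Bigr),
\]
and the bracketed operator is the sum of a nonzero scalar (since $\mu_{0}\neq 0$) and a nilpotent operator, hence is invertible on $E_{k}$. Therefore $N_{k}=0$, which precisely means $\mu_{0}\omega^{k}$ is a semi-simple eigenvalue of $B$. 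Applying this to $k$ such that $\mu_{0}\omega^{k}=\mu_{0}$ (i.e., $k=0$) gives the first conclusion.

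Finally, in the simple case, $\dim\mathrm{Range}\,P_{B^{n}}(\nu_{0})=1$, so the additivity relation forces exactly one summand in the decomposition $P_{B^{n}}(\nu_{0})=\sum_{k\in I}P_{B}(\mu_{0}\omega^{k})$ to be nonzero and of rank one, and all others to vanish. This unique surviving summand must be $P_{B}(\mu_{0})$ (since $\mu_{0}\in\mathfrak{S}(B)$ by assumption and the projection at an eigenvalue of $B$ cannot vanish), whence $P_{B}(\mu_{0})=P_{B^{n}}(\nu_{0})$ and $\mu_{0}$ is a simple eigenvalue of $B$. I expect the only delicate point to be a careful justification of the additive decomposition of $P_{B^{n}}(\nu_{0})$ via the resolvent identity $\Rs(z^{n},B^{n})=\prod_{k=0}^{n-1}\Rs(z\omega^{k},B)$ combined with a partial fraction expansion and a contour deformation; the rest is essentially bookkeeping on a finite-dimensional invariant subspace.
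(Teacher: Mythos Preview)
Your proof is correct but follows a genuinely different route from the paper's. The paper exploits the algebraic identity
\[
\Bigl(\sum_{j=0}^{n-1}\lambda^{n-1-j}B^{j}\Bigr)(\lambda-B)=\lambda^{n}-B^{n},
\]
which immediately gives $\Rs(\lambda,B)=\Rs(\lambda^{n},B^{n})\sum_{j=0}^{n-1}\lambda^{n-1-j}B^{j}$. Since $\nu_{0}$ is a simple pole of $\Rs(\cdot,B^{n})$, one multiplies by $(\lambda-\mu_{0})=\dfrac{\lambda-\mu_{0}}{\lambda^{n}-\mu_{0}^{n}}(\lambda^{n}-\mu_{0}^{n})$ and takes the limit $\lambda\to\mu_{0}$; this shows at once that $\mu_{0}$ is a simple pole of $\Rs(\cdot,B)$ (hence semi-simple) and yields the explicit formula
\[
\Pi_{0}=\frac{1}{n\mu_{0}^{n-1}}P_{0}\sum_{j=0}^{n-1}\mu_{0}^{n-1-j}B^{j},
\]
from which $\mathrm{Range}(\Pi_{0})\subset\mathrm{Range}(P_{0})$ is immediate. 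The simple case is then handled by identifying both rank-one projections via their common eigenvector. Your argument instead invokes the spectral mapping theorem, decomposes $P_{B^{n}}(\nu_{0})$ additively over the $n$-th roots lying in $\mathfrak{S}(B)$, and then uses a clean nilpotent-plus-scalar binomial computation on each finite-dimensional range. The paper's route is shorter and delivers the explicit projection formula for free; your route is more structural, treats all $n$-th roots of $\nu_{0}$ in $\mathfrak{S}(B)$ on an equal footing, and makes the passage of semi-simplicity transparent via the nilpotent vanishing. The additive decomposition you flag as delicate is indeed standard via the holomorphic functional calculus (choosing an indicator function $g$ of a small disc around $\nu_{0}$ and noting $g(B^{n})=(g\circ p)(B)$), so no genuine gap remains.
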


\begin{nb}\label{nb:powerB} Notice that, under the assumption of Theorem \ref{theo:powerB}, since $\nu_{0}$ is a simple eigenvalue of $B^{n}$, there exists only \emph{one} eigenvalue $\mu_{0}$ of $B$ such that
$\nu_{0}=\mu_{0}^{n}.$
\end{nb}
\begin{proof} Observe that for any $\lambda \in \C$,
$$\left(\sum_{j=0}^{n-1}\lambda^{n-1-j}B^{j}\right)\left(\l-B\right)=\l^{n}-B^{n}.$$
In particular, if $\l^{n} \notin \mathfrak{S}(B^{n})$ then $\lambda \notin \mathfrak{S}(B)$ and 
\begin{equation}\label{eq:resolvpuis}
\Rs(\l^{n},B^{n})\sum_{j=0}^{n-1}\lambda^{n-1-j}A^{j}=\Rs(\l,B).\end{equation}
Let us assume now $\mu_{0}^{n}=\nu_{0}$ is semi-simple and let $P_{0}$ be the spectral projection associated to $\nu_{0}$ (and $B^{n}$), then (recall that $\nu_{0}$ is a simple pole of the resolvent $\Rs(\cdot,B^{n})$ 
$$P_{0}=\lim_{z \to \nu_{0}}\left(z-\nu_{0}\right)\Rs(z,B^{n})=\lim_{\l \to \mu_{0}}\left(\l^{n}-\mu_{0}^{n}\right)\Rs(\l^{n},B^{n}).$$
Writing, for $\l^{n} \neq \mu_{0}$,
$$\left(\l-\mu_{0}\right)\Rs(\l,B)=\frac{\l-\mu_{0}}{\l^{n}-\mu_{0}^{n}}\left(\l^{n}-\mu_{0}^{n}\right)\Rs(\l^{n},B^{n})\sum_{j=0}^{n-1}\l^{n-1-j}B^{j}$$
and using that 
$$\lim_{\l\to\mu_{0}}\frac{\l-\mu_{0}}{\l^{n}-\mu_{0}^{n}}=\frac{1}{n\mu_{0}^{n-1}},$$
we deduce that the limit
$$\lim_{\l\to\mu_{0}}\left(\l-\mu_{0}\right)\Rs(\l,B)$$
exists and is equal to 
$$\frac{1}{n\mu_{0}^{n-1}}P_{0}\sum_{j=0}^{n-1}\mu_{0}^{n-1-j}B^{j}.$$
Therefore, $\mu_{0}$ is a simple pole of $\Rs(\cdot,B)$ and
$$\Pi_{0}=\frac{1}{n\mu_{0}^{n-1}}P_{0}\sum_{j=0}^{n-1}\mu_{0}^{n-1-j}B^{j}.$$
In particular, $\mathrm{Range}(\Pi_{0}) \subset \mathrm{Range}(P_{0})$ and $\dim\left(\mathrm{Range}(\Pi_{0})\right) \leq \dim\left(\mathrm{Range}(P_{0})\right).$ In particular, if $\nu_{0}$ is simple then so is $\mu_{0}$ with
$$\mathrm{Range}(\Pi_{0})=\mathrm{Range}(P_{0}).$$
Let now $\psi \in X$ be an eigenfunction of $B$ associated to $\mu_{0}$ and let $\psi^{*} \in X^{*}$ be an eigenfunction of $B^{*}$ associated to $\mu_{0}$ with the normalisation
$$\langle \psi^{*},\psi\rangle=1.$$
Then, $\mathrm{Range}(P_{0})=\mathrm{Range}(\Pi_{0})=\mathrm{Span}(\psi)$ and for any $h \in X$, there is $\alpha_{h} \in \R$ such that
$$\Pi_{0}h=\alpha_{h}\psi.$$
One has then $\langle \psi^{*},\Pi_{0}h\rangle=\alpha_{h}$ i.e.
$$\alpha_{h}=\langle \Pi_{0}^{*}\psi^{*},h\rangle=\langle \psi^{*},h\rangle$$
i.e.
$$\Pi_{0}h=\langle \psi^{*},h\rangle\,\psi.$$
In the same way, $P_{0}h=\langle \psi^{*},h\rangle\,\psi$ and $\Pi_{0}=P_{0}.$
\end{proof}

\end{document}